\documentclass[12pt, 3p
			  ]{elsarticle}
\usepackage{amsmath,amsbsy,amsfonts}
\allowdisplaybreaks[4]
\usepackage{bm}
\usepackage{amssymb}

\usepackage{booktabs}	
\usepackage[colorlinks]{hyperref}
\usepackage{enumerate}
\usepackage{lineno}
\usepackage{stmaryrd} 

\usepackage{caption,subcaption}
\usepackage{graphicx}
\usepackage{graphics}
\usepackage{epstopdf}
\usepackage{pstricks-add}
\usepackage{CJKutf8,CJKnumb}
\AtBeginDocument{\begin{CJK}{UTF8}{gbsn}}
\AtEndDocument{\clearpage \end{CJK}}
\usepackage{cases}
\usepackage{float}
\usepackage{cleveref}
\usepackage{diagbox}
\usepackage{geometry}
\usepackage{multirow}

\geometry{top=2.9cm, bottom=2.5cm}


\newfam\msbfam
\font\tenmsb=msbm10    \textfont\msbfam=\tenmsb
\font\sevenmsb=msbm7 \scriptfont\msbfam=\sevenmsb
\font\fivemsb=msbm5 \scriptscriptfont\msbfam=\fivemsb

\newfam\bigfam
\font\tenbig=msbm10 scaled \magstep2   \textfont\bigfam=\tenbig
\font\sevenbig=msbm7 scaled \magstep2 \scriptfont\bigfam=\sevenbig
\font\fivebig=msbm5 scaled \magstep2
\scriptscriptfont\bigfam=\fivebig

\newtheorem{theorem}{Theorem}[section]
\newtheorem{lemma}[theorem]{Lemma}

\newtheorem{algorithm}{Algorithm}

\newdefinition{remark}{Remark}[section]
\newdefinition{example}{Example}[section]
\newenvironment{proof}[1][Proof]{\noindent\textbf{#1. }}{\hfill $\Box$}

\definecolor{mygray}{gray}{.85}

\numberwithin{equation}{section}






\bibliographystyle{elsarticle-num}
\journal{}

\begin{document}
\begin{frontmatter}

\title{A Local Parallel Finite Element Method for Super-Hydrophobic Proppants
in a Hydraulic Fracturing System Based on a 2D/3D Transient Triple-Porosity Navier-Stokes Model
\tnoteref{mytitlenote}}

\author[add1]{Luling Cao}\ead{lulingcao@163.com}
\author[add1]{Jian Li\corref{correspondingauthor}}\ead{jiaaanli@gmail.com}
\cortext[correspondingauthor]{Corresponding author.}
\author[add2]{Zhangxin Chen}\ead{zhachen@ucalgary.ca}
\author[add3]{Guangzhi Du}\ead{gzdu@sdnu.edu.cn}

\address[add1]{School of Mathematics and Data Science, Shaanxi University of Science and Technology, Xi'an 710021, China}
\address[add2]{Department of Chemical \& Petroleum Engineering, Schulich School of Engineering, University of Calgary, Calgary T2N 1N4, Canada}
\address[add3]{School of Mathematics and Statistics, Shandong Normal University, Jinan 250014, China}

\begin{abstract}
A hydraulic fracturing system with super-hydrophobic proppants is characterized by a transient triple-porosity Navier-Stokes model. For this complex multiphysics system, particularly in the context of three-dimensional space, a local parallel and non-iterative finite element method based on two-grid discretizations is proposed.
The underlying idea behind utilizing the local parallel approach is to combine a decoupled method, a two-grid method and a domain decomposition method.
The strategy allows us to initially capture low-frequency data across the decoupled domain using a coarse grid. Then it tackles high-frequency components by solving residual equations within overlapping subdomains by employing finer grids and local parallel procedures at each time step.
By utilizing this approach, a significant improvement in computational efficiency can be achieved.
Furthermore, the convergence results of the approximate solutions from the algorithm are obtained.
Finally, we perform 2D/3D numerical experiments to demonstrate the effectiveness and efficiency of the algorithm as well as to illustrate its advantages in application.

\end{abstract}

\begin{keyword}
super-hydrophobic proppant; hydraulic fracturing system; transient triple-\\
porosity-Navier-Stokes model; finite element method; local and parallel algorithm;
overlapping domain decomposition method
\end{keyword}

\end{frontmatter}

\section{Introduction}
A coupled system of free flow and porous media flow has attracted significant attention in research due to its diverse range of applications.
For instance, this includes processes like underground hydrocarbon recovery \cite{sarma2006new}, geothermal energy production \cite{gawin1995coupled}, purely drinkable water recovery \cite{layton2013analysis}, and various other fields. Notably, it plays a crucial role in the field of petroleum extraction \cite{wei2010coupled, nie2012dual}.
The hydraulic fracturing technology is a technique for oil and gas well development.
Typically, water, sands, and chemicals are injected into a rock formation through wells under high pressure with the aim of creating new fractures (artificial fractures) in the rock. This process increases the size, extent, and connectivity of existing fractures (natural fractures) and serves as a reservoir modification method to enhance fluid flow capabilities within an oil/gas reservoir \cite{tiab2015petrophysics}.

In the hydraulic fracturing process, a fracturing fluid holds a significant responsibility in initiating, enlarging, and maintaining fractures to improve the permeability of a reservoir.
This facilitates the smoother flow of oil/gas into a wellbore, leading to improved production efficiency.
In particular, super-hydrophobic proppants with their capacity to permit the passage of oil/gas while obstructing water can significantly augment an oil/gas recovery rate.
Hence, conducting numerical simulations to assess the influence of different parameters of this material on recovery rates serves as a foundation for optimizing water control fracturing processes and material parameters \cite{liang2016comprehensive, lu2022shaly}.

The above coupled system is usually described by Stokes(Navier-Stokes)-Darcy equations or dual-porosity Stoke(Navier-Stokes) equations.
Up to now, these equations are well-studied resulting in a large number of numerical schemes that have been proposed and investigated.
However, they have limitations in fractured reservoirs for petroleum extraction due to their assumptions about a uniform matrix or fracture network within a Darcy (dual-porosity) system, which may not accurately reflect real reservoir conditions.
Therefore, a triple-porosity Stokes system was proposed \cite{nasu2022new} as a more efficient and practical alternative to the Darcy (dual-porosity) system.
A realistic reservoir possesses a more complex fracture network because the physical properties of different continua and geometrical structures are distinct.
As a result, a triple-porosity region consists of three interconnected and transmittable porous media, known as more permeable macrofractures, less permeable microfractures, and a stagnant-matrix region, respectively.
This triple-porosity region is governed by transient triple-porosity equations.
Accordingly, the conduit region is described by the nonstationary Navier-Stokes equations.
In addition, five physically valid coupling conditions are considered to connect the two distinct models at an interface between the free flow and the porous medium flow.
In this way, a hydraulic fracturing system with super-hydrophobic proppants can be described by a transient triple-porosity Navier-Stokes model.
To facilitate the widespread application of this complex model,
it is necessary to investigate an efficient algorithm for solving it.

To the best of our knowledge, there are few numerical results available for this model.
It has been demonstrated that for complex  multi-physics problems, local parallel finite element methods exhibit efficiency.
The two-grid method, first introduced by Xu for solving semi-linear elliptic equations \cite{xu1996two}, has gained popularity for improving computational efficiency.
Based on his idea, He, Xu, Layton and others developed this algorithm \cite{he2008local, he2006local, layton1993two, xu2000local, xu2001local, xu2002local} for the Stokes and Navier-Stokes equations.
Subsequently, some parallel algorithms for coupled problems have been developed
\cite{shang2011new, shang2015parallel, zheng2021parallel, du2016modified, zuo2018parallel, du2021local, ding2021local, shang2010local, li2021local, li2022local}.
However, there is no study on local parallel finite element discretization algorithms for a time-dependent triple-porosity Navier-Stokes model.
In addition, there are even fewer three-dimensional parallel numerical examples for the coupled problems.
Furthermore, this model has yet to find practical applications in the field of petroleum extraction, including specialized areas such as the use of super-hydrophobic proppants in hydraulic fracturing systems.

In this paper, we present and analyze local parallel finite element discretization algorithms for simulating the behavior of super-hydrophobic proppants in a hydraulic fracturing system. Our approach is based on solving the transient triple-porosity Navier-Stokes equations while considering the Beavers-Joseph interface condition.
The backward Euler scheme is considered for the temporal discretization.
By combining a decoupled method, two-grid method, and domain decomposition method, we achieve excellent parallel performance.
Following the partitioned time-stepping method proposed in \cite{shan2013partitioned, cao2021decoupled, cao2022parallel}, we decouple the entire domain into two subdomains and solve the four decoupled subproblems in parallel on a coarse grid to capture low-frequency data.
Then, we solve the residual equations locally and in parallel within overlapping finer grids to obtain high-frequency components.
During this step, the two-grid method is used to linearize the incompressible Navier-Stokes equations and interface coupling terms.
This approach allows us to enhance computational efficiency.

The rest of this paper is organized as follows: A hydraulic fracturing system with
super-hydrophobic proppants based on the transient triple-porosity Navier-Stokes model is introduced in Section 2.
In Section 3, some preliminaries which are needed in algorithm analysis are provided. A local parallel finite element algorithm is designed and
analyzed in Section 4. The fully discrete local parallel algorithm is proposed in Section 5. Section 6 shows some numerical examples to verify the theoretical results
and we conclude this work through a short conclusion in Section 7.

\section{Model Description}
Let $\Omega=\Omega_p \cup \Omega_c \subset \mathbb{R}^d(d=2,3)$ be a bounded convex domain separated by a common interface $\Gamma=\Omega_p \cap \Omega_c$, where $\Omega_p$ represents the triple-porosity subdomain and $\Omega_c$ is the conduit subdomain~(see Figure \ref{Pic1}).
\begin{figure}[htbp]
  \centering
  \includegraphics[width=12cm]{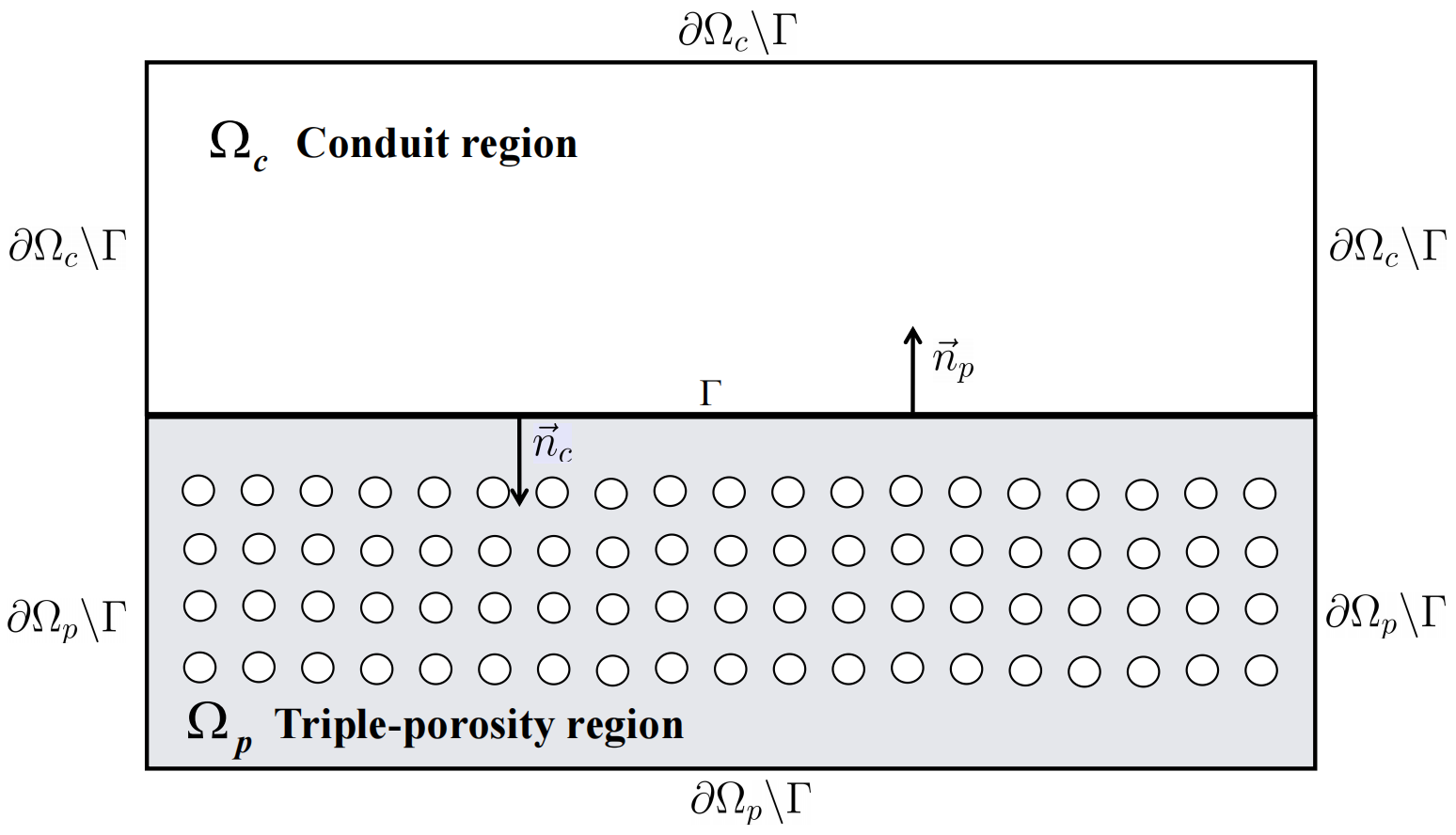}\\
  \caption{\label{Pic1}\small{A sketch of the triple-porosity region~$\Omega_p$, the conduit region~$\Omega_c$~and the interface~$\Gamma$.}}
\end{figure}

In the triple-porosity region $\Omega_p$, the more-permeable macrofracture pressure $p_F(x,t)$, the less-permeable microfracture pressure $p_f(x,t)$ and the stagnant-matrix pressure $p_m(x,t)$ are governed by
\begin{equation}
\begin{split}
\phi_F C_F \frac{\partial p_F}{\partial t} - \nabla \cdot (\frac{k_F}{\tilde{\mu}} \nabla p_F) + \frac{\sigma^{\ast} k_f}{\tilde{\mu}}(p_F - p_f) &= q_F~~~~~~~\text{in}~\Omega_p \times (0, T],\\
p_F(x,0)&=p_F^0(x)~~~\text{in}~\Omega_p,\\
p_F&=0~~~~~~~~~\text{on}~\partial \Omega_p \backslash \Gamma \times (0,T],
\end{split}
\end{equation}
\begin{align}
\phi_f C_f \frac{\partial p_f}{\partial t} - \nabla \cdot (\frac{k_f}{\tilde{\mu}} \nabla p_f) + \frac{\sigma^{\ast} k_f}{\tilde{\mu}}(p_f - p_F) + \frac{\sigma k_m}{\tilde{\mu}}(p_f - p_m)&=q_f~~~~~~\text{in}~\Omega_p \times (0,T],~~~~~~~~~~~~~~~\nonumber\\
p_f(x,0)&=p_f^0(x)~~\text{in}~\Omega_p,\\
p_f&=0~~\text{on}~\partial \Omega_p \backslash \Gamma \times (0,T], \nonumber
\end{align}
\begin{equation}
\begin{split}
\phi_m C_m \frac{\partial p_m}{\partial t} - \nabla \cdot (\frac{k_m}{\tilde{\mu}} \nabla p_m) + \frac{\sigma k_m}{\tilde{\mu}}(p_m - p_f) &= q_m~~~~~~\text{in}~\Omega_p \times (0,T],\\
p_m(x,0) &= p_m^0(x)~~\text{in}~\Omega_p,\\
p_m&=0~~~~~~~~\text{on}~\partial \Omega_p \backslash \Gamma \times (0,T],
\end{split}
\end{equation}
where the porosity, compressibility, intrinsic permeability and source/sink term are denoted by $\phi_i, C_i, k_i, q_i (i=F,f,m)$, respectively.
It is worth noting that in a hydraulic fracturing system, the material of the proppant used can directly impact the magnitude of the intrinsic permeability $k_F$.
In addition, $\tilde{\mu}$ is the dynamic viscosity and $\sigma^{\ast} (\sigma)$ represents the shape factor characterizing the morphology and dimension of the macrofractures(microfractures). The terms $\frac{\sigma^{\ast} k_f}{\tilde{\mu}}(p_F - p_f)$ and $\frac{\sigma k_m}{\tilde{\mu}}(p_m - p_f)$ describe the mass transfer between different fractures and matrix.

In the conduit region $\Omega_c$, the fluid flow velocity $\vec{u}_c(x,t)$ and the kinematic pressure $p(x,t)$ are governed by
\begin{equation}
\begin{split}
\frac{\partial \vec{u}_c}{\partial t} - \nabla \cdot \mathbb{T}(\vec{u}_c, p) + (\vec{u}_c \cdot \nabla) \vec{u}_c &= \vec{f}_c~~~~~~~~\text{in}~\Omega_c \times (0,T],\\
\nabla \cdot \vec{u}_c &=0~~~~~~~~~\text{in}~\Omega_c \times (0,T],\\
\vec{u}_c(x,0)&=\vec{u}_c^0(x)~~~~\text{on}~\Omega_c,\\
\vec{u}_c &=0~~~~~~~~~~\text{on}~\partial \Omega_c \backslash \Gamma \times (0,T].
\end{split}
\end{equation}
Here $\mathbb{T}(\vec{u}_c, p)=2\nu \mathbb{D}(\vec{u}_c) - p \mathbb{I}$ is the stress tensor, $\mathbb{D}(\vec{u}_c)=\frac{1}{2}(\nabla \vec{u}_c + \nabla^{T} \vec{u}_c)$ is the velocity deformation tensor, $\mathbb{I}$~is the identity tensor, $\nu$ is the kinematic viscosity of the fluid and $\vec{f}_c$ is a general body forcing term that includes gravitational acceleration.

Along the interface $\Gamma$, the no-direct fluid-interaction conditions between matrix or microfracture and the conduit region are imposed:
\begin{align}
-\frac{k_m}{\mu}\nabla p_m \cdot \vec{n}_p &=0, \label{km_interface}\\
-\frac{k_f}{\mu}\nabla p_f \cdot \vec{n}_p &=0.
\end{align}
Furthermore, the three well-accepted interface conditions between more-permeable macrofractures and conduit region are as follows:
\begin{align}
\vec{u}_c \cdot \vec{n}_c &= \frac{k_F}{\tilde{\mu}} \nabla p_F \cdot \vec{n}_p,\\
-\vec{n}_c^{T} \mathbb{T}(\vec{u}_c,p_c) \vec{n}_c &= \frac{p_F}{\rho},\\
- P_{\tau} (\mathbb{T}(\vec{u}_c,p_c) \vec{n}_c) &=
\frac{\alpha \nu \sqrt{d}}{\sqrt{\text{trace}(\Pi)}}P_{\tau} (\vec{u}_c + \frac{k_F}{\tilde{\mu}} \nabla p_F), \label{uc_interface}
\end{align}
where $\vec{n}_p$ and $\vec{n}_c$ satisfying $\vec{n}_p=-\vec{n}_c$ on $\Gamma$ are the unit outward normal vectors on $\partial \Omega_p$ and $\partial \Omega_c$.
The last one is the Beavers-Joseph ($\mathbf{BJ}$) interface condition, which describes the tangential components of the stress tensor are proportional to the jump of the tangential velocity across the interface. $P_{\tau}(\vec{v})=\sum_{i=1}^{d-1} (\vec{v} \cdot \tau_i)\tau_i$ denotes the projection onto the local tangent plane on $\Gamma$ with $\tau_i (i=1,2,...,d-1)$ which is the unit tangential vector. In addition, $\alpha$ is a positive constant parameter and $\Pi=k_F \mathbb{I}$ stands for the intrinsic permeability.

\section{Preliminaries}
\subsection{Weak formulation}
In order to introduce the weak formulation, we define the functional spaces
\begin{align*}
X_{pF} &:= \{v_F \in H^1(\Omega_p)| v_F=0~~\text{on}~\partial \Omega_p \backslash \Gamma \},\\
X_{pf} &:= \{v_f \in H^1(\Omega_p)| v_f=0~~\text{on}~\partial \Omega_p \backslash \Gamma \},\\
X_{pm} &:= \{v_m \in H^1(\Omega_p)| v_m=0~~\text{on}~\partial \Omega_p \backslash \Gamma \},\\
X_c &:= \{\vec{v}_c \in [H^1(\Omega_c)]^d| \vec{v}_c=0~~\text{on}~\partial \Omega_c \backslash \Gamma \},\\
Q &:= L^2(\Omega_c).
\end{align*}
For convenience, the norm of the Sobolev space $H^r=W^{r,2}$ is denoted by $\|\cdot\|_{r}$, the semi-norm indexed by $|\cdot|_{r}$ with $r>0$ and the
product spaces are defined by
$$X_p := X_{pF} \times X_{pf} \times X_{pm},~~~~W:=X_p \times X_c.$$
Furthermore, the spaces involving time are defined by $Q_T:=L^2(0,T;Q)$~and~$W_T:=H^1(0,T;$\\
$X_{pF},X_{pF}^{'}) \times H^1(0,T;X_{pf},X_{pf}^{'}) \times H^1(0,T;X_{pm},X_{pm}^{'}) \times H^1(0,T;X_{c},X_{c}^{'})$, where
\begin{align*}
H^1(0,T;X_{pi},X_{pi}^{'})&=\Big\{ v_i: v_i \in L^2(0,T;X_{pi})~\text{and}~\frac{\partial v_i}{\partial t} \in L^2(0,T;X_{pi}^{'}),
\Big\}~~~~i=F,f,m ,\\
H^1(0,T;X_{c},X_{c}^{'})&=\Big\{ \vec{v}_c: \vec{v}_c \in L^2(0,T;X_{c})~\text{and}~\frac{\partial \vec{v}_c}{\partial t} \in L^2(0,T;X_{c}^{'}) \Big\}.
\end{align*}

Following the literature \cite{Cao2010CoupledSM}, the rescaling factor $\eta$ is introduced to make the variational problem well-posed when $\eta$ is small enough.
The weak formulation of the transient coupled triple-porosity Navier-Stokes model reads as:
find $\boldsymbol{u}=[p_F, p_f, p_m, \vec{u}_c]^T \in W_T, p \in Q_T$, for all $\boldsymbol{v}=[v_F, v_f, v_m, \vec{v}_c]^{T} \in W$~and~$q \in Q$, such that
\begin{equation}\label{CoupledFormu}
\begin{split}
\langle \frac{\partial \boldsymbol{u}}{\partial t},\boldsymbol{v} \rangle_{\eta}
+a_{\eta}(\boldsymbol{u},\boldsymbol{v})
+b_{N\eta}(\vec{u}_c,\vec{u}_c, \vec{v}_c)
+b_{\eta}(\vec{v}_c,p)
&=(\boldsymbol{f}, \boldsymbol{v})_{\eta},\\
b_{\eta}(\vec{u}_c,q)&=0,
\end{split}
\end{equation}
where
\begin{align*}
&\mathbf{w}=[p_F, \vec{u}_c]^{T},~~~~
\mathbf{P}=[p_f, p_m]^{T},~~~~
\mathbf{R}=[p_F,p_f]^{T},\\
&\mathbf{\Phi}=[v_F, \vec{v}_c]^{T},~~~~
\mathbf{Q}=[v_f, v_m]^{T},~~~~
\mathbf{T}=[v_F, v_f]^{T},\\
&\langle \frac{\partial \boldsymbol{u}}{\partial t},\boldsymbol{v} \rangle_{\eta}
:=\phi_F C_F(\frac{\partial p_F}{\partial t},v_F)
+\phi_f C_f(\frac{\partial p_f}{\partial t},v_f)
+\phi_m C_m(\frac{\partial p_m}{\partial t},v_m)
+\eta(\frac{\partial \vec{u}_c}{\partial t},\vec{v}_c),\\
&a_{\eta}(\boldsymbol{u},\boldsymbol{v})
:=a_F(p_F,v_F)+a_f(p_f,v_f)+a_m(p_m,v_m)+a_{c\eta}(\vec{u}_c,\vec{v}_c)
+a_{\Gamma \eta}(\mathbf{w},\mathbf{\Phi})\\
&~~~~~~~~~~~~~~~+a_{mf}(\mathbf{P},\mathbf{Q})
+a_{fF}(\mathbf{R},\mathbf{T}),\\
&a_F(p_F,v_F):=\frac{k_F}{\tilde{\mu}}(\nabla p_F, \nabla v_{F}),
~~a_f(p_f,v_f):=\frac{k_f}{\tilde{\mu}}(\nabla p_f, \nabla v_{f}),
~~a_m(p_m,v_m):=\frac{k_m}{\tilde{\mu}}(\nabla p_m, \nabla v_{m}),\\
&a_{c\eta}(\vec{u}_c,\vec{v}_c):=2\nu \eta (\mathbb{D}(\vec{u}_c), \mathbb{D}(\vec{v}_c))+\frac{\eta \nu \alpha \sqrt{d}}{\sqrt{\text{trace}(\Pi)}}\langle P_{\tau} \vec{u}_c, P_{\tau} \vec{v}_c \rangle,\\
&a_{\Gamma \eta}(\mathbf{w},\mathbf{\Phi}):= \frac{\eta}{\rho} \int_{\Gamma}p_F \vec{v}_c \cdot \vec{n}_c \mathrm{d}\Gamma - \int_{\Gamma} v_F \vec{u}_c \cdot \vec{n}_c \mathrm{d}\Gamma
+\frac{\eta \nu \alpha \sqrt{k_F}}{\tilde{\mu}}\int_{\Gamma}  \nabla_{\tau}p_F \cdot P_{\tau} \vec{v}_c \mathrm{d}\Gamma,\\
&a_{mf}(\mathbf{P},\mathbf{Q}):=\frac{\sigma k_m}{\tilde{\mu}}\int_{\Omega_p}(p_m - p_f) v_m \mathrm{d} \Omega
+ \frac{\sigma k_m}{\tilde{\mu}}\int_{\Omega_p} (p_f - p_m) v_f \mathrm{d} \Omega,\\
&a_{fF}(\mathbf{R},\mathbf{T}):=\frac{\sigma^{\ast} k_f}{\tilde{\mu}}\int_{\Omega_p} (p_f - p_F) v_f \mathrm{d}\Omega
+ \frac{\sigma^{\ast} k_f}{\tilde{\mu}}\int_{\Omega_p} (p_F - p_f) v_F \mathrm{d}\Omega,\\
&b_{N\eta}(\vec{u}_c,\vec{u}_c, \vec{v}_c):=\eta ((\vec{u}_c \cdot \nabla)\vec{u}_c,\vec{v}_c),
~~~~b_{\eta}(\vec{v}_c,p):= -\eta(p,\nabla \cdot \vec{v}_c),\\
&(\boldsymbol{f}, \boldsymbol{v})_{\eta}:=
(q_F, v_F) + (q_f, v_f) + (q_m, v_m) + \eta(\vec{f}_c, \vec{v}_c).
\end{align*}

\subsection{Mixed finite element spaces}
Let $T_h=\{K\}$ be a regular triangulation of $\overline{\Omega}=\overline{\Omega}_p \cup \overline{\Omega}_c$ with mesh size $0 <h <1$. If $d=2$, the element $K \in T_h$ will be a triangle; if $d=3$, it will be a tetrahedra. The regular partitions $T_h^{p}$~and~$T_h^c$ induced on the regions $\Omega_p$ and $\Omega_c$ are assumed to be compatible on the interface $\Gamma$. Let $W^h:=X_{pF}^h \times X_{pf}^h \times X_{pm}^h \times X_{c}^h \subset W$ and $Q^h \subset Q$ denote the finite element subspaces.
Here, $W^h$ is equipped with continuous piecewise polynomials of degree $r+1$, and $Q^h$ is equipped with continuous piecewise polynomials of degree $r(r \geq 1)$.
The pair $X_c^h$ and $Q^h$ is assumed to satisfy the discrete inf-sup condition, which is there exists a constant $\beta >0$ such that
\begin{align*}
\inf_{0 \neq q_h \in Q^h} \sup_{0 \neq \vec{v}_c^h \in X_{c}^h} \frac{b_{\eta}(\vec{v}_c^h, q_h)}{\eta \|q_h\|_0 \|\nabla \vec{v}_{c}^h\|_0} > \beta.
\end{align*}

Given $G \subset \subset \Omega_{\ast 0} \subset \subset \Omega_{\ast}$, where $G \subset \subset \Omega_{\ast}$ means that $\text{dist} (\partial G \backslash \partial \Omega_{\ast}, \partial \Omega_{\ast 0} \backslash \partial \Omega_{\ast}) >0 (\ast=c, p)$. Define $T_h^c(G), X_c(G), X_c^h(G), Q^h(G)$ to be the restriction of $T^c_h, X_c, X_c^h, Q^h$ to $G$, respectively. In addition,
\begin{align*}
X_{c0}^h &:=\{ \vec{v} \in X_c^h: \text{supp}~\vec{v} \subset \subset G \},
~~Q_0^h :=\{ q \in Q^h: \text{supp}~q \subset \subset G \}.
\end{align*}

Based on above mixed finite element spaces, some basic assumptions, inequalities and lemmas are introduced.\\
\noindent(A1. Poincar$\mathrm{\acute{e}}$-Friedriches~inequality)
For all $\vec{v} \in X_c(\Omega_{c0})$, there exists a positive constant~$C_P$~which only depends on the area of~$\Omega_{c0}$~such that
\begin{equation}\label{Poin}
\|\vec{v}\|_1 \leq C_P |\vec{v}|_1.
\end{equation}
(A2. Korn inequality) For all $\vec{v} \in X_c(\Omega_{c0})$, there exists a positive constant~$C_K$ such that
\begin{equation}\label{Korn}
(\mathbb{D}(\vec{v}),\mathbb{D}(\vec{v})) \geq C_K |\vec{v}|_1^2.
\end{equation}
(A3. Trace inequality) There exists a positive constant~$C_t$ to satisfy
\begin{equation}
\|\vec{v}\|_{L^2(\Gamma \Omega_{c0})} \leq C_t  \|\vec{v}\|_0^{1/2} \|\vec{v}\|_1^{1/2},~~\forall \vec{v} \in X_c(\Omega_{c0}).
\end{equation}
(A4. Inverse inequality) When~$1 \leq p,q \leq \infty, 0 \leq l \leq k$, it holds
\begin{equation}\label{inverse2}
\|\vec{v}_h\|_{W^{k,p}} \leq C_{I} h^{-\max\{0,\frac{d}{q}-\frac{d}{p}\}}h^{l-k} \|\vec{v}_h\|_{W^{l,q}},~~~~\forall \vec{v}_h \in X_c^h(\Omega_{c0}),
\end{equation}
with a positive constant~$C_{I}$~independent of~$h$.\\
(A6. Superapproximation)
For $G \subset \Omega_{c0}$, let $\omega \in C_0^{\infty}(\Omega)$ with $\mathrm{\text{supp}}~\omega \subset \subset G$. Then for any $(\vec{u}_c, p) \in X_c^h(G) \times Q^h(G)$, there is $(\vec{v},q) \in X_{c0}^h(G) \times Q_0^h(G)$ such that
\begin{align}\label{Superapproximation}
\| \omega \vec{u} -\vec{v} \|_{1,G} \leq c h \|\vec{u}\|_{1,G},~~~~
\| \omega p -q \|_{0,G} \leq c h \|p\|_{0,G}.
\end{align}
Here and after, $c$ is a generic positive constant which may represent different values at its different occurrences.\\
(A7. Stability)
There exists a constant $\beta >0$ such that
\begin{align*}
\inf_{0 \neq q_h \in Q^h(G)} \sup_{0 \neq \vec{v}_c^h \in X_{c}^h(G)} \frac{b_{\eta}(\vec{v}_c^h, q_h)}{\eta \|q_h\|_0 \|\nabla \vec{v}_{c}^h\|_0} > \beta.
\end{align*}
When $G=\Omega_c$, the pair $X_c^h$ and $Q^h$ is assumed to satisfy this condition.\\
(A8. Nonlinear property)
For $\vec{u}_c, \vec{v}_c, \vec{w}_c \in X_c$, we have the following nonlinear properties
\begin{align*}
b_{N\eta}(\vec{u}_c, \vec{v}_c, \vec{w}_c)=-b_{N\eta}(\vec{u}_c, \vec{w}_c, \vec{v}_c),
\end{align*}
and
\begin{align*}
b_{N\eta}(\vec{u}_c, \vec{v}_c, \vec{w}_c)
&\leq C_N \|\vec{u}_c\|_{0,p} \|\nabla \vec{v}_c\|_{0,q} \|\vec{w}_c\|_{0,r},
~~~~\frac{1}{p}+\frac{1}{q}+\frac{1}{r}=1,\\
b_{N\eta}(\vec{u}_c, \vec{v}_c, \vec{w}_c)
&\leq C_N \|\vec{u}_c\|_0^{1/2} \|\nabla \vec{u}_c\|_0^{1/2} \|\nabla \vec{v}_c\|_0 \|\nabla \vec{w}_c\|_0.
\end{align*}

\begin{lemma}[\cite{heywood1990finite}, Discrete Gronwall Inequality]\label{LemGronwall}
Assume that~$E \geq 0$, for any integer~$M \geq 0$, $\kappa_m, A_m, B_m, C_m \geq 0$~satisfying
\begin{equation*}
A_M + \Delta t \sum_{m=0}^M B_m \leq \Delta t \sum_{m=0}^M \kappa_m A_m + \Delta t \sum_{m=0}^M C_m +E.
\end{equation*}
For all~$m$, assume that
\begin{equation*}
\kappa_m \Delta t < 1,
\end{equation*}
and set~$g_m=(1-\kappa_m\Delta t)^{-1}$, then
\begin{equation*}
A_M + \Delta t\sum_{m=0}^M B_m \leq \exp(\Delta t \sum_{m=0}^M g_m \kappa_m)(\Delta t \sum_{m=0}^M C_m + E).
\end{equation*}
\end{lemma}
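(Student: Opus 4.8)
The plan is to reduce the stated estimate to a first--order recursion for the accumulated weighted sum and then resolve that recursion by unrolling, with the exponential factor emerging from a telescoping argument. Throughout I would write $D_M := \Delta t\sum_{m=0}^M C_m + E$, which is nonnegative and \emph{nondecreasing} in $M$ because each $C_m\geq 0$ and $E\geq 0$, and $R_M := \Delta t\sum_{m=0}^M \kappa_m A_m$, with the convention $R_{-1}=0$. Since $B_m\geq 0$, the hypothesis immediately yields the weaker bound $A_M \leq R_M + D_M$, so the genuine content of the lemma is an estimate for $R_M$; the dissipative $B$--sum is then recovered for free at the very end.

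First I would extract a one--step recursion for $R_M$. Writing $R_M - R_{M-1} = \Delta t\,\kappa_M A_M$ and inserting $A_M\leq R_M + D_M$ gives $R_M - R_{M-1}\leq \Delta t\,\kappa_M(R_M + D_M)$. Here the standing assumption $\kappa_M\Delta t<1$ is exactly what is needed to move the $R_M$ term to the left and divide by $1-\kappa_M\Delta t$; recalling $g_M=(1-\kappa_M\Delta t)^{-1}$, this produces
\begin{equation*}
R_M \leq g_M R_{M-1} + \Delta t\, g_M\kappa_M\, D_M .
\end{equation*}
The crucial algebraic observation, which I would isolate as a preliminary identity, is that $g_m = 1 + \Delta t\,g_m\kappa_m$, so that each amplification factor satisfies $g_m = 1+\sigma_m\leq e^{\sigma_m}$, where $\sigma_m := \Delta t\,g_m\kappa_m\geq 0$. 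This is the elementary inequality $1+x\leq e^x$ that converts the discrete products into the exponential appearing in the statement, and note that $\sum_{m=0}^M\sigma_m = \Delta t\sum_{m=0}^M g_m\kappa_m =: S_M$ is precisely the exponent in the conclusion.

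Next I would unroll the recursion from $R_{-1}=0$ (applying the hypothesis at each level $m\leq M$), obtaining $R_M \leq \sum_{m=0}^M\big(\prod_{j=m+1}^M g_j\big)\sigma_m D_m$, and then bound $D_m\leq D_M$ by monotonicity together with $\prod_{j=m+1}^M g_j\leq \exp\!\big(\sum_{j=m+1}^M\sigma_j\big)=e^{S_M-S_m}$. The hard part will be the remaining combinatorial step: collapsing $\sum_{m=0}^M\sigma_m e^{S_M-S_m}$ to $e^{S_M}-1$. I would handle this by a telescoping estimate: with the partial sums $S_m$, the inequality $\sigma_m e^{-S_m}\leq e^{-S_{m-1}}-e^{-S_m}$ (again a consequence of $1+\sigma_m\leq e^{\sigma_m}$) sums to $1-e^{-S_M}$, so that multiplying through by $e^{S_M}$ gives exactly $R_M\leq D_M\big(e^{S_M}-1\big)$. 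This telescoping, where the careful bookkeeping of the index ordering lives, is the step I expect to require the most care.

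Finally, I would restore the dissipative term. Returning to the full hypothesis $A_M+\Delta t\sum_{m=0}^M B_m\leq R_M + D_M$ and substituting the bound just obtained yields $A_M+\Delta t\sum_{m=0}^M B_m\leq D_M\,(e^{S_M}-1)+D_M = D_M\,e^{S_M}$, which is precisely the assertion once $S_M$ is rewritten as $\Delta t\sum_{m=0}^M g_m\kappa_m$. The sign conditions $C_m\geq 0$ and $E\geq 0$ enter only through the monotonicity of $D_M$ used in the unrolling, and the constraint $\kappa_m\Delta t<1$ enters only to guarantee $g_m>0$ so that division by $1-\kappa_m\Delta t$ preserves the inequality.
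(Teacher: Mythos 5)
Your proof is correct: the recursion $R_M \leq g_M R_{M-1} + \sigma_M D_M$ with $\sigma_m = \Delta t\, g_m \kappa_m$ follows exactly as you say from $\kappa_M \Delta t < 1$, the identity $g_m = 1+\sigma_m \leq e^{\sigma_m}$ is right, the unrolled bound $R_M \leq \sum_{m=0}^{M}\bigl(\prod_{j=m+1}^{M} g_j\bigr)\sigma_m D_m$ checks out by induction from $R_{-1}=0$, and the telescoping step $\sigma_m e^{-S_m} \leq e^{-S_{m-1}} - e^{-S_m}$ (equivalent to $\sigma_m \leq e^{\sigma_m}-1$) correctly collapses the sum to $e^{S_M}-1$, giving $A_M + \Delta t\sum_{m=0}^{M} B_m \leq D_M e^{S_M}$ as required. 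One point worth making explicit: your recursion uses the hypothesis inequality at every level $m \leq M$, not just at $M$; the lemma as stated (``for any integer $M \geq 0$'') does supply this, but it is the one place where a reader might pause. Note that the paper itself gives no proof of this lemma --- it is quoted verbatim from Heywood and Rannacher \cite{heywood1990finite} --- so there is no in-paper argument to compare against; your route through the weighted accumulator $R_M$ and the telescoping of $\sigma_m e^{-S_m}$ is a clean, self-contained variant of the standard induction in that reference, resting on the same two ingredients (isolating the implicit term via $g_m$, and $1+x \leq e^x$).
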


In the context of the notations and assumptions provided, the mixed finite element approximation for the problem described in equation \eqref{CoupledFormu} can be expressed as follows: Given $\boldsymbol{u}_h(0)=\mathbb{P}_h \boldsymbol{u}_0=[p_{F0},p_{f0},p_{m0},\vec{u}_{c0}]^{T}$, find $\boldsymbol{u}_h=[p_{Fh},p_{fh},p_{mh},\vec{u}_{ch}]^{T} \in W^h$ and $p_h \in Q^h$ for $t \in (0,T]$ such that for all $\boldsymbol{v}=[v_F,v_f,v_m,\vec{v}_c]^{T} \in X^h, q \in Q^h$,
\begin{equation}\label{CoupledSemiFormu}
\langle \frac{\partial \boldsymbol{u}_h}{\partial t},\boldsymbol{v} \rangle_{\eta}
+a_{\eta}(\boldsymbol{u}_h,\boldsymbol{v})
+b_{N\eta}(\vec{u}_{ch},\vec{u}_{ch}, \vec{v}_c)
+b_{\eta}(\vec{v}_c,p_h)
-b_{\eta}(\vec{u}_{ch},q)
=(\boldsymbol{f}, \boldsymbol{v})_{\eta},
\end{equation}
where projection operator $\mathbb{P}_h: (p_F(t),p_f(t),p_m(t),\vec{u}_c(t),p(t)) \in (W,Q) \rightarrow (\mathrm{P}_h^F p_F(t),\mathrm{P}_h^f p_f(t),$\\
$\mathrm{P}_h^m p_m(t), \mathrm{P}_h^c \vec{u}_c(t), \mathrm{P}_h^p p(t)) \in (W^h,Q^h)$ satisfying
\begin{align*}
a_{\eta}(\mathbb{P}_h \boldsymbol{u}-\boldsymbol{u},\boldsymbol{v}_h)
+b_{\eta}(\vec{v}_{ch},\mathrm{P}_h^p p- p)
&=0,~~~~\forall \boldsymbol{v}_h=[v_{Fh},v_{fh},v_{mh},\vec{v}_{ch}]^{T} \in W^h,\\
b_{\eta}(\mathrm{P}_h^c \vec{u}_c - \vec{u}_c,q_h)&=0,~~~~\forall q_h \in Q^h.
\end{align*}
Similar with the properties in literatures \cite{nasu2022new, hou2022modeling, cao2021decoupled},
the solution~$(\boldsymbol{u},p)=(p_F,p_f,p_m,\vec{u}_c,p)$~to the problem~\eqref{CoupledFormu}~was supposed to satisfy
\begin{align}
&\|\vec{u}_c\|_{L^{\infty}(0,T;H^{r+1})}
+\|\vec{u}_c\|_{L^{\infty}(0,T;W^{2,d^{\ast}})}
+\|\vec{u}_{c,t}\|_{L^2(0,T;H^{r+1})}
+\|p\|_{H^1(0,T;H^r)}
+\|p_{m,t}\|_{L^2(0,T;H^{r+1})}\nonumber\\
&+\|p_{m,tt}\|_{L^2(0,T;L^2)}
+\|p_{f,t}\|_{L^2(0,T;H^{r+1})}
+\|p_{f,tt}\|_{L^2(0,T;L^2)}
+\|p_{F,t}\|_{L^2(0,T;H^{r+1})}
+\|p_{F,tt}\|_{L^2(0,T;L^2)} \nonumber\\
&\leq C_B, \label{BDness}
\end{align}
where $0 < r \leq k$, the positive constant~$C_B$~is independent of~$h$~and~$\Delta t$,
$\vec{u}_{c,t}=\frac{\partial \vec{u}_c}{\partial t}$,
$p_{\ast,t}=\frac{\partial p_{\ast}}{\partial t}$, $p_{\ast,tt}=\frac{\partial^2 p_{\ast}}{\partial t^2}(\ast=F,f,m)$~and~$d^{\ast} > d$.
Therefore, the following error estimates are obtained.

\begin{lemma}\label{LemsemiConvergence}
Under the conditions of \eqref{BDness},
the problem \eqref{CoupledSemiFormu} has a unique solution $(\boldsymbol{u}_h, p_h)$ and the following properties hold:
\begin{align*}
\|\boldsymbol{u}(t)-\boldsymbol{u}_h(t)\|_0
+h(|\boldsymbol{u}(t)-\boldsymbol{u}_h(t)|_1
+\|p(t)-p_h(t)\|_{0})
&\leq c h^{r+1}(\|\boldsymbol{u}\|_{H^{r+1}} + \|p\|_{H^{r}}),\\
\Big\| \frac{\partial \boldsymbol{u}}{\partial t}-\frac{\partial \boldsymbol{u}_h}{\partial t} \Big\|_0
&\leq c h^{r+1} \|\boldsymbol{u}\|_{H^{r+1}}.
\end{align*}
Furthermore, we have
\begin{align*}
\int_{0}^T \Big\| \frac{\partial^2 \boldsymbol{u}}{\partial t^2} - \frac{\partial^2 \boldsymbol{u}_h}{\partial t^2} \Big\|_0^2 \mathrm{d}t \leq ch^{2(r+1)},
~~\int_{0}^T \Big | \frac{\partial \boldsymbol{u}}{\partial t } - \frac{\partial  \boldsymbol{u}_h}{\partial t } \Big |_1^2 \mathrm{d}t \leq ch^{2r},
~~\int_{0}^T \Big | \frac{\partial p}{\partial t} - \frac{\partial p_h}{\partial t} \Big |_0^2 \mathrm{d}t \leq ch^{2r}.
\end{align*}

\end{lemma}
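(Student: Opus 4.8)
The plan is to follow the standard elliptic-projection framework for Galerkin approximations of coupled parabolic--Navier--Stokes systems, using the projection $\mathbb{P}_h$ together with the structural facts collected in (A1)--(A8). Existence and uniqueness come first: since $W^h$ and $Q^h$ are finite dimensional, \eqref{CoupledSemiFormu} is a differential--algebraic system for the coefficients of $\boldsymbol{u}_h$, with $p_h$ acting as a Lagrange multiplier enforced through the discrete inf-sup condition (A7). Standard ODE theory provides local existence; to extend the solution to $[0,T]$ I would test with $\boldsymbol{v}=\boldsymbol{u}_h$, $q=p_h$, use the skew-symmetry $b_{N\eta}(\vec{u}_{ch},\vec{u}_{ch},\vec{u}_{ch})=0$ from (A8) and the coercivity afforded to $a_\eta$ by the rescaling $\eta$ (following \cite{Cao2010CoupledSM}) to obtain an a priori energy bound; uniqueness then follows from the same estimate applied to the difference of two solutions. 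Here the positive contributions of $a_F,a_f,a_m,a_{c\eta}$ and the positive semidefinite transfer forms $a_{mf},a_{fF}$ supply the coercive part, while the non-symmetric interface integrals in $a_{\Gamma\eta}$ are kept subcritical by the trace inequality (A3).

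For the convergence rates I would split $\boldsymbol{u}-\boldsymbol{u}_h=(\boldsymbol{u}-\mathbb{P}_h\boldsymbol{u})+(\mathbb{P}_h\boldsymbol{u}-\boldsymbol{u}_h)=:\boldsymbol{\rho}+\boldsymbol{\theta}$ and $p-p_h=(p-\mathrm{P}_h^p p)+(\mathrm{P}_h^p p-p_h)=:\xi+\pi$. The projection remainders $\boldsymbol{\rho}$ and $\xi$ are estimated directly by the approximation properties of $\mathbb{P}_h$, producing the factors $h^{r+1}\|\boldsymbol{u}\|_{H^{r+1}}$ and $h^{r}\|p\|_{H^{r}}$. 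Subtracting \eqref{CoupledSemiFormu} from \eqref{CoupledFormu} and invoking the defining orthogonality of $\mathbb{P}_h$, which annihilates $a_\eta(\boldsymbol{\rho},\cdot)$ together with the associated $b_\eta$ term, yields an error equation for $(\boldsymbol{\theta},\pi)$ whose forcing reduces to $\langle \partial_t\boldsymbol{\rho},\boldsymbol{v}\rangle_{\eta}$ plus the convective defect $b_{N\eta}(\vec{u}_c,\vec{u}_c,\vec{v}_c)-b_{N\eta}(\vec{u}_{ch},\vec{u}_{ch},\vec{v}_c)$.

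Testing this error equation with $\boldsymbol{v}=\boldsymbol{\theta}$, $q=\pi$, I would apply the coercivity of $a_\eta$, the trace inequality (A3) for the $a_{\Gamma\eta}$ contribution, and the decomposition of the convective defect into $b_{N\eta}(\vec{u}_c-\vec{u}_{ch},\vec{u}_c,\cdot)+b_{N\eta}(\vec{u}_{ch},\vec{u}_c-\vec{u}_{ch},\cdot)$. Using the interpolation bounds of (A8) together with the $L^\infty$ control on $\vec{u}_c$ from \eqref{BDness} and Young's inequality, the critical factors are absorbed into the coercive terms, and the discrete Gronwall inequality (Lemma \ref{LemGronwall}) closes the $L^2$ and $H^1$ estimates for $\boldsymbol{\theta}$; combining with the bounds on $\boldsymbol{\rho}$ and $\xi$ gives the first displayed inequality. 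The pressure error $\pi$ is then recovered from the inf-sup condition (A7) applied to the momentum part of the error equation, and the estimate for $\|\partial_t\boldsymbol{u}-\partial_t\boldsymbol{u}_h\|_0$ is obtained by the same energy argument after differentiating the error equation in time and testing with $\partial_t\boldsymbol{\theta}$.

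The estimates for the second-order and gradient time derivatives follow the same template after differentiating the error equation once and twice in time and testing with $\partial_t\boldsymbol{\theta}$; the additional regularity $\|\vec{u}_{c,t}\|_{L^2(0,T;H^{r+1})}$, $\|p_{\ast,t}\|_{L^2(0,T;H^{r+1})}$ and $\|p_{\ast,tt}\|_{L^2(0,T;L^2)}$ in \eqref{BDness} supplies the bounds on $\partial_t\boldsymbol{\rho}$ and $\partial_{tt}\boldsymbol{\rho}$ needed for the forcing, and time integration combined with Lemma \ref{LemGronwall} produces the $L^2(0,T)$ estimates. I expect the principal obstacle to be the convective term in these differentiated estimates, where products such as $b_{N\eta}(\partial_t\vec{u}_{ch},\vec{u}_{ch},\partial_t\boldsymbol{\theta})$ must be bounded by the interpolation inequalities of (A8) and the boundedness \eqref{BDness} without spoiling the $h^{r+1}$ rate or forcing a mesh restriction; a secondary difficulty is the non-symmetric interface form $a_{\Gamma\eta}$, whose boundary integrals remain subcritical only through careful use of the trace inequality (A3) and the balance provided by the rescaling $\eta$.
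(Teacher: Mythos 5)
Your proposal cannot be matched line-by-line against the paper, because the paper offers no proof of this lemma at all: immediately after postulating the regularity \eqref{BDness}, the authors write ``Therefore, the following error estimates are obtained'' and cite \cite{nasu2022new, hou2022modeling, cao2021decoupled}, treating Lemma \ref{LemsemiConvergence} as a known semi-discrete result. What you have written is a blind reconstruction of the standard argument those references rely on, and as such it is essentially the right route: the elliptic-projection splitting $\boldsymbol{u}-\boldsymbol{u}_h=\boldsymbol{\rho}+\boldsymbol{\theta}$ with the projection $\mathbb{P}_h$ exactly as defined in the paper (whose orthogonality kills the $a_\eta$ and $b_\eta$ contributions of $\boldsymbol{\rho}$ in the error equation), energy testing with $(\boldsymbol{\theta},\pi)$, skew-symmetry of $b_{N\eta}$ from (A8), smallness of $\eta$ to control the non-symmetric interface form $a_{\Gamma\eta}$ (this is the point of the rescaling of \cite{Cao2010CoupledSM}, and it is also needed for well-posedness of $\mathbb{P}_h$ itself, which you could state explicitly), positivity of the transfer forms $a_{mf},a_{fF}$, recovery of the pressure through the inf-sup condition (A7), and Gronwall via Lemma \ref{LemGronwall}. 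Your observation that $a_{mf}(\mathbf{P},\mathbf{P})$ and $a_{fF}(\mathbf{R},\mathbf{R})$ are positive semidefinite is correct, since each collapses to a multiple of $\|p_m-p_f\|_0^2$ or $\|p_F-p_f\|_0^2$.

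Two caveats are worth recording. First, a minor imprecision: to obtain the $L^2(0,T)$ bound on $\partial_{tt}(\boldsymbol{u}-\boldsymbol{u}_h)$ one differentiates the error equation twice and tests with $\partial_{tt}\boldsymbol{\theta}$ (or differentiates once and tests with $\partial_{tt}\boldsymbol{\theta}$), not with $\partial_t\boldsymbol{\theta}$ as you wrote. Second, and more substantively, the rate $\int_0^T\|\partial_{tt}(\boldsymbol{u}-\boldsymbol{u}_h)\|_0^2\,\mathrm{d}t\leq ch^{2(r+1)}$ requires a bound $\|\partial_{tt}\boldsymbol{\rho}\|_0\leq ch^{r+1}\|\partial_{tt}\boldsymbol{u}\|_{H^{r+1}}$, i.e.\ $H^{r+1}$-in-space regularity of the second time derivatives, whereas \eqref{BDness} provides $p_{\ast,tt}$ only in $L^2(0,T;L^2)$ and says nothing about $\vec{u}_{c,tt}$. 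So this particular estimate does not follow from the listed hypotheses by your (or any standard) argument without strengthening the regularity assumptions; the paper inherits the same gap by citing the literature rather than proving the lemma, so this is a defect of the statement's hypotheses rather than of your strategy, but an honest write-up should flag the extra regularity being used.
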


\subsection{Fully discrete finite element scheme}
For the temporal discretization, the time interval $[0,T]$ is averagely divided into N segments
$[t_n,t_{n+1}] (n=0,1,...,N-1)$ satisfying
\begin{equation*}
0=t_0 < t_1 < \dots < t_{N-1} < t_N=T,
\end{equation*}
and the time step is~$\Delta t=\frac{T}{N}$. Using backward Euler scheme, the fully discretization of the problem \eqref{CoupledFormu} based on the partitioned time-stepping method reads as follows.
\begin{algorithm}{(Partitioned Time-Stepping Algorithm)}\label{Algorithm-1}

\textbf{Step 1}. Given $(p_{Fh}^n, p_{fh}^n, p_{mh}^n) \in X_p^h$, for all $(v_{F}^h, v_{f}^h, v_{m}^h) \in X_p^h$, find $(p_{Fh}^{n+1}, p_{fh}^{n+1}, p_{mh}^{n+1}) \in X_p^h$ such that
\begin{equation}\label{fullypF}
\begin{split}
&\phi_F C_F \Big( \frac{p_{Fh}^{n+1} - p_{Fh}^n}{\Delta t},v_{F}^h\Big)
+\frac{k_F}{\tilde{\mu}}(\nabla p_{Fh}^{n+1}, \nabla v_{F}^h)
+\frac{\sigma^{\ast} k_f}{\tilde{\mu}} \int_{\Omega_p} (p_{Fh}^{n+1} - p_{fh}^n) v_{F}^h \mathrm{d}\Omega ~~~~~~\\
&- \int_{\Gamma} v_{F}^h \vec{u}_{ch}^n \cdot \vec{n}_c \mathrm{d}\Gamma
=(q_F(t_{n+1}),v_{F}^h),
\end{split}
\end{equation}
\begin{equation}\label{fullypf}
\begin{split}
&\phi_f C_f \Big( \frac{p_{fh}^{n+1} - p_{fh}^n}{\Delta t}, v_{f}^h\Big)
+ \frac{k_f}{\tilde{\mu}}(\nabla p_{fh}^{n+1}, \nabla v_{f}^h)
+ \frac{\sigma k_m}{\tilde{\mu}} \int_{\Omega_p}  (p_{fh}^{n+1} - p_{mh}^n)v_{f}^h \mathrm{d}\Omega ~~~~~~~\\
&+ \frac{\sigma^{\ast} k_f}{\tilde{\mu}} \int_{\Omega_p} (p_{fh}^{n+1} - p_{Fh}^n) v_{f}^h \mathrm{d}\Omega
=(q_f(t_{n+1}), v_{f}^h),
\end{split}
\end{equation}
\begin{equation}\label{fullypm}
\begin{split}
&\phi_m C_m \Big( \frac{p_{mh}^{n+1} - p_{mh}^n}{\Delta t}, v_{m}^h\Big)
+\frac{k_m}{\tilde{\mu}}(\nabla p_{mh}^{n+1},\nabla v_{m}^h)
+\frac{\sigma k_m}{\tilde{\mu}} \int_{\Omega_p} (p_{mh}^{n+1} - p_{fh}^n) v_{m}^h \mathrm{d}\Omega ~~~\\
&=(q_m(t_{n+1}),v_{m}^h).
\end{split}
\end{equation}

\textbf{Step 2}. Given $(\vec{u}_{ch}^n, p_h^n) \in X_c^h \times Q^h$, for all $(\vec{v}_{c}^h, q_h) \in X_c^h \times Q^h$, seek $(\vec{u}_{ch}^{n+1}, p_h^{n+1}) \in X_c^h \times Q^h$ such that
\begin{align}
&\eta \Big( \frac{\vec{u}_{ch}^{n+1} - \vec{u}_{ch}^n}{\Delta t}, \vec{v}_{ch} \Big)
+a_{c\eta}(\vec{u}_{ch}^{n+1},\vec{v}_{ch})
+b_{\eta}(\vec{v}_{ch},p_h^{n+1})-b_{\eta}(\vec{u}_{ch}^{n+1},q_h)
+b_{N\eta}(\vec{u}_{ch}^{n+1},\vec{u}_{ch}^{n+1},\vec{v}_{ch}) \nonumber\\
&+\frac{\eta}{\rho}\int_{\Gamma} p_{Fh}^{n} \vec{v}_{ch} \cdot \vec{n}_c \mathrm{d}\Gamma
+\frac{\eta \nu \alpha \sqrt{k_F}}{\tilde{\mu}}\int_{\Gamma}  \nabla_{\tau}p_{Fh}^{n} \cdot P_{\tau} \vec{v}_{ch} \mathrm{d}\Gamma
=\eta (\vec{f}_c(t_{n+1}) ,\vec{v}_{ch}). \label{fullyuc}
\end{align}
\end{algorithm}

For the scheme \eqref{fullypF}-\eqref{fullyuc}, we have the following results.
\begin{theorem}\label{PTS-Results}
Under the conditions of \eqref{BDness}, we have
\begin{align*}
\|\boldsymbol{u}(t_{n+1}) - \boldsymbol{u}_h^{n+1}\|_{0}^2
+\Delta t |\boldsymbol{u}(t_{n+1}) - \boldsymbol{u}_h^{n+1}|_1^2
\leq c(\Delta t^2 + h^{2(r+1)}).
\end{align*}

\end{theorem}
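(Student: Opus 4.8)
The plan is to follow the classical energy argument for backward-Euler finite element schemes, adapted to the \emph{partitioned} (explicitly coupled) structure of Algorithm~\ref{Algorithm-1}. First I would split the error at each time level through the coupled projection $\mathbb{P}_h$ introduced above, writing
$$\boldsymbol{u}(t_{n+1})-\boldsymbol{u}_h^{n+1}=\big(\boldsymbol{u}(t_{n+1})-\mathbb{P}_h\boldsymbol{u}(t_{n+1})\big)+\big(\mathbb{P}_h\boldsymbol{u}(t_{n+1})-\boldsymbol{u}_h^{n+1}\big)=:\boldsymbol{\rho}^{n+1}+\boldsymbol{\theta}^{n+1},$$
and analogously for the pressure. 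By Lemma~\ref{LemsemiConvergence} together with the regularity bound \eqref{BDness}, the projection part $\boldsymbol{\rho}^{n+1}$ and its temporal difference quotient are already of order $h^{r+1}$, so the whole task reduces to estimating the finite-element error $\boldsymbol{\theta}^{n+1}\in W^h$; the final claim then follows from the triangle inequality.

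Next I would derive the error equation for $\boldsymbol{\theta}^{n+1}$ by subtracting the scheme \eqref{fullypF}--\eqref{fullyuc} from the weak form \eqref{CoupledFormu} tested at $t=t_{n+1}$. The backward-Euler difference quotient produces the consistency term $\frac{\partial\boldsymbol{u}}{\partial t}(t_{n+1})-\frac{\boldsymbol{u}(t_{n+1})-\boldsymbol{u}(t_n)}{\Delta t}$, which by Taylor expansion with integral remainder is bounded in $L^2$ by $c\,\Delta t$ using the second-order temporal regularity in \eqref{BDness}; this is the origin of the $\Delta t^2$ contribution after squaring and summing. I would then test with $\boldsymbol{v}=\boldsymbol{\theta}^{n+1}$ and $q$ equal to the discrete pressure error, use the defining identities of $\mathbb{P}_h$ to remove the projection-error contributions paired against discrete functions, and apply the polarization identity $(\theta^{n+1}-\theta^n,\theta^{n+1})=\tfrac12(\|\theta^{n+1}\|_0^2-\|\theta^n\|_0^2+\|\theta^{n+1}-\theta^n\|_0^2)$ to each time-derivative term so that a telescoping energy emerges. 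Coercivity of $a_F,a_f,a_m$ and of $a_{c\eta}$---the latter via the Korn inequality \eqref{Korn} and the Poincar\'e inequality \eqref{Poin}---then supplies $|\boldsymbol{\theta}^{n+1}|_1^2$ on the left-hand side.

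I expect the delicate heart of the argument to be the explicit, time-lagged treatment of all coupling terms, which is precisely what renders Step~1 and Step~2 decoupled and parallelizable. Because the interface integrals and the inter-continuum mass-transfer terms in \eqref{fullypF}--\eqref{fullyuc} evaluate the ``other'' variable at level $n$ while the continuous equation uses level $n+1$, the error equation carries extra cross terms such as $\int_{\Gamma}\theta_F^{n+1}(\vec{u}_c(t_{n+1})-\vec{u}_{ch}^{n})\cdot\vec{n}_c$, $\tfrac{\sigma^{\ast}k_f}{\tilde{\mu}}\int_{\Omega_p}(p_f(t_{n+1})-p_{fh}^{n})\theta_F^{n+1}$, and the analogous microfracture/matrix and tangential-gradient interface terms. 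Each lagged factor would be decomposed as a temporal increment $(\,\cdot\,)(t_{n+1})-(\,\cdot\,)(t_n)$ of size $O(\Delta t)$ (feeding the $\Delta t^2$ bound) plus a previous-step error $(\,\cdot\,)(t_n)-(\,\cdot\,)_h^{n}=\boldsymbol{\rho}^n+\boldsymbol{\theta}^n$. The interface pieces are controlled with the trace inequality (A3) and, crucially, the smallness of the rescaling factor $\eta$, so that Young's inequality lets the genuinely dissipative $|\boldsymbol{\theta}^{n+1}|_1^2$ and $\|P_{\tau}\vec{\theta}_c^{n+1}\|$ terms absorb the dangerous contributions, leaving only $\|\boldsymbol{\theta}^n\|_0^2$-type quantities on the right; showing that this absorption is possible (i.e. that the explicit coupling does not overwhelm the dissipation) is the step I anticipate to be hardest.

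The nonlinear term is handled in parallel: writing the difference $b_{N\eta}(\vec{u}_c(t_{n+1}),\vec{u}_c(t_{n+1}),\vec{\theta}_c^{n+1})-b_{N\eta}(\vec{u}_{ch}^{n+1},\vec{u}_{ch}^{n+1},\vec{\theta}_c^{n+1})$ as a sum of two trilinear forms in the velocity error $\vec{\rho}_c^{n+1}+\vec{\theta}_c^{n+1}$, I would use the skew-symmetry of (A8) (which kills the pure $\vec{\theta}_c^{n+1}$ self-interaction) together with the bounds of (A8) and the boundedness of $\vec{u}_c$ in $L^{\infty}(0,T;W^{2,d^{\ast}})$ from \eqref{BDness} to dominate it by $\tfrac{\nu\eta}{4}|\vec{\theta}_c^{n+1}|_1^2$ plus lower-order $\|\boldsymbol{\theta}^{n+1}\|_0^2$ and projection terms. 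Collecting all estimates, multiplying by $\Delta t$ and summing from $n=0$ to an arbitrary index yields an inequality in exactly the form of the discrete Gronwall inequality (Lemma~\ref{LemGronwall}), with $A_M=\|\boldsymbol{\theta}^{M+1}\|_0^2$, the coercive quantities as $\Delta t\sum B_m$, and $E+\Delta t\sum C_m=c(\Delta t^2+h^{2(r+1)})$ gathered from the truncation and projection estimates. Applying Lemma~\ref{LemGronwall} under the harmless restriction $\kappa_m\Delta t<1$ bounds $\|\boldsymbol{\theta}^{n+1}\|_0^2+\Delta t\,|\boldsymbol{\theta}^{n+1}|_1^2$, and the triangle inequality with the $O(h^{r+1})$ projection bound delivers the claimed estimate.
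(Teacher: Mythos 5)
Your outline is correct and would yield the stated bound, but it follows a genuinely different decomposition from the paper's. You insert the coupled projection $\mathbb{P}_h\boldsymbol{u}(t_{n+1})$ and run the classical $\boldsymbol{\rho}$--$\boldsymbol{\theta}$ argument, measuring the backward-Euler consistency error directly against the continuous weak form \eqref{CoupledFormu}. The paper instead inserts the spatially semidiscrete solution $\boldsymbol{u}_h(t_{n+1})$ of \eqref{CoupledSemiFormu}: the spatial error $\boldsymbol{u}(t_{n+1})-\boldsymbol{u}_h(t_{n+1})$ is covered by Lemma~\ref{LemsemiConvergence}, while the temporal error $\boldsymbol{u}_h^{n+1}-\boldsymbol{u}_h(t_{n+1})$ is exactly the quantity $E_{c\mu}^{n+1}$, $E_{F\mu}^{n+1}$, etc.\ (with $\mu=h$) estimated by $c\Delta t^2$ in Lemma~\ref{LemSemitime}, and the theorem follows by the triangle inequality. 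The core energy machinery is the same in both routes: subtract the scheme \eqref{fullypF}--\eqref{fullyuc} from the reference equations, Taylor expansion with integral remainder, test with $2\Delta t$ times the error, the polarization identity, skew-symmetry to annihilate the quadratic nonlinear self-interaction, splitting of every time-lagged coupling term into a previous-step error plus an $O(\Delta t)$ increment, trace-inequality absorption of the interface terms into the dissipation under the smallness condition $\eta\le C_K\tilde{\mu}/(6\nu\alpha^2)$ (which you correctly identify as essential, and which the theorem statement leaves implicit), and the discrete Gronwall Lemma~\ref{LemGronwall}. As for what each choice buys: your projection-based splitting is the more self-contained one-pass argument, needing only \eqref{BDness} and the approximation properties of $\mathbb{P}_h$ (including for $\partial_t\boldsymbol{\rho}$), and it bypasses the second-time-derivative semidiscrete estimates; the paper's comparator makes the nonlinear difference terms cleaner (one gets $b_{N\eta}(E,E,E)=0$ with no $\boldsymbol{\rho}$ cross-terms, whereas you must additionally control terms like $b_{N\eta}(\vec{u}_{ch}^{n+1},\boldsymbol{\rho},\boldsymbol{\theta})$), but it requires the full suite of Lemma~\ref{LemsemiConvergence}, notably $\int_0^T\|\partial_{tt}(\boldsymbol{u}-\boldsymbol{u}_h)\|_0^2\,\mathrm{d}t\le ch^{2(r+1)}$, to convert the semidiscrete derivative norms appearing after Gronwall into uniform constants; more importantly, Lemma~\ref{LemSemitime} is stated for $\mu=h,H$ simultaneously and is reused verbatim in the two-grid local analysis (e.g.\ the $E_{cH}^{n+1}$ terms in Lemma~\ref{ech-ect}), a later payoff your projection-based lemma would not provide without being reproved on the coarse grid.
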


\section{Local finite element algorithms}
In this section, a local and parallel finite element algorithm based on two-grid discretizations is proposed. The underlying idea behind employing local and parallel method is to combine the decoupled method and two-grid method.
This strategy allows us to initially capture low-frequency data across the decoupled entire domain using a coarse grid.
Subsequently, we tackle high-frequency components by solving residual equations within overlapping subdomains, employing finer grids and local parallel procedures at each time step.
To provide an error analysis between the true solution $\boldsymbol{u}(t_{m+1})$ and the local numerical solution $\boldsymbol{u}_{m+1}^h$ in this method, the estimation approach has been designed as follows:
\begin{align*}
\|\boldsymbol{u}(t_{m+1})-\boldsymbol{u}_{m+1}^h\|_{\Omega_{\ast 0}}
&=\|\boldsymbol{u}(t_{m+1})-\boldsymbol{u}_h^{m+1}+\boldsymbol{u}_h^{m+1}-\boldsymbol{u}_{m+1}^h\|_{\Omega_{\ast 0}}\\
&\leq \|\boldsymbol{u}(t_{m+1})-\boldsymbol{u}_h^{m+1}\|_{\Omega_{\ast 0}}
+\|\boldsymbol{u}_h^{m+1}-\boldsymbol{u}_{H}^{m+1}\|_{\Omega_{\ast 0}}
+\|(\boldsymbol{u}_{H}-\boldsymbol{u}^h)(t_{m+1})\|_{\Omega_{\ast 0}}\\
&~~~~+\|\boldsymbol{u}_{H}^{m+1}-\boldsymbol{u}_{m+1}^h-(\boldsymbol{u}_H-\boldsymbol{u}^h)(t_{m+1})\|_{\Omega_{\ast 0}}.
\end{align*}
Therefore, two local algorithms are proposed based on this method. The first algorithm involves a semi-discrete approach utilizing local finite elements for spatial
discretization. The second algorithm adopts a fully discrete local finite element algorithm.
Furthermore, error estimates for these algorithms will be derived.

\begin{figure}[H]
\begin{centering}
\begin{subfigure}[t]{0.31\textwidth}
\centering
\includegraphics[width=1.32\textwidth]{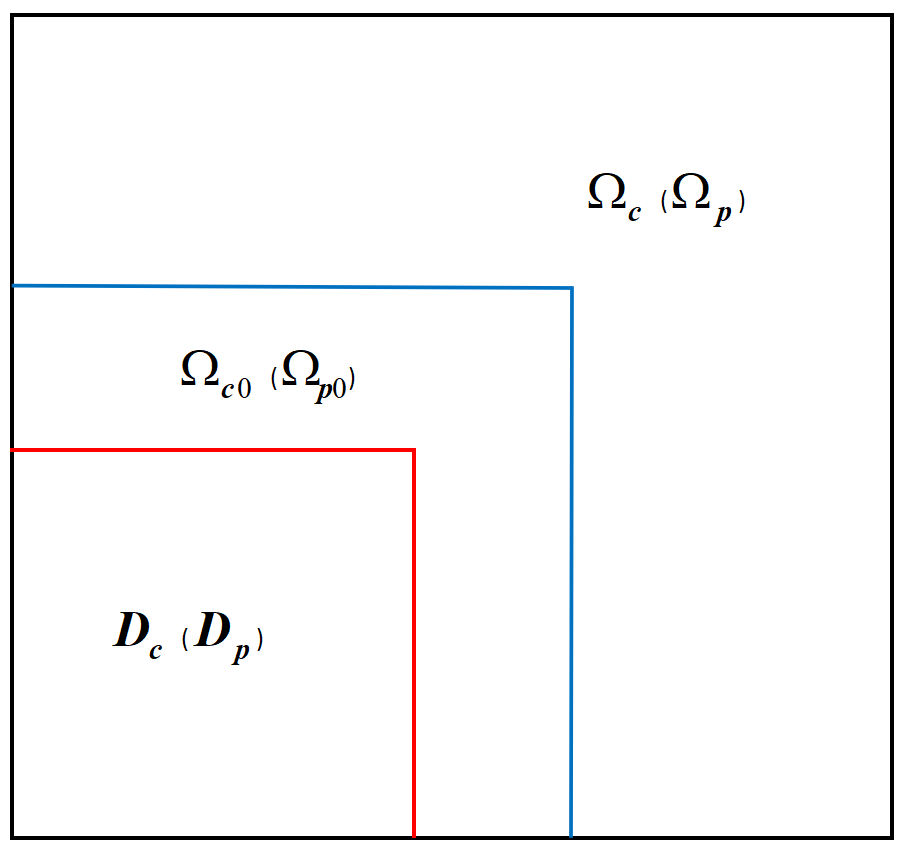}
\end{subfigure}
\hspace{32mm}
\begin{subfigure}[t]{0.31\textwidth}
\centering
\includegraphics[width=1.32\textwidth]{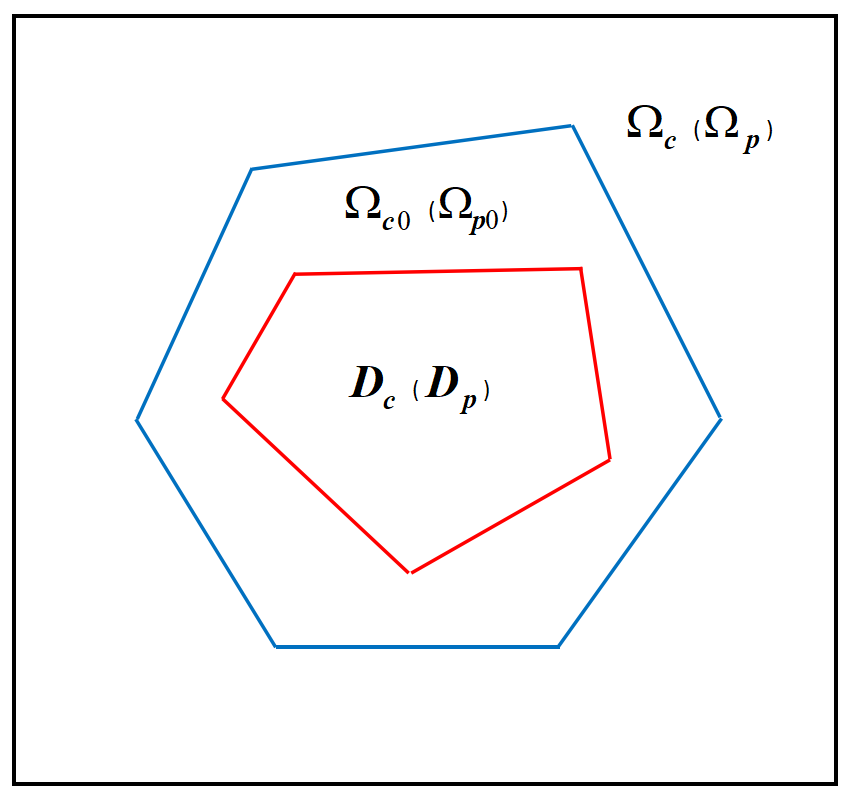}
\end{subfigure}
\end{centering}
\caption{\small{\label{Figdomain12} Subdomains $D_c \subset \subset \Omega_{c0} \subset \Omega_c$ and $D_p \subset \subset \Omega_{p0} \subset \Omega_p$.}}
\end{figure}

Let $D_c \subset \subset \Omega_{c0} \subset \Omega_c, D_p \subset \subset \Omega_{p0} \subset \Omega_p, \Gamma \Omega_{c0}=\Gamma \cap \partial \Omega_{c0}, \Gamma \Omega_{p0}=\Gamma \cap \partial \Omega_{p0}$ and $\Gamma \Omega_{0}=\Gamma \cap \partial \Omega_{c0} \cap \partial \Omega_{p0}$(see Figure \ref{Figdomain12}).
Consider $T_H(\Omega_{\ast})$ and $T_h(\Omega_{\ast})$ as the uniform discretizations with space sizes of $H$ and $h$, respectively, where $h << H <1$. Here, $\Omega_{\ast}$ can be $\Omega_c, \Omega_p, \Omega_{c0}$ and $\Omega_{p0}$.
The local and uniform refined grids $T_h(\Omega_{c0})$ and $T_h(\Omega_{p0})$ are obtained from $T_H(\Omega_c)$ and $T_H(\Omega_p)$, respectively. Moreover, $T_h(\Omega_{c0})$ and $T_h(\Omega_{p0})$ coincide with $T_h(\Omega_c)$ and $T_h(\Omega_p)$.

\subsection{Semi-discrete local finite element algorithm}\label{Sec-semi-discrete}
Set $\boldsymbol{u}_H(0)=\mathbb{P}_H \boldsymbol{u}_0=[\mathrm{P}^F_H p_{F}^0, \mathrm{P}^f_H p_{f}^0, \mathrm{P}^m_H p_{m}^0, \mathrm{P}^c_H \vec{u}_{c}^0]^T$
and $\boldsymbol{e}^h(0)=[(\mathbb{P}_h - \mathbb{P}_H) \boldsymbol{u}_0 ] |_{\Omega_0}=[(\mathrm{P}^F_h-\mathrm{P}^F_H)p_{F}^0|_{\Omega_{p}^0}, (\mathrm{P}^f_h-\mathrm{P}^f_H)p_{f}^0|_{\Omega_{p0}}, (\mathrm{P}^m_h-\mathrm{P}^m_H)p_{m}^0|_{\Omega_{p0}},
(\mathrm{P}^c_h-\mathrm{P}^c_H)\vec{u}_{c}^0|_{\Omega_{c0}}]^T$.
For $0 < t \leq T$, the local semi-discrete solutions $\boldsymbol{u}^h=[p^{Fh}, p^{fh}, p^{mh}, \vec{u}^{ch}]$ and $p^h$ are obtained by the following steps.\\
\noindent \textbf{Step 1.} Find global coarse grid solutions $\boldsymbol{u}_H=[p_{FH}, p_{fH}, p_{mH}, \vec{u}_{cH}]^T \in W^H$ and $p_H \in Q^H$, such that for all $\boldsymbol{v}=[v_F, v_f, v_m, \vec{v}_c]^T \in W^H$ and $q \in Q^H$ to satisfy
\begin{equation}\label{Semi-LFEM}
\langle \frac{\partial \boldsymbol{u}_H}{\partial t}, \boldsymbol{v} \rangle_{\eta}
+ a_{\eta}(\boldsymbol{u}_H, \boldsymbol{v})
+ b_{N\eta}(\vec{u}_{cH}, \vec{u}_{cH}, \vec{v}_c)
+ b_{\eta}(\vec{v}_c, p_H)
- b_{\eta}(\vec{u}_{cH}, q)
=(\boldsymbol{f},\boldsymbol{v})_{\eta}.
\end{equation}
\textbf{Step 2.} Find local fine grid corrections $\boldsymbol{e}^h=[e^{Fh}, e^{fh}, e^{mh}, e^{ch}]^T \in W^h(\Omega_0)$ and $\xi^h=Q^h(\Omega_{c0})$, such that the following equations hold for all $\boldsymbol{v}=[v_F, v_f, v_m, \vec{v}_c]^T \in W^h(\Omega_0)$ and $q \in Q^h(\Omega_{c0})$.\\
(2.1) In the local triple-porosity media subdomain $\Omega_{p0}$, the solutions $[e^{Fh}, e^{fh}, e^{mh}]$ are satisfied
\begin{align}
&\phi_F C_F \Big( \frac{\partial e^{Fh}}{\partial t}, v_F \Big)
+ a_F(e^{Fh}, v_F)
+ \frac{\sigma^{\ast} k_f}{\tilde{\mu}}(e^{Fh}-e^{fh},v_F)
= (q_F, v_F)\nonumber \\
&- \Big[ \phi_F C_F \Big( \frac{\partial p_{FH}}{\partial t}, v_F \Big)
+ a_F(p_{FH}, v_F)
+ \frac{\sigma^{\ast} k_f}{\tilde{\mu}}(p_{FH}-p_{fH}, v_F)
\Big]
+\langle \vec{u}_{cH} \cdot \vec{n}_c,v_F \rangle_{\Gamma \Omega_{p0}},~~ \label{Semi-pF}
\end{align}
\begin{align}
&\phi_f C_f \Big( \frac{\partial e^{fh}}{\partial t}, v_f \Big)
+ a_f(e^{fh}, v_f)
+ \frac{\sigma k_m}{\tilde{\mu}}(e^{fh}-e^{mh},v_f)
+ \frac{\sigma^{\ast} k_f}{\tilde{\mu}}(e^{fh}-e^{Fh},v_F)
= (q_F, v_F) \nonumber \\
&- \Big[ \phi_f C_f \Big( \frac{\partial p_{fH}}{\partial t}, v_f \Big)
+ a_f(p_{fH}, v_f)
+ \frac{\sigma k_m}{\tilde{\mu}}(p_{fH}-p_{mH},v_f)
+ \frac{\sigma^{\ast} k_f}{\tilde{\mu}}(p_{fH}-p_{FH}, v_F)
\Big],\label{Semi-pf}
\end{align}
\begin{align}
&\phi_m C_m \Big( \frac{\partial e^{mh}}{\partial t}, v_m \Big)
+ a_m(e^{mh}, v_m)
+ \frac{\sigma k_m}{\tilde{\mu}}(e^{mh}-e^{fh},v_m)
= (q_m, v_m) \nonumber\\
&~~~~~~~~~~~~~~~~~~~~~~~~~- \Big[ \phi_m C_m \Big( \frac{\partial p_{mH}}{\partial t}, v_m \Big)
+ a_m(p_{mH}, v_m)
+ \frac{\sigma k_m}{\tilde{\mu}}(p_{mH}-p_{fH},v_m)
\Big].\label{Semi-pF}
\end{align}
(2.2) In the local conduit subdomain $\Omega_{c0}$, the solutions $[e^{ch}, \xi^h]$ are satisfied
\begin{align}
&\eta \Big( \frac{\partial e^{ch}}{\partial t}, \vec{v}_c \Big)
+ a_{c\eta}(e^{ch}, \vec{v}_c )
+ b_{\eta}(\vec{v}_c, \xi^h)
- b_{\eta}(e^{ch}, q)
+ b_{N\eta}(e^{ch}, \vec{u}_{cH}, \vec{v}_c)
+ b_{N\eta}(\vec{u}_{cH}, e^{ch}, \vec{v}_c) \nonumber\\
&=\eta (\vec{f}_c,\vec{v}_c)
- \Big[
\eta \Big( \frac{\partial \vec{u}_{cH}}{\partial t}, \vec{v}_c \Big)
+ a_{c\eta}(\vec{u}_{cH}, \vec{v}_c)
+ b_{\eta}(\vec{v}_c, p_{H})
- b_{\eta}(\vec{u}_{cH}, q)
+ b_{N\eta}(\vec{u}_{cH},\vec{u}_{cH},\vec{v}_c)
\Big] \nonumber\\
&~~~~-\frac{\eta}{\rho}\langle p_{FH}, \vec{v}_c \cdot \vec{n}_c \rangle_{\Gamma \Omega_{c0}}
-\frac{\eta \nu \alpha \sqrt{k_F}}{\tilde{\mu}} \langle \nabla_{\tau} p_{FH}, P_{\tau} \vec{v}_c  \rangle_{\Gamma \Omega_{c0}}. \label{semi-localC}
\end{align}

\noindent \textbf{Step 3.} Correction:
\begin{align*}
&p^{Fh}|_{D_p}=p_{FH} + e^{Fh}|_{D_p},~~
p^{fh}|_{D_p}=p_{fH} + e^{fh}|_{D_p},~~
p^{mh}|_{D_p}=p_{mH} + e^{mh}|_{D_p},\\
&\vec{u}^{ch}|_{D_c}=\vec{u}_{cH} + e^{ch}|_{D_c},~~
p^h|_{D_c}=p_H + \xi^h|_{D_c}.
\end{align*}

Following the duality argument in literature \cite{heywood1988finite}, an auxiliary problem \eqref{dualityProblem} is introduced and some results are obtained. For $\ell \in (0,T]$ and $\phi \in L^2(0,\ell;[L^2(\Omega_{c0})]^d)$, find $(\boldsymbol{\Phi}(t),\psi(t)) \in X_c(\Omega_{c0}) \times L^2(\Omega_{c0})$ for $t \in [0,\ell)$ and $\forall (\boldsymbol{v}, q) \in X_c(\Omega_{c0}) \times L^2(\Omega_{c0})$ such that
\begin{equation}\label{dualityProblem}
\begin{split}
&\eta \Big(\boldsymbol{v},-\frac{\partial \boldsymbol{\Phi}}{\partial t} \Big)
+ a_{c\eta}(\boldsymbol{v},\boldsymbol{\Phi})
+ b_{\eta}(\boldsymbol{\Phi},q)
- b_{\eta}(\boldsymbol{v},\psi)
+ b_{N\eta}(\vec{u}_{cH},\boldsymbol{v},\boldsymbol{\Phi})\\
&~~~~~~~~~~~~~~~~~~~~~~~~~~~~~~~~~~~~~~~~~~~~~~~~~~~~~~~~~~~
+ b_{N\eta}(\boldsymbol{v},\vec{u}_{cH},\boldsymbol{\Phi})
=\eta (\boldsymbol{v},\phi),\\
&\boldsymbol{\Phi}(\ell)=0.
\end{split}
\end{equation}
Furthermore, the spatial semi-discrete scheme corresponding to the problem \eqref{dualityProblem} on the mesh grid $T_h(\Omega_{c0})$ reads: find $(\boldsymbol{\Phi}_h, \psi_h) \in L^{\infty}(0,\ell;X_c^h(\Omega_{c0})) \times L^{\infty}(0,\ell;Q^h(\Omega_{c0}))$ for $t \in [0,\ell)$ such that
\begin{equation}\label{semi-spacedualityProblem}
\begin{split}
&\eta \Big(\boldsymbol{v},-\frac{\partial \boldsymbol{\Phi}_h}{\partial t} \Big)
+ a_{c\eta}(\boldsymbol{v},\boldsymbol{\Phi}_h)
+ b_{\eta}(\boldsymbol{\Phi}_h,q)
- b_{\eta}(\boldsymbol{v},\psi_h)
+ b_{N\eta}(\vec{u}_{cH},\boldsymbol{v},\boldsymbol{\Phi}_h)\\
&~~~~~~~~~~~~~~~~~~~~~~~~~~~~~~~~~~~~~~~~~~~~~~~~~~~~~~~~~~~
+ b_{N\eta}(\boldsymbol{v},\vec{u}_{cH},\boldsymbol{\Phi}_h)
=\eta (\boldsymbol{v},\phi),\\
&\boldsymbol{\Phi}_h(\ell)=0.
\end{split}
\end{equation}
The stability results for the solution $(\boldsymbol{\Phi},\psi)$ are obtained as follows:
\begin{equation}\label{StabResult}
\sup_{0 \leq t \leq T} \|\boldsymbol{\Phi}(t)\|_{1,\Omega_{c0}}^2
+ \int_{0}^T \big(
\|\boldsymbol{\Phi}\|_{2,\Omega_{c0}}
+ \|\psi\|_{1,\Omega_{c0}}
+ \Big\|\frac{\partial \boldsymbol{\Phi}}{\partial t} \Big\|_{0,\Omega_{c0}}
\big)
\mathrm{d}t
\leq c \int_{0}^T \|\phi\|_{0,\Omega_{c0}}^2 \mathrm{d}t.
\end{equation}
Moreover, the following results hold
\begin{equation}\label{ErrorResult}
\|\boldsymbol{\Phi}-\boldsymbol{\Phi}_h\|_{0,\Omega_{c0}}
+ h(|\boldsymbol{\Phi}-\boldsymbol{\Phi}_h|_{1,\Omega_{c0}}
+  \|\psi - \psi_h\|_{0,\Omega_{c0}} )
\leq ch^2(\|\boldsymbol{\Phi}\|_{2,\Omega_{c0}}+\|\psi\|_{1,\Omega_{c0}}).
\end{equation}

\begin{lemma}\label{Lem1}
Under the assumptions of Theorem \ref{PTS-Results}, there hold
\begin{equation}\label{Lem1eqs}
\eta \int_{0}^T \|e^{ch}(t)\|_{0,\Omega_{c0}}^2 \mathrm{d}t
+ \eta \int_{0}^T \Big \|\frac{\partial e^{ch} }{\partial t}(t) \Big\|_{0,\Omega_{c0}}^2 \mathrm{d}t
+ \eta \int_{0}^T \Big \|\frac{\partial^2 e^{ch} }{\partial t^2}(t) \Big\|_{0,\Omega_{c0}}^2 \mathrm{d}t
\leq cH^{2(r+1)}.
\end{equation}
\end{lemma}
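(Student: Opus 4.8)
The plan is to view $e^{ch}$ as the local fine–grid solution of a \emph{residual} problem whose data is controlled entirely by the coarse–grid error, and then to run an Aubin--Nitsche duality argument built on the auxiliary problems \eqref{dualityProblem}--\eqref{semi-spacedualityProblem} to promote the natural energy bound into the $L^2(\Omega_{c0})$ bound asserted in \eqref{Lem1eqs}. First I would recast the right–hand side of \eqref{semi-localC}: for every $\vec{v}_c \in X_c^h(\Omega_0)$, which vanishes on $\partial\Omega_{c0}\setminus\Gamma$ but is free on $\Gamma\Omega_{c0}$, the exact conduit equation contained in \eqref{CoupledFormu}, restricted to $\Omega_{c0}$, furnishes an expression for $\eta(\vec{f}_c,\vec{v}_c)$ in terms of $\vec{u}_c$, $p$ and the interface datum $p_F$. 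Substituting this identity into \eqref{semi-localC} and cancelling the forcing, the pair $(e^{ch},\xi^h)$ satisfies
\begin{align*}
&\eta(\partial_t e^{ch},\vec{v}_c) + a_{c\eta}(e^{ch},\vec{v}_c) + b_{\eta}(\vec{v}_c,\xi^h) - b_{\eta}(e^{ch},q) + b_{N\eta}(e^{ch},\vec{u}_{cH},\vec{v}_c) + b_{N\eta}(\vec{u}_{cH},e^{ch},\vec{v}_c) \\
&\quad = \eta(\partial_t(\vec{u}_c-\vec{u}_{cH}),\vec{v}_c) + a_{c\eta}(\vec{u}_c-\vec{u}_{cH},\vec{v}_c) + b_{\eta}(\vec{v}_c,p-p_H) - b_{\eta}(\vec{u}_c-\vec{u}_{cH},q) \\
&\qquad + b_{N\eta}(\vec{u}_c,\vec{u}_c,\vec{v}_c)-b_{N\eta}(\vec{u}_{cH},\vec{u}_{cH},\vec{v}_c) + \tfrac{\eta}{\rho}\langle p_F-p_{FH},\vec{v}_c\cdot\vec{n}_c\rangle_{\Gamma\Omega_{c0}} + \tfrac{\eta\nu\alpha\sqrt{k_F}}{\tilde{\mu}}\langle\nabla_\tau(p_F-p_{FH}),P_\tau\vec{v}_c\rangle_{\Gamma\Omega_{c0}},
\end{align*}
whose right–hand side, denoted $\mathcal{R}(\vec{v}_c,q)$, involves only the coarse–grid errors $\vec{u}_c-\vec{u}_{cH}$, $p-p_H$, $p_F-p_{FH}$. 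Since $(\vec{u}_{cH},p_H,p_{FH})$ solves exactly the semidiscrete scheme \eqref{CoupledSemiFormu} on the grid of size $H$, Lemma \ref{LemsemiConvergence} applied with $h$ replaced by $H$ gives $\|\vec{u}_c-\vec{u}_{cH}\|_0,\ \|p-p_H\|_0,\ \|p_F-p_{FH}\|_0 = O(H^{r+1})$, together with the matching first/second time–derivative and integrated $H^1$ bounds; these supply the factor $H^{2(r+1)}$ once $\mathcal{R}$ is paired against a sufficiently regular field.

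To capture the $L^2(\Omega_{c0})$ norm I would take $\phi=e^{ch}$ in \eqref{dualityProblem} and test with $\vec{v}=e^{ch}$, producing $\eta\|e^{ch}\|_0^2$ on the right. Integrating over $(0,\ell)$, integrating by parts in time (the terminal condition $\boldsymbol{\Phi}(\ell)=0$ kills the upper endpoint, while the lower endpoint yields $\eta(e^{ch}(0),\boldsymbol{\Phi}(0))$ with $e^{ch}(0)=O(H^{r+1})$), and then using the residual equation above with the discrete dual $(\boldsymbol{\Phi}_h,\psi_h)$ of \eqref{semi-spacedualityProblem} as test functions, the self–adjoint parts of the operators cancel and leave $\eta\int_0^\ell\|e^{ch}\|_0^2\,dt$ equal to $\mathcal{R}$ evaluated at $(\boldsymbol{\Phi}_h,\psi_h)$ plus a defect measured by $\boldsymbol{\Phi}-\boldsymbol{\Phi}_h$ and $\psi-\psi_h$. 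After shifting derivatives onto the smooth dual field, the volume terms are bounded by the coarse error in $L^2$ times $\|\boldsymbol{\Phi}\|_{2,\Omega_{c0}}+\|\psi\|_{1,\Omega_{c0}}$, the interface terms by the trace inequality (A3), and the discretization defect by \eqref{ErrorResult}; the dual stability \eqref{StabResult} then controls every dual norm by $\|\phi\|_0=\|e^{ch}\|_0$, and a Cauchy--Schwarz/Young absorption gives $\eta\int_0^T\|e^{ch}\|_0^2\,dt\le cH^{2(r+1)}$.

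For the two time–derivative terms in \eqref{Lem1eqs} I would differentiate the residual equation once, respectively twice, in $t$. As $a_{c\eta}$ and $b_{\eta}$ are time independent, $\partial_t e^{ch}$ and $\partial_{tt}e^{ch}$ solve residual equations of the same structure, now driven by $\partial_t$ (resp.\ $\partial_{tt}$) of the coarse errors and by the differentiated convection and interface terms. Repeating the duality argument with $\phi=\partial_t e^{ch}$ and $\phi=\partial_{tt}e^{ch}$, and invoking the time–derivative bounds of Lemma \ref{LemsemiConvergence} (in particular $\int_0^T\|\partial_{tt}(\boldsymbol{u}-\boldsymbol{u}_H)\|_0^2\,dt\le cH^{2(r+1)}$ and the companion $H^1$-in-time estimates) together with the regularity \eqref{BDness}, yields the remaining two bounds, which are then added to the first.

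The main difficulty lies in the nonlinear convection and the interface couplings. After differentiating in time the trilinear form generates cross terms such as $b_{N\eta}(\partial_t e^{ch},\vec{u}_{cH},\cdot)$ and $b_{N\eta}(e^{ch},\partial_t\vec{u}_{cH},\cdot)$; controlling these requires the uniform boundedness of $\vec{u}_{cH}$ and $\partial_t\vec{u}_{cH}$ in $H^1$ from the coarse stability, the skew–symmetry (A8), and careful use of Young's inequality against the coercivity of $a_{c\eta}$ delivered by (A1)--(A2). Equally delicate is showing that the boundary pairings $\langle p_F-p_{FH},\cdot\rangle_{\Gamma\Omega_{c0}}$ and $\langle\nabla_\tau(p_F-p_{FH}),\cdot\rangle_{\Gamma\Omega_{c0}}$ remain of order $H^{r+1}$ after being tested against the dual solution, since the crude trace of the coarse error is of lower order; this must be recovered by transferring the tangential derivative onto the smooth dual field and exploiting \eqref{StabResult}. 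Ensuring the duality identity is exact despite the discrete/continuous mismatch between $\boldsymbol{\Phi}$ and $\boldsymbol{\Phi}_h$, handled through \eqref{ErrorResult}, is the final technical point.
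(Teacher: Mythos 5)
Your overall strategy — duality via \eqref{dualityProblem}--\eqref{semi-spacedualityProblem}, a residual equation for $(e^{ch},\xi^h)$ driven by coarse-grid errors, and differentiation in time for the $\partial_t e^{ch}$, $\partial_{tt}e^{ch}$ pieces — is the paper's strategy, but there is a genuine gap at the rate-critical step. You split $\mathcal{R}(\boldsymbol{\Phi}_h,\psi_h)=\mathcal{R}(\boldsymbol{\Phi}_h-\boldsymbol{\Phi},\psi_h-\psi)+\mathcal{R}(\boldsymbol{\Phi},\psi)$, control the first part by \eqref{ErrorResult}, and propose to handle $\mathcal{R}(\boldsymbol{\Phi},\psi)$ by shifting derivatives onto the smooth dual field. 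That works for the second-order volume terms (it amounts to invoking the dual variational identity with $\vec{w}=\vec{u}_c-\vec{u}_{cH}$), but it fails for the interface couplings: the dual problem \eqref{dualityProblem} carries no interface data, so $\langle p_F-p_{FH},\boldsymbol{\Phi}\cdot\vec{n}_c\rangle_{\Gamma\Omega_{c0}}$ and $\langle\nabla_\tau(p_F-p_{FH}),P_\tau\boldsymbol{\Phi}\rangle_{\Gamma\Omega_{c0}}$ must be bounded directly. Pairing the raw coarse error against the full dual field gives only $c\|p_F-p_{FH}\|_{1,\Omega_{p0}}\|\boldsymbol{\Phi}\|_{1,\Omega_{c0}}=O(H^{r})$, and even after transferring $\nabla_\tau$ onto $\boldsymbol{\Phi}$ the trace interpolation $\|p_F-p_{FH}\|_{L^2(\Gamma)}\le c\|p_F-p_{FH}\|_0^{1/2}\|p_F-p_{FH}\|_1^{1/2}=O(H^{r+1/2})$ leaves you half a power of $H$ short; an explicit integration by parts of $a_{c\eta}(\vec{u}_c-\vec{u}_{cH},\boldsymbol{\Phi})$ over the subdomain likewise generates artificial-boundary terms on $\partial\Omega_{c0}$ of the same deficient order, since $\vec{u}_c-\vec{u}_{cH}$ does not vanish there. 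Squared, your argument delivers at best $cH^{2r+1}$, not the asserted $cH^{2(r+1)}$.

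The ingredient you are missing is the paper's \emph{second}, coarse-grid discrete dual $(\boldsymbol{\Phi}_H,\psi_H)$ together with the exact cancellation \eqref{uh-uH}, obtained by subtracting \eqref{Semi-LFEM} from \eqref{CoupledSemiFormu} and testing with $(\boldsymbol{\Phi}_H,\psi_H)$; in your exact-residual formulation the analogous identity $\mathcal{R}(\boldsymbol{\Phi}_H,\psi_H)=0$ holds as well, since $W^H\subset W$, but you never invoke it. With this cancellation every pairing — including the interface terms, handled via the $H^{-1/2}(\Gamma\Omega_{p0})$--$H^{1/2}_{00}(\Gamma\Omega_{c0})$ duality — becomes (coarse $H^1$ error)$\times$(dual defect), i.e.\ $O(H^{r})\cdot O(H)$ by \eqref{ErrorResult} applied at mesh size $H$, with no integration by parts and no boundary remainders, and then \eqref{StabResult} plus Young absorption closes the estimate at the optimal rate. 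Two secondary deviations are worth noting: the paper writes the residual \eqref{AlgorithmB} against the auxiliary fine semidiscrete solution $(\vec{u}_{ch},p_h,p_{Fh})$ (using the coincidence of $T_h(\Omega_{c0})$ with $T_h(\Omega_c)$), so that Lemma \ref{LemsemiConvergence} applies symmetrically at both mesh sizes, whereas your exact-solution reference is workable but then all rates must come from the coarse error alone; and the paper starts from the \emph{discrete} dual \eqref{semi-spacedualityProblem} tested with $(e^{ch},\xi^h)$, whereas your start from the continuous dual forces a mismatch term of the form $\eta(e^{ch},\partial_t(\boldsymbol{\Phi}-\boldsymbol{\Phi}_h))$ whose control is not supplied by \eqref{ErrorResult}. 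Your plan for the two time-derivative bounds (differentiate the residual equation and rerun the duality with $\phi=\partial_t e^{ch}$ and $\phi=\partial_{tt}e^{ch}$) does coincide with the paper's, but it inherits the same half-power loss until the coarse-dual cancellation is inserted.
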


\begin{proof}
Taking $(\boldsymbol{v},q)=(e^{ch},\xi^h)$ in \eqref{semi-spacedualityProblem} with $\phi=e^{ch}$, we have
\begin{equation}\label{aim1}
\begin{split}
\eta \|e^{ch}\|_{0,\Omega_{c0}}^2
&= -\eta \frac{\mathrm{d} }{\mathrm{d} t} (e^{ch},\boldsymbol{\Phi}_h)
+ \eta \Big( \frac{\partial e^{ch}}{\partial t}, \boldsymbol{\Phi}_h \Big)
+ a_{c\eta}(e^{ch},\boldsymbol{\Phi}_h)
+ b_{\eta}(\boldsymbol{\Phi}_h,\xi^h)\\
&~~~~- b_{\eta}(e^{ch},\psi_h)
+ b_{N\eta}(\vec{u}_{cH},e^{ch},\boldsymbol{\Phi}_h)
+ b_{N\eta}(e^{ch},\vec{u}_{cH},\boldsymbol{\Phi}_h).
\end{split}
\end{equation}
Thanks to the assumption on the auxiliary grid $T_h(\Omega_c)$ that coincides with $T_h(\Omega_{c0})$ on $\Omega_{c0}$, \eqref{semi-localC} can be written as
\begin{align}
&\eta \Big( \frac{\partial e^{ch}}{\partial t}, \vec{v}_c \Big)
+ a_{c\eta}(e^{ch},\vec{v}_c)
+ b_{\eta}(\vec{v}_c,\xi^h)
- b_{\eta}(e^{ch},q)
+ b_{N\eta}(e^{ch},\vec{u}_{cH},\vec{v}_c)
+ b_{N\eta}(\vec{u}_{cH},e^{ch},\vec{v}_c) \nonumber\\
&=\eta \Big( \frac{\partial \vec{u}_{ch}}{\partial t} - \frac{\partial \vec{u}_{cH} }{\partial t},\vec{v}_c \Big)
+ a_{c\eta}(\vec{u}_{ch}-\vec{u}_{cH},\vec{v}_c)
+ b_{\eta}(\vec{v}_c,p_{h}-p_{H})
- b_{\eta}(\vec{u}_{ch}-\vec{u}_{cH},q) \nonumber\\
&~~~+ b_{N\eta}(\vec{u}_{ch}-\vec{u}_{cH},\vec{u}_{cH},\vec{v}_c)
+ b_{N\eta}(\vec{u}_{cH},\vec{u}_{ch}-\vec{u}_{cH},\vec{v}_c)
+ b_{N\eta}(\vec{u}_{ch}-\vec{u}_{cH},\vec{u}_{ch}-\vec{u}_{cH},\vec{v}_c) \nonumber\\
&~~~+\frac{\eta}{\rho}\langle p_{Fh} - p_{FH}, \vec{v}_c \cdot \vec{n}_c \rangle_{\Gamma \Omega_{c0}}
+ \frac{\eta \nu \alpha \sqrt{k_F}}{\tilde{\mu}} \langle \nabla_{\tau} (p_{Fh} - p_{FH}),P_{\tau} \vec{v}_c \rangle_{\Gamma \Omega_{c0}}. \label{AlgorithmB}
\end{align}
Subtracting \eqref{Semi-LFEM} from \eqref{CoupledSemiFormu} and taking $(v_F,v_f,v_m)=\boldsymbol{0},(\vec{v}_c,q)=(\boldsymbol{\Phi}_H,\psi_H)$, we get
\begin{align}
&\eta \Big( \frac{\partial \vec{u}_{ch}}{\partial t} - \frac{\partial \vec{u}_{cH}}{\partial t},\boldsymbol{\Phi}_H \Big)
+a_{c\eta}(\vec{u}_{ch}-\vec{u}_{cH},\boldsymbol{\Phi}_H)
+b_{\eta}(\boldsymbol{\Phi}_H,p_h-p_H)
-b_{\eta}(\vec{u}_{ch}-\vec{u}_{cH},\psi_H) \nonumber\\
&+b_{N\eta}(\vec{u}_{ch}-\vec{u}_{cH},\vec{u}_{cH},\boldsymbol{\Phi}_H)
+b_{N\eta}(\vec{u}_{cH},\vec{u}_{ch}-\vec{u}_{cH},\boldsymbol{\Phi}_H)
+b_{N\eta}(\vec{u}_{ch}-\vec{u}_{cH},\vec{u}_{ch}-\vec{u}_{cH},\boldsymbol{\Phi}_H)\nonumber\\
&+\frac{\eta}{\rho}\langle p_{Fh}-p_{FH},\boldsymbol{\Phi}_H \cdot \vec{n}_c  \rangle_{\Gamma \Omega_{c0}}
+\frac{\eta \nu \alpha \sqrt{k_F}}{\tilde{\mu}}\langle \nabla_{\tau}(p_{Fh}-p_{FH}),
P_{\tau} \boldsymbol{\Phi}_H \rangle_{\Gamma \Omega_{c0}}=0. \label{uh-uH}
\end{align}
Combining \eqref{AlgorithmB} and \eqref{uh-uH}, the formula \eqref{aim1} can be written as
\begin{align}
&\eta \|e^{ch}\|_{0,\Omega_{c0}}^2
= -\eta \frac{\mathrm{d} }{\mathrm{d} t} (e^{ch},\boldsymbol{\Phi}_h)
+ \eta \Big( \frac{\partial \vec{u}_{ch}}{\partial t} - \frac{\partial \vec{u}_{cH} }{\partial t},\boldsymbol{\Phi}_h-\boldsymbol{\Phi} \Big)
+ a_{c\eta}(\vec{u}_{ch}-\vec{u}_{cH},\boldsymbol{\Phi}_h-\boldsymbol{\Phi})\nonumber\\
&~~~+ b_{\eta}(\boldsymbol{\Phi}_h-\boldsymbol{\Phi},p_{h}-p_{H})
- b_{\eta}(\vec{u}_{ch}-\vec{u}_{cH},\psi_h-\psi)
+ b_{N\eta}(\vec{u}_{ch}-\vec{u}_{cH},\vec{u}_{cH},\boldsymbol{\Phi}_h-\boldsymbol{\Phi})\nonumber\\
&~~~+ b_{N\eta}(\vec{u}_{cH},\vec{u}_{ch}-\vec{u}_{cH},\boldsymbol{\Phi}_h-\boldsymbol{\Phi})
+ b_{N\eta}(\vec{u}_{ch}-\vec{u}_{cH},\vec{u}_{ch}-\vec{u}_{cH},\boldsymbol{\Phi}_h-\boldsymbol{\Phi}) \nonumber\\
&~~~+\frac{\eta}{\rho}\langle p_{Fh} - p_{FH}, (\boldsymbol{\Phi}_h-\boldsymbol{\Phi}) \cdot \vec{n}_c \rangle_{\Gamma \Omega_{c0}}
+ \frac{\eta \nu \alpha \sqrt{k_F}}{\tilde{\mu}} \langle \nabla_{\tau} (p_{Fh} - p_{FH}),P_{\tau} (\boldsymbol{\Phi}_h-\boldsymbol{\Phi}) \rangle_{\Gamma \Omega_{c0}}\nonumber\\
&~~~+\eta \Big( \frac{\partial \vec{u}_{ch}}{\partial t} - \frac{\partial \vec{u}_{cH}}{\partial t},\boldsymbol{\Phi}-\boldsymbol{\Phi}_H \Big)
+a_{c\eta}(\vec{u}_{ch}-\vec{u}_{cH},\boldsymbol{\Phi}-\boldsymbol{\Phi}_H)
+b_{\eta}(\boldsymbol{\Phi}-\boldsymbol{\Phi}_H,p_h-p_H)\nonumber\\
&~~~-b_{\eta}(\vec{u}_{ch}-\vec{u}_{cH},\psi-\psi_H)
+b_{N\eta}(\vec{u}_{ch}-\vec{u}_{cH},\vec{u}_{cH},\boldsymbol{\Phi}-\boldsymbol{\Phi}_H)
+b_{N\eta}(\vec{u}_{cH},\vec{u}_{ch}-\vec{u}_{cH},\boldsymbol{\Phi}-\boldsymbol{\Phi}_H)\nonumber\\
&~~~+b_{N\eta}(\vec{u}_{ch}-\vec{u}_{cH},\vec{u}_{ch}-\vec{u}_{cH},\boldsymbol{\Phi}-\boldsymbol{\Phi}_H)
+\frac{\eta}{\rho}\langle p_{Fh}-p_{FH},(\boldsymbol{\Phi}-\boldsymbol{\Phi}_H) \cdot \vec{n}_c  \rangle_{\Gamma \Omega_{c0}}\nonumber\\
&~~~+\frac{\eta \nu \alpha \sqrt{k_F}}{\mu}\langle \nabla_{\tau}(p_{Fh}-p_{FH}),
P_{\tau} (\boldsymbol{\Phi}-\boldsymbol{\Phi}_H) \rangle_{\Gamma \Omega_{c0}}. \label{aim2}
\end{align}
Using the H$\ddot{\mathrm{o}}$lder inequality, Young inequality and trace inequality, the formula \eqref{aim2} is bounded by
\begin{align}
&\eta \|e^{ch}\|_{0,\Omega_{c0}}^2
\leq -\eta \frac{\mathrm{d} }{\mathrm{d} t} (e^{ch},\boldsymbol{\Phi}_h)
+ c\eta \Big\| \frac{\partial \vec{u}_{ch}}{\partial t} - \frac{\partial \vec{u}_{cH} }{\partial t} \Big\|_{0,\Omega_{c0}} \|\boldsymbol{\Phi}_h - \boldsymbol{\Phi}\|_{0,\Omega_{c0}} \nonumber\\
&~~~+ c\eta( \|\vec{u}_{ch}-\vec{u}_{cH}\|_{1,\Omega_{c0}}
+ \|p_h-p_H\|_{0,\Omega_{c0}} )
(\|\boldsymbol{\Phi}_h-\boldsymbol{\Phi}\|_{1,\Omega_{c0}}
+\|\psi_h - \psi\|_{0,\Omega_{c0}})\nonumber\\
&~~~+c\eta \|p_{Fh}-p_{FH}\|_{1,\Omega_{p0}}
\|\boldsymbol{\Phi}_h-\boldsymbol{\Phi}\|_{1,\Omega_{c0}}
+ c\eta \Big\| \frac{\partial \vec{u}_{ch}}{\partial t} - \frac{\partial \vec{u}_{cH} }{\partial t} \Big\|_{0,\Omega_{c0}} \|\boldsymbol{\Phi} - \boldsymbol{\Phi}_H\|_{0,\Omega_{c0}}\nonumber\\
&~~~+ c\eta( \|\vec{u}_{ch}-\vec{u}_{cH}\|_{1,\Omega_{c0}}
+ \|p_h-p_H\|_{0,\Omega_{c0}} )
(\|\boldsymbol{\Phi}-\boldsymbol{\Phi}_H\|_{1,\Omega_{c0}}
+\|\psi - \psi_H\|_{0,\Omega_{c0}})\nonumber\\
&~~~+ c\eta \|p_{Fh}-p_{FH}\|_{1,\Omega_{p0}}
\|\boldsymbol{\Phi}-\boldsymbol{\Phi}_H\|_{1,\Omega_{c0}}. \label{eh1}
\end{align}
Utilizing the same idea employed to handle the interface term in references \cite{cao2010coupled, shan2013partitioned}, we have
\begin{align*}
&\frac{\eta \nu \alpha \sqrt{k_F}}{\mu}\langle \nabla_{\tau}(p_{Fh}-p_{FH}),
P_{\tau} (\boldsymbol{\Phi}_h-\boldsymbol{\Phi}) \rangle_{\Gamma \Omega_{c0}}\\
&\leq \frac{\eta \nu \alpha \sqrt{k_F}}{\mu} \|\nabla_{\tau}(p_{Fh}-p_{FH})\|_{H^{-1/2}(\Gamma \Omega_{p0})}
\|P_{\tau} (\boldsymbol{\Phi}_h-\boldsymbol{\Phi})\|_{H_{00}^{1/2}(\Gamma \Omega_{c0})}\\
&\leq \frac{\eta \nu \alpha \sqrt{k_F}}{\mu} \|p_{Fh}-p_{FH}\|_{H^{1/2}(\Gamma \Omega_{p0})}
\|\boldsymbol{\Phi}_h-\boldsymbol{\Phi}\|_{H_{00}^{1/2}(\Gamma \Omega_{c0})}\\
&\leq c\eta \|p_{Fh}-p_{FH}\|_{1,\Omega_{p0}}
\|\boldsymbol{\Phi}_h-\boldsymbol{\Phi}\|_{1,\Omega_{c0}},
\end{align*}
where $H_{00}^{1/2}(\Gamma \Omega_{c0})=X_c(\Omega_{c0})|_{\Gamma }$ and $(H_{00}^{1/2}(\Gamma \Omega_{c0}))'=H^{-1/2}(\Gamma \Omega_{p0})$ is a dual space of $H_{00}^{1/2}(\Gamma \Omega_{c0})$.
Using similar arguments, we arrive at
\begin{align*}
&\frac{\eta}{\rho}\langle p_{Fh} - p_{FH}, (\boldsymbol{\Phi}_h-\boldsymbol{\Phi}) \cdot \vec{n}_c \rangle_{\Gamma \Omega_{c0}}
\leq c \eta \|p_{Fh} - p_{FH}\|_{1,\Omega_{p0}} \|\boldsymbol{\Phi}_h-\boldsymbol{\Phi}\|_{1,\Omega_{c0}},\nonumber\\
&\frac{\eta \nu \alpha \sqrt{k_F}}{\mu}\langle \nabla_{\tau}(p_{Fh}-p_{FH}),
P_{\tau} (\boldsymbol{\Phi}-\boldsymbol{\Phi}_H) \rangle_{\Gamma \Omega_{c0}}
\leq c\eta \|p_{Fh}-p_{FH}\|_{1,\Omega_{p0}}
\|\boldsymbol{\Phi}-\boldsymbol{\Phi}_H\|_{1,\Omega_{c0}},\nonumber\\
&\frac{\eta}{\rho}\langle p_{Fh}-p_{FH},(\boldsymbol{\Phi}-\boldsymbol{\Phi}_H) \cdot \vec{n}_c  \rangle_{\Gamma \Omega_{c0}}
\leq c\eta \|p_{Fh}-p_{FH}\|_{1,\Omega_{p0}}
\|\boldsymbol{\Phi}-\boldsymbol{\Phi}_H\|_{1,\Omega_{c0}}.
\end{align*}
Integrating \eqref{eh1} from 0 to $T$ and using the H$\ddot{\mathrm{o}}$lder inequality and the results in \eqref{StabResult} \eqref{ErrorResult}, we get
\begin{align*}
&\eta \int_{0}^T \|e^{ch}(t)\|_{0,\Omega_{c0}}^2 \mathrm{d}t
\leq \eta (e^{ch}(0),\boldsymbol{\Phi}_h(0))\\
&~~~+c\eta H^2 \Big( \int_{0}^T \Big\|\frac{\partial \vec{u}_{ch}}{\partial t} - \frac{\partial \vec{u}_{cH} }{\partial t} \Big\|_{0,\Omega_{c0}}^2 \mathrm{d}t \Big)^{1/2}
\Big(\int_{0}^T \|\boldsymbol{\Phi}\|_{2,\Omega_{c0}}^2 + \|\psi\|_{1,\Omega_{c0}}^2 \mathrm{d}t \Big)^{1/2}\\
&~~~+c\eta H ( \int_{0}^T \|\vec{u}_{ch}-\vec{u}_{cH}\|_{1,\Omega_{c0}}^2 \mathrm{d}t )^{1/2} (\int_{0}^T \|\boldsymbol{\Phi}\|_{2,\Omega_{c0}}^2+ \|\psi\|_{1,\Omega_{c0}}^2 \mathrm{d}t )^{1/2}\\
&~~~+c\eta H ( \int_{0}^T \|p_h-p_H\|_{0,\Omega_{c0}}^2 \mathrm{d}t )^{1/2} (\int_{0}^T \|\boldsymbol{\Phi}\|_{2,\Omega_{c0}}^2+ \|\psi\|_{1,\Omega_{c0}}^2 \mathrm{d}t )^{1/2}\\
&~~~+c\eta H ( \int_{0}^T \|p_{Fh}-p_{FH}\|_{1,\Omega_{p0}}^2 \mathrm{d}t )^{1/2} (\int_{0}^T \|\boldsymbol{\Phi}\|_{2,\Omega_{c0}}^2+ \|\psi\|_{1,\Omega_{c0}}^2 \mathrm{d}t )^{1/2}\\
&\leq \eta(e^{ch}(0),\boldsymbol{\Phi}_h(0))
+\Big[ c\eta H^2 \Big( \int_{0}^T \Big\|\frac{\partial \vec{u}_{ch}}{\partial t} - \frac{\partial \vec{u}_{cH} }{\partial t} \Big\|_{0,\Omega_{c0}}^2 \mathrm{d}t \Big)^{1/2}\\
&~~~+c\eta H ( \int_{0}^T \|\vec{u}_{ch}-\vec{u}_{cH}\|_{1,\Omega_{c0}}^2 \mathrm{d}t )^{1/2}
+c\eta H ( \int_{0}^T \|p_h-p_H\|_{0,\Omega_{c0}}^2 \mathrm{d}t )^{1/2} \\
&~~~+c\eta H ( \int_{0}^T \|p_{Fh}-p_{FH}\|_{1,\Omega_{p0}}^2 \mathrm{d}t )^{1/2}
\Big]  \Big( \int_{0}^T \|e^{ch}(t)\|_{0,\Omega_{c0}}^2 \mathrm{d}t \Big)^{1/2}.
\end{align*}
Applying the stability results \eqref{StabResult}, we have
\begin{equation}
\eta \int_{0}^T \|e^{ch}(t)\|_{0,\Omega_{c0}}^2 \mathrm{d}t
\leq cH^{2(r+1)}.
\end{equation}
Similar to this way, we can obtain the other results of $\frac{\partial e^{ch}}{\partial t}$ and $\frac{\partial^2 e^{ch}}{\partial t^2}$ in \eqref{Lem1eqs}.
\end{proof}

\begin{lemma}\label{echConvergnece}
Under the assumptions of Theorem \ref{PTS-Results}, there holds
\begin{equation}\label{Lemma2}
\|e^{ch}(t)\|_{0,\Omega_{c0}}^2 \leq cH^{2(r+1)}.
\end{equation}
\end{lemma}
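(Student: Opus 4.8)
The plan is to promote the time-integrated bounds of Lemma~\ref{Lem1} into the pointwise-in-time estimate \eqref{Lemma2} by applying the fundamental theorem of calculus to the scalar function $t \mapsto \eta\|e^{ch}(t)\|_{0,\Omega_{c0}}^2$. Since $e^{ch}$ is smooth enough in time for this quantity to be absolutely continuous (its spatial part lives in the finite element space and its time regularity is guaranteed by \eqref{Lem1eqs}), I would write, for any $t \in (0,T]$,
\begin{equation*}
\eta \|e^{ch}(t)\|_{0,\Omega_{c0}}^2
= \eta \|e^{ch}(0)\|_{0,\Omega_{c0}}^2
+ 2\eta \int_0^t \Big( e^{ch}(s), \frac{\partial e^{ch}}{\partial t}(s) \Big)_{\Omega_{c0}} \, \mathrm{d}s.
\end{equation*}
The integrand is then controlled by the Cauchy--Schwarz and Young inequalities, and the range of integration is enlarged to the whole interval $[0,T]$, which yields
\begin{equation*}
\eta \|e^{ch}(t)\|_{0,\Omega_{c0}}^2
\leq \eta \|e^{ch}(0)\|_{0,\Omega_{c0}}^2
+ \eta \int_0^T \|e^{ch}(s)\|_{0,\Omega_{c0}}^2 \, \mathrm{d}s
+ \eta \int_0^T \Big\| \frac{\partial e^{ch}}{\partial t}(s) \Big\|_{0,\Omega_{c0}}^2 \, \mathrm{d}s.
\end{equation*}

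Next I would invoke Lemma~\ref{Lem1}: the estimate \eqref{Lem1eqs} bounds both remaining time integrals on the right-hand side by $cH^{2(r+1)}$ simultaneously. It then only remains to control the initial contribution $\eta\|e^{ch}(0)\|_{0,\Omega_{c0}}^2$. Recalling that $e^{ch}(0) = (\mathrm{P}^c_h - \mathrm{P}^c_H)\vec{u}_c^0|_{\Omega_{c0}}$, I would split it by the triangle inequality as
\begin{equation*}
\|e^{ch}(0)\|_{0,\Omega_{c0}}
\leq \|\mathrm{P}^c_h \vec{u}_c^0 - \vec{u}_c^0\|_{0,\Omega_{c0}}
+ \|\vec{u}_c^0 - \mathrm{P}^c_H \vec{u}_c^0\|_{0,\Omega_{c0}},
\end{equation*}
and apply the standard $L^2$ approximation property of the projection operators (of the type recorded in Lemma~\ref{LemsemiConvergence}) together with the regularity assumption \eqref{BDness} to bound the two terms by $ch^{r+1}$ and $cH^{r+1}$, respectively. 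Because $h \ll H$, both are dominated by $cH^{r+1}$, so $\eta\|e^{ch}(0)\|_{0,\Omega_{c0}}^2 \leq cH^{2(r+1)}$. Collecting the three bounds and absorbing the fixed factor $\eta$ into the generic constant $c$ then gives \eqref{Lemma2}.

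The only genuinely delicate point I anticipate is the treatment of the initial term $\|e^{ch}(0)\|_{0,\Omega_{c0}}$: one must verify that the difference of the two projections $\mathrm{P}^c_h - \mathrm{P}^c_H$ still inherits the optimal $O(H^{r+1})$ rate, which relies on the relation $h \ll H$ and on the projection error estimates being valid on the local subdomain $\Omega_{c0}$. Everything else is a routine combination of the fundamental theorem of calculus with the already-established Lemma~\ref{Lem1}; no new stability or duality argument is needed here, since those have been consumed in proving \eqref{Lem1eqs}.
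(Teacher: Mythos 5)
Your proof is correct, but it takes a genuinely different and more elementary route than the paper's. The paper establishes \eqref{Lemma2} by a second pass through the duality machinery: it differentiates the local correction equation \eqref{semi-localC} in time, forms the analogue of \eqref{AlgorithmB} for $\frac{\partial e^{ch}}{\partial t}$, tests the semi-discrete dual problem \eqref{semi-spacedualityProblem} with $(\frac{\partial e^{ch}}{\partial t},\frac{\partial \xi^h}{\partial t})$ and $\phi=e^{ch}$, and reruns the Lemma \ref{Lem1} estimates, arriving at \eqref{lemma2proof} whose bracket of two-grid differences (involving $\frac{\partial^2}{\partial t^2}(\vec{u}_{ch}-\vec{u}_{cH})$, $\frac{\partial}{\partial t}(\vec{u}_{ch}-\vec{u}_{cH})$, $\frac{\partial}{\partial t}(p_h-p_H)$ and $\frac{\partial}{\partial t}(p_{Fh}-p_{FH})$) is then bounded via Lemma \ref{LemsemiConvergence}. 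You instead observe that \eqref{Lem1eqs} already controls both $\int_0^T\|e^{ch}\|_{0,\Omega_{c0}}^2\,\mathrm{d}t$ and $\int_0^T\|\frac{\partial e^{ch}}{\partial t}\|_{0,\Omega_{c0}}^2\,\mathrm{d}t$, so the fundamental theorem of calculus applied to $t\mapsto\|e^{ch}(t)\|_{0,\Omega_{c0}}^2$ together with Cauchy--Schwarz and Young closes the pointwise bound with no loss of order; in effect, the $\frac{\partial e^{ch}}{\partial t}$ bound in Lemma \ref{Lem1} already encodes what the paper re-derives through the time-differentiated residual equation. What your route buys is brevity and transparency --- no second duality argument, and an explicit, honest accounting of the initial term $e^{ch}(0)=(\mathrm{P}^c_h-\mathrm{P}^c_H)\vec{u}_c^0|_{\Omega_{c0}}$, which is indeed $O(h^{r+1}+H^{r+1})=O(H^{r+1})$ by the standard projection estimates and \eqref{BDness} (the paper's displayed bound \eqref{lemma2proof} carries no such term). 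Your one stated worry is a non-issue: since $\Omega_{c0}\subset\Omega_c$, one has $\|\cdot\|_{0,\Omega_{c0}}\leq\|\cdot\|_{0,\Omega_c}$, so the global projection error estimates suffice and no local variant is needed. Both arguments deliver the same $cH^{2(r+1)}$ rate, with constants that absorb a factor $\eta^{-1}$ when the weight in \eqref{Lem1eqs} is removed, exactly as in the paper.
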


\begin{proof}
Differentiating \eqref{semi-localC} with respect to $t$ and repeat the same procedure
used in \eqref{AlgorithmB}, we obtain
\begin{align*}
&\eta \Big( \frac{\partial^2 e^{ch}}{\partial t^2 }, \vec{v}_c \Big)
+ a_{c\eta}(\frac{\partial e^{ch}}{\partial t},\vec{v}_c)
+ b_{\eta}(\vec{v}_c,\frac{\partial \xi^h}{\partial t})
- b_{\eta}(\frac{\partial e^{ch}}{\partial t},q)
+ b_{N\eta}(\frac{\partial e^{ch}}{\partial t},\vec{u}_{cH},\vec{v}_c)\\
&+ b_{N\eta}( e^{ch},\frac{\partial \vec{u}_{cH}}{\partial t},\vec{v}_c)
+ b_{N\eta}(\frac{\partial \vec{u}_{cH}}{\partial t},e^{ch},\vec{v}_c)
+ b_{N\eta}(\vec{u}_{cH},\frac{\partial e^{ch}}{\partial t},\vec{v}_c) \nonumber\\
&=\eta \Big( \frac{\partial^2 \vec{u}_{ch}}{\partial t^2} - \frac{\partial^2 \vec{u}_{cH} }{\partial t^2},\vec{v}_c \Big)
+ a_{c\eta}(\frac{\partial \vec{u}_{ch}}{\partial t}-\frac{\partial \vec{u}_{cH}}{\partial t},\vec{v}_c)
+ b_{\eta}(\vec{v}_c,\frac{\partial p_{h}}{\partial t}-\frac{\partial p_{H}}{\partial t})\nonumber\\
&~~~- b_{\eta}(\frac{\partial \vec{u}_{ch}}{\partial t}-\frac{\partial \vec{u}_{cH}}{\partial t},q)
+ b_{N\eta}(\frac{\partial \vec{u}_{ch}}{\partial t}-\frac{\partial \vec{u}_{cH}}{\partial t},\vec{u}_{cH},\vec{v}_c)
+ b_{N\eta}(\vec{u}_{ch}-\vec{u}_{cH},\frac{\partial \vec{u}_{cH}}{\partial t},\vec{v}_c)\\
&~~~+ b_{N\eta}(\frac{\partial \vec{u}_{cH}}{\partial t},\vec{u}_{ch}-\vec{u}_{cH},\vec{v}_c)
+ b_{N\eta}(\vec{u}_{cH},\frac{\partial \vec{u}_{ch}}{\partial t}-\frac{\vec{u}_{cH}}{\partial t},\vec{v}_c)\nonumber\\
&~~~+ b_{N\eta}(\frac{\partial \vec{u}_{ch}}{\partial t}-\frac{\partial \vec{u}_{cH}}{\partial t},\vec{u}_{ch}-\vec{u}_{cH},\vec{v}_c)
+ b_{N\eta}(\vec{u}_{ch}-\vec{u}_{cH},\frac{\partial \vec{u}_{ch}}{\partial t}-\frac{\partial \vec{u}_{cH}}{\partial t},\vec{v}_c) \nonumber\\
&~~~+\frac{\eta}{\rho}\langle \frac{\partial p_{Fh}}{\partial t} - \frac{\partial p_{FH}}{\partial t}, \vec{v}_c \cdot \vec{n}_c \rangle_{\Gamma \Omega_{c0}}
+ \frac{\eta \nu \alpha \sqrt{k_F}}{\tilde{\mu}} \langle \nabla_{\tau} (\frac{\partial p_{Fh}}{\partial t} - \frac{\partial p_{FH}}{\partial t}),P_{\tau} \vec{v}_c \rangle_{\Gamma \Omega_{c0}}.
\end{align*}
Taking $(\vec{v}_c,q)=(\frac{\partial e^{ch}}{\partial t},\frac{\partial \xi^h}{\partial t})$ in \eqref{semi-spacedualityProblem} with $\phi=e^{ch}$ and using the similar proof in Lemma \ref{Lem1}, we have
\begin{align}
&\frac{\eta}{2}\|e^{ch}(t)\|_{0,\Omega_{c0}}^2
\leq
\eta \Big( \int_{0}^T \|e^{ch}(t)\|_{0,\Omega_{c0}}^2 \mathrm{d}t \Big)^{1/2}\Big[ c H^2 \Big( \int_{0}^T \Big\|\frac{\partial^2 \vec{u}_{ch}}{\partial t^2} - \frac{\partial^2 \vec{u}_{cH} }{\partial t^2} \Big\|_{0,\Omega_{c0}}^2 \mathrm{d}t \Big)^{1/2} \nonumber\\
&+c H \Big( \int_{0}^T \Big\| \frac{\partial \vec{u}_{ch}}{\partial t}-\frac{\partial \vec{u}_{cH}}{\partial t} \Big\|_{1,\Omega_{c0}}^2 \mathrm{d}t \Big)^{1/2}
+c H \Big( \int_0^T \|\vec{u}_{ch}-\vec{u}_{cH}\|_{1,\Omega_{c0}}^2 \mathrm{d}t \Big)^{1/2} \nonumber\\
&+c H \Big( \int_{0}^T \Big\| \frac{\partial p_h}{\partial t}-\frac{\partial p_H}{\partial t} \Big\|_{0,\Omega_{c0}}^2 \mathrm{d}t \Big)^{1/2}
+c H \Big( \int_{0}^T \Big\| \frac{\partial p_{Fh}}{\partial t}-\frac{\partial p_{FH}}{\partial t} \Big\|_{1,\Omega_{p0}}^2 \mathrm{d}t \Big)^{1/2}
\Big]. \label{lemma2proof}
\end{align}
Using Lemma \ref{Lem1}, the inequality \eqref{lemma2proof} is bounded by \eqref{Lemma2}.
\end{proof}

\subsection{Fully discrete local finite element algorithm}\label{Sec-fully-discrete}
For $n=0,1,...,N-1(N >1)$, take $\boldsymbol{u}_H^0=\boldsymbol{u}_H(0),\boldsymbol{e}_0^h=\boldsymbol{e}^h(0)$ and find $\boldsymbol{u}_{n+1}^h=[p_{n+1}^{Fh}, p_{n+1}^{fh}, p_{n+1}^{mh}, \vec{u}_{n+1}^{ch}]^T$, $p_{n+1}^h$ by the following steps.\\
\noindent \textbf{Step 1.} Find global coarse grid solutions $\boldsymbol{u}_H^{n+1}=[p_{FH}^{n+1}, p_{fH}^{n+1}, p_{mH}^{n+1}, \vec{u}_{cH}^{n+1}]^T \in W^H$ and $p_H^{n+1} \in Q^H$, such that for all $\boldsymbol{v}=[v_F, v_f, v_m, \vec{v}_c]^T \in W^H$ and $q \in Q^H$ to satisfy:\\
(1.1) The fully discrete triple-porosity system in $\Omega_p$\\
\begin{equation}\label{Fully_pF}
\begin{split}
&\phi_F C_F \Big( \frac{p_{FH}^{n+1} - p_{FH}^n}{\Delta t},v_{F}\Big)
+\frac{k_F}{\tilde{\mu}}(\nabla p_{FH}^{n+1}, \nabla v_{F})
+\frac{\sigma^{\ast} k_f}{\tilde{\mu}}(p_{FH}^{n+1} - p_{fH}^n, v_{F})~~~~~~ \\
&- \langle v_{F}, \vec{u}_{cH}^n \cdot \vec{n}_c \rangle_{\Gamma}
=(q_F(t_{n+1}),v_{F}),
\end{split}
\end{equation}
\begin{equation}\label{Fully_pf}
\begin{split}
&\phi_f C_f \Big( \frac{p_{fH}^{n+1} - p_{fH}^n}{\Delta t}, v_{f}\Big)
+ \frac{k_f}{\tilde{\mu}}(\nabla p_{fH}^{n+1}, \nabla v_{f})
+ \frac{\sigma k_m}{\tilde{\mu}} (p_{fH}^{n+1} - p_{mh}^n,v_{f})~~~~~~~~~\\
&+ \frac{\sigma^{\ast} k_f}{\tilde{\mu}}(p_{fH}^{n+1} - p_{FH}^n, v_{f})
=(q_f(t_{n+1}), v_{f}),
\end{split}
\end{equation}
\begin{equation}\label{Fully_pm}
\begin{split}
&\phi_m C_m \Big( \frac{p_{mH}^{n+1} - p_{mH}^n}{\Delta t}, v_{m}\Big)
+\frac{k_m}{\tilde{\mu}}(\nabla p_{mH}^{n+1},\nabla v_{m})
+ \frac{\sigma k_m}{\tilde{\mu}} (p_{mH}^{n+1} - p_{fH}^n, v_{m})~~~~\\
&=(q_m(t_{n+1}),v_{m}).
\end{split}
\end{equation}
(1.2) The fully discrete conduit system in $\Omega_c$
\begin{align}
&\eta \Big( \frac{\vec{u}_{cH}^{n+1} - \vec{u}_{cH}^n}{\Delta t}, \vec{v}_{c} \Big)
+a_{c\eta}(\vec{u}_{cH}^{n+1},\vec{v}_{c})
+b_{\eta}(\vec{v}_{c},p_H^{n+1})-b_{\eta}(\vec{u}_{cH}^{n+1},q)
+b_{N\eta}(\vec{u}_{cH}^{n+1},\vec{u}_{cH}^{n+1},\vec{v}_{c}) \nonumber\\
&+\frac{\eta}{\rho} \langle p_{FH}^{n}, \vec{v}_{c} \cdot \vec{n}_c \rangle_{\Gamma}
+\frac{\eta \nu \alpha \sqrt{k_F}}{\tilde{\mu}} \langle \nabla_{\tau} p_{FH}^{n} ,P_{\tau} \vec{v}_{c} \rangle_{\Gamma}
=\eta (\vec{f}_c(t_{n+1}) ,\vec{v}_{c}). \label{Fully_uc}
\end{align}
\textbf{Step 2.} Find local fine grid corrections $\boldsymbol{e}_{n+1}^h=[e_{n+1}^{Fh}, e_{n+1}^{fh}, e_{n+1}^{mh}, e_{n+1}^{ch}]^T \in W^h(\Omega_0)$ and $\xi_{n+1}^h=Q^h(\Omega_{c0})$, such that the following equations hold for all $\boldsymbol{v}=[v_F, v_f, v_m, \vec{v}_c]^T \in W^h(\Omega_0)$ and $q \in Q^h(\Omega_{c0})$.\\
(2.1) The fully discrete triple-porosity system in local subdomain $\Omega_{p0}$
\begin{align}
&\phi_F C_F \Big( \frac{\partial e_{n+1}^{Fh}}{\partial t}, v_F \Big)
+ a_F(e_{n+1}^{Fh}, v_F)
+ \frac{\sigma^{\ast} k_f}{\tilde{\mu}}(e_{n+1}^{Fh}-e_{n}^{fh},v_F) \nonumber\\
&= (q_F(t_{n+1}), v_F)
- \Big[ \phi_F C_F \Big( \frac{\partial p_{FH}^{n+1}}{\partial t}, v_F \Big)
+ a_F(p_{FH}^{n+1}, v_F)
+ \frac{\sigma^{\ast} k_f}{\tilde{\mu}}(p_{FH}^{n+1}-p_{fH}^{n}, v_F)
\Big] \nonumber\\
&~~~~+\langle \vec{u}_{cH}^n \cdot \vec{n}_c,v_F \rangle_{\Gamma \Omega_{p0}}, ~~~~~~~~~~~~~~~~~~~~~~~~~~~~~~~~~~~~~~~~~~~~~~~~~~~~~~~~~~~~~~~~~~~~ \label{LocalFully_pF}
\end{align}
\begin{align}
&\phi_f C_f \Big( \frac{\partial e_{n+1}^{fh}}{\partial t}, v_f \Big)
+ a_f(e_{n+1}^{fh}, v_f)
+ \frac{\sigma k_m}{\tilde{\mu}}(e_{n+1}^{fh}-e_n^{mh},v_f)
+ \frac{\sigma^{\ast} k_f}{\tilde{\mu}}(e_{n+1}^{fh}-e_n^{Fh},v_F) \nonumber\\
&= (q_F(t_{n+1}), v_F)
- \Big[ \phi_f C_f \Big( \frac{\partial p_{fH}^{n+1}}{\partial t}, v_f \Big)
+ a_f(p_{fH}^{n+1}, v_f)
+ \frac{\sigma k_m}{\tilde{\mu}}(p_{fH}^{n+1}-p_{mH}^n,v_f) \nonumber\\
&~~~~+ \frac{\sigma^{\ast} k_f}{\tilde{\mu}}(p_{fH}^{n+1}-p_{FH}^n, v_F)
\Big],~~~~~~~~~~~~~~~~~~~~~~~~~~~~~~~~~~~~~~~~~~~~~~~~~~~~~~~~~~~~ \label{LocalFully_pf}
\end{align}
\begin{align}
&\phi_m C_m \Big( \frac{\partial e_{n+1}^{mh}}{\partial t}, v_m \Big)
+ a_m(e_{n+1}^{mh}, v_m)
+ \frac{\sigma k_m}{\tilde{\mu}}(e_{n+1}^{mh}-e_n^{fh},v_m)
= (q_m(t_{n+1}), v_m)\nonumber\\
&~~~~- \Big[ \phi_m C_m \Big( \frac{\partial p_{mH}^{n+1}}{\partial t}, v_m \Big)
+ a_m(p_{mH}^{n+1}, v_m)
+ \frac{\sigma k_m}{\tilde{\mu}}(p_{mH}^{n+1}-p_{fH}^n,v_m)
\Big].~~~~~~~~~~~~~~\label{LocalFully_pm}
\end{align}
(2.2) The fully discrete conduit system in local subdomain $\Omega_{c0}$
\begin{align}
&\eta \Big( \frac{e_{n+1}^{ch}-e_{n}^{ch}}{\Delta t}, \vec{v}_c \Big)
+ a_{c\eta}(e_{n+1}^{ch}, \vec{v}_c )
+ b_{\eta}(\vec{v}_c, \xi_{n+1}^h)
- b_{\eta}(e_{n+1}^{ch}, q)
+ b_{N\eta}(e_{n+1}^{ch}, \vec{u}_{cH}^{n+1}, \vec{v}_c) \nonumber\\
&+ b_{N\eta}(\vec{u}_{cH}^{n+1}, e_{n+1}^{ch}, \vec{v}_c) \nonumber\\
&=\eta (\vec{f}_c(t_{n+1}),\vec{v}_c)
- \Big[
\eta \Big( \frac{\vec{u}_{cH}^{n+1}-\vec{u}_{cH}^{n}}{\Delta t}, \vec{v}_c \Big)
+ a_{c\eta}(\vec{u}_{cH}^{n+1}, \vec{v}_c)
+ b_{\eta}(\vec{v}_c, p_{H}^{n+1})
- b_{\eta}(\vec{u}_{cH}^{n+1}, q) \nonumber\\
&+ b_{N\eta}(\vec{u}_{cH}^{n+1},\vec{u}_{cH}^{n+1},\vec{v}_c)
\Big]
-\frac{\eta}{\rho}\langle p_{FH}^{n}, \vec{v}_c \cdot \vec{n}_c \rangle_{\Gamma \Omega_{c0}}
-\frac{\eta \nu \alpha \sqrt{k_F}}{\tilde{\mu}} \langle \nabla_{\tau} p_{FH}^{n}, P_{\tau} \vec{v}_c  \rangle_{\Gamma \Omega_{c0}}. \label{LocalFully_uc}
\end{align}

\noindent \textbf{Step 3.} Correction:
\begin{align*}
&p_{n+1}^{Fh}|_{D_p}=p_{FH}^{n+1} + e_{n+1}^{Fh}|_{D_p},~~
p_{n+1}^{fh}|_{D_p}=p_{fH}^{n+1} + e_{n+1}^{fh}|_{D_p},~~
p_{n+1}^{mh}|_{D_p}=p_{mH}^{n+1} + e_{n+1}^{mh}|_{D_p},\\
&\vec{u}^{ch}_{n+1}|_{D_c}=\vec{u}_{cH}^{n+1} + e_{n+1}^{ch}|_{D_c},~~
p_{n+1}^h|_{D_c}=p_H^{n+1} + \xi_{n+1}^h|_{D_c}.
\end{align*}

In the following, we will present some error estimates based on the fully discrete local finite element algorithm. Some results in whole domain are first given. Assume that $\boldsymbol{u}_{\mu}^{n+1}=[p_{F\mu}^{n+1}, p_{f\mu}^{n+1}, p_{m\mu}^{n+1}, \vec{u}_{c\mu}^{n+1}]^T, p_{\mu}^{n+1}$ and
$\boldsymbol{u}_{\mu}(t_{n+1})=[p_{F\mu}(t_{n+1}), p_{f\mu}(t_{n+1}), p_{m\mu}(t_{n+1}), \vec{u}_{c\mu}(t_{n+1})]^T,p_{\mu}$\\
$(t_{n+1}) (\mu=h, H)$ are obtained from
\eqref{fullypF}-\eqref{fullyuc} and \eqref{CoupledSemiFormu}, respectively.
Defining
\begin{align*}
&E_{F\mu}^{n+1}=p_{F\mu}^{n+1}-p_{F\mu}(t_{n+1}),
~~~~E_{f\mu}^{n+1}=p_{f\mu}^{n+1}-p_{f\mu}(t_{n+1}),
~~~~E_{m\mu}^{n+1}=p_{m\mu}^{n+1}-p_{m\mu}(t_{n+1}),\\
&E_{c\mu}^{n+1}=\vec{u}_{c\mu}^{n+1} - \vec{u}_{c\mu}(t_{n+1}),
~~~~\delta_{\mu}^{n+1}=p_{\mu}^{n+1}-p_{\mu}(t_{n+1}),
\end{align*}
the following bound holds.

\begin{lemma}\label{LemSemitime}
Under the boundedness of \eqref{BDness} and the rescaling factor $\eta$ satisfies the condition
$\eta \leq \frac{C_K \tilde{\mu}}{6\nu \alpha^2}$,
it is valid for $k=1,2,...,N$ that
\begin{align}
&\eta \|E_{c\mu}^{k}\|_0^2
+\phi_F C_F \|E_{F\mu}^{k}\|_0^2
+\phi_f C_f \|E_{f\mu}^{k}\|_0^2
+\phi_m C_m \|E_{m\mu}^{k}\|_0^2
+ \eta \sum_{n=0}^{k-1}\|E_{c\mu}^{n+1}-E_{c\mu}^{n}\|_0^2 \nonumber\\
&+\phi_F C_F \sum_{n=0}^{k-1}\|E_{F\mu}^{n+1}-E_{F\mu}^{n}\|_0^2
+\phi_f C_f \sum_{n=0}^{k-1}\|E_{f\mu}^{n+1}-E_{f\mu}^n\|_0^2
+\sum_{n=0}^{k-1}\phi_m C_m \|E_{m\mu}^{n+1}-E_{m\mu}^{n}\|_0^2 \nonumber\\
&+2\nu \eta C_K \Delta t  |E_{c\mu}^{k}|_1^2
+\frac{k_F \Delta t}{\tilde{\mu}}|E_{F\mu}^{k}|_1^2
+\frac{2k_f\Delta t}{\tilde{\mu}}\sum_{n=0}^{k-1} |E_{f\mu}^{n+1}|_1^2
+\frac{2k_m \Delta t}{\tilde{\mu}} \sum_{n=0}^{k-1}|E_{m\mu}^{n+1}|_1^2 \nonumber\\
&\leq c \Delta t^2. \label{LemSemitimeformu}
\end{align}
\end{lemma}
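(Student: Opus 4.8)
The plan is to compare the backward--Euler scheme \eqref{fullypF}--\eqref{fullyuc} on the grid level $\mu$ with the space--semidiscrete problem \eqref{CoupledSemiFormu} written at $t=t_{n+1}$, and to close the resulting energy inequality with the discrete Gronwall inequality of Lemma \ref{LemGronwall}. Since \eqref{CoupledSemiFormu} holds for every $t$, evaluating it at $t_{n+1}$ gives the semidiscrete equations with the \emph{exact} derivative $\frac{\partial}{\partial t}\boldsymbol{u}_\mu(t_{n+1})$ and with all coupling and interface data taken at $t_{n+1}$. Subtracting \eqref{fullypF}--\eqref{fullyuc} and inserting $E_{\ast\mu}^{n+1}=\ast_\mu^{n+1}-\ast_\mu(t_{n+1})$ produces, for each unknown, an evolution equation for $E^{n+1}$ whose right--hand side is the sum of three consistency contributions: (i) the backward--Euler truncation $\frac{\ast_\mu(t_{n+1})-\ast_\mu(t_n)}{\Delta t}-\frac{\partial}{\partial t}\ast_\mu(t_{n+1})$; (ii) the partitioned \emph{lag} terms produced by evaluating the cross--couplings $p_{f\mu}^{n},p_{m\mu}^{n},p_{F\mu}^{n}$ and the interface data $\vec{u}_{c\mu}^{n},p_{F\mu}^{n}$ at level $n$ instead of $n+1$; and (iii) the linearised convective remainder.

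I would then test the three pressure error equations with $v_F=E_{F\mu}^{n+1}$, $v_f=E_{f\mu}^{n+1}$, $v_m=E_{m\mu}^{n+1}$ and the conduit error equation with $(\vec{v}_c,q)=(E_{c\mu}^{n+1},\delta_\mu^{n+1})$, and add them. The choice $q=\delta_\mu^{n+1}$ makes the two couplings $b_\eta(E_{c\mu}^{n+1},\delta_\mu^{n+1})$ cancel, so the pressure error leaves the identity. For each discrete time derivative I use the polarisation identity $(\frac{E^{n+1}-E^{n}}{\Delta t},E^{n+1})=\frac{1}{2\Delta t}\big(\|E^{n+1}\|_0^2-\|E^{n}\|_0^2+\|E^{n+1}-E^{n}\|_0^2\big)$; after multiplying by $\Delta t$ and summing, this yields exactly the telescoped $L^2$ terms $\eta\|E_{c\mu}^{k}\|_0^2$, $\phi_iC_i\|E_{i\mu}^{k}\|_0^2$ and the jump sums $\sum\|E^{n+1}-E^{n}\|_0^2$ in \eqref{LemSemitimeformu}. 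The diffusive forms together with the Korn inequality \eqref{Korn} supply the coercive contributions $2\nu\eta C_K|E_{c\mu}^{n+1}|_1^2$ and $\frac{k_i}{\tilde{\mu}}|E_{i\mu}^{n+1}|_1^2$, which become the $H^1$ terms on the left of \eqref{LemSemitimeformu}, while the symmetric parts of $a_{mf}$ and $a_{fF}$ are nonnegative and are kept on the left.

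The bulk of the work is to estimate the right--hand side. Contribution (i) is controlled by the integral remainder, e.g. $\big\|\frac{\vec{u}_{c\mu}(t_{n+1})-\vec{u}_{c\mu}(t_n)}{\Delta t}-\frac{\partial}{\partial t}\vec{u}_{c\mu}(t_{n+1})\big\|_0^2\le c\,\Delta t\int_{t_n}^{t_{n+1}}\|\frac{\partial^2}{\partial t^2}\vec{u}_{c\mu}\|_0^2\,\mathrm{d}t$, so after Cauchy--Schwarz, multiplication by $\Delta t$ and summation it contributes $c\,\Delta t^2$ through the regularity \eqref{BDness}. Contribution (ii) is split as $\ast_\mu^{n}-\ast_\mu(t_{n+1})=E_{\ast\mu}^{n}-\int_{t_n}^{t_{n+1}}\frac{\partial}{\partial t}\ast_\mu\,\mathrm{d}t$; the time increment is again $O(\Delta t)$ and absorbed into the $c\,\Delta t^2$ budget, while the previous--step error $E_{\ast\mu}^{n}$ is treated by Young's inequality and feeds the Gronwall sum. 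For (iii) I write $\vec{u}_{c\mu}^{n+1}=\vec{u}_{c\mu}(t_{n+1})+E_{c\mu}^{n+1}$ and expand the difference of trilinear terms; by the skew--symmetry in (A8) the two terms carrying $E_{c\mu}^{n+1}$ in the last two slots vanish, leaving only $b_{N\eta}(E_{c\mu}^{n+1},\vec{u}_{c\mu}(t_{n+1}),E_{c\mu}^{n+1})$, which the second estimate of (A8), Young's inequality and $|\vec{u}_{c\mu}(t_{n+1})|_1\le C_B$ bound by $\varepsilon\eta|E_{c\mu}^{n+1}|_1^2+c\|E_{c\mu}^{n+1}\|_0^2$.

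I expect the main obstacle to be the interface couplings $\frac{\eta}{\rho}\langle p_{F\mu}^{n}-p_{F\mu}(t_{n+1}),E_{c\mu}^{n+1}\cdot\vec{n}_c\rangle_\Gamma$, the Beavers--Joseph term $\frac{\eta\nu\alpha\sqrt{k_F}}{\tilde{\mu}}\langle\nabla_\tau(p_{F\mu}^{n}-p_{F\mu}(t_{n+1})),P_\tau E_{c\mu}^{n+1}\rangle_\Gamma$, and the dual term $-\langle E_{F\mu}^{n+1},(\vec{u}_{c\mu}^{n}-\vec{u}_{c\mu}(t_{n+1}))\cdot\vec{n}_c\rangle_\Gamma$ coming from the $p_F$ equation. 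Using the trace inequality (A3) and the $H^{1/2}/H^{-1/2}$ duality on $\Gamma$ exactly as in the proof of Lemma \ref{Lem1}, these are bounded by $c\eta|E_{F\mu}^{n}|_1|E_{c\mu}^{n+1}|_1$ plus $O(\Delta t)$ lag terms, and Young's inequality splits them as $\varepsilon\,2\nu\eta C_K|E_{c\mu}^{n+1}|_1^2+\frac{c\eta}{\varepsilon}|E_{F\mu}^{n}|_1^2$. It is precisely here that the smallness condition $\eta\le\frac{C_K\tilde{\mu}}{6\nu\alpha^2}$ is needed: it keeps the coefficient multiplying $|E_{c\mu}^{n+1}|_1^2$ below the coercivity constant $2\nu\eta C_K$, so that the interface terms are absorbed on the left and the surviving $|E_{F\mu}|_1^2$ is in turn controlled by the diffusive coercivity $\frac{k_F}{\tilde{\mu}}|E_{F\mu}|_1^2$ (this is also why, after telescoping, the coupled pair $E_{c\mu},E_{F\mu}$ retains only its final--step $H^1$ norm, whereas the uncoupled $E_{f\mu},E_{m\mu}$ keep their full summed coercivity). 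Collecting the absorbed terms, multiplying by $\Delta t$ and summing over $n=0,\dots,k-1$ with $E_{\ast\mu}^{0}=0$, the residual $\|E^{n}\|_0^2$ terms match the hypotheses of Lemma \ref{LemGronwall} with $\Delta t\sum C_m=c\,\Delta t^2$; applying it yields \eqref{LemSemitimeformu}.
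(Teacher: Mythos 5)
Your proposal is correct and follows essentially the same route as the paper's proof: subtract the scheme \eqref{fullypF}--\eqref{fullyuc} from the semidiscrete equations at $t_{n+1}$ written with a difference quotient plus Taylor integral remainder, test with $(2\Delta t\,E_{c\mu}^{n+1},2\Delta t\,\delta_\mu^{n+1})$ and $2\Delta t\,E_{i\mu}^{n+1}$, use the polarisation identity and skew-symmetry of $b_{N\eta}$, bound the lag and interface terms via the trace inequality and the $H^{1/2}/H^{-1/2}$ duality as in Lemma \ref{Lem1}, absorb the resulting $|E_{F\mu}^n|_1^2$ and $|E_{c\mu}^n|_1^2$ terms into the diffusive and viscous coercivity (which is precisely where $\eta\le\frac{C_K\tilde{\mu}}{6\nu\alpha^2}$ enters, and why only the final-step $H^1$ norms of $E_{c\mu},E_{F\mu}$ survive), and close with the discrete Gronwall inequality. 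The only gloss is that the right-hand-side norms involve the \emph{semidiscrete} solutions $\boldsymbol{u}_\mu$ rather than $\boldsymbol{u}$, so the bound $c\Delta t^2$ requires combining \eqref{BDness} with Lemma \ref{LemsemiConvergence} and the triangle inequality, exactly as the paper notes in its final step.
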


\begin{proof}
For $(\vec{v}_c,q )\in X_c^{\mu} \times Q^{\mu}$, taking $[v_F, v_f, v_m]=\boldsymbol{0}$ in \eqref{CoupledSemiFormu} and using the Taylor expansion with the integral remainder, we have
\begin{align}
&\eta \Big( \frac{\vec{u}_{c\mu}(t_{n+1})-\vec{u}_{c\mu}(t_n)}{\Delta t},\vec{v}_c \Big)
+a_{c\eta}(\vec{u}_{c\mu}(t_{n+1}),\vec{v}_c)
+b_{\eta}(\vec{v}_c,p_{\mu}(t_{n+1}))
-b_{\eta}(\vec{u}_{c\mu}(t_{n+1}),q) \nonumber\\
&+b_{N\eta}(\vec{u}_{c\mu}(t_{n+1}),\vec{u}_{c\mu}(t_{n+1}),\vec{v}_c)
+\frac{\eta}{\rho}\langle p_{F\mu}(t_{n+1}),\vec{v}_c \cdot \vec{n}_c \rangle_{\Gamma}
+\frac{\eta \nu \alpha \sqrt{k_F}}{\tilde{\mu}}\langle \nabla_{\tau} p_{F\mu}(t_{n+1}),P_{\tau} \vec{v}_c \rangle_{\Gamma} \nonumber\\
&=\eta(\vec{f}_c(t_{n+1}),\vec{v}_c)
+\frac{1}{\Delta t}\int_{t_n}^{t_{n+1}} (t_{n+1} -t )\Big( \frac{\partial^2 \vec{u}_{c\mu}}{\partial t^2}(t),\vec{v}_c\Big)\mathrm{d}t. \label{ucmu}
\end{align}
Subtracting \eqref{ucmu} from
\eqref{fullyuc}, we obtain
\begin{align}
&\eta \Big( \frac{E_{c\mu}^{n+1} - E_{c\mu}^n}{\Delta t},\vec{v}_c\Big)
+a_{c\eta}(E_{c\mu}^{n+1},\vec{v}_c)
+b_{\eta}(\vec{v}_c,\delta_{\mu}^{n+1})
-b_{\eta}(E_{c\mu}^{n+1},q)
+b_{N\eta}(E_{c\mu}^{n+1},\vec{u}_{c\mu}(t_{n+1}),\vec{v}_c)\nonumber\\
&+b_{N\eta}(\vec{u}_{c\mu}(t_{n+1}),E_{c\mu}^{n+1},\vec{v}_c)
+b_{N\eta}(E_{c\mu}^{n+1},E_{c\mu}^{n+1},\vec{v}_c)
+\frac{\eta}{\rho}\langle E_{F\mu}^n + p_{F\mu}(t_n)-p_{F\mu}(t_{n+1}),\vec{v}_c \cdot \vec{n}_c \rangle_{\Gamma}\nonumber\\
&+\frac{\eta \nu \alpha \sqrt{k_F}}{\tilde{\mu}}
\langle  \nabla_{\tau}(E_{F\mu}^n + p_{F\mu}(t_n)-p_{F\mu}(t_{n+1})),P_{\tau} \vec{v}_c \rangle_{\Gamma}\nonumber\\
&=\frac{1}{\Delta t}\int_{t_n}^{t_{n+1}} (t-t_{n+1})\Big( \frac{\partial^2 \vec{u}_{c\mu}}{\partial t^2}(t),\vec{v}_c\Big)\mathrm{d}t. \label{UsePropety1}
\end{align}
Taking $(\vec{v}_c,q)=(2\Delta t E_{c\mu}^{n+1},2\Delta t \delta_{\mu}^{n+1})$ in \eqref{UsePropety1} and using $2(a-b,a)=|a|^2-|b|^2+|a-b|^2$ and the skew-symmetrized property $b_{N\eta}(\vec{u}_{c\mu},\vec{v}_c,\vec{v}_c)=0$, we get
\begin{align}
&\eta \|E_{c\mu}^{n+1}\|_0^2 - \eta \|E_{c\mu}^{n}\|_0^2 + \eta \|E_{c\mu}^{n+1}-E_{c\mu}^{n}\|_0^2
+ 4\nu \eta \Delta t \|\mathbb{D}(E_{c\mu}^{n+1})\|_0^2 \nonumber\\
&=2\int_{t_n}^{t_{n+1}} (t-t_{n+1})\Big( \frac{\partial^2 \vec{u}_{c\mu}}{\partial t^2}(t),E_{c\mu}^{n+1} \Big)\mathrm{d}t
-b_{N\eta}(E_{c\mu}^{n+1},\vec{u}_{c\mu}(t_{n+1}),2\Delta t E_{c\mu}^{n+1}) \nonumber\\
&~~~-\frac{2 \eta \Delta t}{\rho} \langle E_{F\mu}^n + p_{F\mu}(t_n)-p_{F\mu}(t_{n+1}),E_{c\mu}^{n+1} \cdot \vec{n}_c \rangle_{\Gamma} \nonumber\\
&~~~-\frac{2\Delta t \eta \nu \alpha \sqrt{k_F}}{\tilde{\mu}}\langle \nabla_{\tau}(E_{F\mu}^n+p_{F\mu}(t_n)-p_{F\mu}(t_{n+1})),P_{\tau} E_{c\mu}^{n+1} \rangle_{\Gamma} \nonumber\\
&:=T_1+T_2+T_3+T_4. \label{TFour}
\end{align}
Utilizing the H$\ddot{\mathrm{o}}$lder inequality, $T_1$ is bounded by
\begin{align*}
T_1 &\leq 2\Big[\int_{t_n}^{t_{n+1}} (t-t_{n+1})^2\mathrm{d}t \Big]^{1/2}
\Big[\int_{t_n}^{t_{n+1}} \Big|\Big( \frac{\partial^2 \vec{u}_{c\mu}}{\partial t^2}(t),E_{c\mu}^{n+1} \Big) \Big|^2 \mathrm{d}t \Big]^{1/2}\\
&\leq \frac{2\Delta t^{3/2}}{3} \Big( \int_{t_n}^{t_{n+1}} \Big\| \frac{\partial^2 \vec{u}_{c\mu}}{\partial t^2}(t)\Big\|_0^2 \mathrm{d}t \Big)^{1/2} \|E_{c\mu}^{n+1}\|_0\\
&\leq c\Delta t^2 \int_{t_n}^{t_{n+1}} \Big\| \frac{\partial^2 \vec{u}_{c\mu}}{\partial t^2}(t) \Big\|_0^2 \mathrm{d}t
+ \frac{\eta C_P^2 C_B^2 \Delta t}{\nu C_K} \|E_{c\mu}^{n+1}\|_0^2.
~~~~~~~~~~~~~~~~~~~~~~~~~~
\end{align*}
By applying the H$\ddot{\mathrm{o}}$lder inequality, Poinc$\acute{\mathrm{a}}$re inequality, Sobolev's imbedding property $W^{1,2}(\Omega_c)$\\
$\hookrightarrow L^6(\Omega_c)$ and the boundedness assumption in \eqref{BDness}, we have
\begin{align*}
T_2 &\leq 2 \eta \Delta t \|E_{c\mu}^{n+1}\|_{L^6} \|\vec{u}_{c\mu}(t_{n+1})\|_{W^{1,3}}
\|E_{c\mu}^{n+1}\|_0\\
&\leq 2 C_P C_B \eta \Delta t |E_{c\mu}^{n+1}|_1 \|E_{c\mu}^{n+1}\|_0\\
&\leq \nu \eta C_K \Delta t |E_{c\mu}^{n+1}|_1^2
+\frac{\eta C_P^2 C_B^2 \Delta t}{\nu C_K}\|E_{c\mu}^{n+1}\|_0^2.~~~~~~~~~~~~~~~~~~~~~~~~~~~~~~~~~~~~~
\end{align*}
For the interface terms, repeating the same proof in Lemma \ref{Lem1} and using the trace inequality, we derive that
\begin{align*}
T_3
&\leq  \frac{2\eta C_t^2 \Delta t}{\rho} \|E_{F\mu}^n\|_0^{1/2} |E_{F\mu}^n|_1^{1/2} \|E_{c\mu}^{n+1}\|_0^{1/2} |E_{c\mu}^{n+1}|_1^{1/2}\\
&~~~+ \frac{2\eta C_t^2 \Delta t}{\rho}\|p_{F\mu}(t_n)-p_{F\mu}(t_{n+1})\|_1 |E_{c\mu}^{n+1}|_1\\
&\leq \frac{\eta C_t^4 \Delta t}{\nu C_K \rho^2}\|E_{F\mu}^n\|_0^2
+\frac{2\eta \nu \alpha^2 k_F\Delta t}{C_K \tilde{\mu}^2}|E_{F\mu}^n|_1^2
+\frac{\eta C_K \tilde{\mu}^2 C_t^4 \Delta t}{8\nu k_F\rho^2 \alpha^2}\|E_{c\mu}^{n+1}\|_0^2 \\
&~~~+ c\Delta t^2 \int_{t_n}^{t_{n+1}}\Big\| \frac{\partial p_{F\mu}}{\partial t}(t) \Big\|_1^2 \mathrm{d}t
+ \frac{\nu \eta C_K \Delta t}{2} |E_{c\mu}^{n+1}|_1^2,~~~~~~~~~~~~~~~~~~~~~~~~~
\end{align*}
and
\begin{align*}
T_4
&\leq \frac{2\Delta t \eta \nu \alpha \sqrt{k_F}}{\tilde{\mu}} \|\nabla_{\tau} E_{F\mu}^n\|_{H^{-1/2}(\Gamma)}\|P_{\tau} E_{c\mu}^{n+1}\|_{H_{00}^{1/2}(\Gamma)}\\
&~~~+\frac{2\Delta t \eta \nu \alpha \sqrt{k_F}}{\tilde{\mu}} \|\nabla_{\tau}(p_{F\mu}(t_n)-p_{F\mu}(t_{n+1}))\|_{H^{-1/2}(\Gamma)}\|P_{\tau} E_{c\mu}^{n+1}\|_{H_{00}^{1/2}(\Gamma)}\\
&\leq \frac{2\Delta t \eta \nu \alpha \sqrt{k_F}}{\tilde{\mu}} \|E_{F\mu}^n\|_{H^{1/2}(\Gamma)}\| E_{c\mu}^{n+1}\|_{H_{00}^{1/2}(\Gamma)}\\
&~~~+\frac{2\Delta t \eta \nu \alpha \sqrt{k_F}}{\tilde{\mu}} \|p_{F\mu}(t_n)-p_{F\mu}(t_{n+1})\|_{H^{1/2}(\Gamma)}\| E_{c\mu}^{n+1}\|_{H_{00}^{1/2}(\Gamma)}\\
&\leq \frac{4\eta \nu \alpha^2 k_F \Delta t }{C_K \tilde{\mu}^2}|E_{F\mu}^{n}|_1^2
+c\Delta t^2 \int_{t_n}^{t_{n+1}}\Big\| \frac{\partial p_{F\mu}}{\partial t}(t) \Big\|_1^2 \mathrm{d}t
+\frac{\nu \eta C_K \Delta t}{2} |E_{c\mu}^{n+1}|_1^2.
\end{align*}
Collecting the estimates of $T_1 - T_4$ and using the Korn inequality, we deduce that
\begin{align}
&\eta \|E_{c\mu}^{n+1}\|_0^2 - \eta \|E_{c\mu}^{n}\|_0^2 + \eta \|E_{c\mu}^{n+1}-E_{c\mu}^{n}\|_0^2
+ 2\nu \eta C_K \Delta t |E_{c\mu}^{n+1}|_1^2 \nonumber\\
&\leq \Big( \frac{2\eta C_P^2 C_B^2 \Delta t}{\nu C_K}+
\frac{\eta C_K \tilde{\mu}^2 C_t^4 \Delta t}{8\nu k_F\rho^2 \alpha^2} \Big)\|E_{c\mu}^{n+1}\|_0^2
+ c\Delta t^2 \int_{t_n}^{t_{n+1}} \Big\| \frac{\partial^2 \vec{u}_{c\mu}}{\partial t^2}(t) \Big\|_0^2 \mathrm{d}t \nonumber\\
&~~~+ c\Delta t^2 \int_{t_n}^{t_{n+1}} \Big\| \frac{\partial p_{F\mu}}{\partial t}(t) \Big\|_1^2 \mathrm{d}t
+ \frac{6\eta \nu \alpha^2 k_F\Delta t}{C_K \tilde{\mu}^2} |E_{F\mu}^n|_1^2
+\frac{\eta C_t^4 \Delta t}{\nu C_K \rho^2}\|E_{F\mu}^n\|_0^2. \label{Ecmu}
\end{align}
Similar to obtain the estimation of $E_{c\mu}^{n+1}$, we estimate $E_{F\mu}^{n+1}, E_{f\mu}^{n+1}$ and $E_{m\mu}^{n+1}$ as follows
\begin{align}
&\phi_F C_F \|E_{F\mu}^{n+1}\|_0^2
-\phi_F C_F \|E_{F\mu}^{n}\|_0^2
+\phi_F C_F \|E_{F\mu}^{n+1}-E_{F\mu}^{n}\|_0^2
+\frac{k_F \Delta t}{\tilde{\mu}} |E_{F\mu}^{n+1}|_1^2 \nonumber\\
&=2\int_{t_n}^{t_{n+1}} (t-t_{n+1})\Big( \frac{\partial^2 p_{F\mu}}{\partial t^2}(t),E_{F\mu}^{n+1} \Big)\mathrm{d}t
+ 2\Delta t \langle (E_{c\mu}^n + \vec{u}_{c\mu}(t_n) - \vec{u}_{c\mu}(t_{n+1}))\cdot \vec{n}_c , E_{F\mu}^{n+1} \rangle_{\Gamma} \nonumber\\
&~~~-\frac{2\sigma^{\ast}k_f \Delta t}{\mu}(E_{F\mu}^{n+1}-E_{f\mu}^n+p_{f\mu}(t_{n+1})-p_{f\mu}(t_n),E_{F\mu}^{n+1}) \nonumber\\
&\leq \frac{4\sigma^{\ast}k_f \Delta t}{\mu}\|E_{F\mu}^{n+1}\|_0^2
+ c\Delta t^2 \int_{t_n}^{t_{n+1}} \Big\| \frac{\partial^2 p_{F\mu}}{\partial t^2}(t) \Big\|_0^2 \mathrm{d}t
+ 2\nu \eta C_K \Delta t |E_{c\mu}^n|_1^2
+ \frac{\tilde{\mu}^2 C_t^8 \Delta t}{2 \nu \eta C_K k_F^2}\|E_{c\mu}^n\|_0^2 \nonumber\\
&+c\Delta t^2 \int_{t_n}^{t_{n+1}}\Big\| \frac{\partial \vec{u}_{c\mu}}{\partial t}(t) \Big\|_1^2 \mathrm{d}t
+c\Delta t^2 \int_{t_n}^{t_{n+1}}\Big\| \frac{\partial^2 p_{f\mu}}{\partial t^2}(t) \Big\|_0^2 \mathrm{d}t
+\frac{2\sigma^{\ast}k_f \Delta t}{\mu}\|E_{f\mu}^{n}\|_0^2, \label{EFmu}
\end{align}
\begin{align}
&\phi_f C_f \|E_{f\mu}^{n+1}\|_0^2
-\phi_f C_f \|E_{f\mu}^{n}\|_0^2
+\phi_f C_f \|E_{f\mu}^{n+1}-E_{f\mu}^n\|_0^2
+\frac{2k_f\Delta t}{\tilde{\mu}} |E_{f\mu}^{n+1}|_1^2 \nonumber\\
&=2\int_{t_n}^{t_{n+1}} (t-t_{n+1})\Big( \frac{\partial^2 p_{f\mu}}{\partial t^2}(t),E_{f\mu}^{n+1} \Big)\mathrm{d}t \nonumber\\
&~~~-\frac{2\sigma k_m \Delta t}{\tilde{\mu}}(E_{f\mu}^{n+1}-E_{m\mu}^n
+p_{m\mu}(t_{n+1})-p_{m\mu}(t_n),E_{f\mu}^{n+1}) \nonumber\\
&~~~-\frac{2\sigma^{\ast}k_f \Delta t}{\tilde{\mu}}(E_{f\mu}^{n+1}-E_{F\mu}^n+p_{F\mu}(t_{n+1})-p_{F\mu}(t_n),E_{f\mu}^{n+1}) \nonumber\\
&\leq \frac{4(\sigma k_m+\sigma^{\ast} k_f) \Delta t}{\tilde{\mu}}\|E_{f\mu}^{n+1}\|_0^2
+ c\Delta t^2 \int_{t_n}^{t_{n+1}} \Big\| \frac{\partial^2 p_{f\mu}}{\partial t^2}(t) \Big\|_0^2 \mathrm{d}t
+\frac{2\sigma k_m \Delta t}{\tilde{\mu}} \|E_{m\mu}^n\|_0^2 \nonumber\\
&+\frac{2\sigma^{\ast} k_f \Delta t}{\tilde{\mu}} \|E_{F\mu}^n\|_0^2
+c\Delta t^2 \int_{t_n}^{t_{n+1}} \Big\| \frac{\partial^2 p_{m\mu}}{\partial t^2}(t) \Big\|_0^2 \mathrm{d}t
+c\Delta t^2 \int_{t_n}^{t_{n+1}} \Big\| \frac{\partial^2 p_{F\mu}}{\partial t^2}(t) \Big\|_0^2 \mathrm{d}t, \label{Efmu}~~~
\end{align}
and
\begin{align}
&\phi_m C_m \|E_{m\mu}^{n+1}\|_0^2
-\phi_m C_m \|E_{m\mu}^{n}\|_0^2
+\phi_m C_m \|E_{m\mu}^{n+1}-E_{m\mu}^{n}\|_0^2
+\frac{2k_m \Delta t}{\tilde{\mu}} |E_{m\mu}^{n+1}|_1^2 \nonumber\\
&=2\int_{t_n}^{t_{n+1}} (t-t_{n+1})\Big( \frac{\partial^2 p_{m\mu}}{\partial t^2}(t),E_{m\mu}^{n+1} \Big)\mathrm{d}t \nonumber\\
&~~~-\frac{2\sigma k_m \Delta t}{\tilde{\mu}}(E_{m\mu}^{n+1}-E_{f\mu}^n+p_{f\mu}(t_{n+1})-p_{f\mu}(t_n),E_{m\mu}^{n+1})\nonumber\\
&\leq \frac{4\sigma k_m \Delta t}{\tilde{\mu}} \|E_{m\mu}^{n+1}\|_0^2
+c\Delta t^2 \int_{t_n}^{t_{n+1}} \Big\| \frac{\partial^2 p_{m\mu}}{\partial t^2}(t) \Big\|_0^2 \mathrm{d}t
+\frac{2\sigma k_m \Delta t}{\tilde{\mu}}\|E_{f\mu}^n\|_0^2 \nonumber\\
&+c\Delta t^2 \int_{t_n}^{t_{n+1}} \Big\| \frac{\partial^2 p_{f\mu}}{\partial t^2}(t) \Big\|_0^2 \mathrm{d}t. \label{Emmu}
~~~~~~~~~~~~~~~~~~~~~~~~~~~~~~~~~~~~~~~~~~~~~~~~~~~~~~~~~~~~~
\end{align}
Combining \eqref{Ecmu}-\eqref{Emmu} and summing it from $n=0$ to $n=k-1(k=1,2,...,N)$, we obtain
\begin{align*}
&\eta \|E_{c\mu}^{k}\|_0^2 - \eta \|E_{c\mu}^{0}\|_0^2 + \eta \sum_{n=0}^{k-1}\|E_{c\mu}^{n+1}-E_{c\mu}^{n}\|_0^2
+\phi_F C_F \|E_{F\mu}^{k}\|_0^2
-\phi_F C_F \|E_{F\mu}^{0}\|_0^2 \\
&+\phi_F C_F \sum_{n=0}^{k-1}\|E_{F\mu}^{n+1}-E_{F\mu}^{n}\|_0^2
+\phi_f C_f \|E_{f\mu}^{k}\|_0^2
-\phi_f C_f \|E_{f\mu}^{0}\|_0^2
+\phi_f C_f \sum_{n=0}^{k-1}\|E_{f\mu}^{n+1}-E_{f\mu}^n\|_0^2\\
&+\phi_m C_m \|E_{m\mu}^{k}\|_0^2
-\phi_m C_m \|E_{m\mu}^{0}\|_0^2
+\sum_{n=0}^{k-1}\phi_m C_m \|E_{m\mu}^{n+1}-E_{m\mu}^{n}\|_0^2
+2\nu \eta C_K \Delta t ( |E_{c\mu}^{k}|_1^2-|E_{c\mu}^{0}|_1^2 )\\
&+\frac{k_F \Delta t}{\tilde{\mu}} (|E_{F\mu}^{k}|_1^2
-|E_{F\mu}^0|_1^2 )
+\frac{2k_f\Delta t}{\tilde{\mu}}\sum_{n=0}^{k-1} |E_{f\mu}^{n+1}|_1^2
+\frac{2k_m \Delta t}{\tilde{\mu}} \sum_{n=0}^{k-1}|E_{m\mu}^{n+1}|_1^2\\
&\leq \Big( \frac{2C_P^2 C_B^2 \Delta t}{\nu C_K}+
\frac{C_K \tilde{\mu}^2 C_t^4 \Delta t}{8\nu k_F\rho^2 \alpha^2} +
\frac{\tilde{\mu}^2 C_t^8 \Delta t}{2 \nu \eta^2 C_K k_F^2} \Big)
\sum_{n=0}^{k-1}\eta(\|E_{c\mu}^{n+1}\|_0^2 + \|E_{c\mu}^n\|_0^2) \\
&~~~+\Big(\frac{6\sigma^{\ast}k_f \Delta t}{\tilde{\mu} \phi_F C_F}
+\frac{\eta C_t^4 \Delta t}{\nu C_K \phi_F C_F \rho^2}\Big)
\sum_{n=0}^{k-1}\phi_F C_F(\|E_{F\mu}^{n+1}\|_0^2+\|E_{F\mu}^n\|_0^2)\\
&~~~+\frac{6(\sigma k_m+\sigma^{\ast} k_f) \Delta t}{\tilde{\mu}\phi_f C_f}
\sum_{n=0}^{k-1}\phi_f C_f(\|E_{f\mu}^{n+1}\|_0^2+\|E_{f\mu}^{n}\|_0^2)
+\frac{6\sigma k_m \Delta t}{\tilde{\mu}\phi_m C_m} \sum_{n=0}^{k-1}\phi_m C_m(\|E_{m\mu}^{n+1}\|_0^2+\|E_{m\mu}^{n}\|_0^2)\\
&~~~+ c\Delta t^2  \Big\| \frac{\partial^2 \vec{u}_{c\mu}}{\partial t^2}(t) \Big\|_{L^2(0,T;L^2)}^2
+c\Delta t^2 \Big\| \frac{\partial^2 p_{F\mu}}{\partial t^2}(t) \Big\|_{L^2(0,T;L^2)}^2
+c\Delta t^2 \Big\| \frac{\partial^2 p_{f\mu}}{\partial t^2}(t) \Big\|_{L^2(0,T;L^2)}^2 \\
&~~~+c\Delta t^2 \Big\| \frac{\partial^2 p_{m\mu}}{\partial t^2}(t) \Big\|_{L^2(0,T;L^2)}^2
+c\Delta t^2 \Big\| \frac{\partial \vec{u}_{c\mu}}{\partial t}(t) \Big\|_{L^2(0,T;H^1)}^2
+ c\Delta t^2 \Big\| \frac{\partial p_{F\mu}}{\partial t}(t) \Big\|_{L^2(0,T;H^1)}^2.
\end{align*}
Using the discrete Gronwall Lemma \ref{LemGronwall},
there exists a positive constant $\eta_0=\frac{C_K \tilde{\mu}}{6\nu \alpha^2}$ such that when $\eta \leq \eta_0$, we show that
\begin{align*}
&\eta \|E_{c\mu}^{k}\|_0^2
+\phi_F C_F \|E_{F\mu}^{k}\|_0^2
+\phi_f C_f \|E_{f\mu}^{k}\|_0^2
+\phi_m C_m \|E_{m\mu}^{k}\|_0^2
+ \eta \sum_{n=0}^{k-1}\|E_{c\mu}^{n+1}-E_{c\mu}^{n}\|_0^2\\
&+\phi_F C_F \sum_{n=0}^{k-1}\|E_{F\mu}^{n+1}-E_{F\mu}^{n}\|_0^2
+\phi_f C_f \sum_{n=0}^{k-1}\|E_{f\mu}^{n+1}-E_{f\mu}^n\|_0^2
+\sum_{n=0}^{k-1}\phi_m C_m \|E_{m\mu}^{n+1}-E_{m\mu}^{n}\|_0^2\\
&+2\nu \eta C_K \Delta t  |E_{c\mu}^{k}|_1^2
+\frac{k_F \Delta t}{\tilde{\mu}}|E_{F\mu}^{k}|_1^2
+\frac{2k_f\Delta t}{\tilde{\mu}}\sum_{n=0}^{k-1} |E_{f\mu}^{n+1}|_1^2
+\frac{2k_m \Delta t}{\tilde{\mu}} \sum_{n=0}^{k-1}|E_{m\mu}^{n+1}|_1^2\\
&\leq \exp{ \Big( \frac{C T}{1-C \Delta t}  \Big) }
\Big\{ c\Delta t^2  \Big\| \frac{\partial^2 \vec{u}_{c\mu}}{\partial t^2}(t) \Big\|_{L^2(0,T;L^2)}^2
+ c\Delta t^2 \Big\| \frac{\partial^2 p_{F\mu}}{\partial t^2}(t) \Big\|_{L^2(0,T;L^2)}^2\\
&~~~~+ c\Delta t^2 \Big\| \frac{\partial^2 p_{f\mu}}{\partial t^2}(t) \Big\|_{L^2(0,T;L^2)}^2
+c\Delta t^2 \Big\| \frac{\partial^2 p_{m\mu}}{\partial t^2}(t) \Big\|_{L^2(0,T;L^2)}^2
+c\Delta t^2 \Big\| \frac{\partial \vec{u}_{c\mu}}{\partial t}(t) \Big\|_{L^2(0,T;H^1)}^2\\
&~~~+ c\Delta t^2 \Big\| \frac{\partial p_{F\mu}}{\partial t}(t) \Big\|_{L^2(0,T;H^1)}^2
\Big\}
\end{align*}
where
\begin{align*}
C&=\max \Big\{ \frac{2C_P^2 C_B^2 \Delta t}{\nu C_K}+
\frac{C_K \tilde{\mu}^2 C_t^4 \Delta t}{8\nu k_F\rho^2 \alpha^2} +
\frac{\tilde{\mu}^2 C_t^8 \Delta t}{2 \nu \eta^2 C_K k_F^2},
\frac{6\sigma^{\ast}k_f \Delta t}{\tilde{\mu} \phi_F C_F}
+\frac{\eta C_t^4 \Delta t}{\nu C_K \phi_F C_F \rho^2},\\
&~~~~~~~~~~~~~\frac{6(\sigma k_m+\sigma^{\ast} k_f) \Delta t}{\tilde{\mu}\phi_f C_f},
\frac{6\sigma k_m \Delta t}{\tilde{\mu}\phi_m C_m} \Big\}.
\end{align*}
Under the boundedness of \eqref{BDness} and the convergence results in Lemma \ref{LemsemiConvergence}, using the triangle inequality, we conclude \eqref{LemSemitimeformu}.
\end{proof}
\\

There are some notations denoted by
\begin{align*}
&E_{n+1}^{Fh}=e_{n+1}^{Fh}-e^{Fh}(t_{n+1}),
~~~~E_{n+1}^{fh}=e_{n+1}^{fh}-e^{fh}(t_{n+1}),
~~~~E_{n+1}^{mh}=e_{n+1}^{mh}-e^{mh}(t_{n+1}),\\
&E_{n+1}^{ch}=e_{n+1}^{ch}-e^{ch}(t_{n+1}),
~~~~\delta_{n+1}^h=\xi_{n+1}^h-\xi^h(t_{n+1}),
\end{align*}
where $e^{ih}(t_{n+1}), e_{n+1}^{ih} (i=F,f,m,c), \xi^h(t_{n+1})$ and $\xi_{n+1}^h$
are defined in Section \ref{Sec-semi-discrete} and Section \ref{Sec-fully-discrete}.

\begin{lemma}\label{ech-ect}
For $k=1,2,...,N$, there is the following inequality
\begin{align}\label{lem54equ}
\eta \|E_{k}^{ch}\|_{0,\Omega_{c0}}^2 + \eta \sum_{n=0}^{k-1} \|E_{n+1}^{ch}-E_n^{ch}\|_{0,\Omega_{c0}}^2
+\nu C_K \eta \Delta t \sum_{n=0}^{k-1} |E_{n+1}^{ch}|_{1,\Omega_{c0}}^2
\leq c \Delta t^2.
\end{align}
\end{lemma}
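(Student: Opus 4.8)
The plan is to reproduce the energy argument of Lemma \ref{LemSemitime}, now for the fine-grid correction variable restricted to $\Omega_{c0}$. First I would evaluate the semi-discrete correction equation \eqref{semi-localC} at $t=t_{n+1}$ and replace the time derivative $\frac{\partial e^{ch}}{\partial t}(t_{n+1})$ by the backward difference $\frac{e^{ch}(t_{n+1})-e^{ch}(t_n)}{\Delta t}$, introducing a Taylor remainder $\frac{1}{\Delta t}\int_{t_n}^{t_{n+1}}(t-t_{n+1})\frac{\partial^2 e^{ch}}{\partial t^2}(t)\,\mathrm{d}t$; the coarse-grid time derivative $\frac{\partial \vec{u}_{cH}}{\partial t}$ on the right of \eqref{semi-localC} is treated identically. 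Subtracting this from the fully discrete correction equation \eqref{LocalFully_uc} produces an error equation in $E_{n+1}^{ch}$ and $\delta_{n+1}^h$ whose right-hand side splits into three kinds of terms: the Taylor remainder in $\partial_{tt}e^{ch}$; the coarse-grid temporal errors $E_{cH}^{n+1}$ and $E_{FH}^{n}$ (together with $p_{FH}(t_n)-p_{FH}(t_{n+1})$, since the interface term in \eqref{LocalFully_uc} carries $p_{FH}^{n}$ while the evaluated semi-discrete equation carries $p_{FH}(t_{n+1})$), all of which are the quantities controlled in Lemma \ref{LemSemitime} with $\mu=H$; and the linearized convection contributions built from $\vec{u}_{cH}$.

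Next I would test the error equation with $(\vec{v}_c,q)=(2\Delta t\,E_{n+1}^{ch},\,2\Delta t\,\delta_{n+1}^h)$. The pressure pairings cancel through $b_{\eta}$, and the polarization identity $2(a-b,a)=|a|^2-|b|^2+|a-b|^2$ turns the discrete difference term into $\eta\|E_{n+1}^{ch}\|_{0,\Omega_{c0}}^2-\eta\|E_n^{ch}\|_{0,\Omega_{c0}}^2+\eta\|E_{n+1}^{ch}-E_n^{ch}\|_{0,\Omega_{c0}}^2$, while the viscous form yields $4\nu\eta\Delta t\,\|\mathbb{D}(E_{n+1}^{ch})\|_{0,\Omega_{c0}}^2\geq 2\nu\eta C_K\Delta t\,|E_{n+1}^{ch}|_{1,\Omega_{c0}}^2$ by the Korn inequality \eqref{Korn}. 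For the convection terms I would use skew-symmetry to discard $b_{N\eta}(\vec{u}_{cH},E_{n+1}^{ch},E_{n+1}^{ch})=0$ and bound the surviving $b_{N\eta}(E_{n+1}^{ch},\vec{u}_{cH},E_{n+1}^{ch})$ exactly as the term $T_2$ in Lemma \ref{LemSemitime}, via Hölder, the embedding $W^{1,2}(\Omega_c)\hookrightarrow L^6(\Omega_c)$, the Poincar\'e inequality \eqref{Poin} and the boundedness \eqref{BDness}, absorbing the resulting gradient factor into $\nu\eta C_K\Delta t\,|E_{n+1}^{ch}|_{1,\Omega_{c0}}^2$ with Young's inequality.

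The remaining terms are handled termwise. The Taylor remainder contributes $c\Delta t^2\int_{t_n}^{t_{n+1}}\|\partial_{tt}e^{ch}\|_{0,\Omega_{c0}}^2\,\mathrm{d}t$, the coarse-grid volume terms contribute factors of $\eta\|E_{cH}^{n+1}\|_0^2$ and $|E_{cH}^{n+1}|_1^2$, and the interface contributions carrying $E_{FH}^{n}$ are estimated through the trace inequality and the $H^{1/2}$/$H_{00}^{1/2}$ duality argument already used for $T_3$ and $T_4$ in Lemma \ref{LemSemitime}, yielding $|E_{FH}^{n}|_{1,\Omega_{p0}}^2$ and $\|E_{FH}^{n}\|_{0,\Omega_{p0}}^2$. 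I would then sum over $n=0,\dots,k-1$, use $E_0^{ch}=0$, keep all $\|E_{n+1}^{ch}\|_{0,\Omega_{c0}}^2$ terms on the left, and apply the discrete Gronwall inequality (Lemma \ref{LemGronwall}) under the same restriction on $\eta$ as in Lemma \ref{LemSemitime}. On the right, Lemma \ref{Lem1} bounds $c\Delta t^2\|\partial_{tt}e^{ch}\|_{L^2(0,T;L^2)}^2$ by $c\Delta t^2 H^{2(r+1)}$, while Lemma \ref{LemSemitime} bounds the accumulated coarse-grid errors by $c\Delta t^2$; since $\Delta t^2 H^{2(r+1)}\leq c\Delta t^2$, this gives the stated $c\Delta t^2$ bound. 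The main obstacle is the bookkeeping of the linearized convection and the interface terms: one must carefully separate the parts controlled by the $e^{ch}$-regularity of Lemma \ref{Lem1} from those controlled by the coarse-grid temporal errors of Lemma \ref{LemSemitime}, and confirm that the frozen coefficient $\vec{u}_{cH}$ is uniformly bounded via \eqref{BDness} so that the Gronwall step closes with a constant independent of $h$, $H$ and $\Delta t$.
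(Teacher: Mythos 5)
Your skeleton (subtract \eqref{semi-localC} from \eqref{LocalFully_uc}, test with $(2\Delta t\,E_{n+1}^{ch},\,2\Delta t\,\delta_{n+1}^h)$, the identity $2(a-b,a)=|a|^2-|b|^2+|a-b|^2$, Korn, skew-symmetry, discrete Gronwall) matches the paper's mechanics, but you miss the one idea the paper's proof actually turns on: the bracketed coarse-grid residual on the right of the error equation \eqref{Lem54First} is \emph{exactly} the left-hand side of the coarse temporal error equation \eqref{UsePropety1} with $\mu=H$, so it cancels identically against the coarse Taylor remainder $\frac{1}{\Delta t}\int_{t_n}^{t_{n+1}}(t_{n+1}-t)\big(\frac{\partial^2 \vec{u}_{cH}}{\partial t^2}(t),\vec v_c\big)\mathrm{d}t$, leaving only the $e^{ch}$-remainder and the four convection terms $b_{N\eta}(E_{n+1}^{ch},\vec u_{cH}^{n+1},\cdot)$, $b_{N\eta}(\vec u_{cH}^{n+1},E_{n+1}^{ch},\cdot)$, $b_{N\eta}(E_{cH}^{n+1},e^{ch}(t_{n+1}),\cdot)$, $b_{N\eta}(e^{ch}(t_{n+1}),E_{cH}^{n+1},\cdot)$. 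You instead keep $E_{cH}^{n+1}$, $E_{FH}^{n}$, the coarse pressure error and the interface terms on the right and estimate them termwise via Lemma \ref{LemSemitime}. That route does not close at order $\Delta t^2$: (i) the term $2\eta\sum_n(E_{cH}^{n+1}-E_{cH}^n,E_{n+1}^{ch})$, bounded by Cauchy--Schwarz with $\eta\sum_n\|E_{cH}^{n+1}-E_{cH}^n\|_0^2\le c\Delta t^2$, yields at best $O(\Delta t^{3/2})$ (self-consistently $\|E_{n+1}^{ch}\|_0\sim\Delta t$ gives $\sum_n\|E_{n+1}^{ch}\|_0^2\sim T\Delta t$); (ii) the viscous and interface cross terms require $\Delta t\sum_n|E_{cH}^{n+1}|_{1}^2$ and $\Delta t\sum_n|E_{FH}^{n}|_1^2$, but \eqref{LemSemitimeformu} controls $\Delta t|E_{c\mu}^{k}|_1^2$ and $\Delta t|E_{F\mu}^{k}|_1^2$ only at the \emph{final} level (those sums were consumed absorbing cross terms inside that proof), so summation over $n$ produces only $O(\Delta t)$; (iii) the surviving pressure pairings $b_\eta(2\Delta t\,E_{n+1}^{ch},\delta_H^{n+1})$ and $b_\eta(E_{cH}^{n+1},2\Delta t\,\delta_{n+1}^h)$ do \emph{not} cancel under your test-function choice (only the diagonal $\delta_{n+1}^h$ pair does), and no result in the paper bounds $\|\delta_H^{n+1}\|_0$ or $\|\delta_{n+1}^h\|_0$, so these terms are simply uncontrolled in your scheme.

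Two further points. Your treatment of the convection difference omits the mixed terms $b_{N\eta}(E_{cH}^{n+1},e^{ch}(t_{n+1}),2\Delta t\,E_{n+1}^{ch})$ and $b_{N\eta}(e^{ch}(t_{n+1}),E_{cH}^{n+1},2\Delta t\,E_{n+1}^{ch})$, which arise because the semi-discrete correction is linearized about $\vec u_{cH}(t_{n+1})$ while the fully discrete one is linearized about $\vec u_{cH}^{n+1}$; in the paper these need the inverse inequality together with Lemma \ref{echConvergnece} ($\|e^{ch}\|_{0,\Omega_{c0}}\le cH^{r+1}$) and Lemma \ref{LemSemitime} to yield the $c\Delta t^3$ contributions. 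Your closing bookkeeping is otherwise sound — $c\Delta t^2\|\partial_{tt}e^{ch}\|_{L^2(0,T;L^2)}^2$ is indeed controlled through Lemma \ref{Lem1}, and the $b_{N\eta}(E_{n+1}^{ch},\vec u_c(t_{n+1}),E_{n+1}^{ch})$ piece is handled as in $T_2$ — but without the \eqref{UsePropety1} cancellation the interface/coarse-grid program you outline cannot reach the stated $c\Delta t^2$ bound; with it, everything you planned for those terms vanishes and the lemma follows from the four convection estimates plus Lemma \ref{LemGronwall}, exactly as in the paper.
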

\begin{proof}
Subtracting \eqref{semi-localC} from \eqref{LocalFully_uc}, we have
\begin{align}
&\eta\Big( \frac{E_{n+1}^{ch}-E_n^{ch}}{\Delta t},\vec{v}_c \Big)
+a_{c\eta}(E_{n+1}^{ch},\vec{v}_c)
+b_{\eta}(\vec{v}_c,\delta_{n+1}^h)
-b_{\eta}(E_{n+1}^{ch},q)
+b_{N\eta}(E_{n+1}^{ch},\vec{u}_{cH}^{n+1},\vec{v}_c) \nonumber\\
&+b_{N\eta}(\vec{u}_{cH}^{n+1},E_{n+1}^{ch},\vec{v}_c)
+b_{N\eta}(E_{cH}^{n+1},e^{ch}(t_{n+1}),\vec{v}_c)
+b_{N\eta}(e^{ch}(t_{n+1}),E_{cH}^{n+1},\vec{v}_c) \nonumber\\
&=-\Big[ \eta \Big( \frac{E_{cH}^{n+1} - E_{cH}^n}{\Delta t},\vec{v}_c \Big)
+a_{c\eta}(E_{cH}^{n+1},\vec{v}_c)
+b_{\eta}(\vec{v}_c,\delta_{H}^{n+1})
-b_{\eta}(E_{cH}^{n+1},q) \nonumber\\
&~~~+b_{N\eta}(E_{cH}^{n+1},\vec{u}_{cH}(t_{n+1}),\vec{v}_c)
+b_{N\eta}(\vec{u}_{cH}(t_{n+1}),E_{cH}^{n+1},\vec{v}_c)
+b_{N\eta}(E_{cH}^{n+1},E_{cH}^{n+1},\vec{v}_c) \nonumber\\
&~~~+\frac{\eta}{\rho}\langle E_{FH}^{n}+p_{FH}(t_n)-p_{FH}(t_{n+1}),\vec{v}_c \cdot \vec{n}_c \rangle_{\Gamma\Omega_{c0}} \nonumber\\
&~~~+\frac{\eta \nu \alpha \sqrt{k_F}}{\tilde{\mu}}\langle \nabla_{\tau}(E_{FH}^n+p_{FH}(t_n)-p_{FH}(t_{n+1})),P_{\tau} \vec{v}_c \rangle_{\Gamma \Omega_{c0}} \Big] \nonumber\\
&~~~-\frac{1}{\Delta t} \int_{t_n}^{t_{n+1}} (t_{n+1}-t)\Big( \frac{\partial^2 \vec{u}_{cH}}{\partial t^2}(t),\vec{v}_c \Big) \mathrm{d}t
-\frac{1}{\Delta t}\int_{t_n}^{t_{n+1}}(t_{n+1}-t)\Big( \frac{\partial^2 e^{ch}}{\partial t^2}(t),\vec{v}_c \Big) \mathrm{d}t. \label{Lem54First}
\end{align}
Taking $(\vec{v}_c,q)=(2\Delta t E_{n+1}^{ch},2\Delta t \delta_{n+1}^h)$ and using  \eqref{UsePropety1} in \eqref{Lem54First}, we get
\begin{align*}
&\eta \|E_{n+1}^{ch}\|_{0,\Omega_{c0}}^2 - \eta \|E_n^{ch}\|_{0,\Omega_{c0}}^2 + \eta \|E_{n+1}^{ch}-E_n^{ch}\|_{0,\Omega_{c0}}^2
+4\nu \eta \Delta t \|\mathbb{D}(E_{n+1}^{ch})\|_{0,\Omega_{c0}}^2 \\
&=
-\frac{1}{\Delta t}\int_{t_n}^{t_{n+1}}(t_{n+1}-t)\Big( \frac{\partial^2 e^{ch}}{\partial t^2}(t),2\Delta t E_{n+1}^{ch} \Big) \mathrm{d}t
-b_{N\eta}(E_{n+1}^{ch},\vec{u}_{cH}^{n+1},2\Delta t E_{n+1}^{ch})\\
&~~~~-b_{N\eta}(E_{cH}^{n+1},e^{ch}(t_{n+1}),2\Delta t E_{n+1}^{ch})
-b_{N\eta}(e^{ch}(t_{n+1}),E_{cH}^{n+1},2\Delta t E_{n+1}^{ch}).
\end{align*}
By the H$\ddot{\mathrm{o}}$lder inequality, the Young inequality,
the Poincar$\mathrm{\acute{e}}$-Friedriches inequality, the inverse inequality, the nonlinear properties and Lemma \ref{LemSemitime}, we show that
\begin{align*}
&-\frac{1}{\Delta t}\int_{t_n}^{t_{n+1}}(t_{n+1}-t)\Big( \frac{\partial^2 e^{ch}}{\partial t^2}(t),2\Delta t E_{n+1}^{ch} \Big) \mathrm{d}t \\
&\leq c \Delta t^{3/2} \Big( \int_{t_n}^{t_{n+1}} \Big\| \frac{\partial^2 e^{ch}}{\partial t^2}(t) \Big\|_{0,\Omega_{c0}}^2 \mathrm{d}t \Big)^{1/2} \|E_{n+1}^{ch}\|_{0,\Omega_{c0}}\\
&\leq c \Delta t^2 \int_{t_n}^{t_{n+1}} \Big\| \frac{\partial^2 e^{ch}}{\partial t^2}(t) \Big\|_{0,\Omega_{c0}}^2 \mathrm{d}t
+ c \Delta t \|E_{n+1}^{ch}\|_{0,\Omega_{c0}}^2,
\end{align*}
\begin{align*}
&-b_{N\eta}(E_{n+1}^{ch},\vec{u}_{cH}^{n+1}-\vec{u}_c(t_{n+1})+\vec{u}_c(t_{n+1}),2\Delta t E_{n+1}^{ch})\\
&=-b_{N\eta}(E_{n+1}^{ch},\vec{u}_{cH}^{n+1}-\vec{u}_c(t_{n+1}),2\Delta t E_{n+1}^{ch})
-b_{N\eta}(E_{n+1}^{ch},\vec{u}_c(t_{n+1}),2\Delta t E_{n+1}^{ch})\\
&\leq 2C_N \Delta t \|E_{n+1}^{ch}\|_{0,\Omega_{c0}}^{1/2} |E_{n+1}^{ch}|_{1,\Omega_{c0}}^{1/2} |\vec{u}_{cH}^{n+1} - \vec{u}_c(t_{n+1})|_{1,\Omega_{c0}} |E_{n+1}^{ch}|_{1,\Omega_{c0}}
+2C_N C_B \Delta t \|E_{n+1}^{ch}\|_{0,\Omega_{c0}}^2\\
&\leq \nu C_K \eta \Delta t |E_{n+1}^{ch}|_{1,\Omega_{c0}}^2
+ \Big(\frac{C_N^2 C(T)^2 \Delta t}{\nu C_K \eta}
+ 2C_N C_B \Delta t \Big) \|E_{n+1}^{ch}\|_{0,\Omega_{c0}}^2,
~~~~~~~~~~~~~~~~~~~~~~~~~~~~~~~
\end{align*}
\begin{align*}
&-b_{N\eta}(E_{cH}^{n+1},e^{ch}(t_{n+1}),2\Delta t E_{n+1}^{ch})\\
&\leq
2C_N \Delta t \|E_{cH}^{n+1}\|_{0,\Omega_{c0}}^{1/2} |E_{cH}^{n+1}|_{1,\Omega_{c0}}^{1/2} h^{-1} \|e^{ch}(t_{n+1})\|_{0,\Omega_{c0}} |E_{n+1}^{ch}|_{1,\Omega_{c0}} \\
&\leq
\nu C_K \eta \Delta t |E_{n+1}^{ch}|_{1,\Omega_{c0}}^2
+ c \Delta t^3,
~~~~~~~~~~~~~~~~~~~~~~~~~~~~~~~~~~~~~~~~~~~~~~~~~~~~~~~~~~~~~~~~~~~~~~`
\end{align*}
and
\begin{align*}
&-b_{N\eta}(e^{ch}(t_{n+1}),E_{cH}^{n+1},2\Delta t E_{n+1}^{ch})\\
&\leq 2 C_N \Delta t \|e^{ch}(t_{n+1})\|_{0,\Omega_{c0}}^{1/2} |e^{ch}(t_{n+1})|_{1,\Omega_{c0}}^{1/2} |E_{cH}^{n+1}|_{1,\Omega_{c0}} |E_{n+1}^{ch}|_{1,\Omega_{c0}}\\
&\leq \nu C_K \eta \Delta t |E_{n+1}^{ch}|_{1,\Omega_{c0}}^2
+ c \Delta t^3.
~~~~~~~~~~~~~~~~~~~~~~~~~~~~~~~~~~~~~~~~~~~~~~~~~~~~~~~~~~~~~~~~~~~~~~~
\end{align*}
Summing it from $n=0$ to $n=k-1$ and using the Korn inequality, we obtain
\begin{align*}
&\eta \|E_{k}^{ch}\|_{0,\Omega_{c0}}^2 + \eta \sum_{n=0}^{k-1} \|E_{n+1}^{ch}-E_n^{ch}\|_{0,\Omega_{c0}}^2
+\nu C_K \eta \Delta t \sum_{n=0}^{k-1} |E_{n+1}^{ch}|_{1,\Omega_{c0}}^2 \\
&\leq
\Big(\frac{C_N^2 C(T)^2 \Delta t}{\nu C_K \eta}
+ 2C_N C_B \Delta t \Big) \sum_{n=0}^{k-1} \|E_{n+1}^{ch}\|_{0,\Omega_{c0}}^2
+c \Delta t^2 \Big\| \frac{\partial^2 e^{ch}}{\partial t^2}(t) \Big\|_{L^2(0,T;L^2)}^2
+c \Delta t^2.
\end{align*}
By the discrete Gronwall Lemma \ref{LemGronwall}, we derive \eqref{lem54equ}.
\end{proof}

Similar to the proof of Lemma \ref{echConvergnece} and Lemma \ref{ech-ect}, we repeat the above process and have the following lemmas.
\begin{lemma}\label{tripleeih}
Under the assumptions of Theorem \ref{PTS-Results}, there holds
\begin{align*}
\|e^{ih}(t)\|_{0,\Omega_{p0}}^2 \leq cH^{2(r+1)},~~~~(i=F,f,m).
\end{align*}
\end{lemma}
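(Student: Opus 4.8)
The plan is to transplant the duality argument that established Lemma~\ref{Lem1} and Lemma~\ref{echConvergnece} for the conduit correction $e^{ch}$ onto the three scalar porosity corrections $e^{Fh},e^{fh},e^{mh}$ governed by the Step~2.1 residual equations (see \eqref{Semi-pf} and its two companions) posed on $\Omega_{p0}$. Since these equations carry neither the incompressibility constraint nor the convective trilinear form, the associated adjoint problems are merely backward-in-time scalar parabolic equations driven by the coercive bilinear forms $a_F,a_f,a_m$ together with the zeroth-order exchange couplings. First, for each $i\in\{F,f,m\}$, I would introduce an auxiliary dual problem on $\Omega_{p0}$ and its spatial finite element approximation in the spirit of \eqref{dualityProblem}--\eqref{semi-spacedualityProblem}, and record the corresponding stability and approximation estimates, i.e. the porosity analogues of \eqref{StabResult} and \eqref{ErrorResult}. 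These hold because each operator $-\nabla\cdot(\tfrac{k_i}{\tilde{\mu}}\nabla\cdot)$ is self-adjoint, coercive and $H^2$-regular on the convex subdomain $\Omega_{p0}$, so none of the inf--sup or skew-symmetry machinery needed in the conduit case is required here.

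Next, mirroring Lemma~\ref{Lem1}, I would take the source $\phi=e^{ih}$ in the discrete dual problem, test with the corrections $(e^{Fh},e^{fh},e^{mh})$, and integrate by parts in time through $\tfrac{\mathrm{d}}{\mathrm{d}t}(e^{ih},\Phi_{i,h})$. Substituting the Step~2.1 residual equations and subtracting the coarse-grid system \eqref{Semi-LFEM} from the fine semi-discrete scheme \eqref{CoupledSemiFormu} (the porosity counterpart of the identity \eqref{uh-uH}, tested with the coarse dual $\Phi_{i,H}$) recasts $\sum_i\phi_i C_i\|e^{ih}\|_{0,\Omega_{p0}}^2$ as a sum of products of the fine--coarse differences $p_{ih}-p_{iH}$ (in $H^1$ or $L^2$) and of the conduit data $\vec{u}_{cH}\cdot\vec{n}_c$, whose subtraction against the fine scheme produces $\vec{u}_{ch}-\vec{u}_{cH}$ on $\Gamma\Omega_{p0}$, against the dual approximation errors $\Phi_i-\Phi_{i,h}$ and $\Phi_i-\Phi_{i,H}$. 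Bounding by H\"older and Young, and handling the interface flux $\langle(\vec{u}_{ch}-\vec{u}_{cH})\cdot\vec{n}_c,\cdot\rangle_{\Gamma\Omega_{p0}}$ exactly as the conduit interface term was treated in Lemma~\ref{Lem1} (trace inequality and $H^{-1/2}$--$H^{1/2}$ duality), I would extract one factor $H$ from the dual approximation estimate (the $h$-level contributions being smaller since $h\ll H$) and combine it with the $O(H^{r})$ bounds on the $H^1$ fine--coarse differences supplied by Lemma~\ref{LemsemiConvergence} via the triangle inequality $\|p_{ih}-p_{iH}\|\le\|p_{ih}-p_i\|+\|p_i-p_{iH}\|$. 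This yields the integrated estimate
\begin{equation*}
\int_0^T\sum_{i}\Big(\|e^{ih}\|_{0,\Omega_{p0}}^2+\Big\|\tfrac{\partial e^{ih}}{\partial t}\Big\|_{0,\Omega_{p0}}^2+\Big\|\tfrac{\partial^2 e^{ih}}{\partial t^2}\Big\|_{0,\Omega_{p0}}^2\Big)\,\mathrm{d}t\leq cH^{2(r+1)},
\end{equation*}
the porosity analogue of \eqref{Lem1eqs}. Finally, following the proof of Lemma~\ref{echConvergnece}, I would differentiate the Step~2.1 system in $t$, test the dual problem with $\partial_t e^{ih}$, and integrate; invoking the time-integrated bounds just obtained then gives the pointwise-in-time estimate $\|e^{ih}(t)\|_{0,\Omega_{p0}}^2\le cH^{2(r+1)}$ for each $i=F,f,m$, which is the assertion.

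\textbf{Main obstacle.} The chief difficulty is the coupling. The three corrections are tied together through the exchange terms $\tfrac{\sigma k_m}{\tilde{\mu}}$ and $\tfrac{\sigma^{\ast} k_f}{\tilde{\mu}}$, and the $e^{Fh}$-equation additionally carries the conduit interface data on $\Gamma\Omega_{p0}$. I would therefore treat the three scalar equations as one coupled system and sum the tested identities, so that the lower-order exchange contributions are absorbed by Young's inequality and ultimately controlled by the discrete Gronwall Lemma~\ref{LemGronwall}, rather than handled in isolation, while the interface coupling is dominated by the trace theorem with $\vec{u}_{ch}-\vec{u}_{cH}$ estimated through its $H^1(\Omega_{c0})$ norm. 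A secondary point worth verifying is the $H^2$-regularity of the scalar porosity dual solutions required for the analogue of \eqref{ErrorResult}; this is routine given the convexity of $\Omega_{p0}$ and the smoothness of the coefficients, but since it underpins the whole $L^2$-duality estimate it must be recorded explicitly.
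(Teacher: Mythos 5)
Your proposal is correct and follows essentially the same route as the paper: the authors give no standalone proof of Lemma \ref{tripleeih}, stating only that it follows by repeating the duality arguments of Lemma \ref{echConvergnece} (and Lemma \ref{Lem1}), which is precisely the transplanted argument you spell out — scalar backward parabolic dual problems on $\Omega_{p0}$, the fine-minus-coarse substitution, trace/$H^{-1/2}$--$H^{1/2}$ handling of the interface flux, and Gronwall absorption of the exchange couplings. Your added remarks on treating the three equations as a coupled summed system and on the $H^2$-regularity of the dual solutions are sound and, if anything, make explicit what the paper leaves implicit.
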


\begin{lemma}\label{tripleEkih}
For $k=1,2,...,N$, there is the following inequality
\begin{align*}
\|E_{k}^{ih}\|_{0,\Omega_{p0}}^2 \leq c \Delta t^2,~~~~(i=F,f,m).
\end{align*}
\end{lemma}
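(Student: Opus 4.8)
The plan is to mirror the argument of Lemma~\ref{ech-ect}, replacing the conduit velocity correction by each of the three porosity-pressure corrections. First I would subtract the semi-discrete local equations \eqref{Semi-pF}--\eqref{Semi-pf} from their fully discrete counterparts \eqref{LocalFully_pF}--\eqref{LocalFully_pm} to obtain, for each $i\in\{F,f,m\}$, an error equation governing $E_{n+1}^{ih}=e_{n+1}^{ih}-e^{ih}(t_{n+1})$. As in \eqref{Lem54First}, the right-hand side of each such equation collects three kinds of terms: a temporal-consistency residual produced by replacing the continuous time derivatives $\partial_t e^{ih}(t_{n+1})$ and $\partial_t p_{iH}(t_{n+1})$ by backward-difference quotients, which I would rewrite through the Taylor formula with integral remainder as $\frac{1}{\Delta t}\int_{t_n}^{t_{n+1}}(t_{n+1}-t)(\partial_{tt}(\cdot),v_i)\,\mathrm{d}t$; the mass-transfer coupling discrepancies such as $\frac{\sigma^{\ast}k_f}{\tilde\mu}(E_{n+1}^{Fh}-E_n^{fh}+e^{fh}(t_n)-e^{fh}(t_{n+1}),v_F)$ arising from the time lag in the off-diagonal pressures; and the coarse-grid errors $E_{iH}^{n+1},E_{cH}^{n}$ together with the interface contribution $\langle(E_{cH}^n+\vec u_{cH}(t_n)-\vec u_{cH}(t_{n+1}))\cdot\vec n_c,v_F\rangle_{\Gamma\Omega_{p0}}$ entering the $p_F$ equation.

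Next I would test each error equation with $v_i=2\Delta t\,E_{n+1}^{ih}$ and apply the identity $2(a-b,a)=|a|^2-|b|^2+|a-b|^2$ to the discrete time-derivative term, producing $\|E_{n+1}^{ih}\|_{0,\Omega_{p0}}^2-\|E_n^{ih}\|_{0,\Omega_{p0}}^2+\|E_{n+1}^{ih}-E_n^{ih}\|_{0,\Omega_{p0}}^2$, while $a_i(E_{n+1}^{ih},E_{n+1}^{ih})$ yields the coercive gradient term $\frac{k_i}{\tilde\mu}|E_{n+1}^{ih}|_1^2$. The temporal residual is controlled by the H\"older and Young inequalities as $c\Delta t^2\int_{t_n}^{t_{n+1}}\|\partial_{tt}(\cdot)\|_0^2\,\mathrm{d}t+c\Delta t\|E_{n+1}^{ih}\|_{0,\Omega_{p0}}^2$; the $\partial_{tt}e^{ih}$ piece is further absorbed because the pressure analogue of Lemma~\ref{Lem1} bounds $\int_0^T\|\partial_{tt}e^{ih}\|_{0,\Omega_{p0}}^2\,\mathrm{d}t$ by $cH^{2(r+1)}$, so $\Delta t^2$ times it is of higher order. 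The coarse-grid and interface terms are estimated exactly as in Lemmas~\ref{Lem1} and~\ref{ech-ect}, using the trace inequality and the $H^{1/2}$/$H^{-1/2}$ duality on $\Gamma\Omega_{p0}$, and then bounded by $c\Delta t^2$ through the whole-domain estimate of Lemma~\ref{LemSemitime}.

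The point requiring care is the handling of the coupling terms: the bound for $E^{Fh}$ involves $E^{fh}$, that for $E^{fh}$ involves $E^{Fh}$ and $E^{mh}$, and that for $E^{mh}$ involves $E^{fh}$. Rather than estimate the three equations in isolation, I would add the three inequalities weighted by $\phi_iC_i$ so that the off-diagonal inner products appear symmetrically; splitting each such term with the Young inequality then leaves only diagonal contributions $c\Delta t(\|E_{n+1}^{ih}\|_{0,\Omega_{p0}}^2+\|E_n^{ih}\|_{0,\Omega_{p0}}^2)$ on the right, while the time-lag remainders $e^{jh}(t_{n+1})-e^{jh}(t_n)$ produce further $c\Delta t^2\int_{t_n}^{t_{n+1}}\|\partial_t e^{jh}\|_0^2\,\mathrm{d}t$ terms that are again higher order by the pressure version of Lemma~\ref{Lem1}. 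This is the main obstacle, since one must exploit the antisymmetric structure of the transfer terms to verify that no net growth survives the summation.

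Finally I would sum the combined inequality from $n=0$ to $k-1$. The time-difference terms telescope to $\sum_i\phi_iC_i\|E_k^{ih}\|_{0,\Omega_{p0}}^2$, with the initial errors $E_0^{ih}=0$ because the fully and semi-discrete schemes share the same corrected initial data; the squared-increment and $|E_{n+1}^{ih}|_1^2$ terms remain nonnegative on the left, and the right-hand side reduces to a $c\Delta t^2$ consistency contribution plus $c\Delta t\sum_{n=0}^{k-1}\sum_i\|E_{n+1}^{ih}\|_{0,\Omega_{p0}}^2$. Applying the discrete Gronwall inequality of Lemma~\ref{LemGronwall}, valid once $\Delta t$ is small enough that $c\Delta t<1$, absorbs the last sum and yields $\|E_k^{ih}\|_{0,\Omega_{p0}}^2\le c\Delta t^2$ for each $i=F,f,m$, which is the assertion.
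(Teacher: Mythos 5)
Your proposal is correct and follows essentially the same route as the paper: the paper gives no separate proof of this lemma, stating only that one repeats the arguments of Lemma~\ref{echConvergnece} and Lemma~\ref{ech-ect}, and your write-up is precisely that repetition, with the lagged mass-transfer couplings Young-split and absorbed by the discrete Gronwall inequality exactly as in the paper's estimates \eqref{EFmu}--\eqref{Emmu} within Lemma~\ref{LemSemitime}, and the interface and coarse-grid terms handled via the trace inequality and Lemma~\ref{LemSemitime} as you describe. One minor remark: since the transfer terms are lagged in time ($E_{n+1}^{Fh}$ paired with $E_{n}^{fh}$, etc.), no genuine antisymmetric cancellation is available, but this does not matter --- the Young-splitting-plus-Gronwall mechanism you actually carry out is the correct and sufficient one.
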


\section{Fully discrete local parallel finite element algorithm}
In this section, a fully discrete scheme of local parallel finite element algorithm is proposed as follows.
\begin{algorithm}\label{algorithm3}{(Local Parallel Finite Element Algorithm)}

\textbf{Step 1}(Decoupled marching schemes for the low frequency solution).\\
(1)In the triple porous media region $\Omega_p$, given $(p_{FH}^0,p_{fH}^0,p_{mH}^0 )=(P_H^F p_F^0, P_H^f p_f^0, P_H^m p_m^0)$,
find a global coarse grid solution $(p_{FH}^{n+1}, p_{fH}^{n+1}, p_{mH}^{n+1}) \in X_p^H$, such that for all $(v_F, v_f, v_m) \in X_p^H$,
\begin{align}
&\phi_F C_F \Big( \frac{p_{FH}^{n+1} - p_{FH}^n}{\Delta t},v_{F}\Big)
+\frac{k_F}{\tilde{\mu}}(\nabla p_{FH}^{n+1}, \nabla v_{F})
+\frac{\sigma^{\ast} k_f}{\tilde{\mu}}(p_{FH}^{n+1} - p_{fH}^n, v_{F})
- \int_{\Gamma} v_{F} \vec{u}_{cH}^n \cdot \vec{n}_c \mathrm{d}\Gamma \nonumber\\
&=(q_F(t_{n+1}),v_{F}),\label{pFFully}
\end{align}
\begin{align}
&\phi_f C_f \Big( \frac{p_{fH}^{n+1} - p_{fH}^n}{\Delta t}, v_{f}\Big)
+ \frac{k_f}{\tilde{\mu}}(\nabla p_{fH}^{n+1}, \nabla v_{f})
+ \frac{\sigma k_m}{\tilde{\mu}} (p_{fH}^{n+1} - p_{mH}^n,v_{f} ) \nonumber\\
&~~~~~~~~~~~~~~~~~~~~~~~~~~~~~~~~~~~~~~~~~~~~~~~~
+ \frac{\sigma^{\ast} k_f}{\tilde{\mu}}(p_{fH}^{n+1} - p_{FH}^n, v_{f})
=(q_f(t_{n+1}), v_{f}),\label{pfFully}
\end{align}
\begin{align}
&\phi_m C_m \Big( \frac{p_{mH}^{n+1} - p_{mH}^n}{\Delta t}, v_{m}\Big)
+\frac{k_m}{\tilde{\mu}}(\nabla p_{mH}^{n+1},\nabla v_{m})
+ \frac{\sigma k_m}{\tilde{\mu}} (p_{mH}^{n+1} - p_{fH}^n, v_{m} )~~~~~~~~~~~~~~~~~~~~~\nonumber\\
&=(q_m(t_{n+1}),v_{m}).\label{pmFully}
\end{align}
(2)In the conduit region $\Omega_c$, given $\vec{u}_{cH}^0=P_H^c \vec{u}_c^0$, find a global coarse grid solution $(\vec{u}_{cH}^{n+1}, p_{H}^{n+1}) \in X_c^H \times Q^H$, such that for all $(\vec{v}_{c}, q) \in X_c^H \times Q^H$,
\begin{align}
&\eta \Big( \frac{\vec{u}_{cH}^{n+1} - \vec{u}_{cH}^n}{\Delta t}, \vec{v}_{c} \Big)
+a_{c\eta}(\vec{u}_{cH}^{n+1},\vec{v}_{c})
+b_{\eta}(\vec{v}_{c},p_{H}^{n+1})-b_{\eta}(\vec{u}_{cH}^{n+1},q)
+b_{N\eta}(\vec{u}_{cH}^{n+1},\vec{u}_{cH}^{n+1},\vec{v}_{c}) \nonumber\\
&+\frac{\eta}{\rho}\int_{\Gamma} p_{FH}^{n} \vec{v}_{c} \cdot \vec{n}_c \mathrm{d}\Gamma
+\frac{\eta \nu \alpha \sqrt{k_F}}{\tilde{\mu}}\int_{\Gamma}  \nabla_{\tau}p_{FH}^{n} \cdot P_{\tau} \vec{v}_{c} \mathrm{d}\Gamma
=\eta (\vec{f}_c(t_{n+1}) ,\vec{v}_{c}). ~~~~~~~~~~~~~\label{pcFully}
\end{align}

\textbf{Step 2}(Partition overlapping subdomains).
Divide $\Omega_p, \Omega_c$ into a series of disjoint subdomains
$\{D_{pj}\}_1^M, \{D_{cj'}\}_1^{M^{'}}$, then enlarge these subdomains to obtain
$\{\Omega_{pj}\}_{1}^M, \{\Omega_{cj'}\}_{1}^{M^{'}}$ which align with $T_h^p$ and $T_h^c$.

\textbf{Step 3}(Decoupled marching schemes for the high frequency solution).\\
(1)In the triple porous media region $\Omega_p$, find a local fine correction $(e_{FH,j}^{n+1}, e_{fH,j}^{n+1}, e_{mH,j}^{n+1}) \in X_p^h(\Omega_{pj })$, such that for all $(v_F, v_f, v_m) \in X_p^h(\Omega_{pj})$,
\begin{align}
&\phi_F C_F \Big( \frac{e_{Fh,j}^{n+1} - e_{Fh,j}^n}{\Delta t},v_{F}\Big)
+\frac{k_F}{\tilde{\mu}}(\nabla e_{Fh,j}^{n+1}, \nabla v_{F})
+\frac{\sigma^{\ast} k_f}{\tilde{\mu}}(e_{Fh,j}^{n+1} - e_{fh,j}^n, v_{F} )\nonumber\\
&=(q_F(t_{n+1}),v_{F})
-\Big[ \phi_F C_F \Big( \frac{p_{FH}^{n+1} - p_{FH}^n}{\Delta t},v_{F}\Big)
+ \frac{k_F}{\tilde{\mu}}(\nabla p_{FH}^{n+1}, \nabla v_{F})
+ \frac{\sigma^{\ast} k_f}{\tilde{\mu}}(p_{Fh}^{n+1} - p_{fH}^n, v_{F}) \Big] \nonumber\\
&+ \langle  \vec{u}_{cH}^n \cdot \vec{n}_c, v_{F} \rangle_{\Gamma \Omega_{pj}},\label{pFlocalFully}
\end{align}
\begin{align}
&\phi_f C_f \Big( \frac{e_{fh,j}^{n+1} - e_{fh,j}^n}{\Delta t}, v_{f}\Big)
+ \frac{k_f}{\tilde{\mu}}(\nabla e_{fh,j}^{n+1}, \nabla v_{f})
+ \frac{\sigma k_m}{\tilde{\mu}} (e_{fh,j}^{n+1} - e_{mh,j}^n,v_{f}) \nonumber\\
&+ \frac{\sigma^{\ast} k_f}{\tilde{\mu}} (e_{fh,j}^{n+1} - e_{Fh,j}^n, v_{f}) \nonumber\\
&=(q_f(t_{n+1}), v_{f})
-\Big[ \phi_f C_f \Big( \frac{p_{fH}^{n+1} - p_{fH}^n}{\Delta t}, v_{f}\Big)
+ \frac{k_f}{\tilde{\mu}}(\nabla p_{fH}^{n+1}, \nabla v_{f})
+ \frac{\sigma k_m}{\tilde{\mu}} (p_{fH}^{n+1} - p_{mH}^n,v_{f}) \nonumber\\
&~~+\frac{\sigma^{\ast} k_f}{\tilde{\mu}} (p_{fH}^{n+1} - p_{FH}^n, v_{f})
\Big],
~~~~~~~~~~~~~~~~~~~~~~~~~~~~~~~~~~~~~~~~~~~~~~~~~~~~~~~~~~~~~~~~~~~~
\label{pflocalFully}
\end{align}
\begin{align}
&\phi_m C_m \Big( \frac{e_{mh,j}^{n+1} - e_{mh,j}^n}{\Delta t}, v_{m}\Big)
+\frac{k_m}{\tilde{\mu}}(\nabla e_{mh,j}^{n+1},\nabla v_{m})
+\frac{\sigma k_m}{\tilde{\mu}} (e_{mh,j}^{n+1} - \xi_{fh,j}^n, v_{m})
=(q_m(t_{n+1}),v_{m}) \nonumber\\
&-\Big[ \phi_m C_m \Big( \frac{p_{mH}^{n+1} - p_{mH}^n}{\Delta t}, v_{m}\Big)
+\frac{k_m}{\tilde{\mu}}(\nabla p_{mH}^{n+1},\nabla v_{m})
+\frac{\sigma k_m}{\tilde{\mu}} (p_{mH}^{n+1} - p_{fH}^n, v_{m})
\Big].\label{pmlocalFully}
\end{align}
(2)In the conduit region $\Omega_c$, find a local fine correction  $(e_{ch,j'}^{n+1}, \delta_{h,j'}^{n+1}) \in X_c^h(\Omega_{cj'}) \times Q^h(\Omega_{cj'})$, such that for all $(\vec{v}_{c}, q) \in X_c^h(\Omega_{cj'}) \times Q^h(\Omega_{cj'})$,
\begin{align}
&\eta \Big( \frac{e_{ch,j'}^{n+1} - e_{ch,j'}^n}{\Delta t}, \vec{v}_{c} \Big)
+a_{c\eta}(e_{ch,j'}^{n+1},\vec{v}_{c})
+b_{\eta}(\vec{v}_{c},\delta_{h,j'}^{n+1})-b_{\eta}(e_{ch,j'}^{n+1},q)
+b_{N\eta}(e_{ch,j'}^{n+1},\vec{u}_{cH}^{n+1},\vec{v}_{c}) \nonumber\\
&+b_{N\eta}(\vec{u}_{cH}^{n+1},e_{ch,j'}^{n+1},\vec{v}_{c}) \nonumber\\
&=\eta (\vec{f}_c(t_{n+1}) ,\vec{v}_{c})
-\Big[ \eta \Big( \frac{\vec{u}_{cH}^{n+1} - \vec{u}_{cH}^n}{\Delta t}, \vec{v}_{c} \Big)
+ a_{c\eta}(\vec{u}_{cH}^{n+1},\vec{v}_{c})
+b_{\eta}(\vec{v}_{c},p_{cH}^{n+1})-b_{\eta}(\vec{u}_{cH}^{n+1},q) \nonumber\\
&+b_{N\eta}(\vec{u}_{cH}^{n+1},\vec{u}_{cH}^{n+1},\vec{v}_{c})\Big]
- \frac{\eta}{\rho} \langle p_{FH}^{n}, \vec{v}_{c} \cdot \vec{n}_c \rangle_{\Gamma \Omega_{cj'}}
- \frac{\eta \nu \alpha \sqrt{k_F}}{\tilde{\mu}} \langle \nabla_{\tau}p_{FH}^{n}, P_{\tau} \vec{v}_{c} \rangle_{\Gamma \Omega_{cj'}}.~~~~~~~~~~ \label{uclocalFully}
\end{align}

\textbf{Step 4}(Correction of data).
For $n=0,1,2,...,N-1, j=1,2,3,...,M, j^{'}=1,2,3,...M^{'}$,
\begin{align*}
p^{Fh,j}_{n+1}|_{D_{pj }}&= p_{FH}^{n+1} + e_{Fh,j}^{n+1}|_{D_{pj }},\\
p^{fh,j}_{n+1}|_{D_{pj }}&= p_{fH}^{n+1} + e_{fh,j}^{n+1}|_{D_{pj }},\\
p^{mh,j}_{n+1}|_{D_{pj }}&= p_{mH}^{n+1} + e_{mh,j}^{n+1}|_{D_{pj }},\\
\vec{u}^{ch,j'}_{n+1}|_{D_{cj' }} &= \vec{u}_{cH}^{n+1} + e_{ch,j'}^{n+1}|_{D_{cj' }},\\
p^{h,j'}_{n+1}|_{D_{cj' }} &= p_{H}^{n+1} + \delta_{h,j'}^{n+1}|_{D_{cj' }}.
\end{align*}
\end{algorithm}
The series of steps in the above algorithm is shown in Figure \ref{Pic3}.
\begin{figure}[htbp]
  \centering
  \includegraphics[width=15cm]{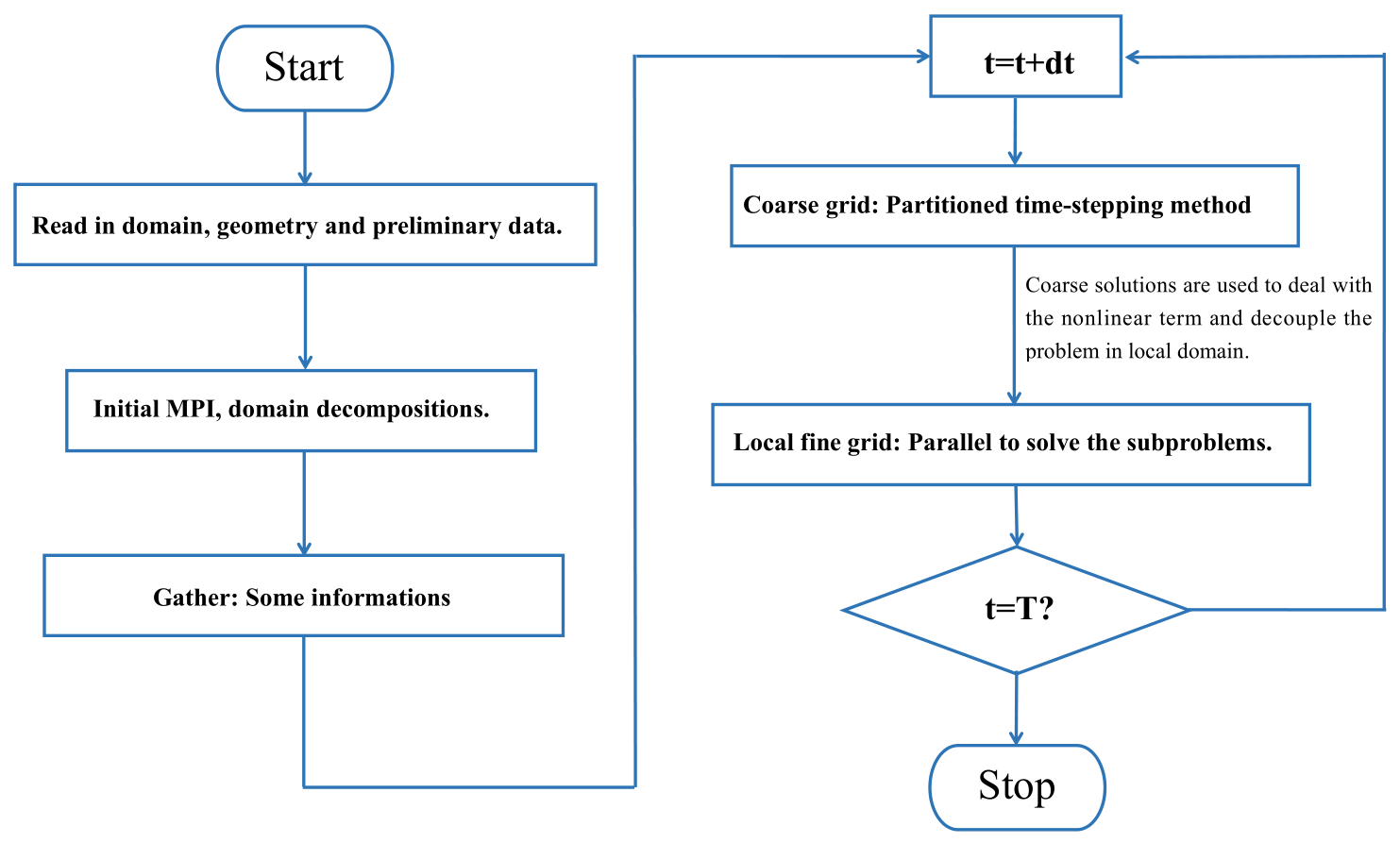}\\
  \caption{\label{Pic3}\small{Flowchart of local and parallel algorithm.}}
\end{figure}


To establish convergence results for the local and parallel finite element algorithm, we initially present local priori error estimates.

\subsection{Local a priori error estimate}
In this section, we initially introduce the following Lemma \ref{LemmaXu} which plays a crucial role in proof of local priori error estimate.
According to the literature \cite{li2021local, ding2021local}, Lemma \ref{LemmaXu} and using the property in \eqref{Superapproximation}, we have local a priori error estimate in Lemma
\ref{prioriLem}.

\begin{lemma}[\cite{xu2000local}]\label{LemmaXu}
Let $\Omega_{c0} \subset \Omega_c$ and $\omega \in C_0^{\infty}(\Omega_c)$ such that
$\mathrm{supp}~\omega \subset \subset \Omega_{c0}$. Then
\begin{equation*}
\|\omega \vec{w}\|_1^2 \leq c a_{c\eta}(\vec{w},\omega^2 \vec{w})
+ c \|\vec{w}\|_{0,\Omega_{c0}}^2,~~~~\forall \vec{w} \in X_c .
\end{equation*}
\end{lemma}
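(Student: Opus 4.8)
The plan is to combine a Leibniz-rule (commutator) identity for the symmetric-gradient form with the Korn and Poincar\'e--Friedrichs inequalities (A1)--(A2). The first observation is that the interface contribution to $a_{c\eta}$ disappears: since $\omega \in C_0^\infty(\Omega_c)$, the cutoff vanishes in a neighborhood of $\partial\Omega_c$, and in particular on $\Gamma$, so $P_{\tau}(\omega^2\vec{w})|_\Gamma = 0$ and the boundary term $\frac{\eta\nu\alpha\sqrt{d}}{\sqrt{\mathrm{trace}(\Pi)}}\langle P_{\tau}\vec{w}, P_{\tau}(\omega^2\vec{w})\rangle$ is zero. Hence $a_{c\eta}(\vec{w},\omega^2\vec{w}) = 2\nu\eta\,(\mathbb{D}(\vec{w}),\mathbb{D}(\omega^2\vec{w}))$, and it suffices to show $\|\omega\vec{w}\|_1^2 \le c\,(\mathbb{D}(\vec{w}),\mathbb{D}(\omega^2\vec{w})) + c\,\|\vec{w}\|_{0,\Omega_{c0}}^2$.

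The core step is an exact algebraic identity obtained from the product rule for the symmetric gradient. Setting $S := \tfrac12(\nabla\omega\otimes\vec{w} + \vec{w}\otimes\nabla\omega)$, one computes $\mathbb{D}(\omega\vec{w}) = \omega\,\mathbb{D}(\vec{w}) + S$ and $\mathbb{D}(\omega^2\vec{w}) = \omega^2\,\mathbb{D}(\vec{w}) + 2\omega S$. Expanding both $L^2$ inner products and using that $\omega$ is scalar-valued, the genuinely second-order terms $\|\omega\mathbb{D}(\vec{w})\|_0^2$ and the cross term $2(\omega\mathbb{D}(\vec{w}),S)$ match, and one is left with
\[
\|\mathbb{D}(\omega\vec{w})\|_0^2 = (\mathbb{D}(\vec{w}),\mathbb{D}(\omega^2\vec{w})) + \|S\|_0^2 .
\]
Because $S$ is supported in $\mathrm{supp}\,\omega \subset\subset \Omega_{c0}$ and contains $\vec{w}$ undifferentiated multiplied by the bounded field $\nabla\omega$, the commutator term obeys $\|S\|_0^2 \le c\,\|\nabla\omega\|_{L^\infty}^2\,\|\vec{w}\|_{0,\Omega_{c0}}^2 \le c\,\|\vec{w}\|_{0,\Omega_{c0}}^2$.

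Finally, since $\omega\vec{w}$ has support compactly contained in $\Omega_{c0}$, it belongs to $X_c(\Omega_{c0})$ (and its norms over $\Omega_c$ and $\Omega_{c0}$ agree), so (A1) and (A2) give $\|\omega\vec{w}\|_1^2 \le C_P^2\,|\omega\vec{w}|_1^2 \le C_P^2 C_K^{-1}\,\|\mathbb{D}(\omega\vec{w})\|_0^2$. Chaining this bound with the identity above and then with the vanishing-interface reduction yields $\|\omega\vec{w}\|_1^2 \le c\,(\mathbb{D}(\vec{w}),\mathbb{D}(\omega^2\vec{w})) + c\,\|\vec{w}\|_{0,\Omega_{c0}}^2 \le c\,a_{c\eta}(\vec{w},\omega^2\vec{w}) + c\,\|\vec{w}\|_{0,\Omega_{c0}}^2$, the factor $1/(2\nu\eta)$ being absorbed into $c$.

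I expect the main obstacle to be the bookkeeping in the Leibniz expansion: verifying that the second-order contributions cancel exactly between $\|\mathbb{D}(\omega\vec{w})\|_0^2$ and $(\mathbb{D}(\vec{w}),\mathbb{D}(\omega^2\vec{w}))$, so that only the harmless first-order remainder $\|S\|_0^2$ survives. The remaining ingredients are routine applications of the coercivity assumptions, together with the minor point of confirming that $\omega\vec{w}$ genuinely lies in $X_c(\Omega_{c0})$ so that (A1)--(A2) are applicable.
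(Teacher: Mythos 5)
The paper offers no proof of this lemma at all---it is imported verbatim from \cite{xu2000local}---so your argument has to stand on its own, and it does. The core identity is correct: with $S=\tfrac12(\nabla\omega\otimes\vec{w}+\vec{w}\otimes\nabla\omega)$ one has $\mathbb{D}(\omega\vec{w})=\omega\mathbb{D}(\vec{w})+S$ and $\mathbb{D}(\omega^2\vec{w})=\omega^2\mathbb{D}(\vec{w})+2\omega S$, so expanding both sides the second-order term $\|\omega\mathbb{D}(\vec{w})\|_0^2$ and the cross term $2(\omega\mathbb{D}(\vec{w}),S)$ indeed coincide, leaving the exact relation $\|\mathbb{D}(\omega\vec{w})\|_0^2=(\mathbb{D}(\vec{w}),\mathbb{D}(\omega^2\vec{w}))+\|S\|_0^2$; the remainder satisfies $\|S\|_0\le\|\nabla\omega\|_{L^\infty}\|\vec{w}\|_{0,\Omega_{c0}}$ since $S$ is supported in $\mathrm{supp}\,\omega$, and (A1)--(A2) applied to $\omega\vec{w}\in X_c(\Omega_{c0})$ close the argument. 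This is exactly the superlocalization mechanism underlying the cited result of Xu--Zhou, transposed from the scalar Dirichlet form to the symmetric-gradient form, so in substance you have reconstructed the proof the paper delegates to the literature.

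One caveat worth flagging, though it does not damage the proof. Your claim that the interface contribution to $a_{c\eta}(\vec{w},\omega^2\vec{w})$ \emph{vanishes} takes $C_0^{\infty}(\Omega_c)$ literally; but under the paper's relative convention for $\subset\subset$ (subdomains such as $\Omega_{c0}$ are explicitly allowed to touch the interface, cf.\ the notation $\Gamma\Omega_{c0}=\Gamma\cap\partial\Omega_{c0}$ and its use in Lemma \ref{prioriLem}), the cutoff in the intended applications need only vanish near $\partial\Omega_{c0}\setminus\partial\Omega_c$ and may be nonzero on $\Gamma$. The fix is one line: since $\omega$ is scalar, $P_{\tau}(\omega^2\vec{w})=\omega^2P_{\tau}\vec{w}$, hence
\begin{equation*}
\frac{\eta\nu\alpha\sqrt{d}}{\sqrt{\mathrm{trace}(\Pi)}}\,\langle P_{\tau}\vec{w},P_{\tau}(\omega^2\vec{w})\rangle
=\frac{\eta\nu\alpha\sqrt{d}}{\sqrt{\mathrm{trace}(\Pi)}}\,\|\omega P_{\tau}\vec{w}\|_{L^2(\Gamma)}^2\ \ge\ 0,
\end{equation*}
so the interface term can simply be dropped from the lower bound, giving $2\nu\eta\,(\mathbb{D}(\vec{w}),\mathbb{D}(\omega^2\vec{w}))\le a_{c\eta}(\vec{w},\omega^2\vec{w})$ whether or not $\omega$ vanishes on $\Gamma$; the rest of your chain of inequalities is unchanged. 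Similarly, the applicability of (A2) to $\omega\vec{w}$ (ruling out rigid motions when $\omega\vec{w}$ does not vanish on all of $\partial\Omega_{c0}$) is covered because the paper simply \emph{assumes} Korn's inequality for all of $X_c(\Omega_{c0})$, so no extra work is needed there.
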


\begin{lemma}\label{prioriLem}
Suppose that $D_c \subset \subset \Omega_{c0} \subset \subset \Omega_c$ and given
$\vec{w}_h^0=\vec{w}_h(0)$.
If $(\vec{w}_h^{n+1},r_h^{n+1}) \in X_c^h(\Omega_c) \times Q^h(\Omega_c)(n=0,1,...,N-1)$, for all $(\vec{v},q) \in X_{c0}^h(\Omega_{c0}) \times Q_0^h(\Omega_{c0})$ satisfies
\begin{align*}
&\eta(\frac{\vec{w}_h^{n+1} - \vec{w}_h^n}{\Delta t}, \vec{v})
+a_{c\eta}(\vec{w}_h^{n+1},\vec{v})
+b_{\eta}(\vec{v},r_h^{n+1})
-b_{\eta}(\vec{w}_h^{n+1},q) \nonumber\\
&+b_{N \eta}(\vec{w}_h^{n+1},\vec{u}_{\mu},\vec{v})
+b_{N \eta}(\vec{u}_{\mu}, \vec{w}_h^{n+1},\vec{v}) \nonumber\\
&=\eta(\vec{f}_c^{n+1},\vec{v})-b_{N \eta}(\vec{u}_{\mu}, \vec{u}_{\mu}, \vec{v})
-\frac{\eta}{\rho}\langle p_{FH}^n ,\vec{v} \cdot \hat{n}_c \rangle_{\Gamma \Omega_{c0} }
-\frac{\eta \nu \alpha \sqrt{k_{F}}}{\tilde{\mu}}
\langle \nabla_{\tau} p_{FH}^n, P_{\tau} \vec{v}
\rangle_{\Gamma \Omega_{c0}}, \label{PrioriLemEqu}
\end{align*}
where $\mu=h, H$. Then the following local error estimate holds:
\begin{equation}
\|\vec{w}_h^{n+1}\|_{1,D_c} \leq c \Big(
(1+\Delta t^{-1/2})\|\vec{w}_h^{n+1}\|_{0,\Omega_{c0}}
+\|f\|_{L^2(\Gamma \Omega_{c0})}
+\Delta t^{-1/2} \|\vec{w}_h^{n}\|_{0,\Omega_{c0}}
\Big).\label{PrioriResult}
\end{equation}
\end{lemma}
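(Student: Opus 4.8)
The plan is to follow the interior (local) error-estimate technique of Xu and Zhou, using the superapproximation property \eqref{Superapproximation} together with Lemma \ref{LemmaXu} as the engine. First I would fix a cut-off function $\omega \in C_0^{\infty}(\Omega_c)$ with $\omega \equiv 1$ on $D_c$ and $\mathrm{supp}\,\omega \subset\subset \Omega_{c0}$, so that $\|\vec{w}_h^{n+1}\|_{1,D_c} \le \|\omega\vec{w}_h^{n+1}\|_1$. Applying Lemma \ref{LemmaXu} with $\vec{w}=\vec{w}_h^{n+1}$ then reduces the problem to estimating $a_{c\eta}(\vec{w}_h^{n+1},\omega^2\vec{w}_h^{n+1})$, modulo the harmless lower-order term $c\|\vec{w}_h^{n+1}\|_{0,\Omega_{c0}}^2$.

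The heart of the argument is to feed the weighted test function into the hypothesis of the lemma. Since $\omega^2\vec{w}_h^{n+1}$ and $\omega^2 r_h^{n+1}$ are not discrete, I would invoke \eqref{Superapproximation} to pick $(\vec{v},q)\in X_{c0}^h(\Omega_{c0})\times Q_0^h(\Omega_{c0})$ with $\|\omega^2\vec{w}_h^{n+1}-\vec{v}\|_{1,\Omega_{c0}}\le ch\|\vec{w}_h^{n+1}\|_{1,\Omega_{c0}}$ and the analogous pressure bound, then split $a_{c\eta}(\vec{w}_h^{n+1},\omega^2\vec{w}_h^{n+1})=a_{c\eta}(\vec{w}_h^{n+1},\omega^2\vec{w}_h^{n+1}-\vec{v})+a_{c\eta}(\vec{w}_h^{n+1},\vec{v})$. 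The consistency part is controlled by continuity and \eqref{Superapproximation} by $ch\|\vec{w}_h^{n+1}\|_{1,\Omega_{c0}}^2$, while $a_{c\eta}(\vec{w}_h^{n+1},\vec{v})$ is replaced, via the lemma's identity tested against $(\vec{v},q)$, by the time-difference term, the two $b_{\eta}$ pressure terms, the three convection terms and the two interface terms. Each is estimated in the standard way: the time term $\tfrac{\eta}{\Delta t}(\vec{w}_h^{n+1}-\vec{w}_h^n,\omega^2\vec{w}_h^{n+1})$ is kept as an $L^2\times L^2$ pairing, so that $\|\omega^2\vec{w}_h^{n+1}\|_0\le\|\vec{w}_h^{n+1}\|_{0,\Omega_{c0}}$ yields exactly the $\Delta t^{-1/2}(\|\vec{w}_h^{n+1}\|_{0,\Omega_{c0}}+\|\vec{w}_h^{n}\|_{0,\Omega_{c0}})$ contribution after taking square roots; the convection terms are handled through the skew-symmetry and the bounds of (A8) together with the a priori boundedness $\|\vec{u}_\mu\|_{W^{1,d^{\ast}}}\le C_B$ from \eqref{BDness}, which generates the $O(1)\|\vec{w}_h^{n+1}\|_{0,\Omega_{c0}}$ part of the estimate; and the two interface terms are treated exactly as in Lemma \ref{Lem1}, i.e.\ by the trace inequality and the $H^{1/2}$--$H^{-1/2}$ duality on $\Gamma\Omega_{c0}$, collecting the boundary data into $\|f\|_{L^2(\Gamma\Omega_{c0})}$. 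The pressure terms $b_{\eta}(\vec{v},r_h^{n+1})-b_{\eta}(\vec{w}_h^{n+1},q)$ are disposed of by writing $\nabla\cdot(\omega^2\vec{w}_h^{n+1})=2\omega(\nabla\omega)\cdot\vec{w}_h^{n+1}$ modulo the discrete divergence constraint and applying \eqref{Superapproximation} and (A7), so they contribute only lower-order $L^2$ terms.

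Collecting everything yields a quadratic inequality of the form $\|\omega\vec{w}_h^{n+1}\|_1^2\le c\,\mathcal{D}\,\|\omega\vec{w}_h^{n+1}\|_1+c\,\mathcal{E}$, where $\mathcal{D}$ gathers the data and $L^2$ quantities and $\mathcal{E}$ contains $\|\vec{w}_h^{n+1}\|_{0,\Omega_{c0}}^2$ together with the leftover $ch\|\vec{w}_h^{n+1}\|_{1,\Omega_{c0}}^2$. Solving this quadratic for $\|\omega\vec{w}_h^{n+1}\|_1$ and taking square roots produces \eqref{PrioriResult}, up to that $H^1$ remainder.

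I expect the main obstacle to be precisely the removal of $ch\|\vec{w}_h^{n+1}\|_{1,\Omega_{c0}}^2$: because $\omega$ is localized to $\Omega_{c0}$ while this norm is taken over the strictly larger region $\Omega_{c0}$, it cannot be absorbed directly into $\|\omega\vec{w}_h^{n+1}\|_1^2\ge\|\vec{w}_h^{n+1}\|_{1,D_c}^2$. The remedy, following \cite{xu2000local, li2021local, ding2021local}, is an iteration over a finite chain of nested subdomains $D_c=G_0\subset\subset G_1\subset\subset\cdots\subset\subset G_L=\Omega_{c0}$ with associated cut-offs, so that the interior $H^1$ norm on each $G_k$ is bounded by an order-$h$ multiple of the $H^1$ norm on $G_{k+1}$ plus lower-order $L^2$ and data terms; for $h$ sufficiently small the accumulated constant remains bounded and the $H^1$ remainder is eliminated, leaving only the $L^2$ and interface-data terms asserted in \eqref{PrioriResult}.
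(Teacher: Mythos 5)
Your proposal is correct and takes essentially the same route the paper intends: the paper does not write out a proof of Lemma \ref{prioriLem}, stating only that it follows from Lemma \ref{LemmaXu}, the superapproximation property \eqref{Superapproximation} and the arguments of \cite{li2021local, ding2021local}, and your reconstruction—cut-off function, weighted test function $\omega^2\vec{w}_h^{n+1}$ made discrete via superapproximation, term-by-term estimation of the time-difference, convection, pressure and interface terms, followed by the nested-subdomain iteration of \cite{xu2000local}—is exactly that standard machinery. In particular, your identification of the leftover $ch\|\vec{w}_h^{n+1}\|_{1,\Omega_{c0}}^2$ term and its removal by iterating over a finite chain of subdomains between $D_c$ and $\Omega_{c0}$ is precisely the step those cited references supply.
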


\begin{theorem}
Suppose that $(p_{Fh}^{n+1},p_{fh}^{n+1},p_{mh}^{n+1},\vec{u}_{ch}^{n+1})$
and $(p_{n+1}^{Fh},p_{n+1}^{fh},p_{n+1}^{mh},\vec{u}_{n+1}^{ch})$ are obtained
from \eqref{fullypF}-\eqref{fullyuc} and \eqref{LocalFully_pF}-\eqref{LocalFully_uc},
respectively. For $0 \leq n \leq N-1$, the following inequalities hold:
\begin{align*}
&| \vec{u}_{ch}^{n+1}-\vec{u}_{n+1}^{ch} |_{1,D_c} \leq c (1+\Delta t^{-1/2})(\Delta t + H^{r+1}),\\
&| p_{ih}^{n+1}-p_{n+1}^{ih} |_{1,D_p} \leq c(1+\Delta t^{-1/2})(\Delta t + H^{r+1}),~~~~(i=F,f,m),\\
&\|p_{ih}^{n+1}-p_{n+1}^{ih}\|_{0,D_p} \leq c(\Delta t + H^{r+1}),~~~~(i=f,m).
\end{align*}
Furthermore,
\begin{equation}\label{FinialResult}
\begin{split}
&| \vec{u}_{c}(t_{n+1})-\vec{u}_{n+1}^{ch} |_{1,D_c} \leq c (1+\Delta t^{-1/2})(\Delta t +h^r + H^{r+1}),\\
&| p_i(t_{n+1})-p_{n+1}^{ih} |_{1,D_p} \leq c(1+\Delta t^{-1/2})(\Delta t + h^r+H^{r+1}),~~~~(i=F,f,m),\\
&\|p_i(t_{n+1})-p_{n+1}^{ih}\|_{0,D_p} \leq c(\Delta t + h^{r+1}),~~~~(i=f,m).
\end{split}
\end{equation}
\end{theorem}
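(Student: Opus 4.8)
The plan is to split the interior error by the triangle inequality into the error of the global fully discrete scheme and the two-grid correction error,
\begin{align*}
|\vec{u}_c(t_{n+1})-\vec{u}_{n+1}^{ch}|_{1,D_c}
\leq |\vec{u}_c(t_{n+1})-\vec{u}_{ch}^{n+1}|_{1,D_c}
+|\vec{u}_{ch}^{n+1}-\vec{u}_{n+1}^{ch}|_{1,D_c},
\end{align*}
and to treat the porous pressures on $D_p$ in the same fashion. For the first term I would insert the continuous-in-time fine solution, $|\vec{u}_c(t_{n+1})-\vec{u}_{ch}^{n+1}|_1 \leq |\vec{u}_c(t_{n+1})-\vec{u}_{ch}(t_{n+1})|_1 + |E_{ch}^{n+1}|_1$, and control the spatial part by Lemma \ref{LemsemiConvergence} (which yields $O(h^r)$ in the $H^1$ seminorm and $O(h^{r+1})$ in $L^2$) and the temporal part by Lemma \ref{LemSemitime} with $\mu=h$, whose bound $2\nu\eta C_K\Delta t\,|E_{ch}^{k}|_1^2 \leq c\Delta t^2$ gives $|E_{ch}^{n+1}|_1 \leq c\Delta t^{1/2} \leq c(1+\Delta t^{-1/2})\Delta t$. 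This furnishes the $\Delta t+h^r$ contribution to \eqref{FinialResult}; for the $L^2$ pressure bounds ($i=f,m$) the $L^2$ parts of the same two lemmas give the sharper $O(\Delta t+h^{r+1})$.

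The decisive quantity is $|\vec{u}_{ch}^{n+1}-\vec{u}_{n+1}^{ch}|_{1,D_c}$, which I would bound through the interior a priori estimate of Lemma \ref{prioriLem}. Writing $\vec{w}_h^{n+1}:=\vec{u}_{ch}^{n+1}-\vec{u}_{n+1}^{ch}=\vec{u}_{ch}^{n+1}-(\vec{u}_{cH}^{n+1}+e_{n+1}^{ch})$ on $\Omega_{c0}$, the first step is to verify that $\vec{w}_h^{n+1}$ obeys the local residual equation assumed in Lemma \ref{prioriLem}. This is produced by subtracting the sum of the coarse conduit equation \eqref{Fully_uc} and the local correction equation \eqref{LocalFully_uc} from the global fine equation \eqref{fullyuc}, and restricting the test pair to $(\vec{v},q)\in X_{c0}^h(\Omega_{c0})\times Q_0^h(\Omega_{c0})$, whose compact support in $\Omega_{c0}$ collapses the integrals over $\Gamma$ and $\Gamma\Omega_{c0}$ into a single interface forcing. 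The delicate point is the convective term: because $\vec{u}_{cH}^{n+1}+e_{n+1}^{ch}$ satisfies only the two-grid linearized fine equation, expanding $b_{N\eta}(\vec{u}_{ch}^{n+1},\vec{u}_{ch}^{n+1},\cdot)-b_{N\eta}(\vec{u}_{cH}^{n+1}+e_{n+1}^{ch},\vec{u}_{cH}^{n+1}+e_{n+1}^{ch},\cdot)$ by bilinearity yields, after regrouping, the linearized form $b_{N\eta}(\vec{w}_h^{n+1},\vec{u}_\mu,\cdot)+b_{N\eta}(\vec{u}_\mu,\vec{w}_h^{n+1},\cdot)$ of Lemma \ref{prioriLem} plus the quadratic remainder $b_{N\eta}(e_{n+1}^{ch},e_{n+1}^{ch},\cdot)$, which must be moved to the right-hand side and absorbed. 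I expect this nonlinear rearrangement, together with checking that the support condition of $X_{c0}^h(\Omega_{c0})$ localizes every interface contribution correctly, to be the main obstacle of the proof.

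With the residual equation established, Lemma \ref{prioriLem} gives
\begin{align*}
|\vec{w}_h^{n+1}|_{1,D_c}
\leq c\Big((1+\Delta t^{-1/2})\|\vec{w}_h^{n+1}\|_{0,\Omega_{c0}}
+\|f\|_{L^2(\Gamma\Omega_{c0})}
+\Delta t^{-1/2}\|\vec{w}_h^{n}\|_{0,\Omega_{c0}}\Big),
\end{align*}
so it remains to estimate the $L^2(\Omega_{c0})$ norms and the interface forcing $f$. I would insert the semi-discrete quantities and decompose
\begin{align*}
\vec{w}_h^{n+1}=E_{ch}^{n+1}-E_{cH}^{n+1}-E_{n+1}^{ch}
+\Big(\vec{u}_{ch}(t_{n+1})-\vec{u}_{cH}(t_{n+1})-e^{ch}(t_{n+1})\Big),
\end{align*}
controlling $\|E_{ch}^{n+1}\|_0,\|E_{cH}^{n+1}\|_0$ by Lemma \ref{LemSemitime} ($O(\Delta t)$), $\|E_{n+1}^{ch}\|_{0,\Omega_{c0}}$ by Lemma \ref{ech-ect} ($O(\Delta t)$), and the semi-discrete correction remainder by combining the $L^2$ spatial bound of Lemma \ref{LemsemiConvergence} on $\vec{u}_{ch}-\vec{u}_{cH}$ with Lemma \ref{echConvergnece}, both $O(H^{r+1})$. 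Hence $\|\vec{w}_h^{n+1}\|_{0,\Omega_{c0}}\leq c(\Delta t+H^{r+1})$. The interface forcing, built from the macrofracture pressure difference $p_{Fh}^{n}-p_{FH}^{n}$, is estimated with the trace inequality (A3) and the already-established pressure bounds exactly as the terms $T_3,T_4$ were treated in Lemma \ref{LemSemitime}, and contributes at the same order $\Delta t+H^{r+1}$. Collecting the three contributions gives $|\vec{w}_h^{n+1}|_{1,D_c}\leq c(1+\Delta t^{-1/2})(\Delta t+H^{r+1})$, the intermediate estimate, and adding the first term yields the conduit line of \eqref{FinialResult}.

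The porous-pressure estimates on $D_p$ follow the same three steps but are simpler, since the triple-porosity equations \eqref{Fully_pF}-\eqref{Fully_pm} carry no convection: the analogue of Lemma \ref{prioriLem} is a standard Caccioppoli-type interior estimate, and the roles of Lemmas \ref{echConvergnece} and \ref{ech-ect} are taken by Lemmas \ref{tripleeih} and \ref{tripleEkih}. The sharper $L^2$ bound $\|p_i(t_{n+1})-p_{n+1}^{ih}\|_{0,D_p}\leq c(\Delta t+h^{r+1})$ for $i=f,m$ needs no interior a priori step at all and is read directly off the $L^2$ parts of Lemmas \ref{LemsemiConvergence} and \ref{tripleEkih}; this is why neither the factor $1+\Delta t^{-1/2}$ nor the macrofracture index $i=F$, whose interface coupling to $\vec{u}_c$ forces the use of the trace estimate, appears in that line.
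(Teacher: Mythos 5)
Your proposal is correct and takes essentially the same route as the paper's own proof: subtracting \eqref{LocalFully_uc} from \eqref{fullyuc} to obtain the residual equation (the paper's \eqref{errorCha}), applying the interior estimate of Lemma \ref{prioriLem}, bounding the $L^2(\Omega_{c0})$ norms through exactly your decomposition via Lemmas \ref{LemSemitime}, \ref{ech-ect}, \ref{echConvergnece} and \ref{LemsemiConvergence} (with Lemmas \ref{tripleeih} and \ref{tripleEkih} on the triple-porosity side), and concluding with the triangle inequality. One harmless slip: regrouping the convective terms leaves the quadratic remainder $b_{N\eta}(\vec{u}_{ch}^{n+1}-\vec{u}_{cH}^{n+1},\vec{u}_{ch}^{n+1}-\vec{u}_{cH}^{n+1},\vec{v}_c)$, a known data term independent of the unknown correction, rather than $b_{N\eta}(e_{n+1}^{ch},e_{n+1}^{ch},\vec{v}_c)$ (which would involve the unknown through $e_{n+1}^{ch}=(\vec{u}_{ch}^{n+1}-\vec{u}_{cH}^{n+1})-\vec{w}_h^{n+1}$), so the absorption you flag as the main obstacle is unnecessary, exactly as in the paper's \eqref{errorCha}.
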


\begin{proof}
Subtracting \eqref{LocalFully_uc} from \eqref{fullyuc}, we obtain
\begin{align}
&\eta \Big( \frac{\vec{u}_{ch}^{n+1} - \vec{u}_{n+1}^{ch}-(\vec{u}_{ch}^n-\vec{u}_n^{ch})}{\Delta t},\vec{v}_c \Big)
+a_{c\eta}(\vec{u}_{ch}^{n+1}-\vec{u}_{n+1}^{ch},\vec{v}_c)
+b_{\eta}(\vec{v}_c,p_h^{n+1}-p_{n+1}^h) \nonumber\\
&-b_{\eta}(\vec{u}_{ch}^{n+1}-\vec{u}_{n+1}^{ch},q)
+b_{N\eta}(\vec{u}_{ch}^{n+1}-\vec{u}_{n+1}^{ch},\vec{u}_{cH}^{n+1},\vec{v}_c)
+b_{N\eta}(\vec{u}_{cH}^{n+1},\vec{u}_{ch}^{n+1}-\vec{u}_{cH}^{n+1},\vec{v}_c) \nonumber\\
&+b_{N\eta}(\vec{u}_{ch}^{n+1}-\vec{u}_{cH}^{n+1},\vec{u}_{ch}^{n+1}-\vec{u}_{cH}^{n+1},\vec{v}_c) \nonumber\\
&= -\frac{\eta}{\rho} \langle p_{Fh}^n-p_{FH}^n,\vec{v}_c \cdot \vec{n}_c \rangle_{\Gamma \Omega_{c0}}
-\frac{\eta \nu \alpha \sqrt{k_F}}{\tilde{\mu}} \langle \nabla_{\tau}(p_{Fh}^n-p_{FH}^n),P_{\tau} \vec{v}_c \rangle_{\Gamma \Omega_{c0}}. \label{errorCha}
\end{align}
Using the Lemma \ref{prioriLem}, Lemma \ref{echConvergnece}, Lemma \ref{ech-ect} in \eqref{errorCha}, we deduce that
\begin{align*}
&|\vec{u}_{ch}^{n+1} - \vec{u}_{n+1}^{ch}|_{1,D_c}\\
&\leq c(1+\Delta t^{-1/2})(\|\vec{u}_{ch}^{n+1}-\vec{u}_{cH}^{n+1}\|_{0,\Omega_{c0}}
+\|e_{n+1}^{ch}-e^{ch}(t_{n+1})\|_{0,\Omega_{c0}})
+\|e^{ch}(t_{n+1})\|_{0,\Omega_{c0}}) \\
&~~~+c\Delta t^{-1/2}(\|\vec{u}_{ch}^n-\vec{u}_{cH}^n\|_{0,\Omega_{c0}} + \|e_n^{ch}-e^{ch}(t_n)\|_{0,\Omega_{c0}}
+\|e^{ch}(t_n)\|_{0,\Omega_{c0}} )
+c(\Delta t^{1/2} + H^{r+1})\\
&\leq c(1+\Delta t^{-1/2})(\Delta t + H^{r+1}).
\end{align*}
Similarly, applying Lemma \ref{tripleeih} and Lemma \ref{tripleEkih},
we can get
\begin{align*}
|p_{ih}^{n+1}-p_{n+1}^{ih}|_{1,D_p} \leq c(1+\Delta t^{-1/2})(\Delta t + H^{r+1}),~~~~(i=F,f,m).
\end{align*}
In addition, we have
\begin{align*}
\|p_{ih}^{n+1}-p_{n+1}^{ih}\|_{0,D_p}
&\leq \|p_{ih}^{n+1}-p_{iH}^{n+1}\|_{0,\Omega_{p0}}
+ \|e_{n+1}^{ih} - e^{ih}(t_{n+1})\|_{0,\Omega_{p0}}
+ \|e^{ih}(t_{n+1})\|_{0,\Omega_{p0}}\\
&\leq c (\Delta t + H^{r+1}).
\end{align*}
Using triangle inequality, \eqref{FinialResult} can be achieved.

\end{proof}

\subsection{Convergence results}\label{FinalResult}
Defining the piecewise norms
\begin{align*}
||| \vec{u}_{ch}^{n+1} - \vec{u}_{n+1}^{ch} |||_{1,\Omega_c}
&=\Big(  \sum_{j'=1}^{M'} | \vec{u}_{ch,j'}^{n+1}-\vec{u}_{n+1}^{ch,j'}  |_{1,D_{cj'}}^2  \Big)^{1/2},\\
||| p_{ih}^{n+1} - p_{n+1}^{ih} |||_{1,\Omega_p}
&=\Big(  \sum_{j=1}^M | p_{ih,j}^{n+1}-p_{n+1}^{ih,j}  |_{1,D_{pj}}^2  \Big)^{1/2},~~~~(i=F,f,m),\\
||| p_{ih}^{n+1} - p_{n+1}^{ih} |||_{0,\Omega_p}
&=\Big(  \sum_{j=1}^M \| p_{ih,j}^{n+1}-p_{n+1}^{ih,j} \|_{0,D_{pj}}^2  \Big)^{1/2},~~~~(i=f,m),
\end{align*}
we have the following convergence results.

\begin{theorem}\label{TheoremFinal}
Assume that $(p_{Fh}^{n+1},p_{fh}^{n+1},p_{mh}^{n+1},\vec{u}_{ch}^{n+1})$
and $(p_{n+1}^{Fh},p_{n+1}^{fh},p_{n+1}^{mh},\vec{u}_{n+1}^{ch})$ are obtained
from Algorithm \ref{Algorithm-1} and Algorithm \ref{algorithm3}, respectively.
For $0 \leq n \leq N-1$, the following inequalities hold:
\begin{align*}
||| \vec{u}_{ch}^{n+1} - \vec{u}_{n+1}^{ch} |||_{1,\Omega_c} &\leq c(1+\Delta t^{-1/2})(\Delta t +h^r+ H^{r+1}),\\
||| p_{ih}^{n+1} - p_{n+1}^{ih} |||_{1,\Omega_p} &\leq c(1+\Delta t^{-1/2})(\Delta t +h^r+ H^{r+1}),~~(i=F,f,m),\\
||| p_{ih}^{n+1} - p_{n+1}^{ih} |||_{0,\Omega_p} &\leq c(\Delta t +H^{r+1}),~~(i=f,m).
\end{align*}
Furthermore,
\begin{align*}
||| \vec{u}_{c}(t_{n+1}) - \vec{u}_{n+1}^{ch} |||_{1,\Omega_c} &\leq c(1+\Delta t^{-1/2})(\Delta t + h^r + H^{r+1}),\\
||| p_{i}(t_{n+1}) - p_{n+1}^{ih} |||_{1,\Omega_p} &\leq c(1+\Delta t^{-1/2})(\Delta t +h^r+ H^{r+1}),~~(i=F,f,m),\\
||| p_{i}(t_{n+1}) - p_{n+1}^{ih} |||_{0,\Omega_p} &\leq c(\Delta t +h^{r+1}),~~(i=f,m).
\end{align*}
\end{theorem}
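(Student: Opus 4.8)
The plan is to reduce Theorem \ref{TheoremFinal} to the single-subdomain estimates already established in the preceding theorem (culminating in \eqref{FinalResult}), by exploiting that Algorithm \ref{algorithm3} decouples into independent subdomain problems of identical structure. The first and key observation is that the coarse Step~1 solve \eqref{pFFully}--\eqref{pcFully} is common to the whole domain, while for each fixed index the Step~3 corrections \eqref{pFlocalFully}--\eqref{uclocalFully}, posed on $\Omega_{cj'}$ and $\Omega_{pj}$, coincide in form with the single-domain corrections \eqref{LocalFully_pF}--\eqref{LocalFully_uc} after the identifications $\Omega_{c0}\leftrightarrow\Omega_{cj'}$, $D_c\leftrightarrow D_{cj'}$ (and $\Omega_{p0}\leftrightarrow\Omega_{pj}$, $D_p\leftrightarrow D_{pj}$). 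Consequently $\vec{u}_{n+1}^{ch,j'}$ equals the single-domain local solution associated with the patch $\Omega_{cj'}$, so the entire analysis behind \eqref{FinalResult}---the auxiliary bounds of Lemmas \ref{echConvergnece}, \ref{ech-ect}, \ref{tripleeih}, \ref{tripleEkih} together with the local a priori estimate of Lemma \ref{prioriLem}---applies verbatim on each patch, yielding for every $j'$, $j$
\begin{align*}
|\vec{u}_c(t_{n+1})-\vec{u}_{n+1}^{ch,j'}|_{1,D_{cj'}} &\leq c(1+\Delta t^{-1/2})(\Delta t + h^r + H^{r+1}),\\
|p_i(t_{n+1})-p_{n+1}^{ih,j}|_{1,D_{pj}} &\leq c(1+\Delta t^{-1/2})(\Delta t + h^r + H^{r+1}),
\end{align*}
and the corresponding $H^{r+1}$-type bounds against $\vec{u}_{ch}^{n+1}$, $p_{ih}^{n+1}$, with $c$ independent of the patch.

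Securing this patch-uniformity of $c$ is the crux. I would require the overlapping decomposition to satisfy $\mathrm{dist}(\partial D_{cj'}\setminus\partial\Omega_c,\,\partial\Omega_{cj'}\setminus\partial\Omega_c)\geq d_0>0$ uniformly in $j'$, and likewise $D_{pj}\subset\subset\Omega_{pj}$, so that the cutoff-function mechanism of Lemma \ref{LemmaXu} and the superapproximation property \eqref{Superapproximation} hold with a single constant on every patch; otherwise the constants in the local a priori estimate could degenerate as the patches shrink. I would also use that the enlarged families $\{\Omega_{cj'}\}_1^{M'}$ and $\{\Omega_{pj}\}_1^M$ cover $\Omega_c$ and $\Omega_p$ with uniformly bounded overlap multiplicity, so that summing local norms over the cover is controlled by the global norm up to a fixed constant.

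With these structural facts in hand, I would square the per-patch bounds, sum over the finite families $\{D_{cj'}\}$ and $\{D_{pj}\}$, and invoke the definitions of the piecewise norms from Subsection \ref{FinalResult}. Each squared term is bounded by $c^2(1+\Delta t^{-1/2})^2(\Delta t + h^r + H^{r+1})^2$, so bounded overlap lets the fixed factor $\sqrt{M'}$ (respectively $\sqrt{M}$) be absorbed into $c$, giving
\begin{align*}
|||\vec{u}_c(t_{n+1})-\vec{u}_{n+1}^{ch}|||_{1,\Omega_c}
&\leq c(1+\Delta t^{-1/2})(\Delta t + h^r + H^{r+1}),
\end{align*}
and the stated $|||\cdot|||_{1,\Omega_p}$ and $|||\cdot|||_{0,\Omega_p}$ estimates identically; the bounds against $\vec{u}_{ch}^{n+1}$, $p_{ih}^{n+1}$ follow in the same way (the harmless extra $h^r$ there only weakens the per-patch $\Delta t + H^{r+1}$ bound). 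I expect the only genuine difficulty to be the uniformity-of-constants point above; once it and bounded overlap are established, the remaining summation and triangle-inequality steps are routine.
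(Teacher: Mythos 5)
Your proposal is correct and matches the paper's intended argument exactly: the paper states Theorem \ref{TheoremFinal} without a separate proof, treating it as an immediate consequence of the per-patch estimates \eqref{FinialResult} applied on each pair $(D_{pj},\Omega_{pj})$, $(D_{cj'},\Omega_{cj'})$ and the definition of the piecewise norms, since the Step~3 corrections of Algorithm \ref{algorithm3} coincide in form with \eqref{LocalFully_pF}--\eqref{LocalFully_uc} on each patch. Your explicit attention to patch-uniform constants (fixed separation distance, bounded overlap of the finitely many fixed subdomains) spells out what the paper leaves implicit, and is exactly why the summation over $j$, $j'$ absorbs into the generic constant $c$.
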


\begin{remark}
The conclusions in this theorem
are the same as the results of parallel methods for a simpler Navier-Stokes-Darcy model or even Navier-Stokes model \cite{li2022local, ding2021local, li2021local}.
\end{remark}

\section{Numerical results}
In this section, the first two numerical examples are presented to validate the accuracy and efficiency of the proposed algorithm. The last one is provided to illustrate the features of the application to flow problems around multistage fractured horizontal wellbore completions with super-hydrophobic proppant.
The well-known MINI elements (P1b-P1) are employed in the conduit region, while P1-elements are used in the triple-porosity region to evaluate the algorithm's convergence rate, as previously conducted.
All simulations reported in this work are
carried out on a same cluster,
and the message-passing is supported by MPI of FreeFEM++ package \cite{Hechet2010}.

\subsection{Example 1: Experimental rate of convergence in 2D}
Let the computational domain $\Omega$ be composed of $\Omega_p = (0,1) \times (0,1)$ and $\Omega_c = (0,1)\times (1,2)$ with the interface $\Gamma = (0,1) \times \{1\}$. The analytical solutions satisfying the transient triple-porosity Navier-Stokes model are given by
\begin{equation*}
\begin{split}
&p_m=(2-\pi \sin(\pi x)) \sin(0.5 \pi (3y^3-2y^2)) \cos(t),\\
&p_f=(2-\pi \sin(\pi x)) \cos(\pi (1-y)) \cos(t),\\
&p_F=(2-\pi \sin(\pi x)) (1-y-\cos(\pi y)) \cos(t),\\
&\vec{u}_c=\Big[ \big(x^2(y-1)^2+y \big)\cos(t), ~~-\frac{2}{3}x (y-1)^3 \cos(t)+  \big( 2-\pi \sin(\pi x) \big) \cos(t) \Big]^{T},~~~~~~~~~~~~\\
&p=(2-\pi \sin(\pi x)) \sin(0.5\pi y) \cos(t).
\end{split}
\end{equation*}
In addition, the initial conditions, boundary conditions and forcing terms can be derived from the analytical solutions. For simplicity of calculation, all the parameters $\phi_i, C_i, k_i(i=F,f,m), \sigma, \sigma^{\ast}, \tilde{\mu}, \rho, \eta, \nu, \alpha$ and $T$ are supposed to be 1. To test the proposed local and parallel algorithm, $\Omega_c$ and $\Omega_p$ are partitioned into $2 \times 2$
subdomains respectively as follows:
\begin{align*}
&D_{c1}=[0,\frac{1}{2}] \times [1,\frac{3}{2}],~D_{c2}=[\frac{1}{2},1] \times [1,\frac{3}{2}],
~D_{c3}=[\frac{1}{2},1] \times [\frac{3}{2},2],~D_{c4}=[0,\frac{1}{2}] \times [\frac{3}{2},2],\\
&D_{p1}=[0,\frac{1}{2}] \times [0,\frac{1}{2}],~D_{p2}=[\frac{1}{2},1] \times [0,\frac{1}{2}],
~D_{p3}=[\frac{1}{2},1] \times [\frac{1}{2},1],~D_{p4}=[0,\frac{1}{2}] \times [\frac{1}{2},1],
\end{align*}
in which each subdomain computed by one MPI process (see Figure \ref{Fig2DDarcyNS}).
Then extend each $D_{cj'}$ and $D_{pj} (j',j=1,2,3,4)$ to $\Omega_{cj'}$ and $\Omega_{pj}$ as follows:
\begin{align*}
&\Omega_{c1}=[0,\frac{3}{4}] \times [1,\frac{7}{4}],~\Omega_{c2}=[\frac{1}{4},1] \times [1,\frac{7}{4}],
~\Omega_{c3}=[\frac{1}{4},1] \times [\frac{5}{4},2],~\Omega_{c4}=[0,\frac{3}{4}] \times [\frac{5}{4},2],\\
&\Omega_{p1}=[0,\frac{3}{4}] \times [0,\frac{3}{4}],~\Omega_{p2}=[\frac{1}{4},1] \times [0,\frac{3}{4}],
~\Omega_{p3}=[\frac{1}{4},1] \times [\frac{1}{4},1],~\Omega_{p4}=[0,\frac{3}{4}] \times [\frac{1}{4},1].
\end{align*}

\begin{figure}[H]
\begin{centering}
\begin{subfigure}[t]{0.31\textwidth}
\centering
\includegraphics[width=1.5\textwidth]{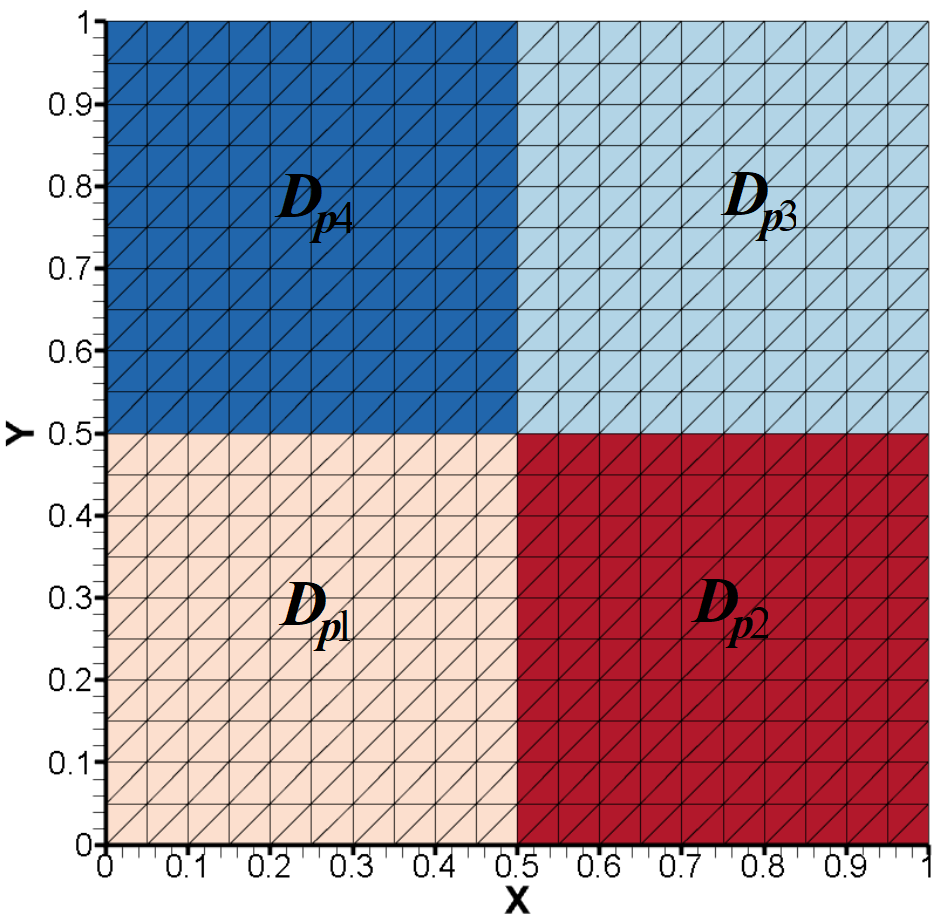}
\end{subfigure}
\hspace{30mm}
\begin{subfigure}[t]{0.31\textwidth}
\centering
\includegraphics[width=1.5\textwidth]{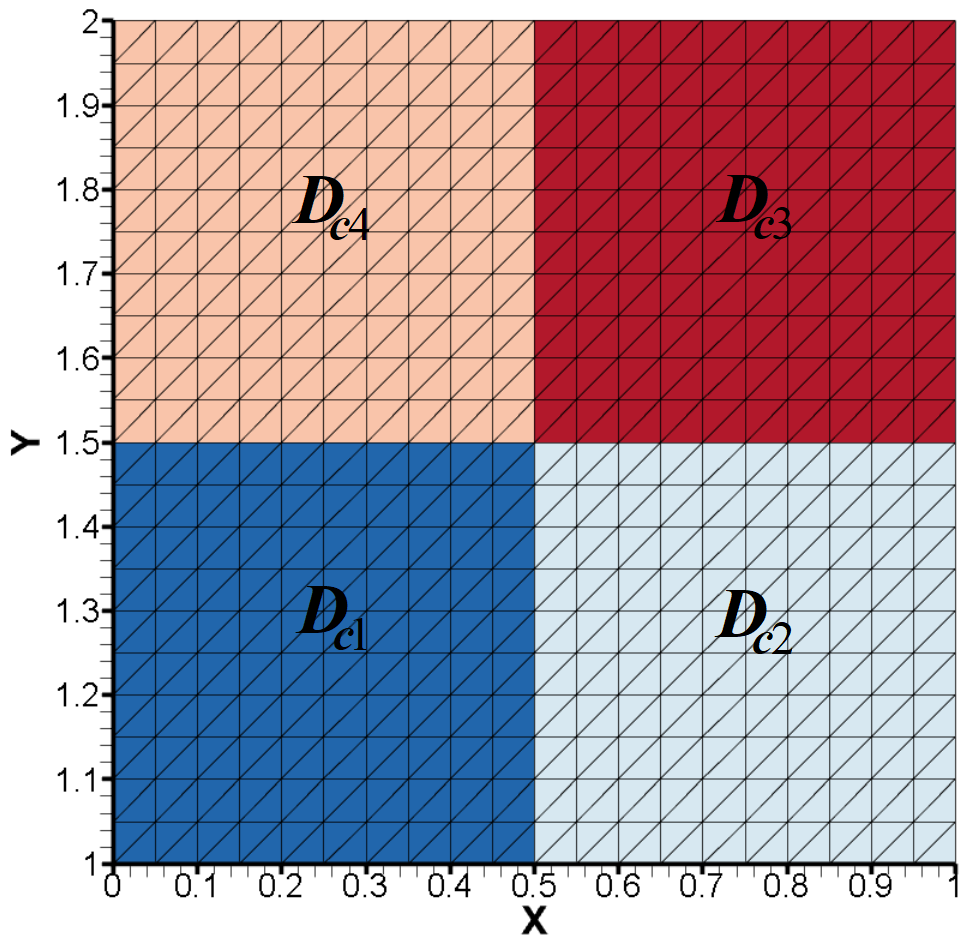}
\end{subfigure}
\end{centering}
\caption{\small{\label{Fig2DDarcyNS}2D partitions of triple-porosity domain and conduit domain.}}
\end{figure}

We solve the model with time size $\Delta t=h^2$, space mesh sizes~$h=1/4, 1/16, 1/64, 1/256$
and coarse grid mesh size $H$ satisfy $h=H^2$.
The numerical results are shown in Table \ref{T1}, which are consistent with the theoretical results in Theorem \ref{TheoremFinal}.
Furthermore, we solve this model with Algorithm \ref{Algorithm-1}, which is partitioned time stepping method.
From Table \ref{T2}, we can see that our parallel algorithm saves a large amount of computational time compared with the Algorithm \ref{Algorithm-1}.


\begin{table}[H]
\caption{\label{T1}The convergence performance and computational cost of Algorithm \ref{algorithm3}(Local Parallel Algorithm) in 2D}
\centering
\resizebox{\textwidth}{!}{
\begin{tabular}{cccccccc}
\hline
$h ~\&~ H$ & $|||\vec{u}_c -\vec{u}_{n+1}^{ch}|||_1$ &
Rate &
 $|||p_F -p_{n+1}^{Fh}|||_1$&
Rate &
$|||p_f -p_{n+1}^{fh}|||_0$&
Rate \\
\hline
~$\frac{1}{4}~~~~~\frac{1}{2}$  & 0.786682    & --       & 0.933324    &--   &0.094526  &--\\
$\frac{1}{16}~~~~~\frac{1}{4}$  & 0.209930    & 0.95     & 0.244991    &0.96 &0.006536  &1.93\\
$\frac{1}{64}~~~~~\frac{1}{8}$  & 0.057940    & 0.93     & 0.061445    &1.00 &0.000384  &2.05\\
$\frac{1}{256}~~~~\frac{1}{16}$ & 0.015161    & 0.97     & 0.015351    &1.00 &0.000023  &2.03\\
\hline
$|||p_f-p_{n+1}^{fh}|||_1$ &
Rate &
$|||p_m-p_{n+1}^{mh}|||_0$ &
Rate &
$|||p_m-p_{n+1}^{mh}|||_1$ &
Rate &
CPU(s)\\
\hline
1.327330 &  --  & 0.070969  &--     &1.038320 & --   &4.18\\
0.343407 & 0.98 & 0.005808  & 1.81  &0.299781 & 0.90 &25.75\\
0.086055 & 1.00 & 0.000368  & 1.99  &0.075829 & 0.99 &320.68\\
0.021517 & 1.00 & 0.000021  & 2.06  &0.018971 & 1.00 &7972.32\\
\hline
\end{tabular}
}
\end{table}

\begin{table}[H]
\caption{\label{T2}The convergence performance and computational cost of Algorithm \ref{Algorithm-1}(Traditional Algorithm) in 2D}
\centering
\resizebox{\textwidth}{!}{
\footnotesize{
\begin{tabular}{cccccccc}
\hline
$h $ & $|\vec{u}_c -\vec{u}_{ch}^{n+1}|_1$ &
Rate &
$|p_F -p_{Fh}^{n+1}|_1$&
Rate &
$\|p_f -p_{fh}^{n+1}\|_0$&
Rate \\
\hline
$\frac{1}{4} $  & 0.783562    & --      & 0.922156    &--   &0.094526 & --   \\
$\frac{1}{16} $  & 0.208532    & 0.95   & 0.245630    &1.95 &0.006536 & 1.93 \\
$\frac{1}{64} $  & 0.057523    & 0.93   & 0.056354    &1.06 &0.000384 & 2.05  \\
$\frac{1}{256} $ & 0.015151    & 0.96   & 0.015264    &0.94 &0.000023 & 2.03  \\
\hline
$|p_f-p_{fh}^{n+1}|_1$ &
Rate &
$\|p_m-p_{mh}^{n+1}\|_0$ &
Rate &
$|p_m-p_{mh}^{n+1}|_1$ &
Rate &
CPU(s)\\
\hline
1.327330 &  --  & 0.070969  &--     &1.038320 & --   &4.22\\
0.343407 & 0.98 & 0.005808  & 1.81  &0.299781 & 0.90 &32.78 \\
0.086055 & 1.00 & 0.000368  & 1.99  &0.075829 & 0.99 &570.62
\\
0.021517 & 1.00 & 0.000021  & 2.06  &0.018971 & 1.00 &11958.50
\\
\hline
\end{tabular}
}
}
\end{table}

\vspace{-0.5cm}
\begin{figure}[H]
\begin{centering}
\begin{subfigure}[t]{0.31\textwidth}
\centering
\includegraphics[width=1.05\textwidth]{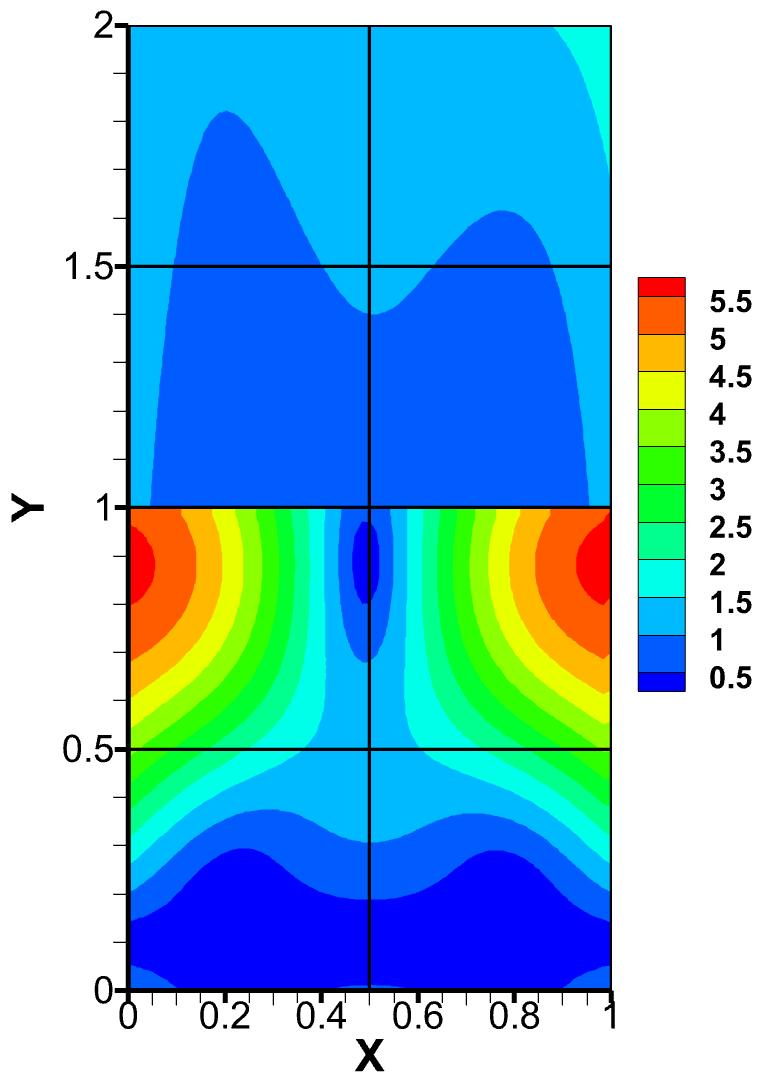}
\end{subfigure}
\quad
\begin{subfigure}[t]{0.31\textwidth}
\centering
\includegraphics[width=1.05\textwidth]{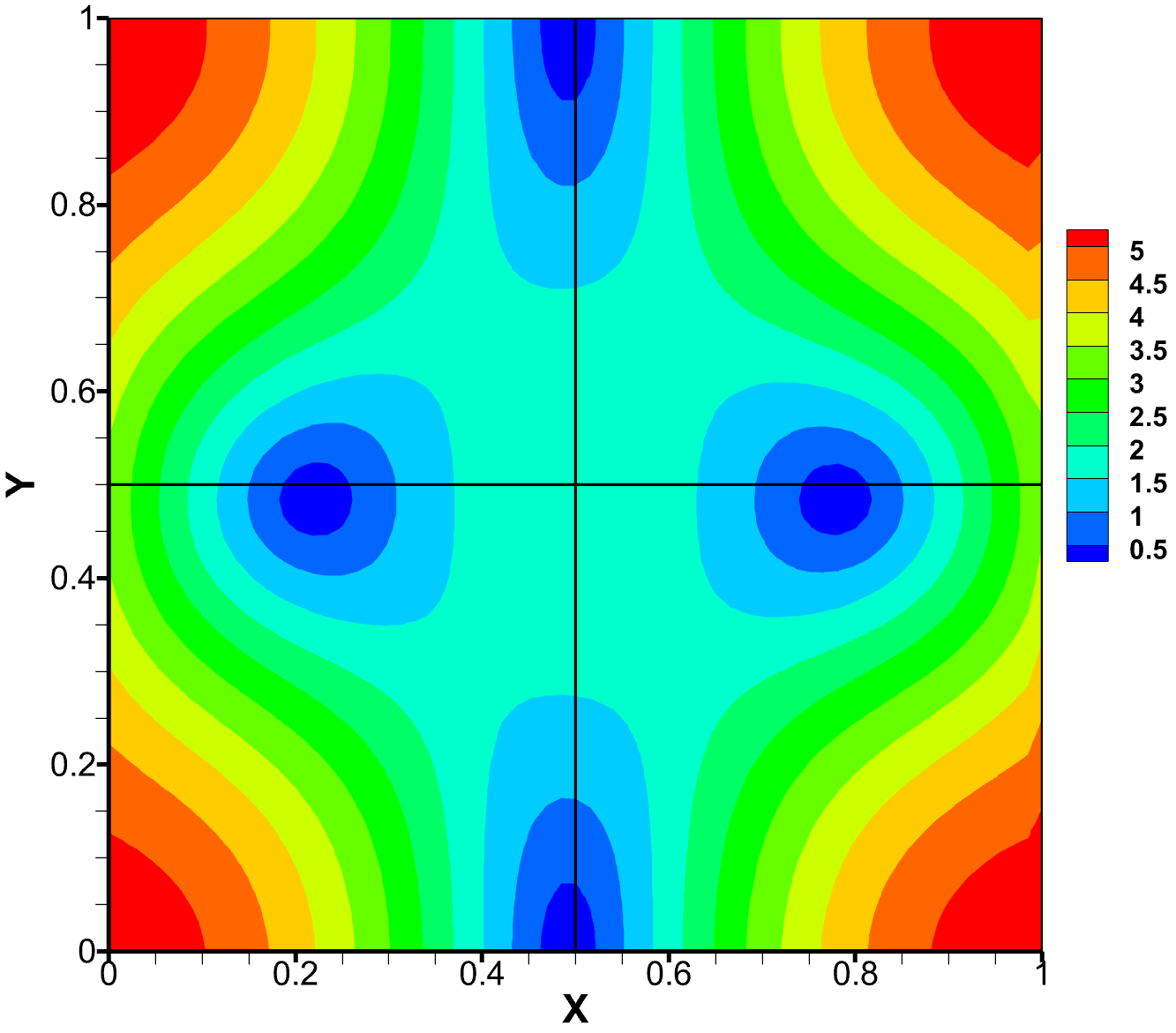}
\end{subfigure}
\quad
\begin{subfigure}[t]{0.31\textwidth}
\centering
\includegraphics[width=1.05\textwidth]{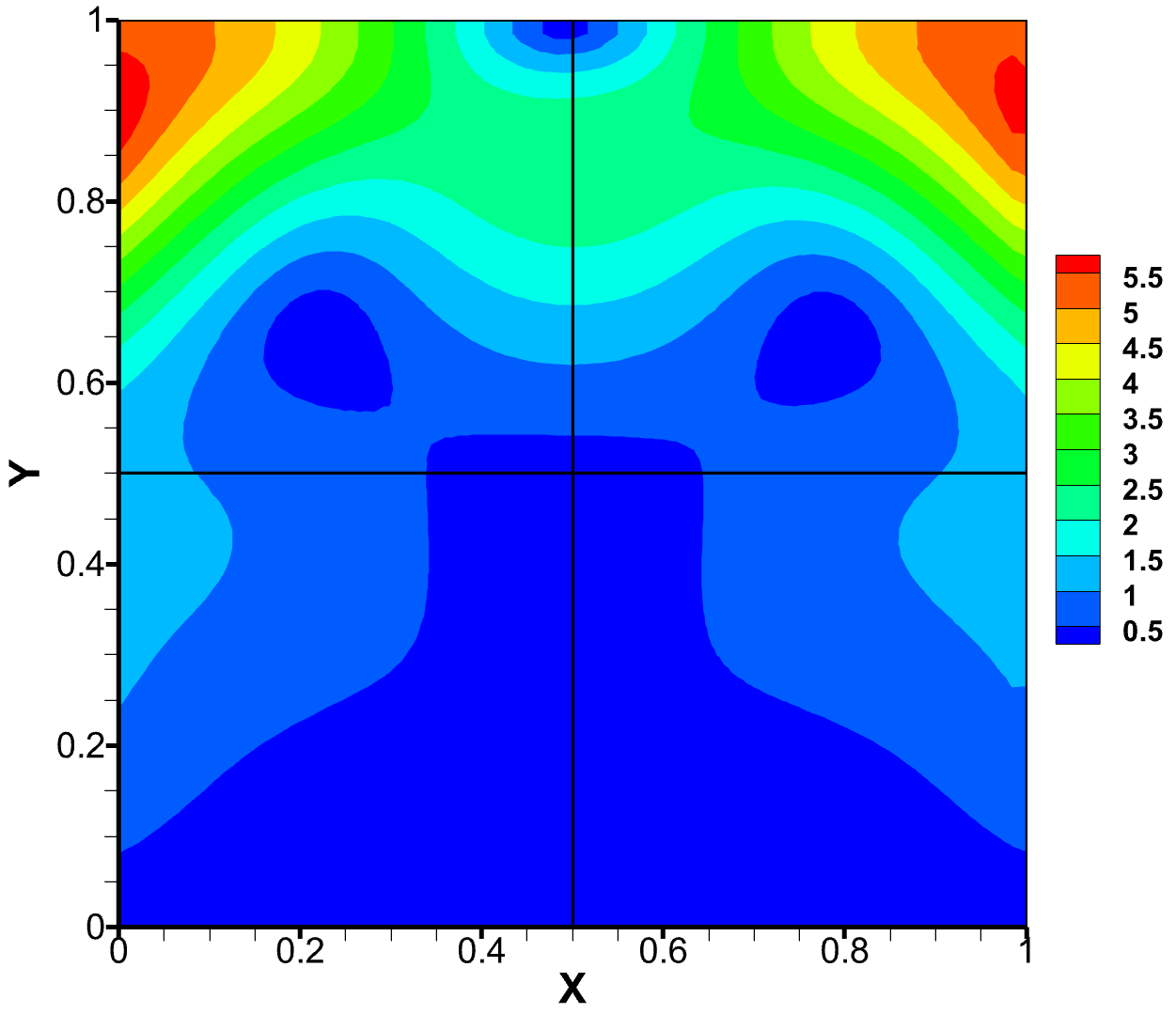}
\end{subfigure}
\end{centering}
\caption{\label{Fig22}\small{The flow speed of parallel algorithm in 2D. Left: the flow in macro-fractures and conduits; Middle: the flow in micro-fractures;
Right: the flow in stagnant-matrix.}}
\end{figure}

\begin{figure}[H]
\begin{centering}
\begin{subfigure}[t]{0.31\textwidth}
\centering
\includegraphics[width=1.05\textwidth]{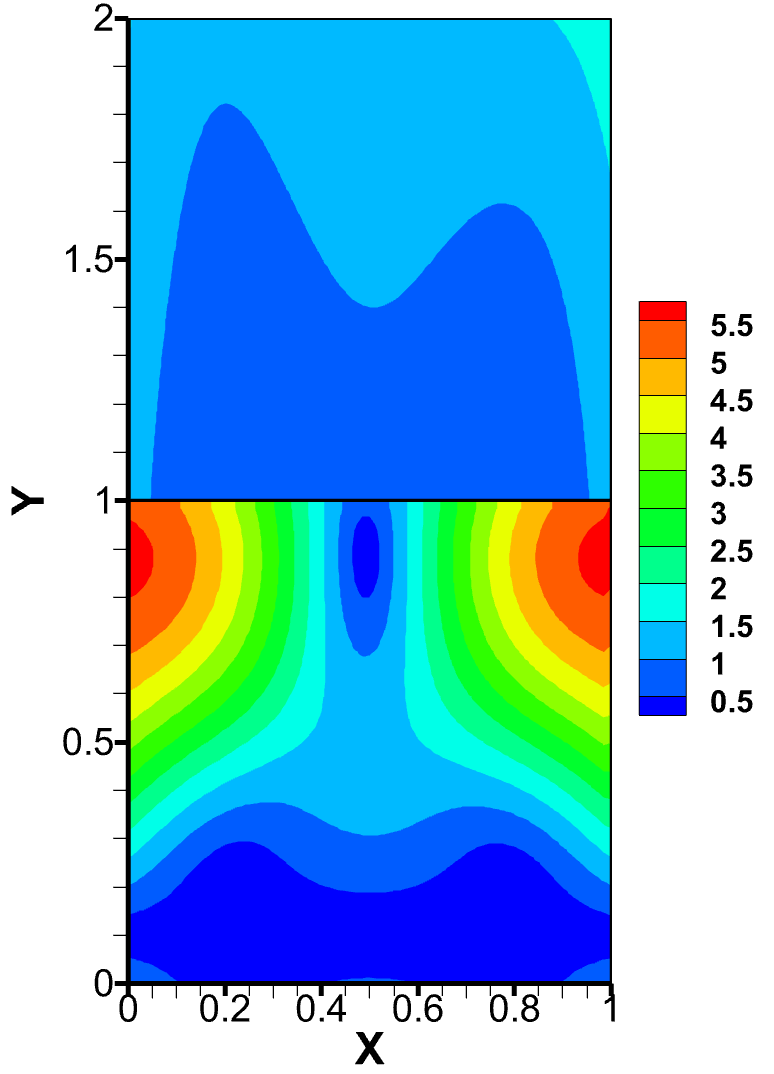}
\end{subfigure}
\quad
\begin{subfigure}[t]{0.31\textwidth}
\centering
\includegraphics[width=1.05\textwidth]{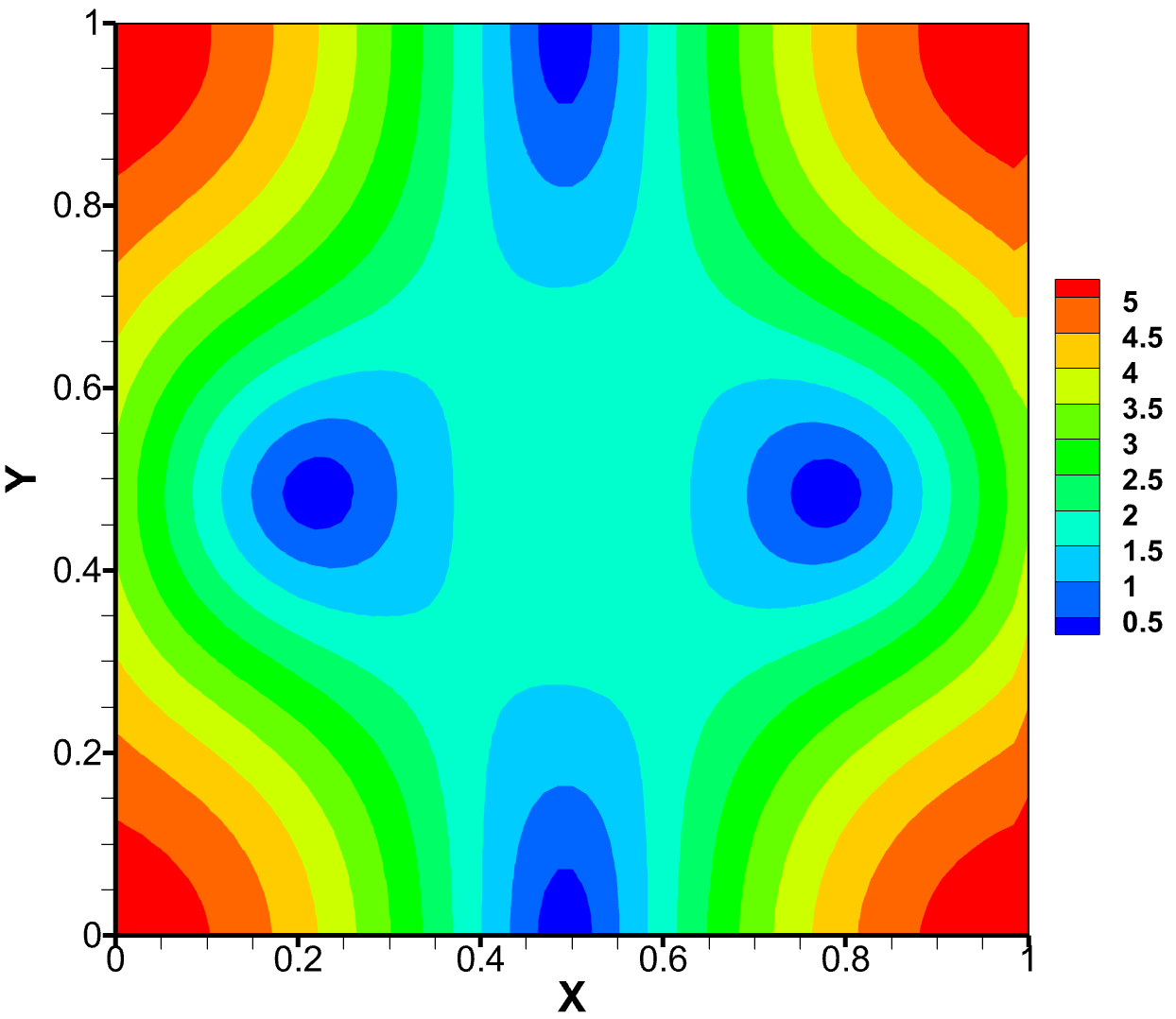}
\end{subfigure}
\quad
\begin{subfigure}[t]{0.31\textwidth}
\centering
\includegraphics[width=1.05\textwidth]{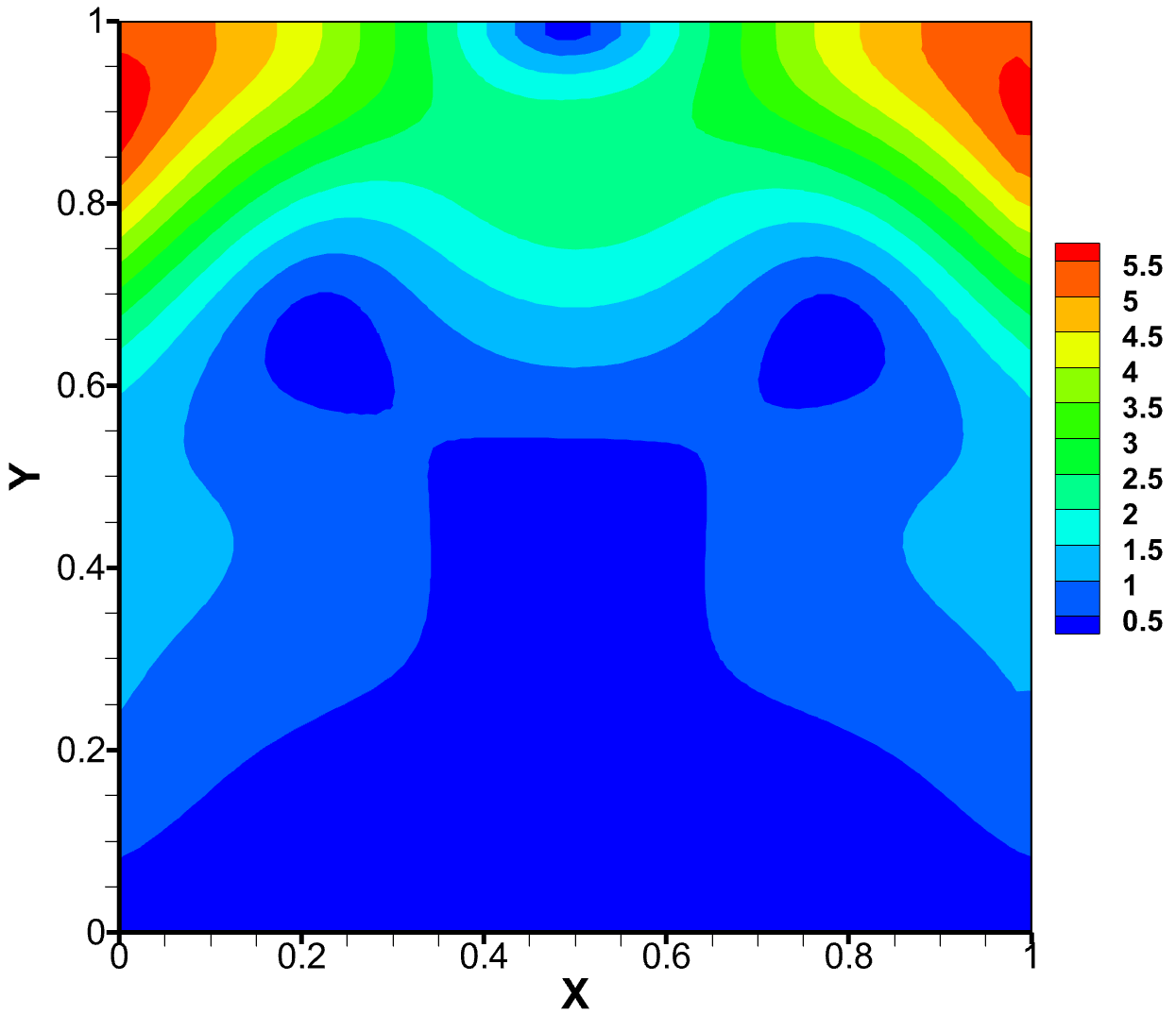}
\end{subfigure}
\end{centering}
\caption{\label{Fig23}\small{The flow speed of traditional algorithm in 2D. Left: the flow in macro-fractures and conduits; Middle: the flow in micro-fractures;
Right: the flow in stagnant-matrix.}}
\end{figure}

\begin{figure}[H]
\begin{centering}
\begin{subfigure}[t]{0.31\textwidth}
\centering
\includegraphics[width=1.03\textwidth]{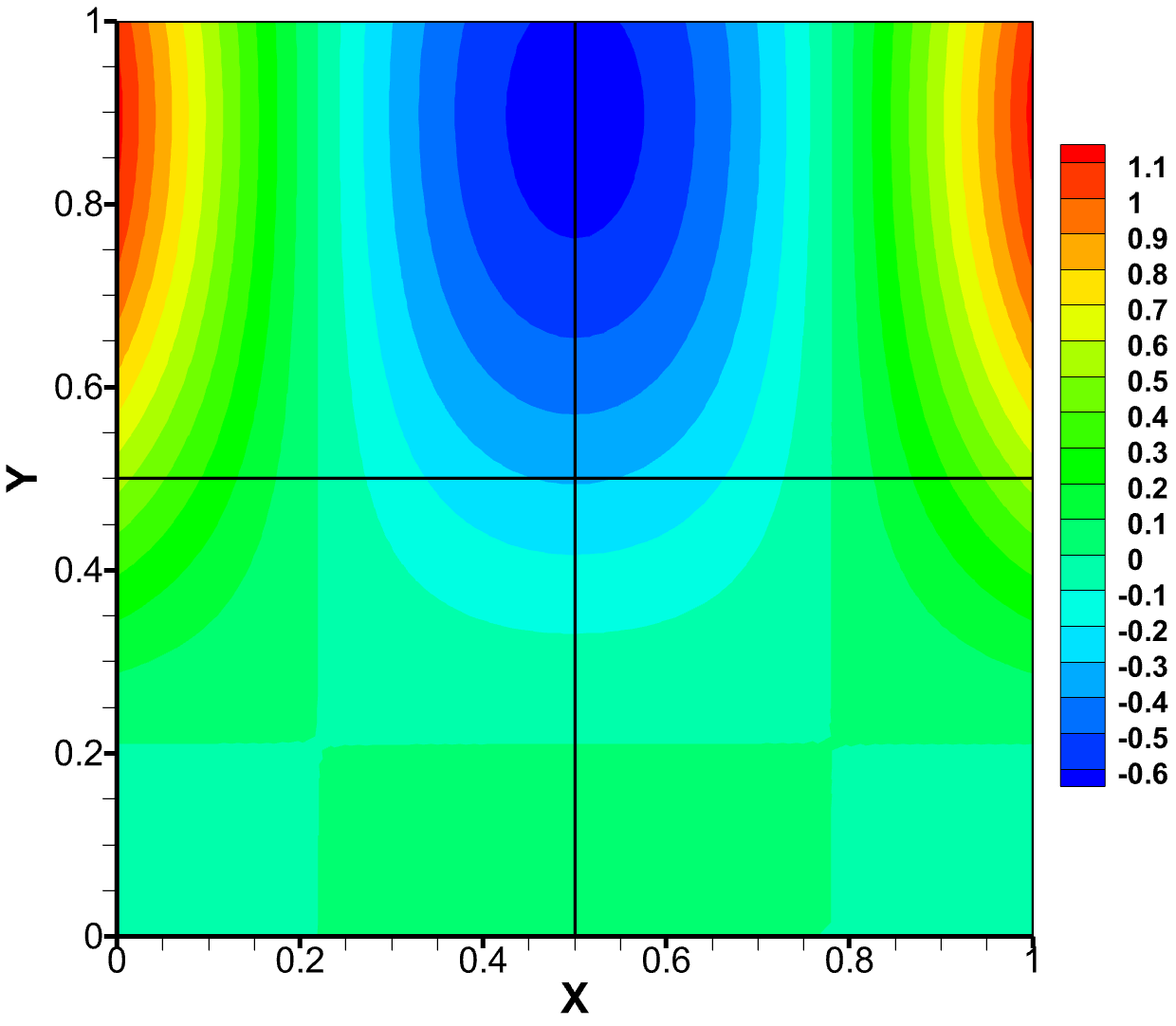}
\end{subfigure}
\quad
\begin{subfigure}[t]{0.31\textwidth}
\centering
\includegraphics[width=1.03\textwidth]{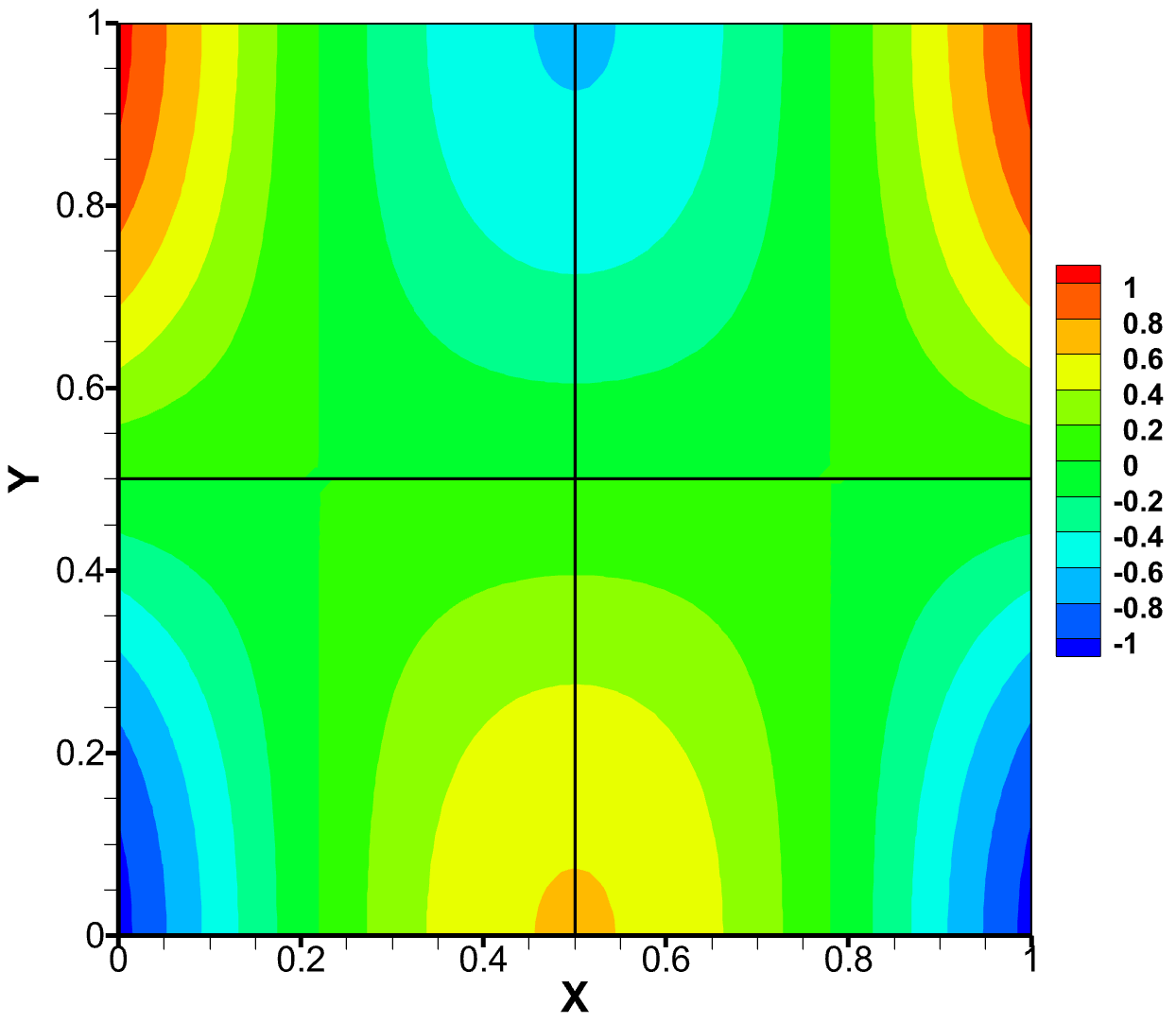}
\end{subfigure}
\quad
\begin{subfigure}[t]{0.31\textwidth}
\centering
\includegraphics[width=1.03\textwidth]{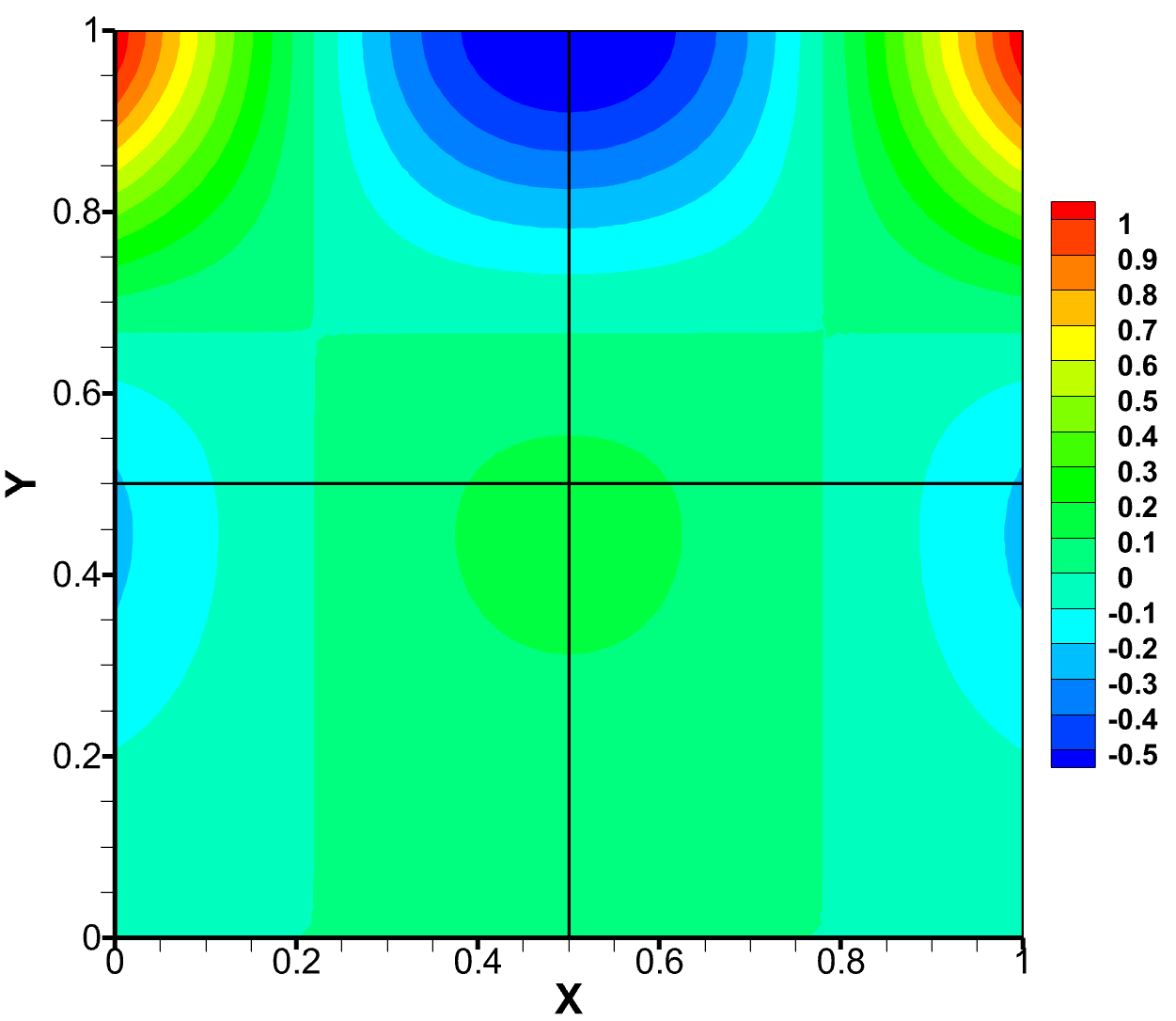}
\end{subfigure}
\end{centering}
\caption{\label{Fig24}\small{The pressure of parallel algorithm in 2D. Left: the flow in macro-fractures and conduits; Middle: the flow in micro-fractures;
Right: the flow in stagnant-matrix.}}
\end{figure}

\begin{figure}[H]
\begin{centering}
\begin{subfigure}[t]{0.31\textwidth}
\centering
\includegraphics[width=1.05\textwidth]{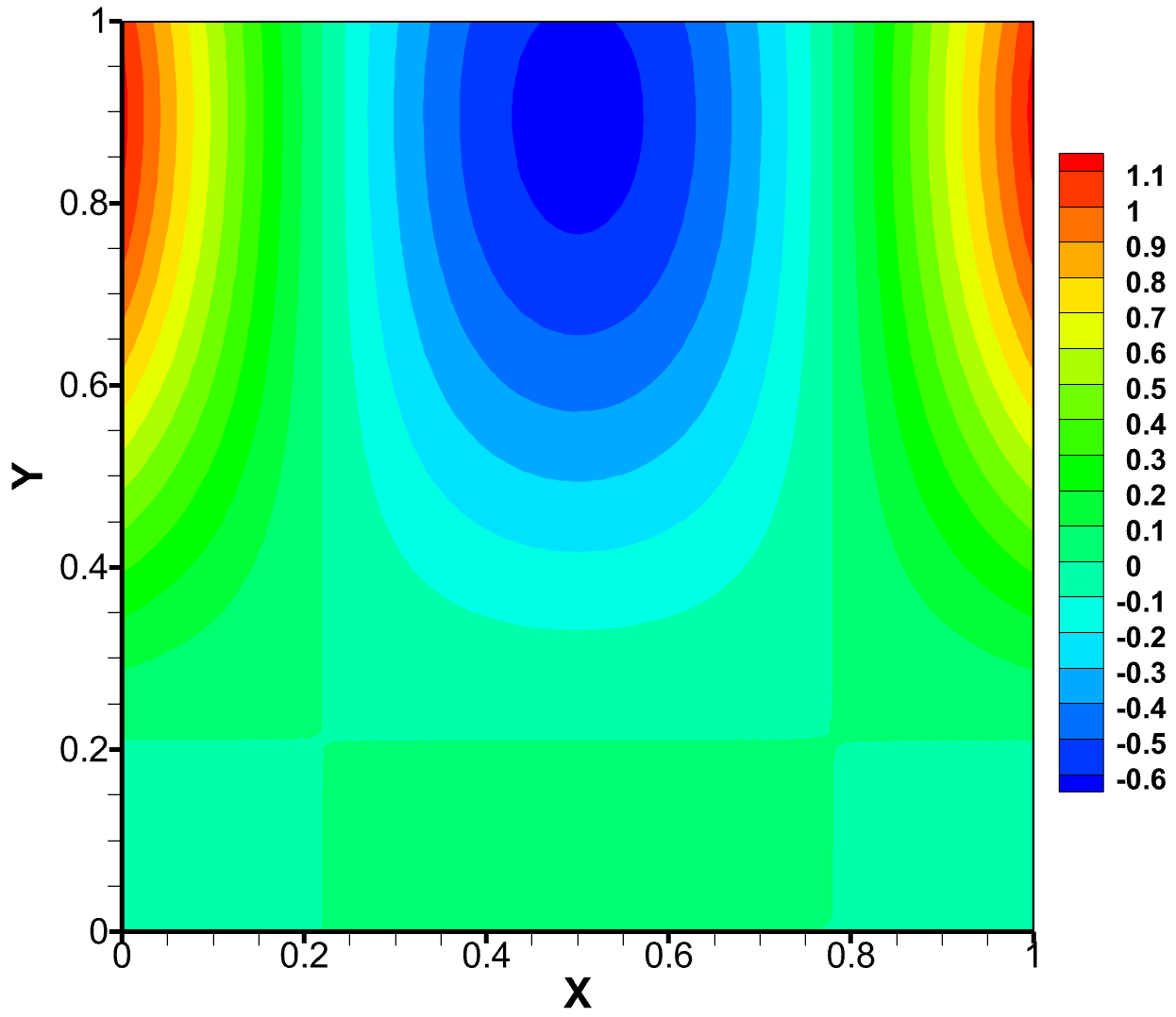}
\end{subfigure}
\quad
\begin{subfigure}[t]{0.31\textwidth}
\centering
\includegraphics[width=1.05\textwidth]{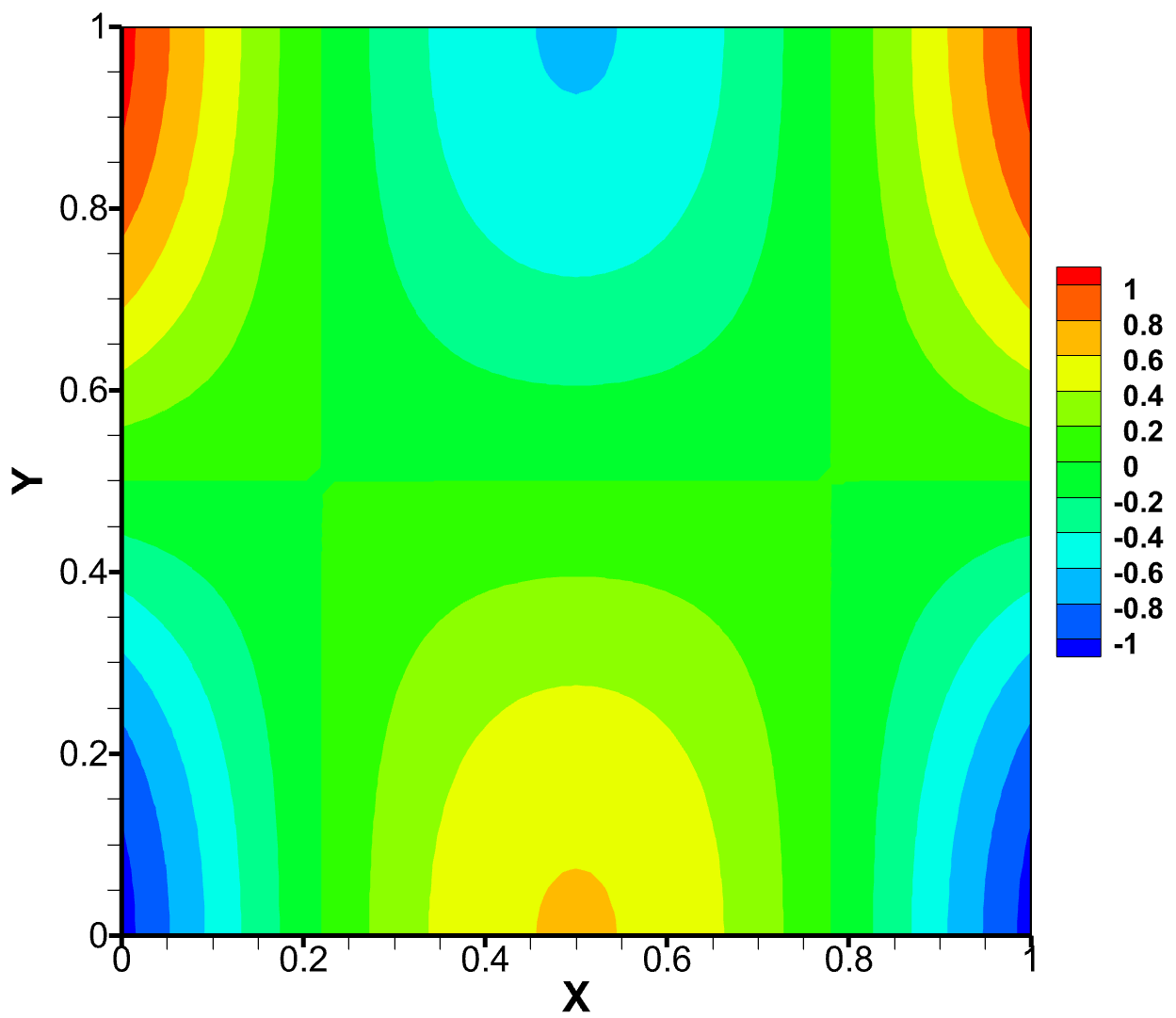}
\end{subfigure}
\quad
\begin{subfigure}[t]{0.31\textwidth}
\centering
\includegraphics[width=1.05\textwidth]{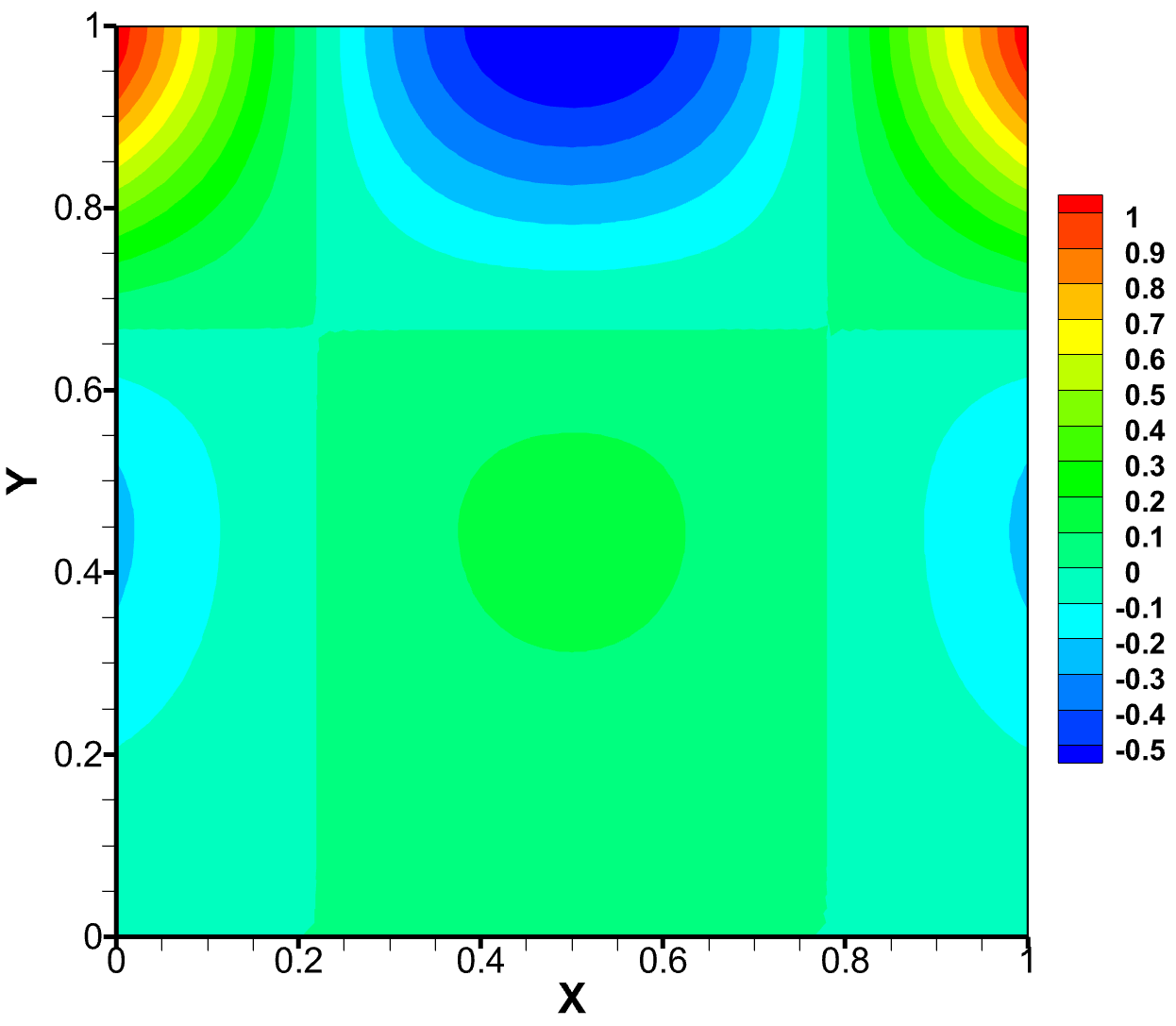}
\end{subfigure}
\end{centering}
\caption{\label{Fig25}\small{The pressure of traditional algorithm in 2D. Left: the flow in macro-fractures and conduits; Middle: the flow in micro-fractures;
Right: the flow in stagnant-matrix.}}
\end{figure}

Furthermore, the flow speed of the parallel algorithm(Algorithm \ref{algorithm3}) and traditional algorithm(Algorithm \ref{Algorithm-1}) at~$h=1/64$
are shown in Figures \ref{Fig22} and \ref{Fig23}, respectively. In addition, the pressures~$p_F, p_f$~and~$p_m$
are presented in Figures \ref{Fig24} and \ref{Fig25}.
We can see that these pictures are nearly the same.

\subsection{Example 2: Experimental rate of convergence in 3D}
Consider the model in 3D, which is shown in Figure \ref{P2}.
Let~$\Omega=(0,1) \times (0,1) \times (-0.25,0.75)$~with~$\Omega_p=\{(x,y,z)\in \Omega | z \geq 0\}$~and~$\Omega_c=\{(x,y,z)\in \Omega | z \leq 0\}$, and~$\Gamma=\{(x,y,z)\in \Omega | z = 0\}$.
\begin{figure}[htbp]
  \centering
  \includegraphics[width=10cm]{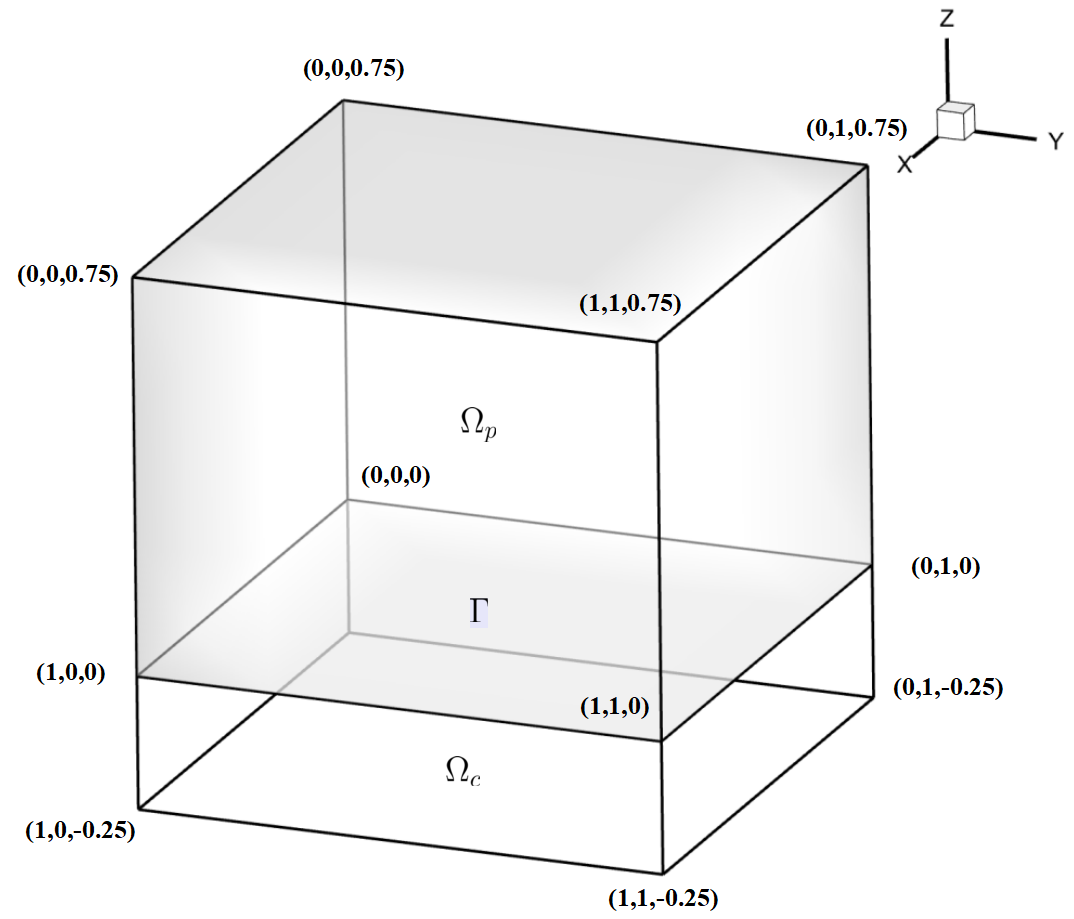}\\
  \caption{\label{P2}\small{3D example model with the triple-porosity region~$\Omega_p$, the conduit region~$\Omega_c$~and the interface~$\Gamma$.}}
\end{figure}
The physical parameters of this model are also simply set~$\phi_i, C_i(i=F,f,m), k_f, k_F, \tilde{\mu}, \sigma, \sigma^{\ast}, \nu, \alpha, \rho, \eta$~equal~1.0 and $k_m$~equals~0.01. We utilize the exact solution below:
\begin{align*}
&p_m=-z+\exp(z)-\exp(-t)\sin(xy)\cos(z),\\
&p_f=-z+\exp(z)-\exp(-t)\sin(xy)\cos(z),\\
&p_F=-z+(-x^2-y^2+8) \exp(-t) \sin(xy) \cos(z),\\
&\boldsymbol{u}_c=\begin{bmatrix} \big( 2x\sin(xy)+y(x^2+y^2-8)\cos(xy) \big) e^{-t}\\
\big( 2y\sin(xy)+x(x^2+y^2-8)\cos(xy) \big) e^{-t}\\
1+\big((x^2+y^2)(x^2+y^2-8)\sin(xy)-4\sin(xy)-8xy\cos(xy)\big)z e^{-t}
\end{bmatrix},\\
&p=\big( -16xy \cos(xy)+(x^2+y^2+z^2-8)(2x^2+2y^2+2z^2-1)\sin(xy)-8\sin(xy) \big) e^{-t},
\end{align*}
which satisfies the source terms, initial conditions, and Dirichlet boundary conditions of the
model.

In this example, the 3D global domain $\Omega$ is decomposed into $2 \times 2 \times 4$
subdomains, which consists of $2 \times 2 \times 2$ in triple-porosity domain $\Omega_p$ and $2 \times 2 \times 2$ in conduit domain, respectively(See Figure \ref{Pic3D}).
\begin{figure}[h]
  \centering
  \includegraphics[width=15cm]{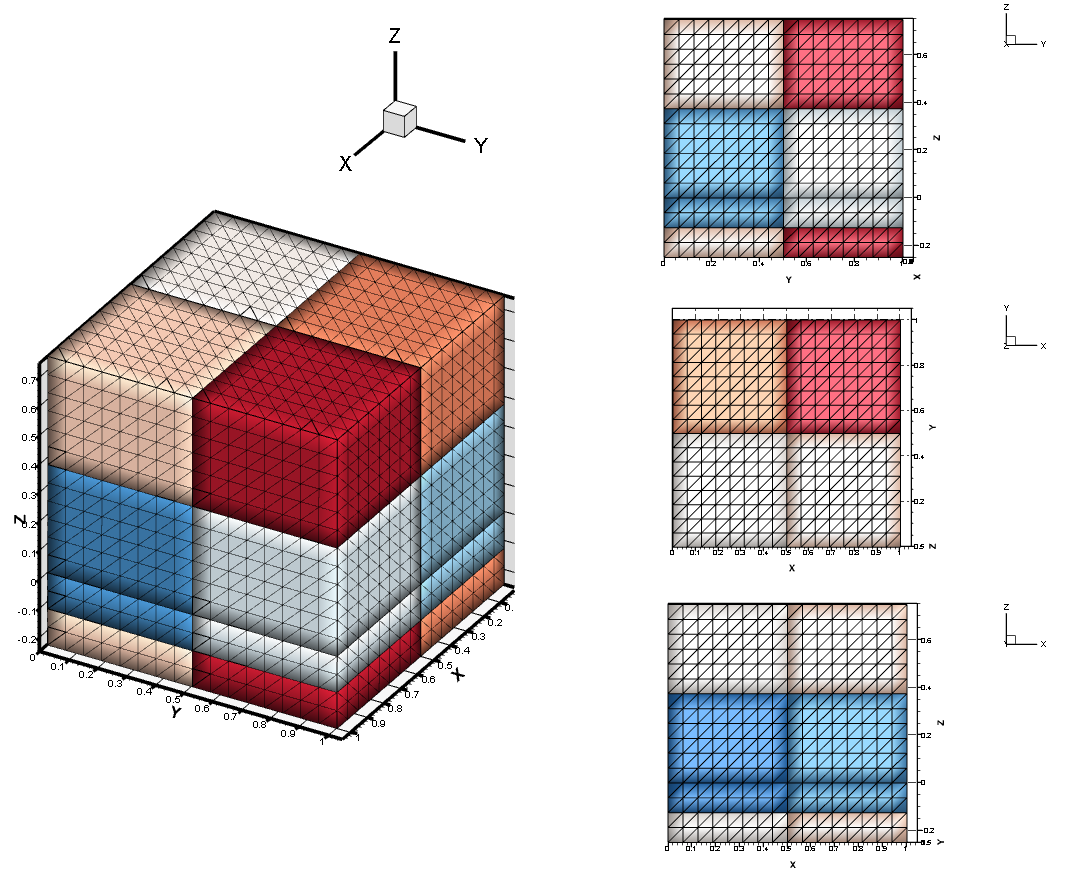}\\
  \caption{\label{Pic3D}\small{3D partitions of triple-porosity region and conduit region.
  Top: main view;
  Middle: top view;
  Bottom: left view.}}
\end{figure}
We set
\begin{align*}
&D_{c1}=[0,\frac{1}{2}] \times [0,\frac{1}{2}] \times [-\frac{1}{8},0],
~~~~~~~D_{c2}=[\frac{1}{2},1] \times [0,\frac{1}{2}] \times [-\frac{1}{8},0],\\
&D_{c3}=[0,\frac{1}{2}] \times [\frac{1}{2},1] \times [-\frac{1}{8},0],
~~~~~~~D_{c4}=[\frac{1}{2},1] \times [\frac{1}{2},1] \times [-\frac{1}{8},0],\\
&D_{c5}=[0,\frac{1}{2}] \times [0,\frac{1}{2}] \times [-\frac{1}{4},-\frac{1}{8}],
~~~~D_{c6}=[\frac{1}{2},1] \times [0,\frac{1}{2}] \times [-\frac{1}{4},-\frac{1}{8}],\\
&D_{c7}=[0,\frac{1}{2}] \times [\frac{1}{2},1] \times [-\frac{1}{4},-\frac{1}{8}],
~~~~D_{c8}=[\frac{1}{2},1] \times [\frac{1}{2},1] \times [-\frac{1}{4},-\frac{1}{8}],\\
&D_{p1}=[0,\frac{1}{2}] \times [0,\frac{1}{2}] \times [0,\frac{3}{8}],
~~~~~~~~~D_{p2}=[\frac{1}{2},1] \times [0,\frac{1}{2}] \times [0,\frac{3}{8}],\\
&D_{p3}=[0,\frac{1}{2}] \times [\frac{1}{2},1] \times [0,\frac{3}{8}],
~~~~~~~~~D_{p4}=[\frac{1}{2},1] \times [\frac{1}{2},1] \times [0,\frac{3}{8}],\\
&D_{p5}=[0,\frac{1}{2}] \times [0,\frac{1}{2}] \times [\frac{3}{8},\frac{3}{4}],
~~~~~~~~~D_{p6}=[\frac{1}{2},1] \times [0,\frac{1}{2}] \times [\frac{3}{8},\frac{3}{4}],\\
&D_{p7}=[0,\frac{1}{2}] \times [\frac{1}{2},1] \times [\frac{3}{8},\frac{3}{4}],
~~~~~~~~~D_{p8}=[\frac{1}{2},1] \times [\frac{1}{2},1] \times [\frac{3}{8},\frac{3}{4}],
\end{align*}
and extend $D_{cj'}, D_{pj}(j', j=1,2,...,8)$ to $\Omega_{cj'}, \Omega_{pj}$ as follows:
\begin{align*}
&\Omega_{c1}=[0,\frac{3}{4}] \times [0,\frac{3}{4}] \times [-\frac{3}{16},0],
~~~~~~~\Omega_{c2}=[\frac{1}{4},1] \times [0,\frac{3}{4}] \times [-\frac{3}{16},0],\\
&\Omega_{c3}=[0,\frac{3}{4}] \times [\frac{1}{4},1] \times [-\frac{3}{16},0],
~~~~~~~\Omega_{c4}=[\frac{1}{4},1] \times [\frac{1}{4},1] \times [-\frac{3}{16},0],\\
&\Omega_{c5}=[0,\frac{3}{4}] \times [0,\frac{3}{4}] \times [-\frac{1}{4},-\frac{1}{16}],
~~~~\Omega_{c6}=[\frac{1}{4},1] \times [0,\frac{3}{4}] \times [-\frac{1}{4},-\frac{1}{16}],\\
&\Omega_{c7}=[0,\frac{3}{4}] \times [\frac{1}{4},1] \times [-\frac{1}{4},-\frac{1}{16}],
~~~~\Omega_{c8}=[\frac{1}{4},1] \times [\frac{1}{4},1] \times [-\frac{1}{4},-\frac{1}{16}],\\
&\Omega_{p1}=[0,\frac{3}{4}] \times [0,\frac{3}{4}] \times [0,\frac{9}{16}],
~~~~~~~~~\Omega_{p2}=[\frac{1}{4},1] \times [0,\frac{3}{4}] \times [0,\frac{9}{16}],\\
&\Omega_{p3}=[0,\frac{3}{4}] \times [\frac{1}{4},1] \times [0,\frac{9}{16}],
~~~~~~~~~\Omega_{p4}=[\frac{1}{4},1] \times [\frac{1}{4},1] \times [0,\frac{9}{16}],\\
&\Omega_{p5}=[0,\frac{3}{4}] \times [0,\frac{3}{4}] \times [\frac{3}{16},\frac{3}{4}],
~~~~~~~~~\Omega_{p6}=[\frac{1}{4},1] \times [0,\frac{3}{4}] \times [\frac{3}{16},\frac{3}{4}],\\
&\Omega_{p7}=[0,\frac{3}{4}] \times [\frac{1}{4},1] \times [\frac{3}{16},\frac{3}{4}],
~~~~~~~~~\Omega_{p8}=[\frac{1}{4},1] \times [\frac{1}{4},1] \times [\frac{3}{16},\frac{3}{4}].
\end{align*}

We solve the model with time size $\Delta t=h$, space mesh sizes~$h=1/16, 1/25, 1/36, 1/49$
and coarse grid mesh size $H$ satisfy $h=H^2$.
The numerical results are shown in Table \ref{T3}, which are also consistent with the theoretical results in Theorem \ref{TheoremFinal}.
Compared to Table \ref{T4}, which is the partitioned time-stepping algorithm,
the parallel algorithm saved approximately $2/3$ of the time cost in computation.
Therefore, the parallel algorithm(Algorithm \ref{algorithm3}) exhibits greater efficiency in the three-dimensional case.


\begin{table}[H]
\caption{\label{T3}The convergence performance and computational cost of Algorithm \ref{algorithm3}(Local Parallel Algorithm) in 3D}
\centering
\resizebox{\textwidth}{!}{
\begin{tabular}{cccccccc}
\hline
$h ~\&~ H$ & $|||\vec{u}_c -\vec{u}_{n+1}^{ch} |||_1$ &
Rate &
$||| p_F -p_{n+1}^{Fh} |||_1$&
Rate &
$||| p_f -p_{n+1}^{fh} |||_0$&
Rate \\
\hline
$\frac{1}{16}~~~~~\frac{1}{4}$  & 0.144418    & --     & 0.067112    &--   &0.000962 & --    \\
$\frac{1}{25}~~~~~\frac{1}{5}$  & 0.090074    & 1.06   & 0.044063    &0.94 &0.000516 & 1.39  \\
$\frac{1}{36}~~~~~\frac{1}{6}$  & 0.063040    & 0.98   & 0.028193    &1.22 &0.000351 & 1.06  \\
$\frac{1}{49}~~~~~\frac{1}{7}$ & 0.046570     & 0.98   & 0.019680    &1.17 &0.000256 & 1.03 \\
\hline
$||| p_f-p_{n+1}^{fh} |||_1$ &
Rate &
$||| p_m-p_{n+1}^{mh} |||_0$ &
Rate &
$||| p_m-p_{n+1}^{mh} |||_1$ &
Rate &
CPU(s)\\
\hline
0.028714 &  --  & 0.002769  &--     &0.035343 & --   & 207.56\\
0.018469 & 0.99 & 0.001863  & 0.89  &0.022940 & 0.97 & 5036.27 \\
0.012953 & 0.97 & 0.001330  & 0.92  &0.016143 & 0.96 & 19146.10
\\
0.009548 & 0.99 & 0.000987  & 0.97  &0.011756 & 1.03 & 31217.50
\\
\hline
\end{tabular}
}
\end{table}

\begin{table}[H]
\caption{\label{T4}The convergence performance and computational cost of Algorithm \ref{Algorithm-1}(Traditional Algorithm) in 3D}
\centering
\resizebox{\textwidth}{!}{
\footnotesize{
\begin{tabular}{cccccccc}
\hline
$h $ & $|\vec{u}_c -\vec{u}_{ch}^{n+1}|_1$ &
Rate &
$|p_F -p_{Fh}^{n+1}|_1$&
Rate &
$\|p_f -p_{fh}^{n+1}\|_0$&
Rate \\
\hline
$\frac{1}{16} $  & 0.146752    & --     & 0.063144    &--   &0.001342 & --
\\
$\frac{1}{25} $  & 0.090678    & 1.08   & 0.043460    &0.84 &0.000612 & 1.76   \\
$\frac{1}{36} $  & 0.064652    & 0.93   & 0.028193    &1.19 &0.000364 & 1.43   \\
$\frac{1}{49} $ & 0.047235    & 1.02    & 0.020150    &1.09 & 0.000265 & 1.03  \\
\hline
$|p_f-p_{fh}^{n+1}|_1$ &
Rate &
$\|p_m-p_{mh}^{n+1}\|_0$ &
Rate &
$|p_m-p_{mh}^{n+1}|_1$ &
Rate &
CPU(s)\\
\hline
0.028997 &  --  & 0.002830  &--     &0.035546 & --   & 500.89\\
0.018563 & 1.00 & 0.001933  & 0.85  &0.023046 & 0.97 &8115.85 \\
0.012313 & 1.13 & 0.001443  & 0.80  &0.016263 & 0.96 &29756.13
\\
0.009163 & 0.96 & 0.000993  & 1.21  &0.011930 & 1.00 &56663.52
\\
\hline
\end{tabular}
}
}
\end{table}

\begin{figure}[H]
\begin{centering}
\begin{subfigure}[t]{0.31\textwidth}
\centering
\includegraphics[width=1.05\textwidth]{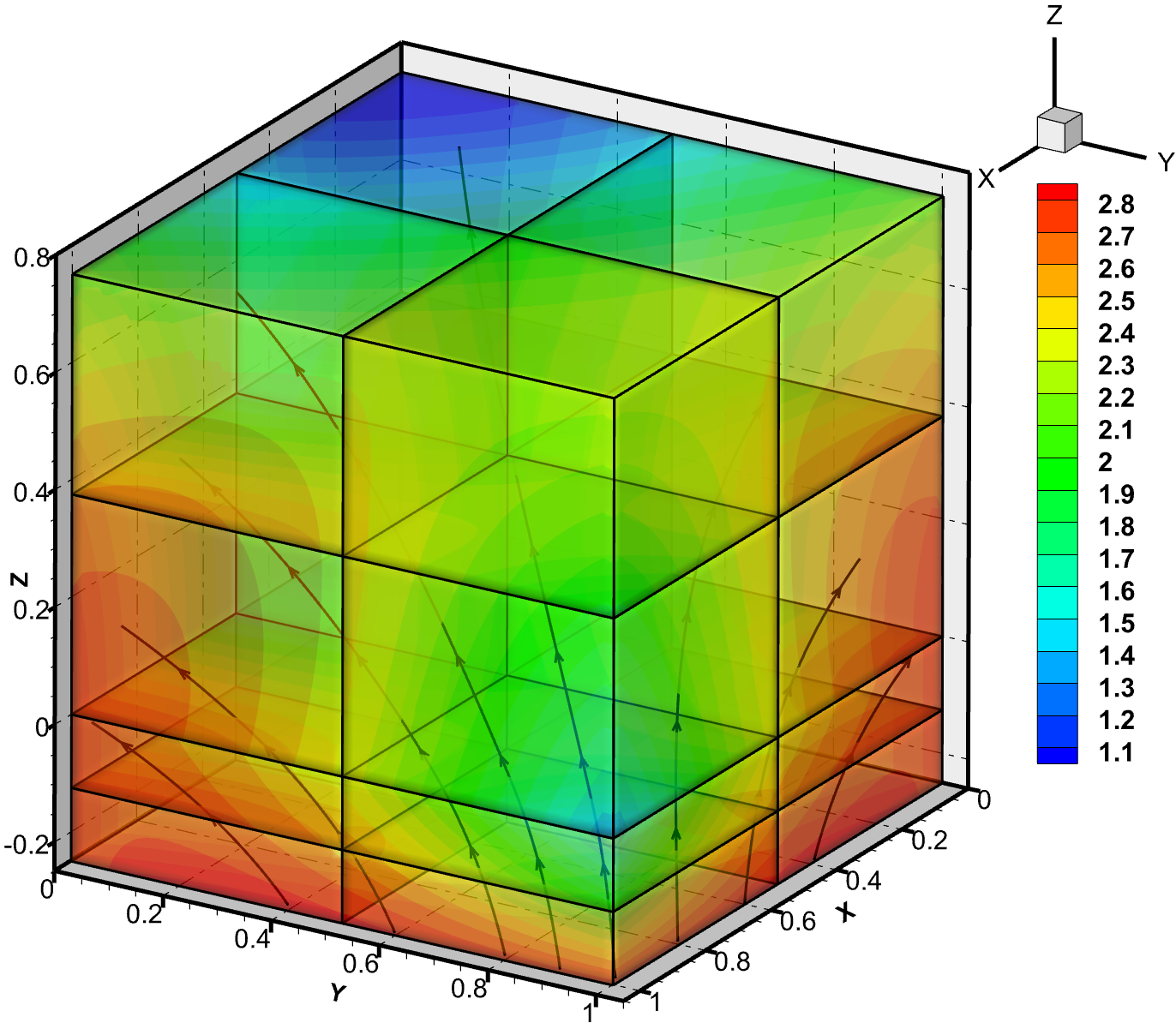}
\end{subfigure}
\quad
\begin{subfigure}[t]{0.31\textwidth}
\centering
\includegraphics[width=1.05\textwidth]{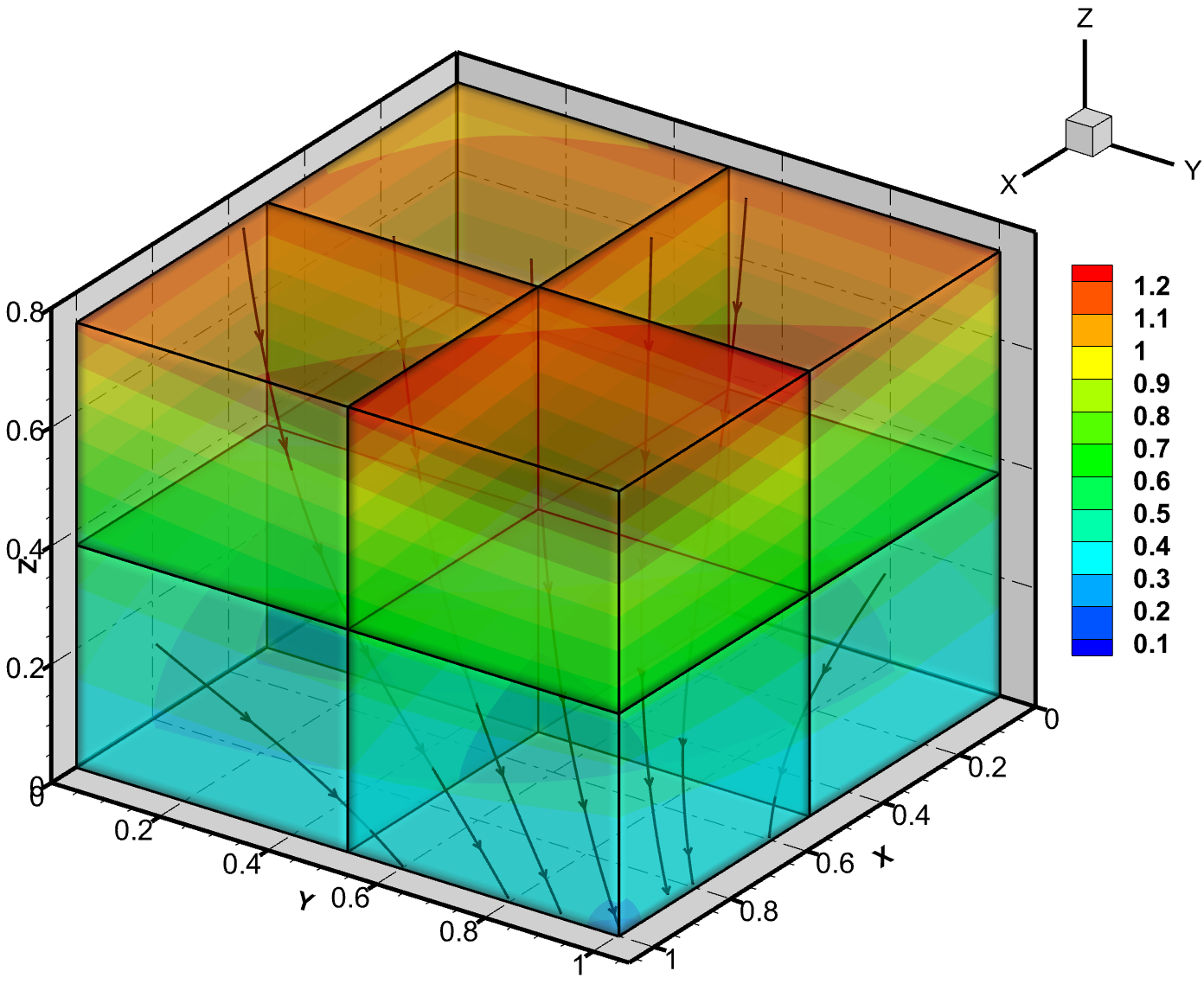}
\end{subfigure}
\quad
\begin{subfigure}[t]{0.31\textwidth}
\centering
\includegraphics[width=1.05\textwidth]{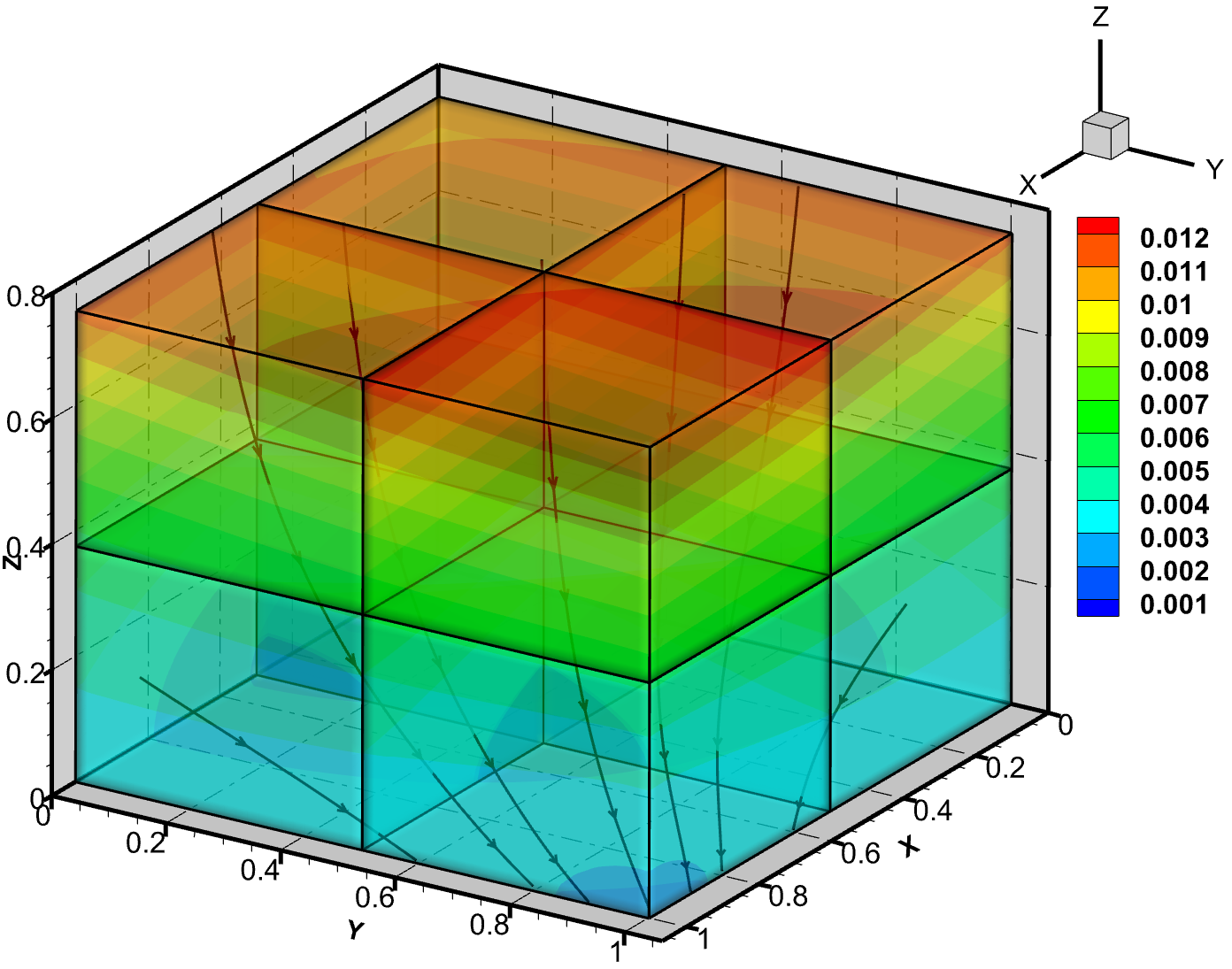}
\end{subfigure}
\end{centering}
\caption{\label{Fig32}\small{The flow speed and streamlines of parallel algorithm in 3D. Left: the flow in macro-fractures and conduits; Middle: the flow in micro-fractures;
Right: the flow in stagnant-matrix.}}
\end{figure}

\vspace{-0.35cm}

\begin{figure}[H]
\begin{centering}
\begin{subfigure}[t]{0.31\textwidth}
\centering
\includegraphics[width=1.05\textwidth]{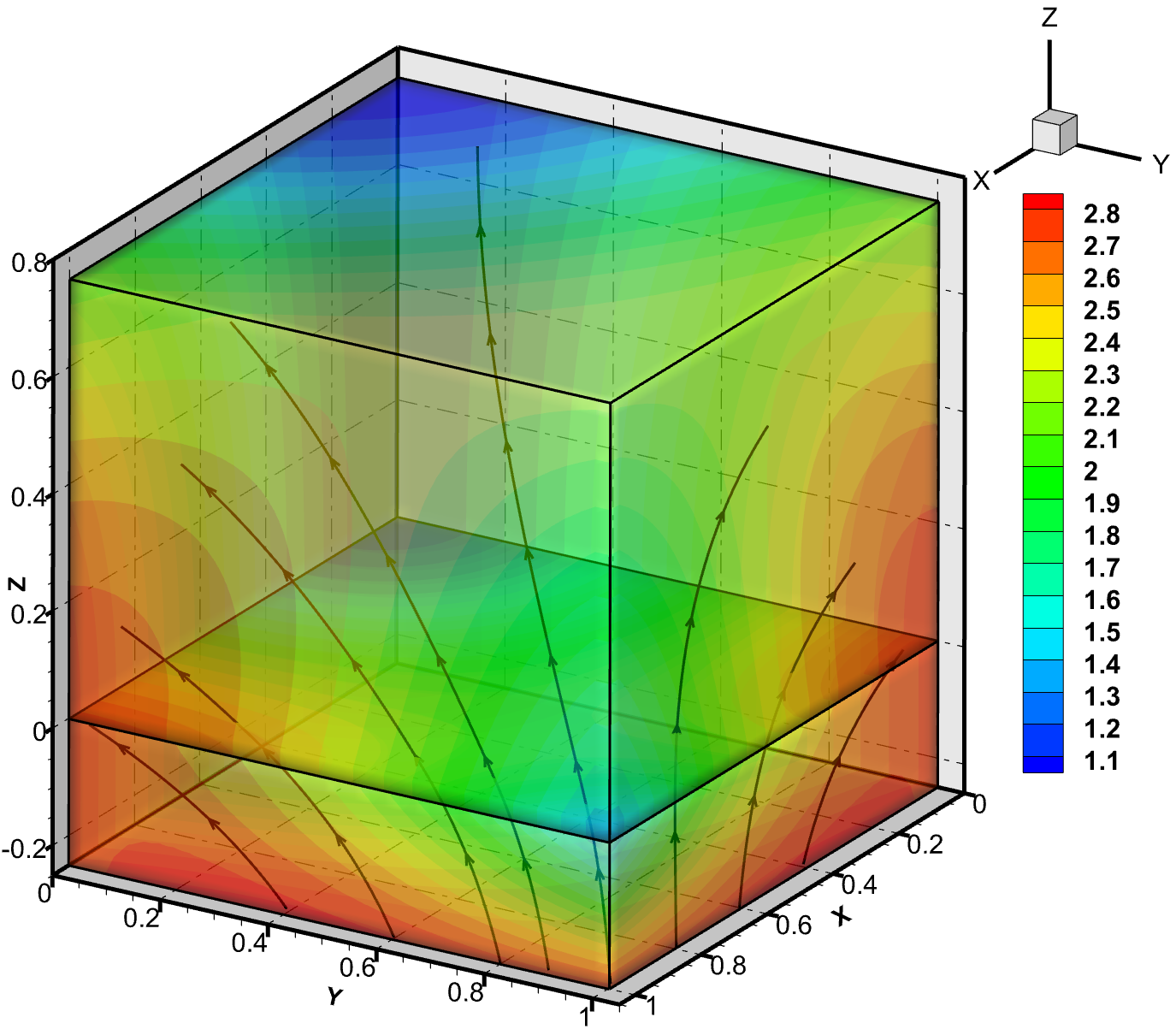}
\end{subfigure}
\quad
\begin{subfigure}[t]{0.31\textwidth}
\centering
\includegraphics[width=1.05\textwidth]{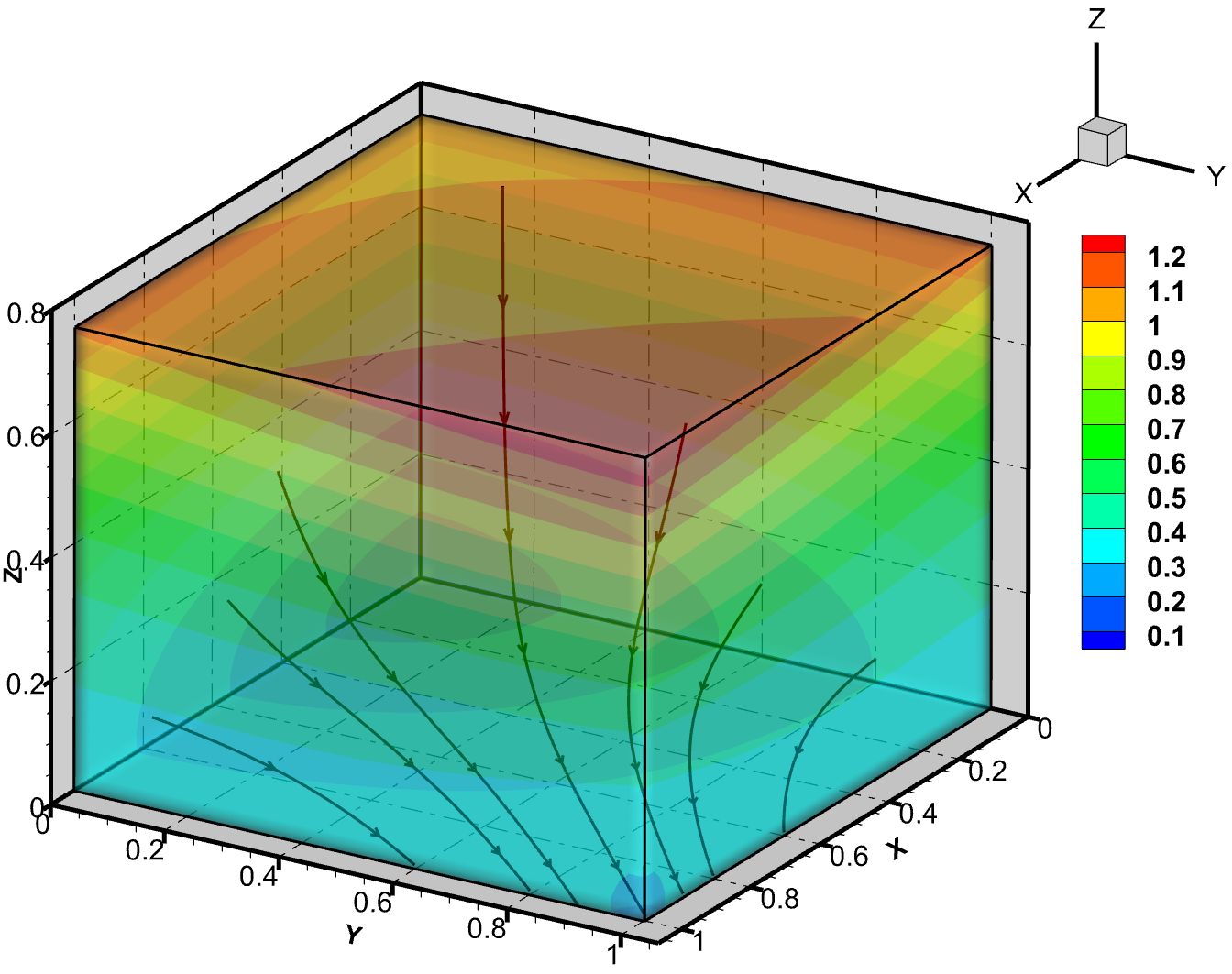}
\end{subfigure}
\quad
\begin{subfigure}[t]{0.31\textwidth}
\centering
\includegraphics[width=1.05\textwidth]{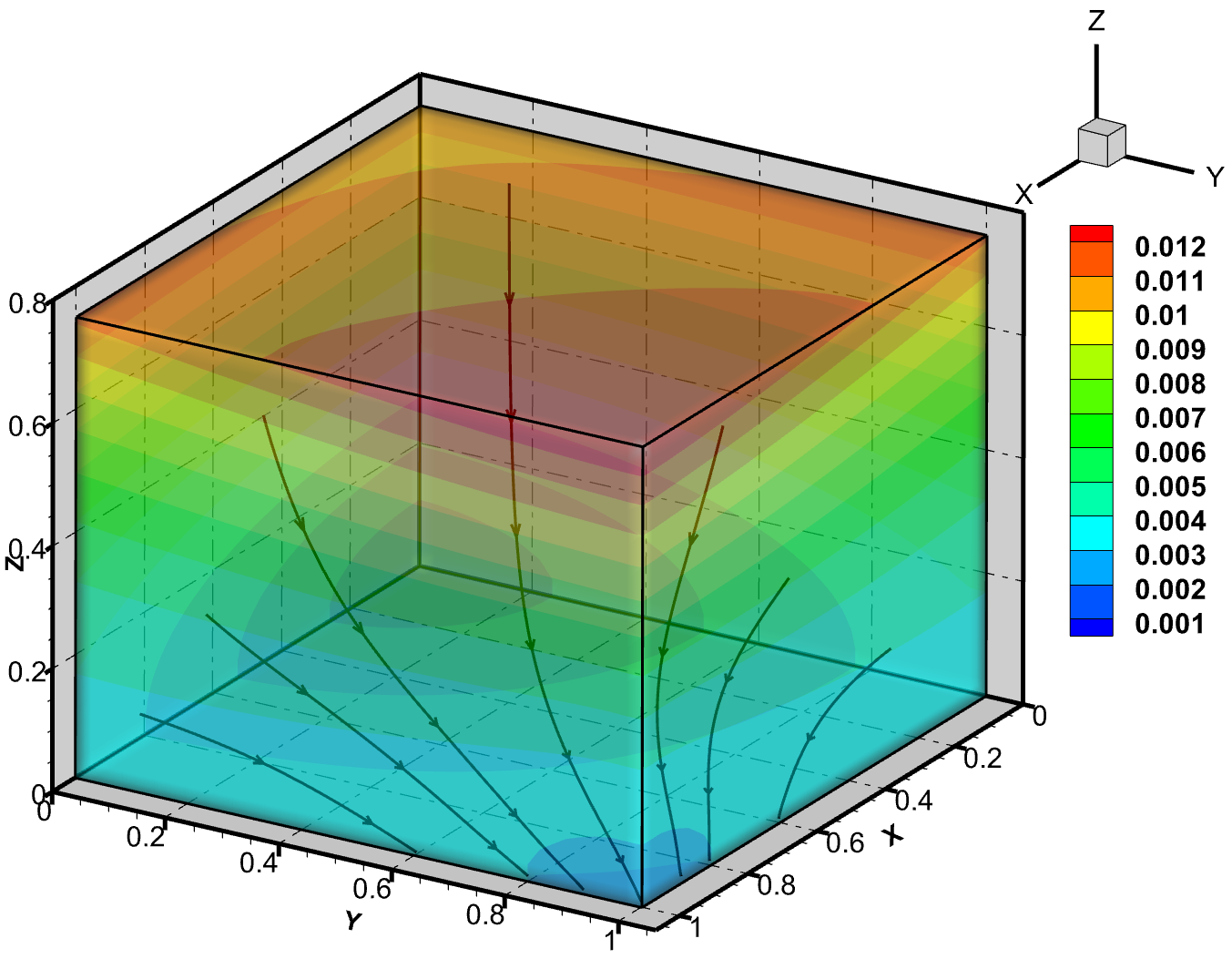}
\end{subfigure}
\end{centering}
\caption{\label{Fig33}\small{The flow speed and streamlines of traditional algorithm in 3D. Left: the flow in macro-fractures and conduits; Middle: the flow in micro-fractures;
Right: the flow in stagnant-matrix.}}
\end{figure}

\vspace{-0.35cm}

\begin{figure}[H]
\begin{centering}
\begin{subfigure}[t]{0.31\textwidth}
\centering
\includegraphics[width=1.05\textwidth]{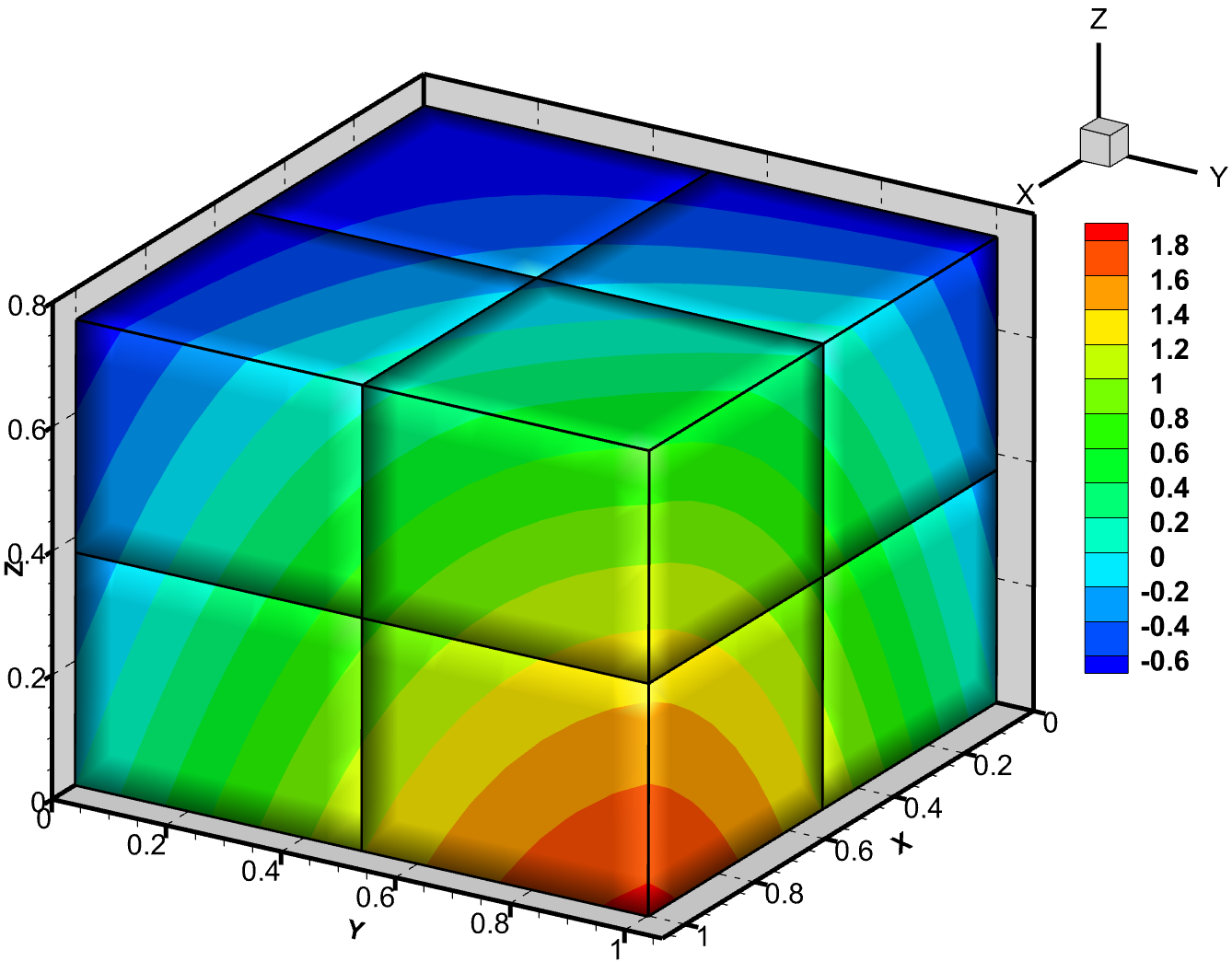}
\end{subfigure}
\quad
\begin{subfigure}[t]{0.31\textwidth}
\centering
\includegraphics[width=1.05\textwidth]{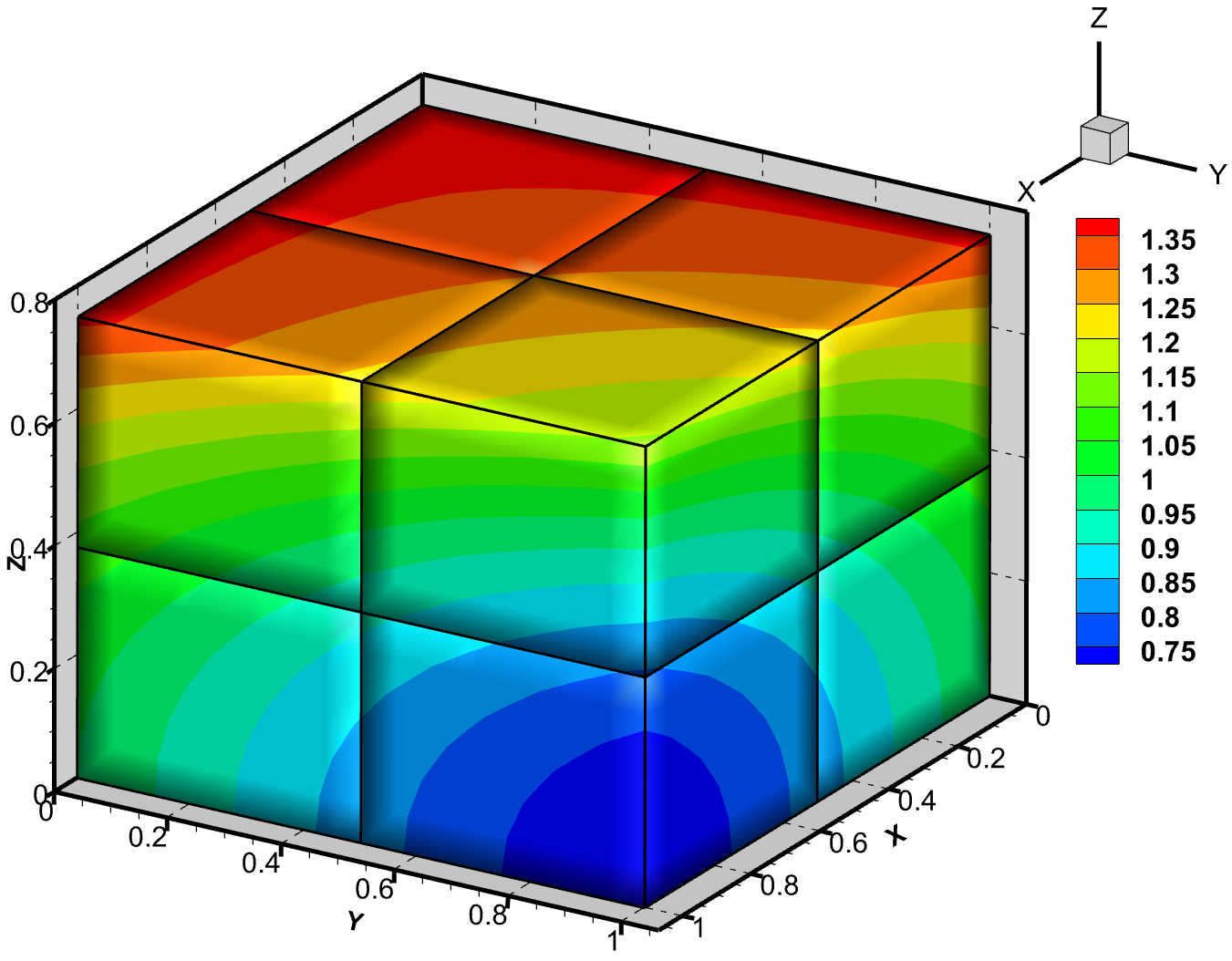}
\end{subfigure}
\quad
\begin{subfigure}[t]{0.31\textwidth}
\centering
\includegraphics[width=1.05\textwidth]{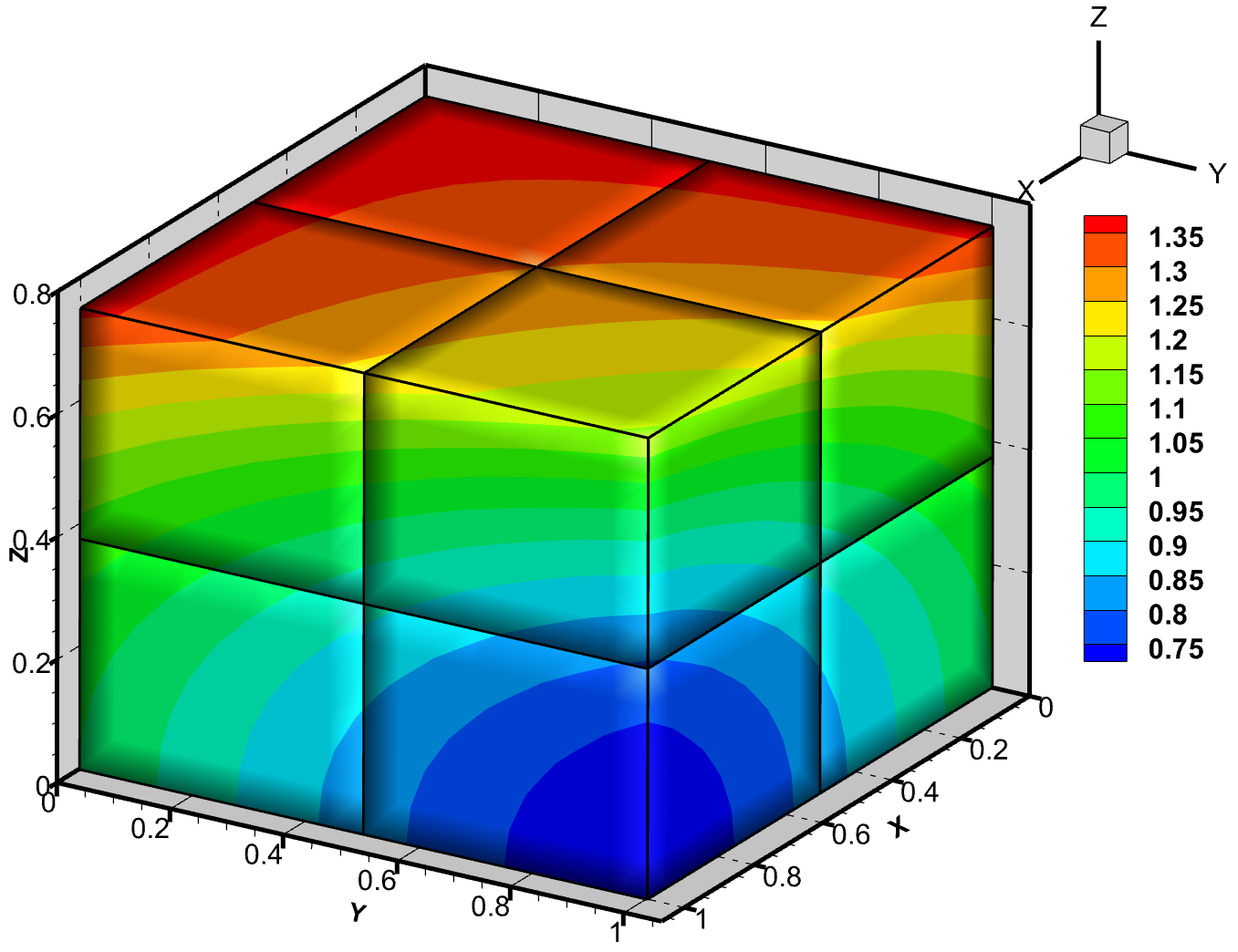}
\end{subfigure}
\end{centering}
\caption{\label{Fig33}\small{The pressure of parallel algorithm in 3D. Left: the flow in macro-fractures and conduits; Middle: the flow in micro-fractures;
Right: the flow in stagnant-matrix.}}
\end{figure}

\vspace{-0.35cm}

\begin{figure}[H]
\begin{centering}
\begin{subfigure}[t]{0.31\textwidth}
\centering
\includegraphics[width=1.05\textwidth]{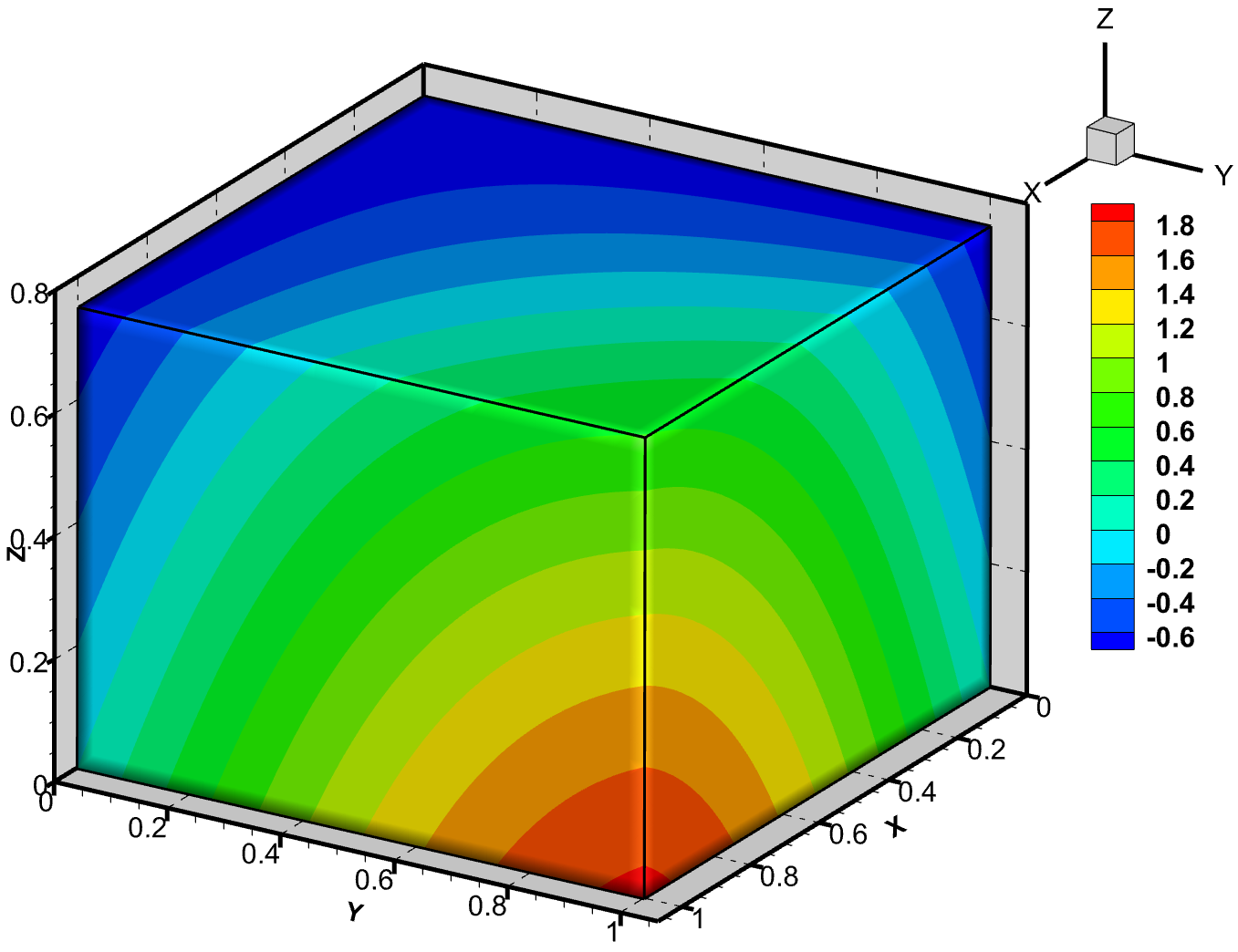}
\end{subfigure}
\quad
\begin{subfigure}[t]{0.31\textwidth}
\centering
\includegraphics[width=1.05\textwidth]{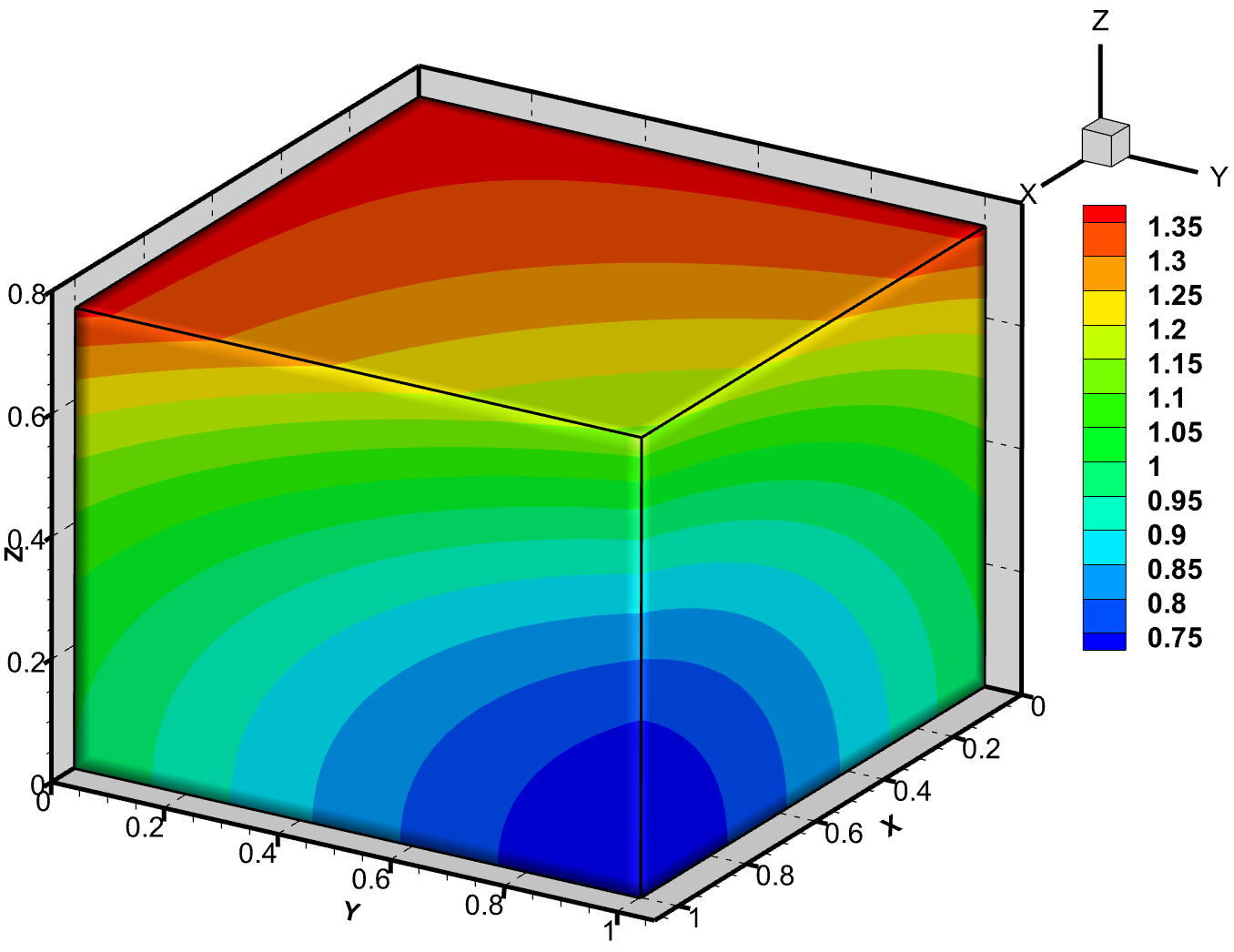}
\end{subfigure}
\quad
\begin{subfigure}[t]{0.31\textwidth}
\centering
\includegraphics[width=1.05\textwidth]{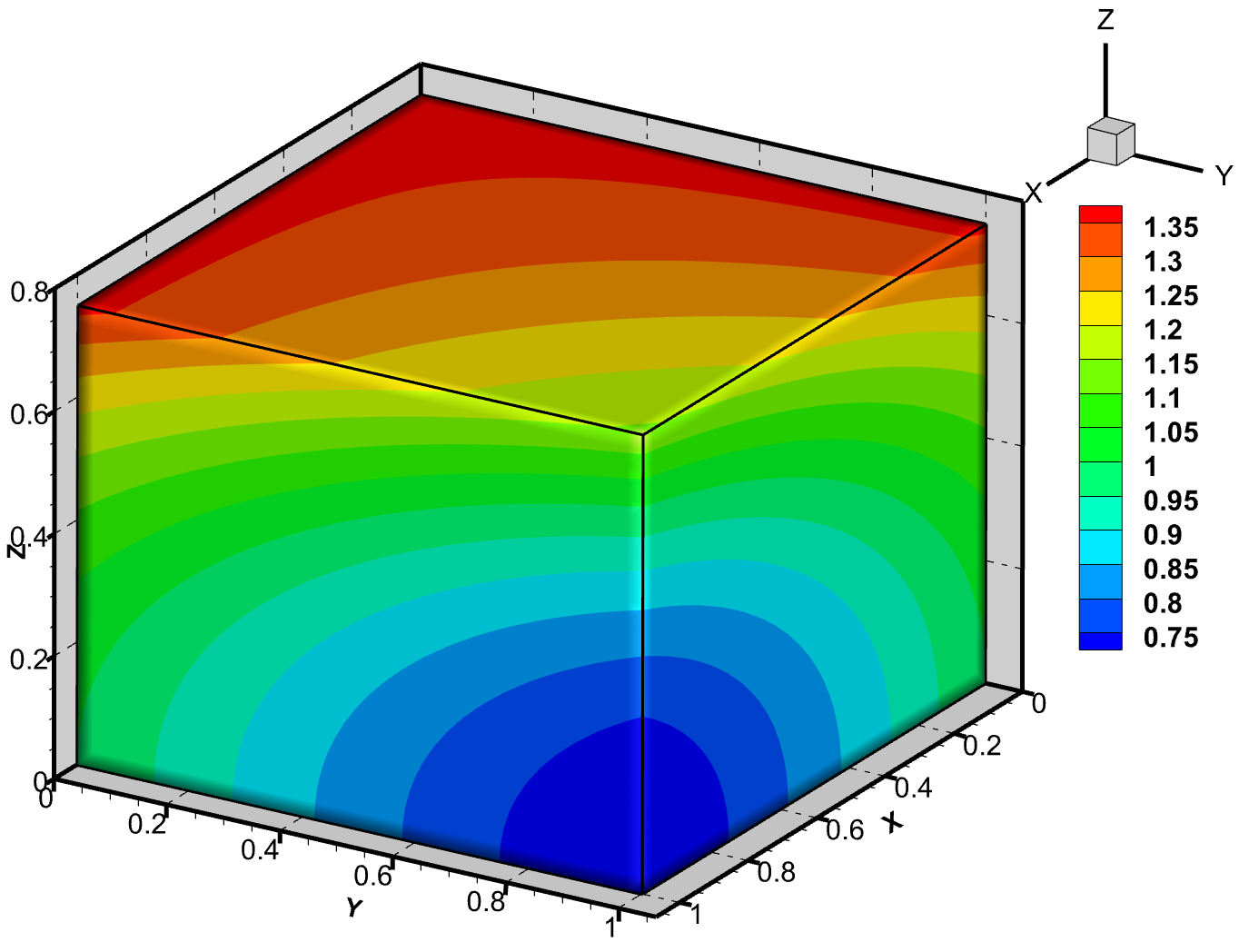}
\end{subfigure}
\end{centering}
\caption{\label{Fig34}\small{The pressure of traditional algorithm in 3D. Left: the flow in macro-fractures and conduits; Middle: the flow in micro-fractures;
Right: the flow in stagnant-matrix.}}
\end{figure}

Simultaneously, we have separately plotted the velocity streamlines and pressure diagrams corresponding to Algorithm \ref{algorithm3} and Algorithm \ref{Algorithm-1} in the three-dimensional case, as shown in the Figure \ref{Fig32}-\ref{Fig34}.
It is clear that they are nearly identical, and the computational performance in 3D is excellent.

\subsection{Example 3: Multistage hydraulically fractured horizontal wellbore completions with super-hydrophobic proppant }
The technique of multistage hydraulically fracturing a horizontal wellbore with cased-hole completions plays an important role in unconventional reservoirs, especially for shale oil and gas production \cite{seale2006multistage, bello2010multi, ZhangChenZhao}.
To improve the recovery rate, proppants with oil-permeable and water-resistant properties are typically used(See Figure \ref{Fracturing}).
Moreover, the material properties of the proppant can significantly and directly impact the permeability of the formation \cite{tan2018laboratory, tan2017experimental, zhang2020numerical}.
In this example, we simulate the flow around a multistage hydraulically fractured horizontal production wellbore with super-hydrophobic proppant and illustrate its flow pattern.
The horizontal cross-section is displayed in Figure \ref{PicPrac}.

\begin{figure}[h]
  \centering
  \includegraphics[width=13cm]{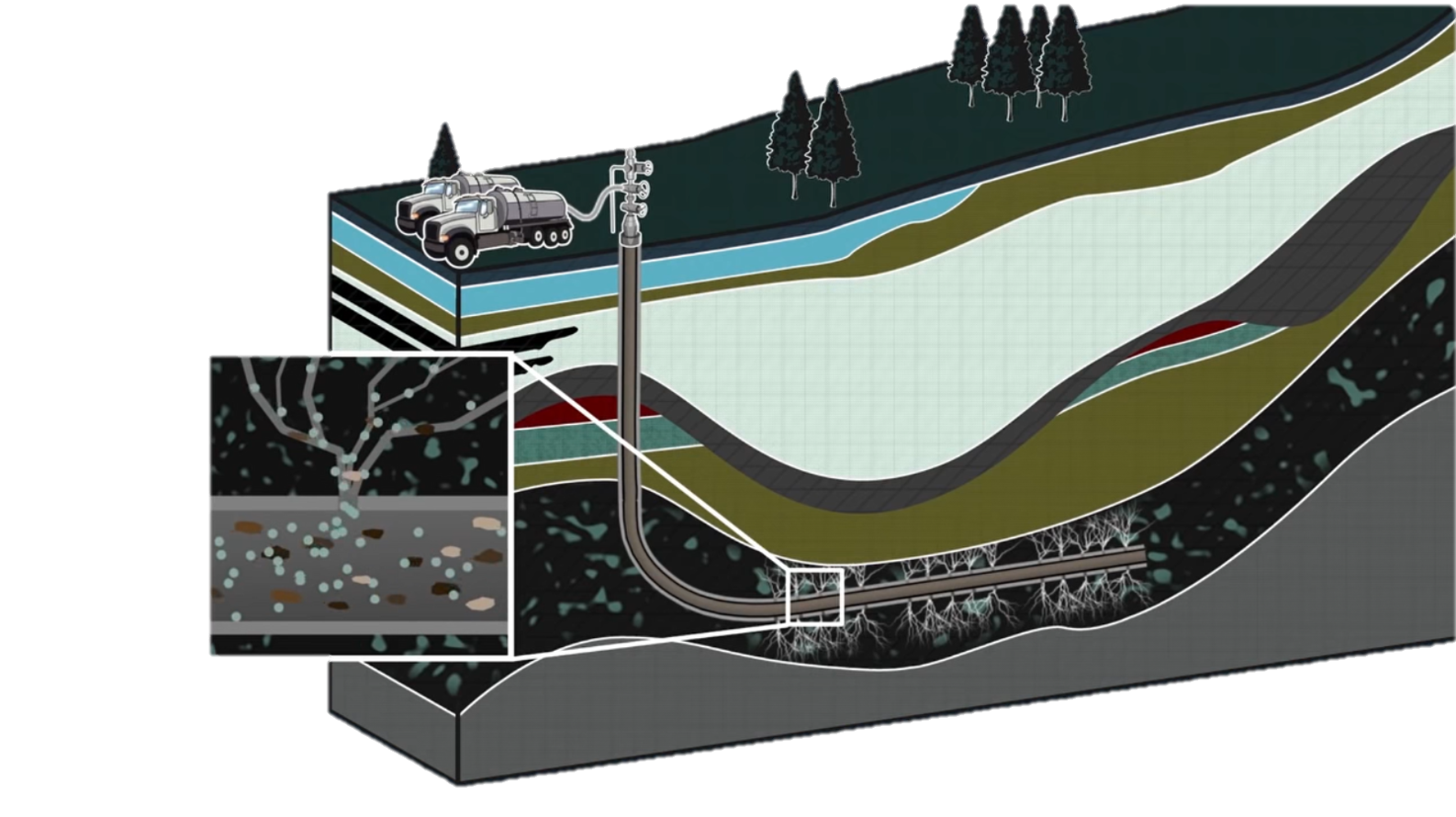}\\
  \caption{\label{Fracturing}\small{The pictorial illustration of the unconventional naturally fractured reservoir with multistage hydraulic fracturing.( https://www.youtube.com/watch?v=2PBCTXHqZec)}}
\end{figure}

\begin{figure}[htbp]
	\centering
	\begin{minipage}{0.49\linewidth}
		\centering
		\includegraphics[width=0.8\linewidth]{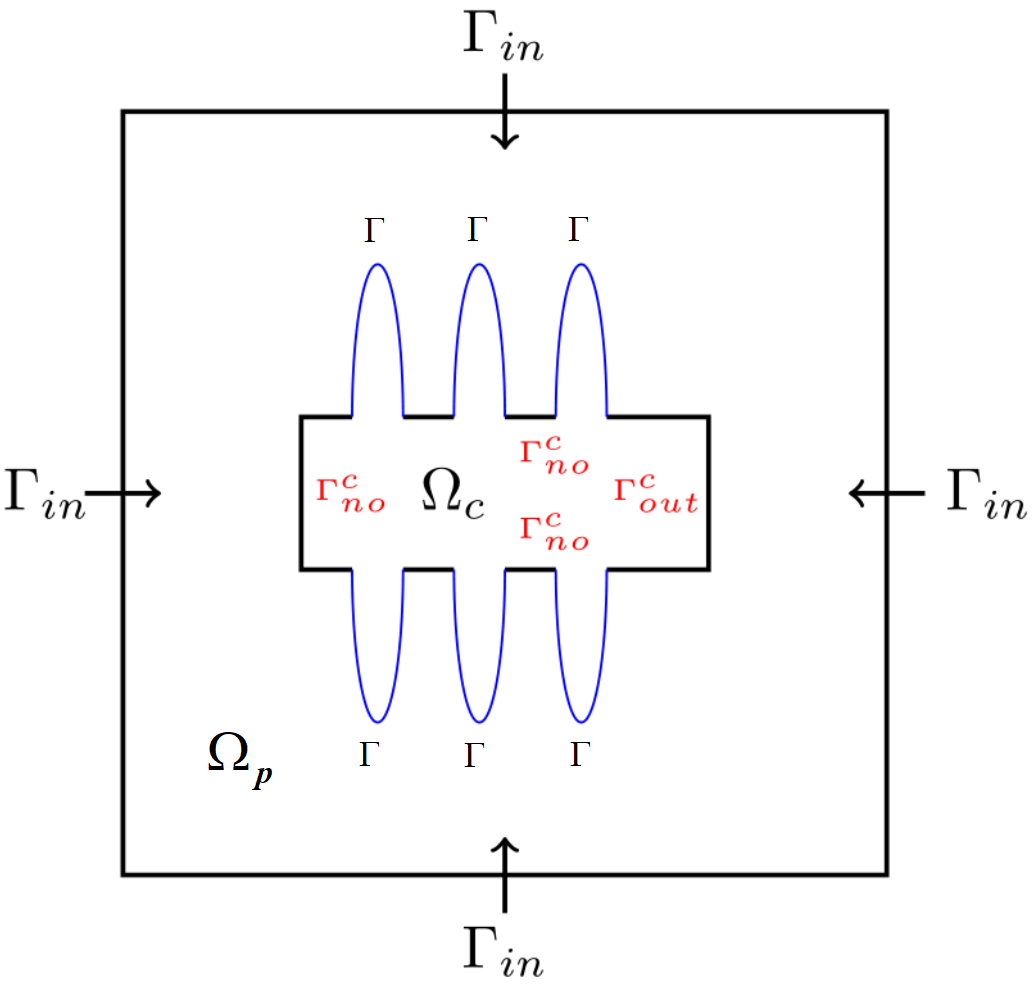}
\caption{\label{PicPrac}\small{The horizontal cross-section sketch of the multistage hydraulically fractured system.}}
	\end{minipage}
        \begin{minipage}{0.5\linewidth}
		\centering
		\includegraphics[width=0.8\linewidth]{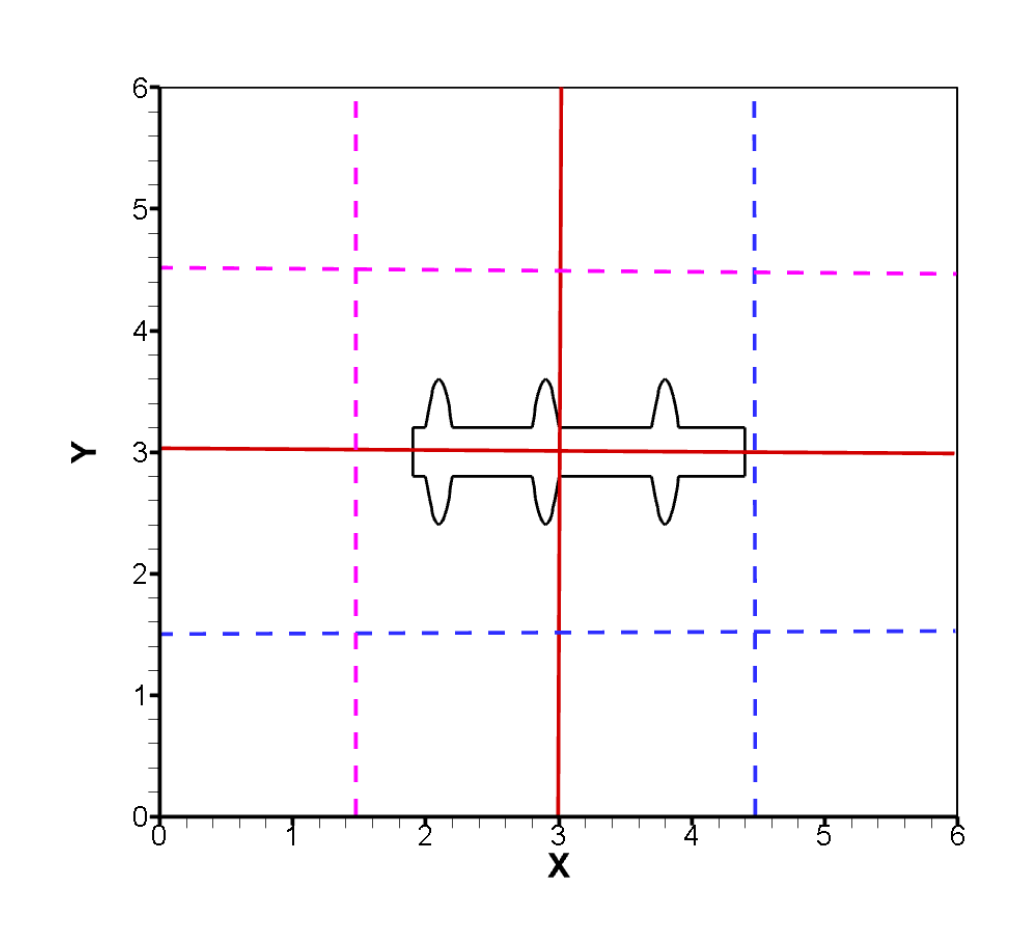}
\caption{\label{PicPracDivide}\small{The partitions of horizontal cross-section domain.}}
	\end{minipage}
\end{figure}

The simulation domain is the square~$[0,6]^2$, where the horizontal wellbore region~$\Omega_c$~is simplified as a rectangle of ~$[1.9, 4.4] \times [2.4, 3.6]$. The triple-porosity region is~$\Omega_p=[0,6]^2 \backslash \Omega_c$\\
and its boundary is~$\Gamma_{\text{in}}$. We assume the pressures~$p_m=4\times 10^3, p_f=1.6\times 10^3$ and $p_F=1.0\times 10^3$~on~$\Gamma_{\text{in}}$. Since the boundary~$\Gamma_{\text{no}}^c$~is equipped with cased holes, the horizontal wellbore does not directly communicate with the triple-porosity medium. Therefore, the following boundary conditions are imposed:
\begin{equation*}
-\frac{k_F}{\tilde{\mu}}\nabla p_F \cdot (-\vec{n}_{c})=0,
,~~~~-\frac{k_f}{\tilde{\mu}}\nabla p_f \cdot (-\vec{n}_{c})=0,~~~~-\frac{k_m}{\tilde{\mu}}\nabla p_m \cdot (-\vec{n}_{c}) =0,~~~~\vec{u}_c \cdot \vec{n}_{c}=0~~~~\text{on}~\Gamma_{\text{no}}^c.
\end{equation*}
However, the fluid in natural fractures can flow into hydraulic fractures through interface~$\Gamma$, which is the only path connecting the triple-porosity domain~$\Omega_p$~and the horizontal wellbore~$\Omega_c$.
The location of the super-hydrophobic proppant within natural fractures (more-permeable macrofractures) serves two purposes:
it provides support to the fractures and also functions to permit the passage of oil while obstructing water.

The interface boundary conditions~\eqref{km_interface}-\eqref{uc_interface} are applied in~$\Gamma$. In practice, a horizontal wellbore is connected to a vertical wellbore at the boundary~$\Gamma_{\text{out}}^c$~and we do not show this part in Figure \ref{PicPrac} for simplicity. In detail, the fluid in~$\Omega_c$~does not communicate with~$\Omega_p$~but directly flows out of the horizontal wellbore to the vertical wellbore. Therefore, the following boundary conditions are considered on~$\Gamma_{\text{out}}^c$:
\begin{equation*}
-\frac{k_F}{\tilde{\mu}} \nabla p_F \cdot (-\vec{n}_{c}) =0,
~~~~-\frac{k_f}{\tilde{\mu}} \nabla p_f \cdot (-\vec{n}_{c}) =0,~~~~-\frac{k_m}{\tilde{\mu}}\nabla p_m \cdot (-\vec{n}_{c})=0,~~~~\mathbb{T}(\vec{u}_c,p)\vec{n}_{c}=0~~~~\text{on}~\Gamma_{\text{out}}^c.
\end{equation*}

\subsubsection{Simulation of the flow behavior around multistage fractured horizontal wellbore completions}\label{Example3-1}

As we all know, the flow velocity in the triple-porosity region is slower than that in the conduit region.
The interface $\Gamma$ serves as a transitional layer between fluids in two different regions, and the fluid closer to the interface in the pipe flow appears in a laminar flow form.
Therefore, the flow in horizontal wellbore is described by the Stokes equation.
Some parameters of this model are chosen as~$\phi_m=10^{-2}, \phi_f=10^{-3},
\phi_F=10^{-4}, C_{m}=10^{-4}, C_{f}=10^{-4}, C_F=10^{-4},
k_m=10^{-8}, k_f=10^{-6}, k_F=10^{-3},
\mu=10^{-2}, \nu=10^{-2}, \sigma=0.5, \rho=10.0, \alpha=1.0, \eta=1.0, q_F=0, q_f=0,
q_m=0$ and $\vec{f}_c=0$. The simulation is carried out with the step sizes of~$H=1/3, h=1/9$~and
$\Delta t=0.05$.

Due to the complex physical geometry and comprehensive interface/boundary conditions of this hydraulic fracturing system, the computational domain is divided into four subdomains as shown in Figure \ref{PicPracDivide}. The blue dashed line and the purple dashed line represent the extension of the subdomains.

As we can see, Figure \ref{Fig55p} and \ref{Fig56p} present the pressure around the multistage hydraulically fractured
horizontal production wellbore with cased-hole completions at~$T=10.0$, which is used in Algorithm \ref{algorithm3} and Algorithm \ref{Algorithm-1}, respectively.
The pictures from left to right are the pressure in macro-fractures and multistage hydraulically fractured horizontal wellbore,
the pressure in micro-fractures
and the pressure in the stagnant-matrix.
One can observe that the matrix has
higher pressure which supplies the fluid to the macro-fractures. The vertical wellbore connected to
$\Gamma^c_{\text{out}}$~provides a pathway to the outside environment for the fluid. Therefore,
the pressure in the horizontal wellbore is lower compared with that in the domain farther away from the well
which is represented by the blue color. Correspondingly,
Figure \ref{Fig55} and \ref{Fig56} display the velocity and streamlines, which are also nearly the same.
The pictures from left to right are the flow in macro-fractures and multistage hydraulically fractured horizontal wellbore,
the flow in micro-fractures
and the flow in the stagnant-matrix.
As expected, the fluid in the matrix domain which has higher pressure is pushed into the micro-fractures, macro-fractures and then the horizontal wellbore sequentially. The cased-hole completions seal the interface between the horizontal wellbore and the triple-porosity medium, and the horizontal wellbore does not directly communicate with the triple-porosity
medium but is only fed by the hydraulic fractures through interface~$\Gamma$. These observations also match
with the benchmark study performed by Hou et al. \cite{hou2016a, hou2022modeling} and Mahbub et al. \cite{al2019coupled}.

\vspace{-0.4cm}

\begin{figure}[H]
\begin{centering}
\begin{subfigure}[t]{0.31\textwidth}
\centering
\includegraphics[width=1.05\textwidth]{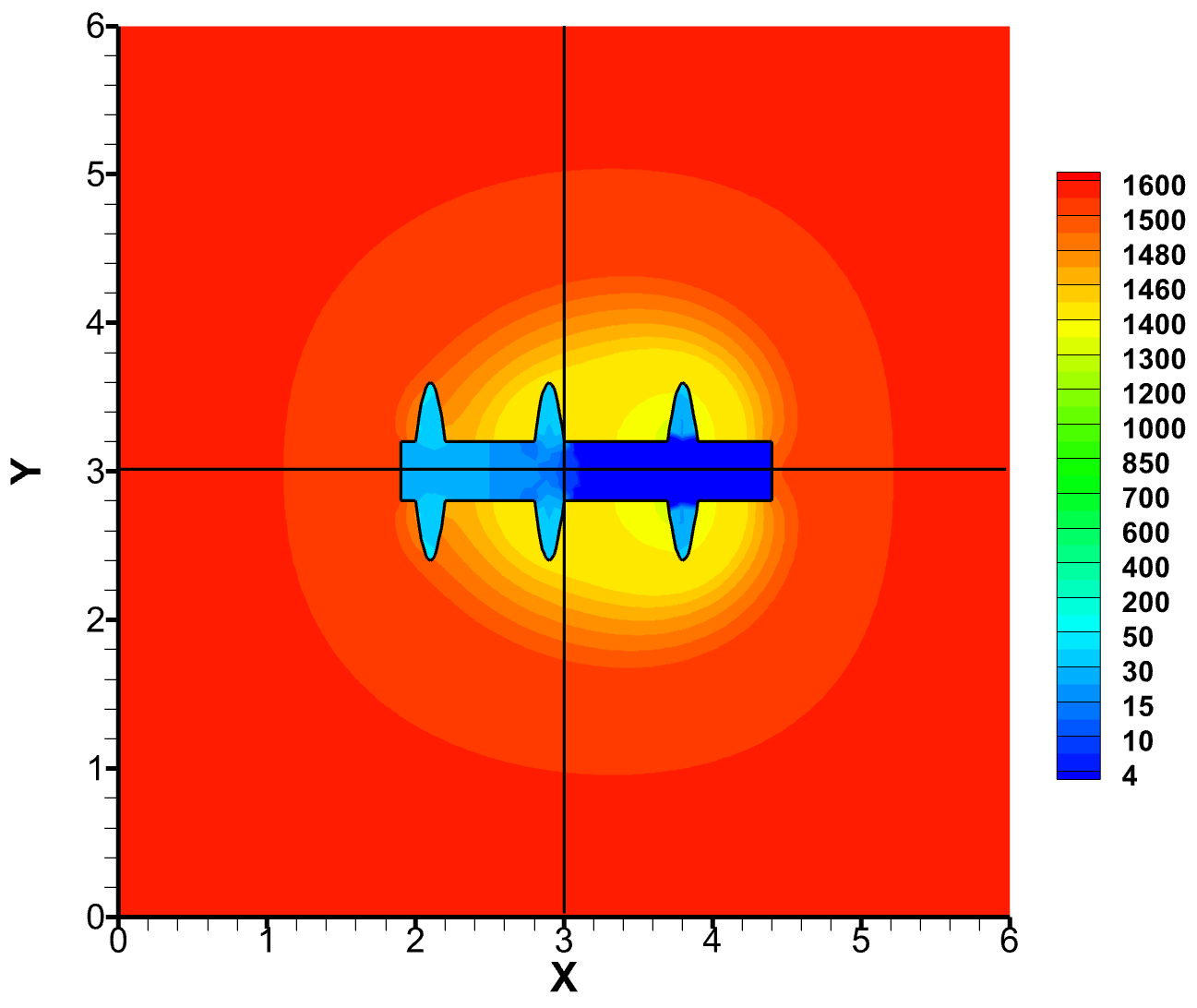}
\end{subfigure}
\quad
\begin{subfigure}[t]{0.31\textwidth}
\centering
\includegraphics[width=1.05\textwidth]{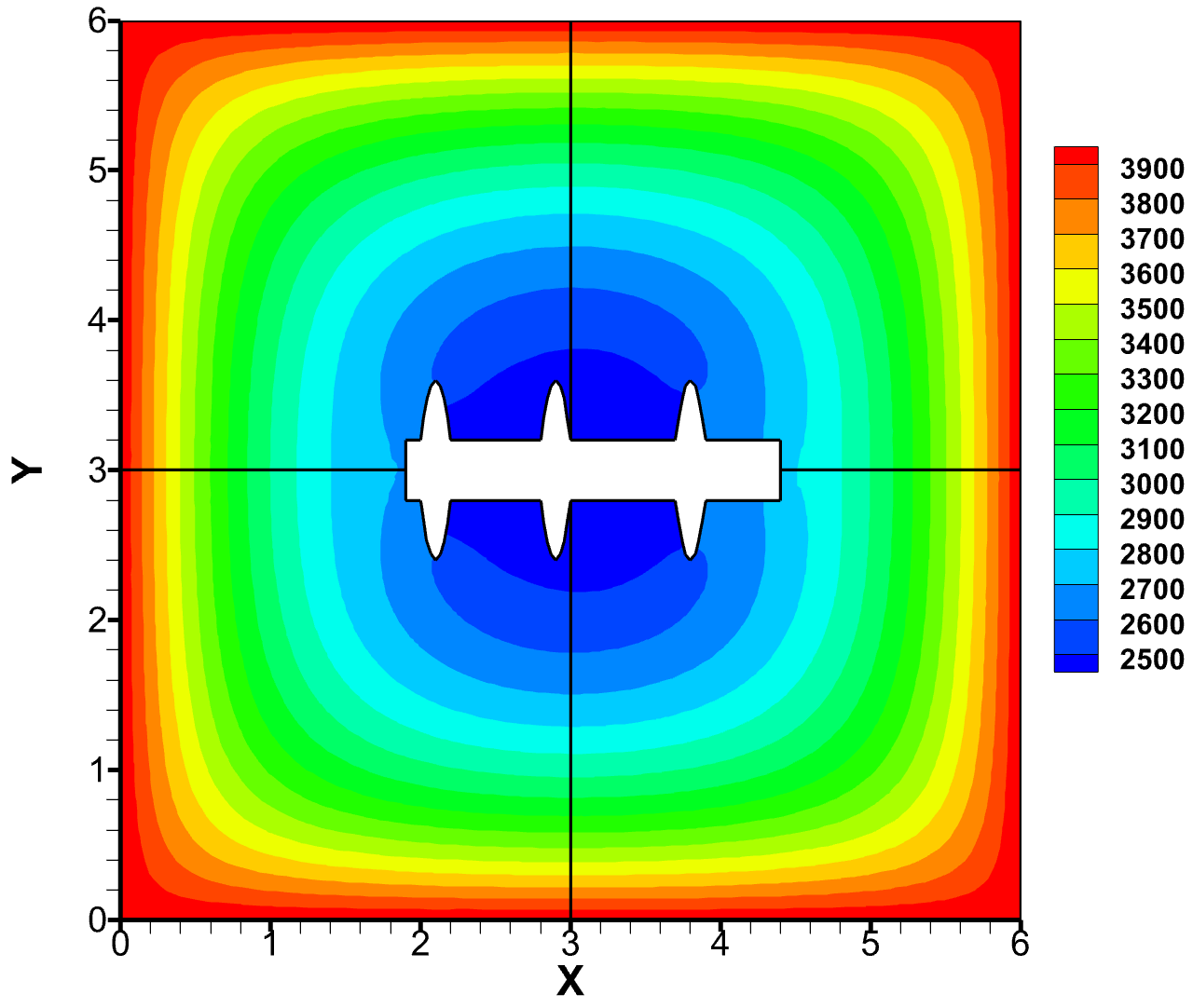}
\end{subfigure}
\quad
\begin{subfigure}[t]{0.31\textwidth}
\centering
\includegraphics[width=1.05\textwidth]{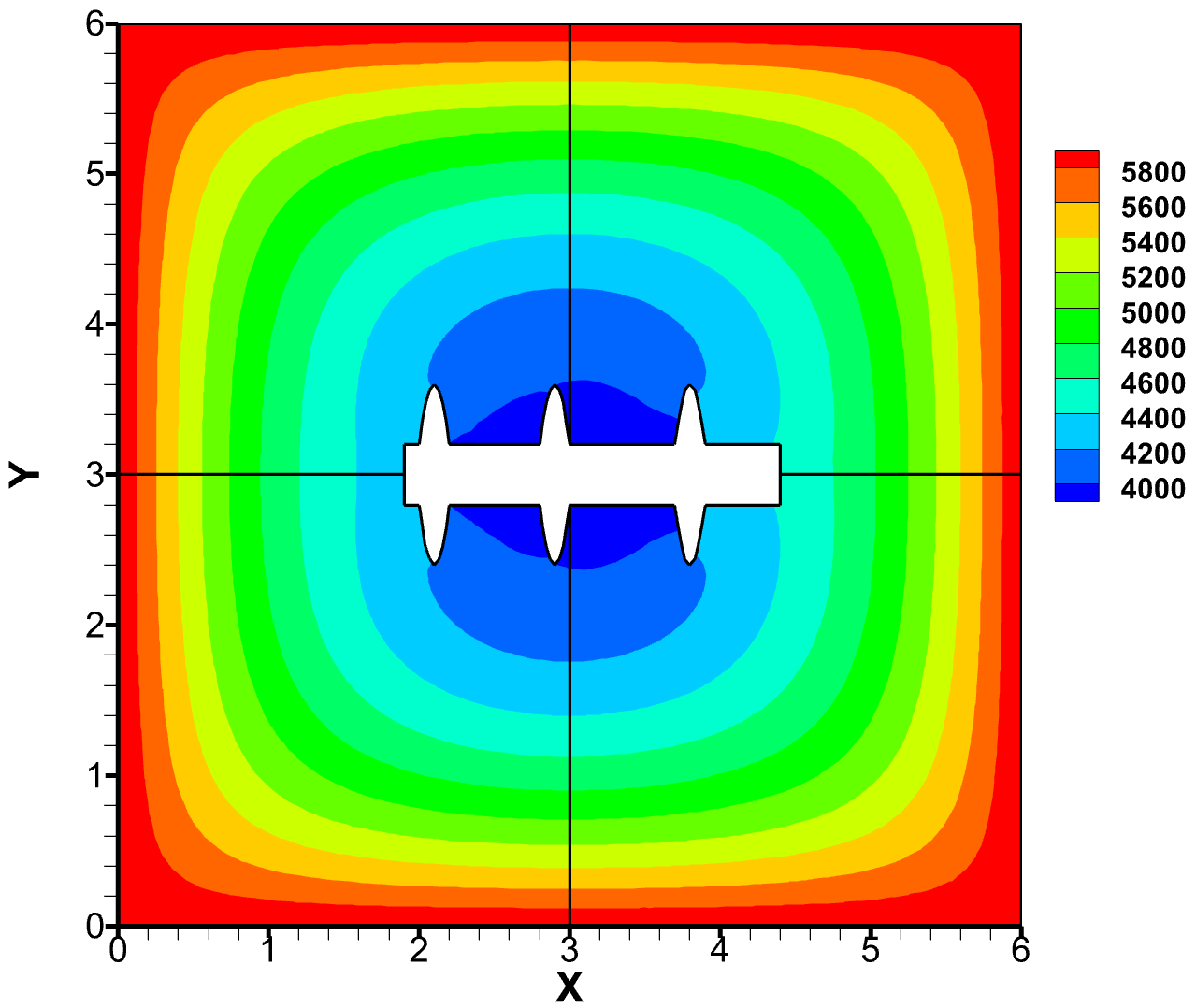}
\end{subfigure}
\end{centering}
\caption{\label{Fig55p}\small{The pressure around the multistage hydraulically fractured horizontal production wellbore with cased-hole completions in Algorithm \ref{algorithm3}. 
}}
\end{figure}

\vspace{-0.5cm}

\begin{figure}[H]
\begin{centering}
\begin{subfigure}[t]{0.31\textwidth}
\centering
\includegraphics[width=1.05\textwidth]{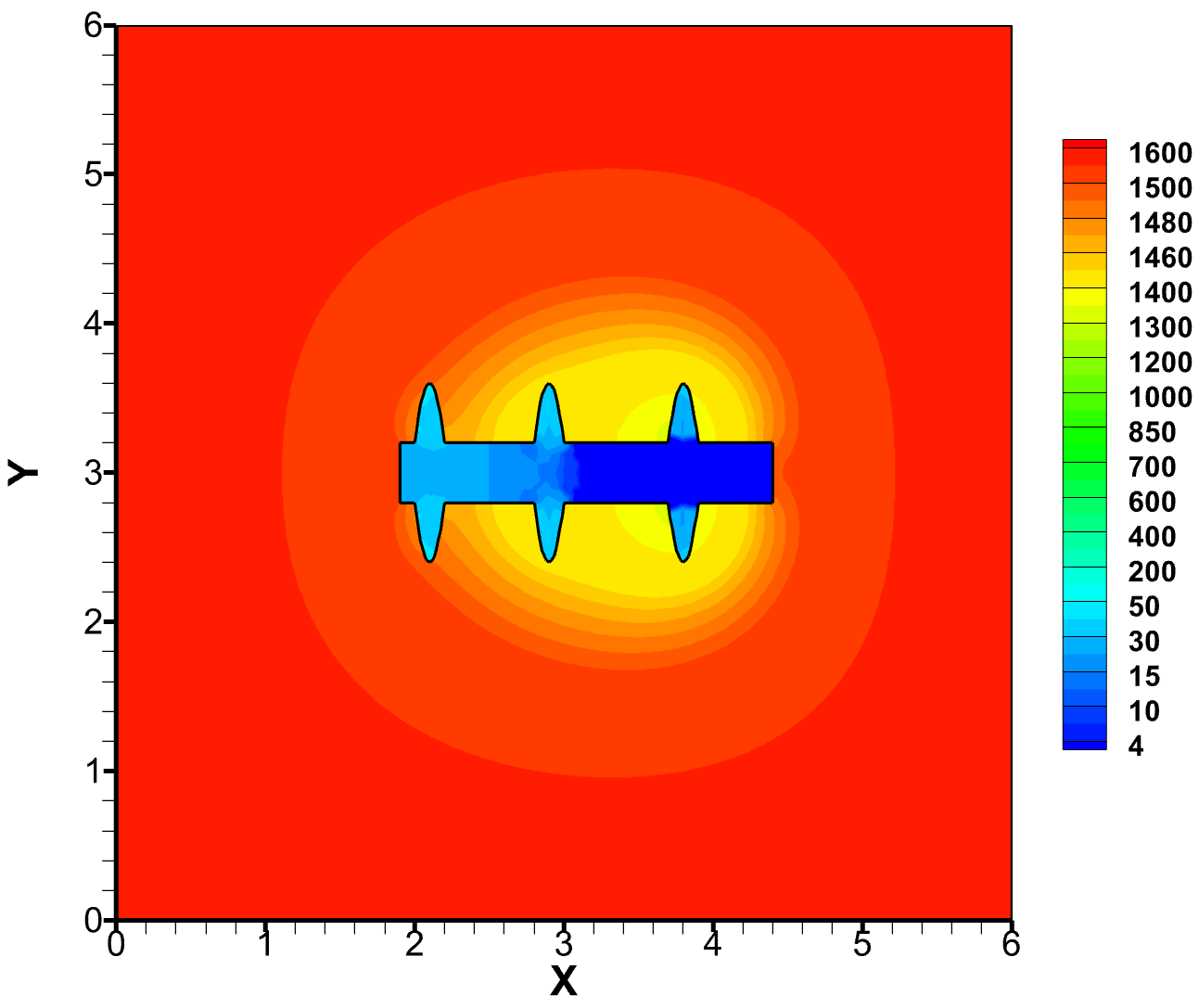}
\end{subfigure}
\quad
\begin{subfigure}[t]{0.31\textwidth}
\centering
\includegraphics[width=1.05\textwidth]{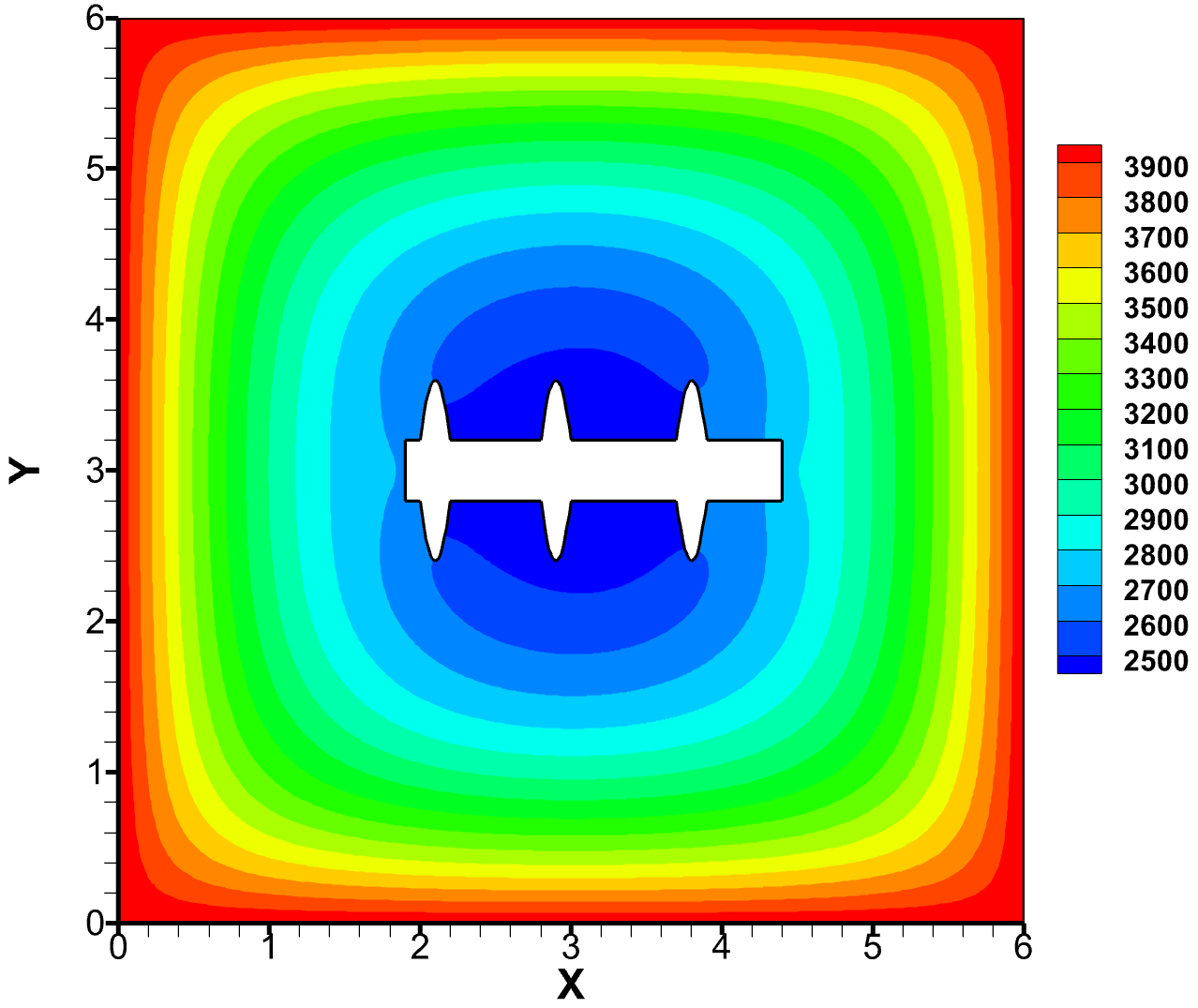}
\end{subfigure}
\quad
\begin{subfigure}[t]{0.31\textwidth}
\centering
\includegraphics[width=1.05\textwidth]{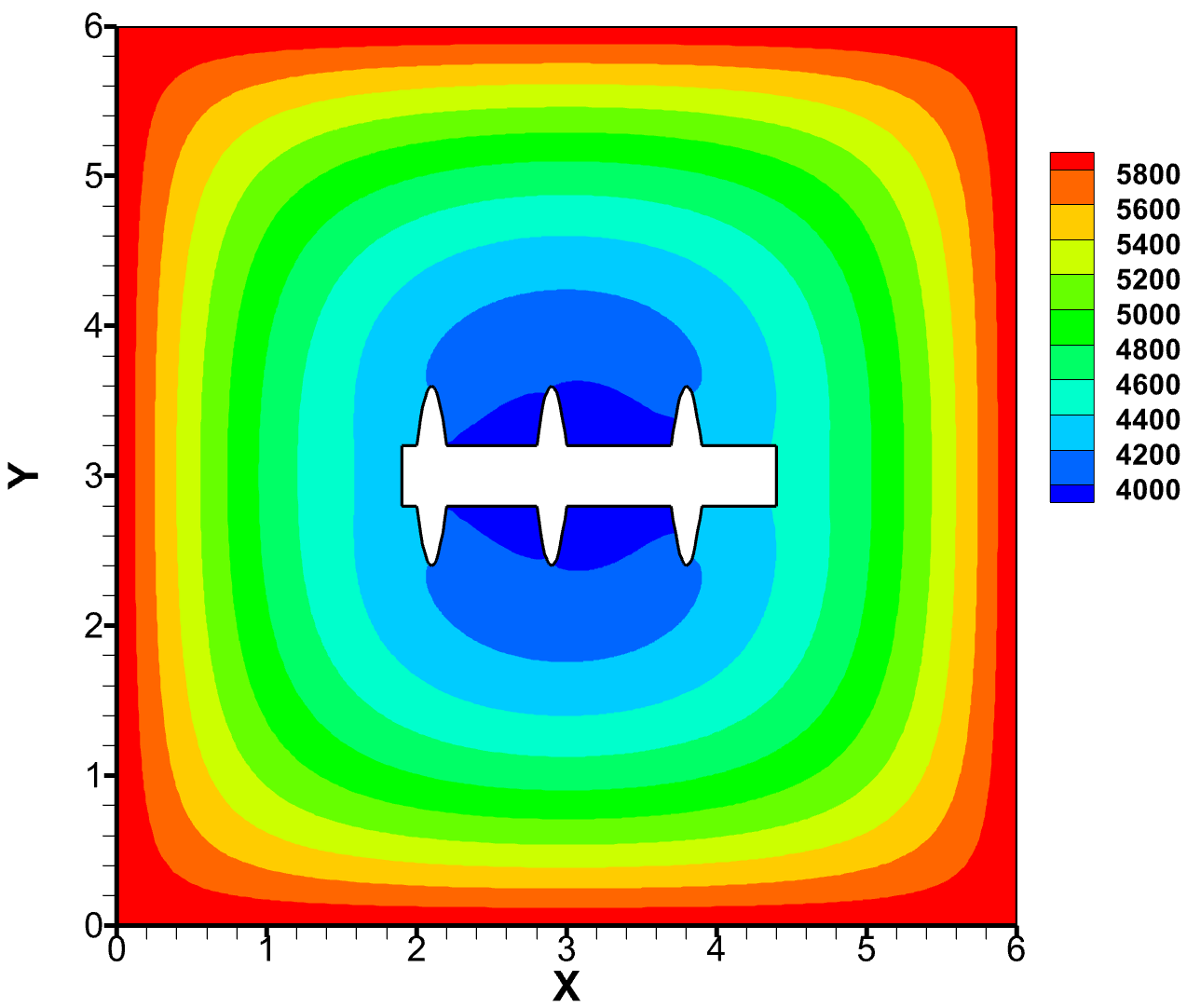}
\end{subfigure}
\end{centering}
\caption{\label{Fig56p}\small{The pressure around the multistage hydraulically fractured horizontal production wellbore with cased-hole completions in Algorithm \ref{Algorithm-1}. 
}}
\end{figure}


\begin{figure}[H]
\begin{centering}
\begin{subfigure}[t]{0.31\textwidth}
\centering
\includegraphics[width=1.05\textwidth]{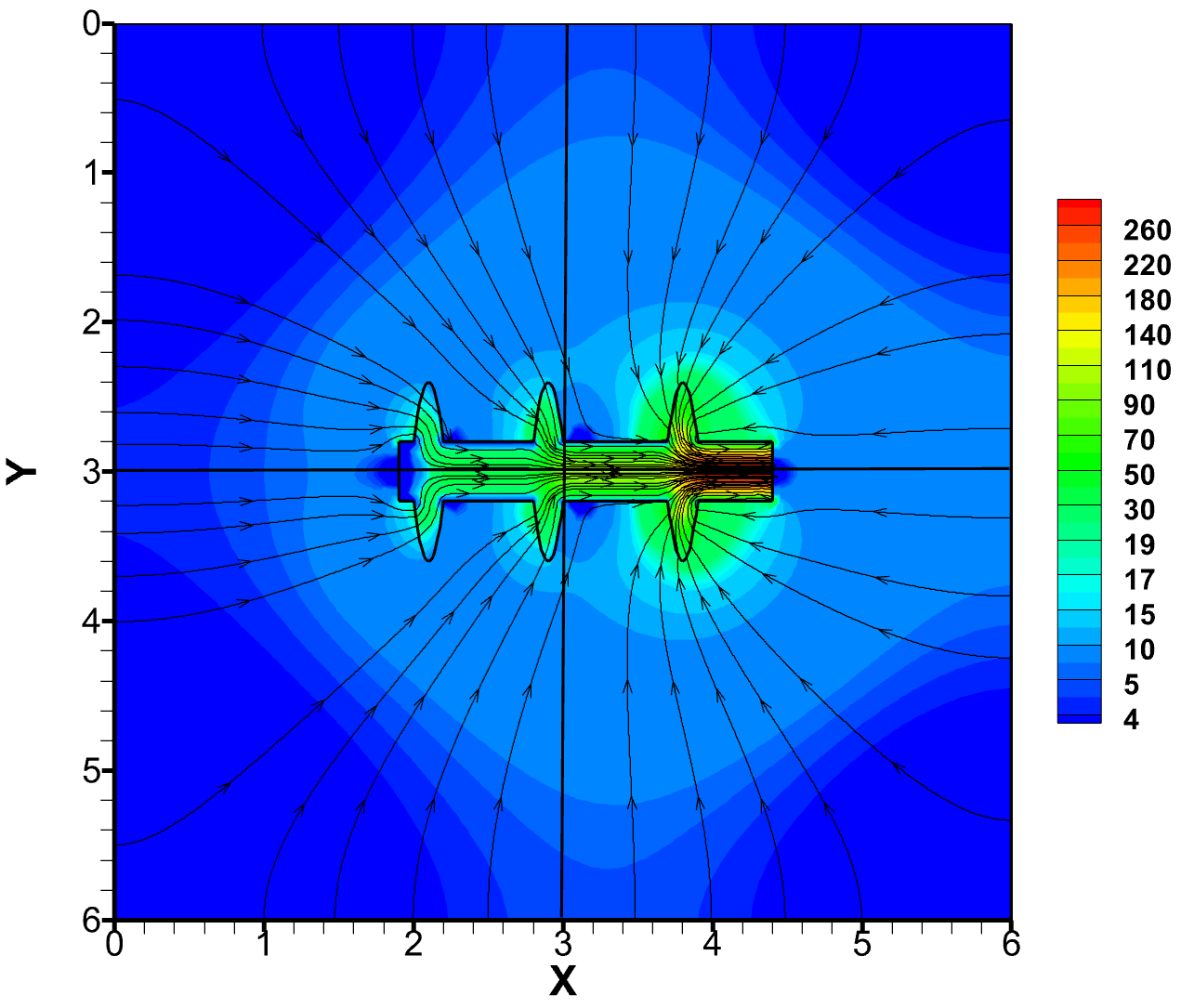}
\end{subfigure}
\quad
\begin{subfigure}[t]{0.31\textwidth}
\centering
\includegraphics[width=1.05\textwidth]{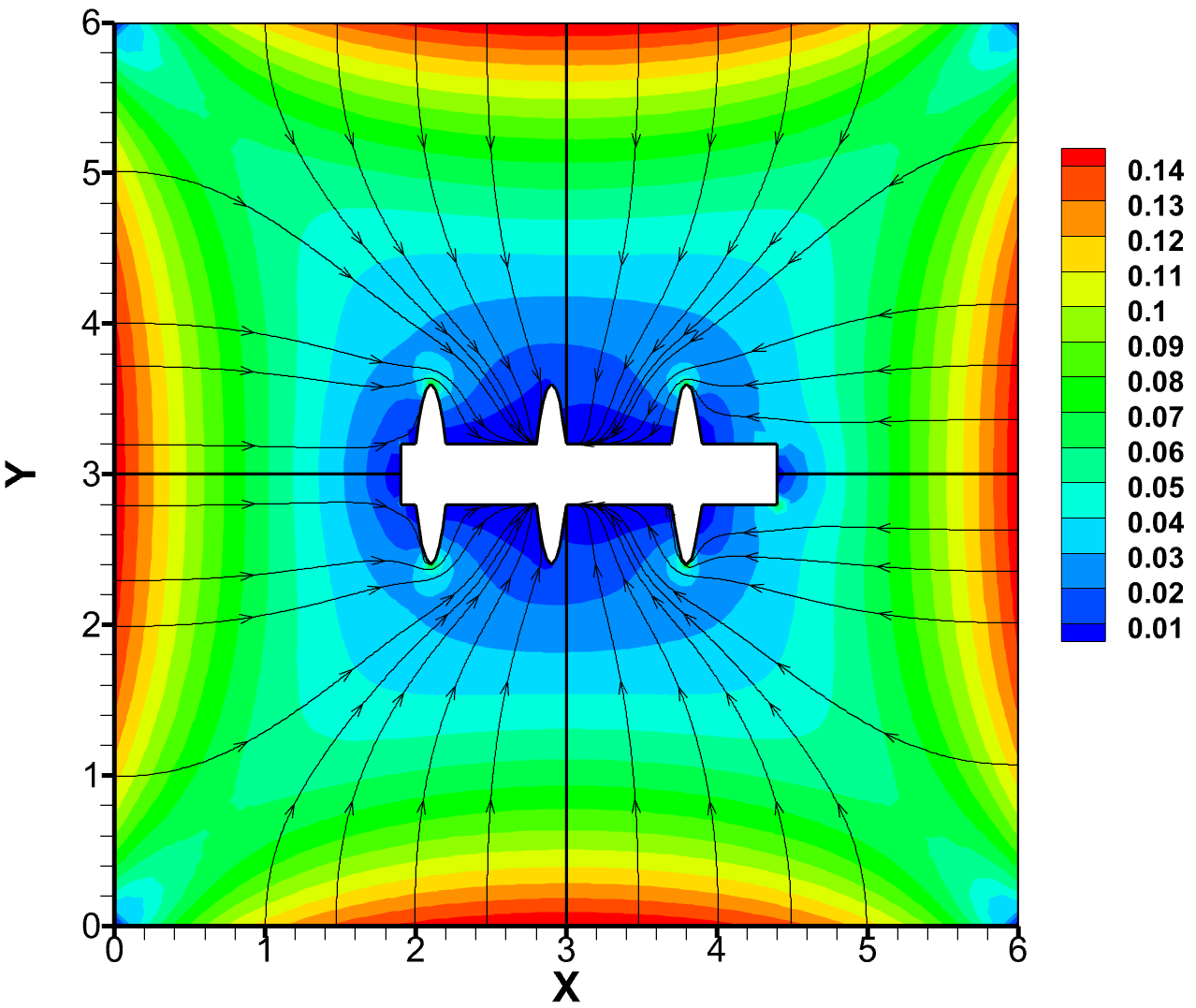}
\end{subfigure}
\quad
\begin{subfigure}[t]{0.31\textwidth}
\centering
\includegraphics[width=1.05\textwidth]{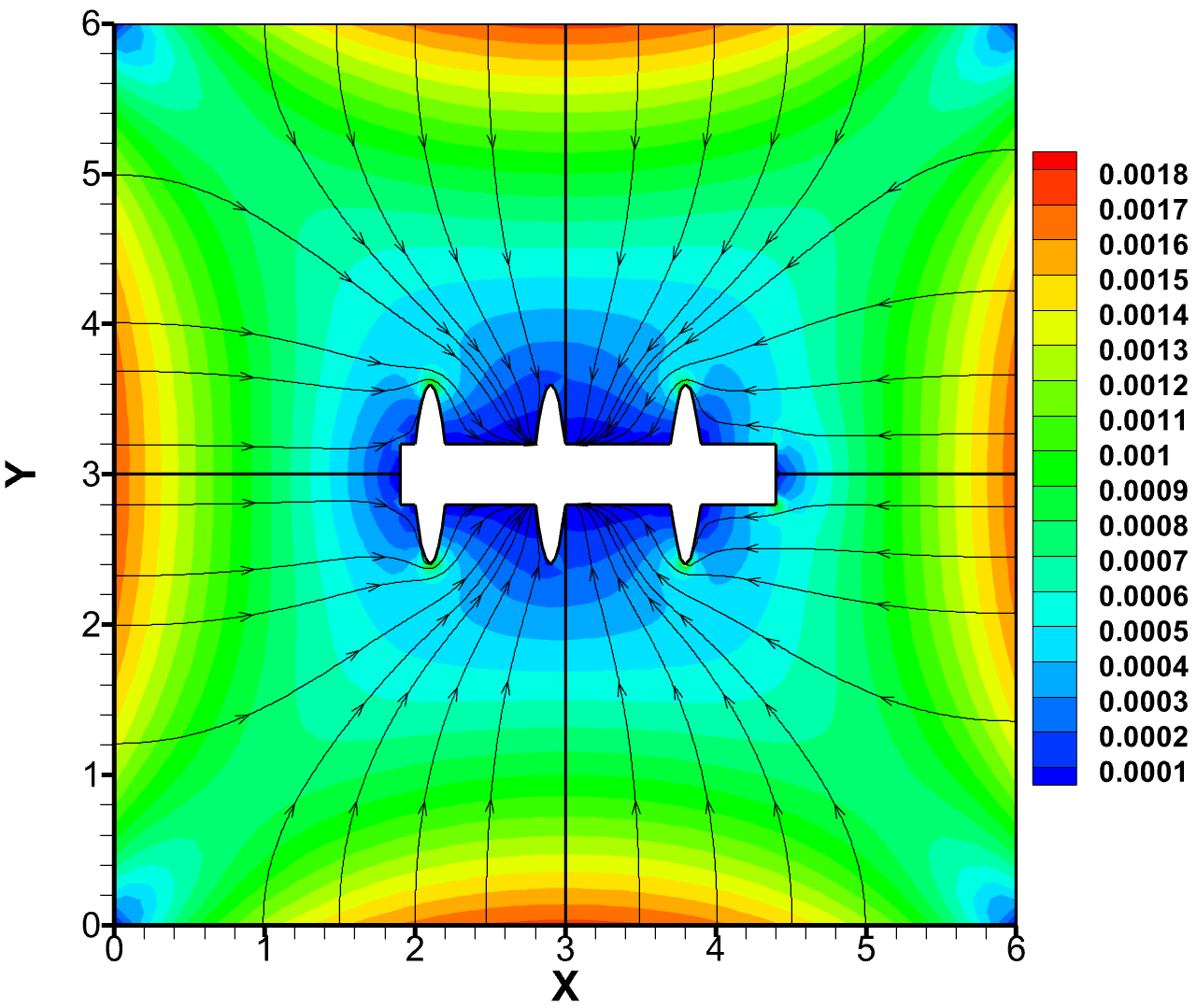}
\end{subfigure}
\end{centering}
\caption{\label{Fig55}\small{The velocity and streamlines around the multistage hydraulically fractured horizontal production wellbore with cased-hole completions in Algorithm \ref{algorithm3}.
}}
\end{figure}

\begin{figure}[H]
\begin{centering}
\begin{subfigure}[t]{0.31\textwidth}
\centering
\includegraphics[width=1.05\textwidth]{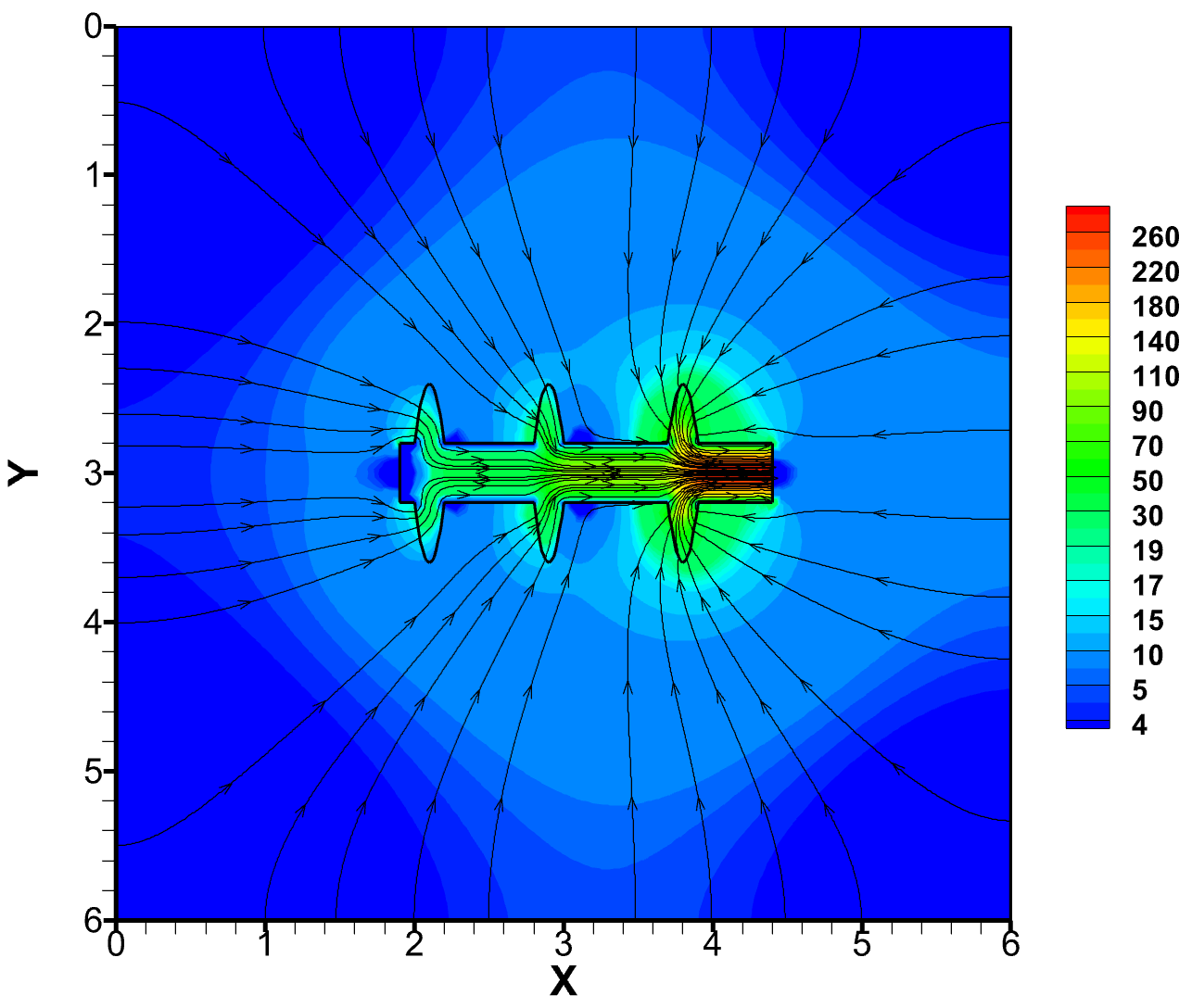}
\end{subfigure}
\quad
\begin{subfigure}[t]{0.31\textwidth}
\centering
\includegraphics[width=1.05\textwidth]{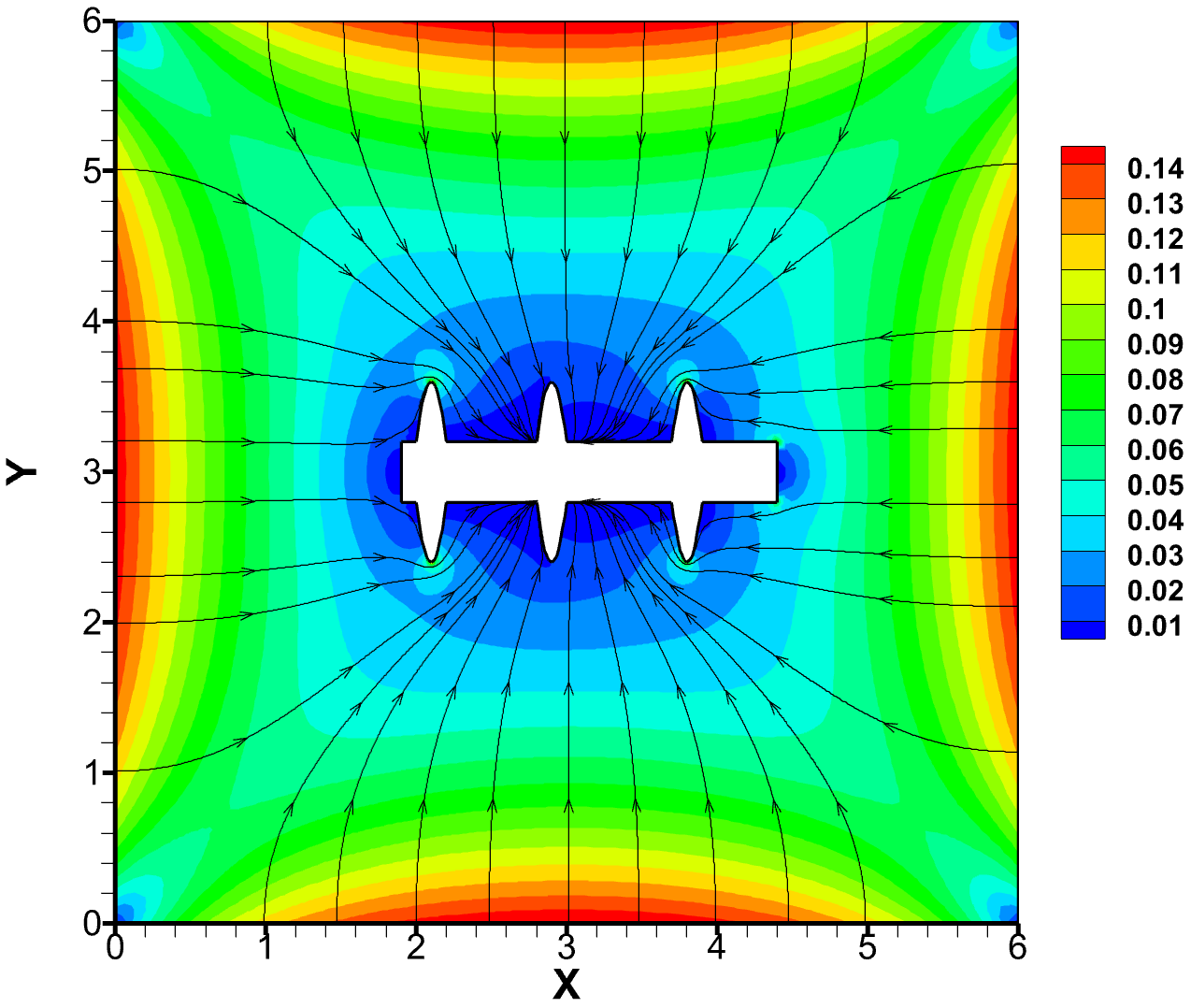}
\end{subfigure}
\quad
\begin{subfigure}[t]{0.31\textwidth}
\centering
\includegraphics[width=1.05\textwidth]{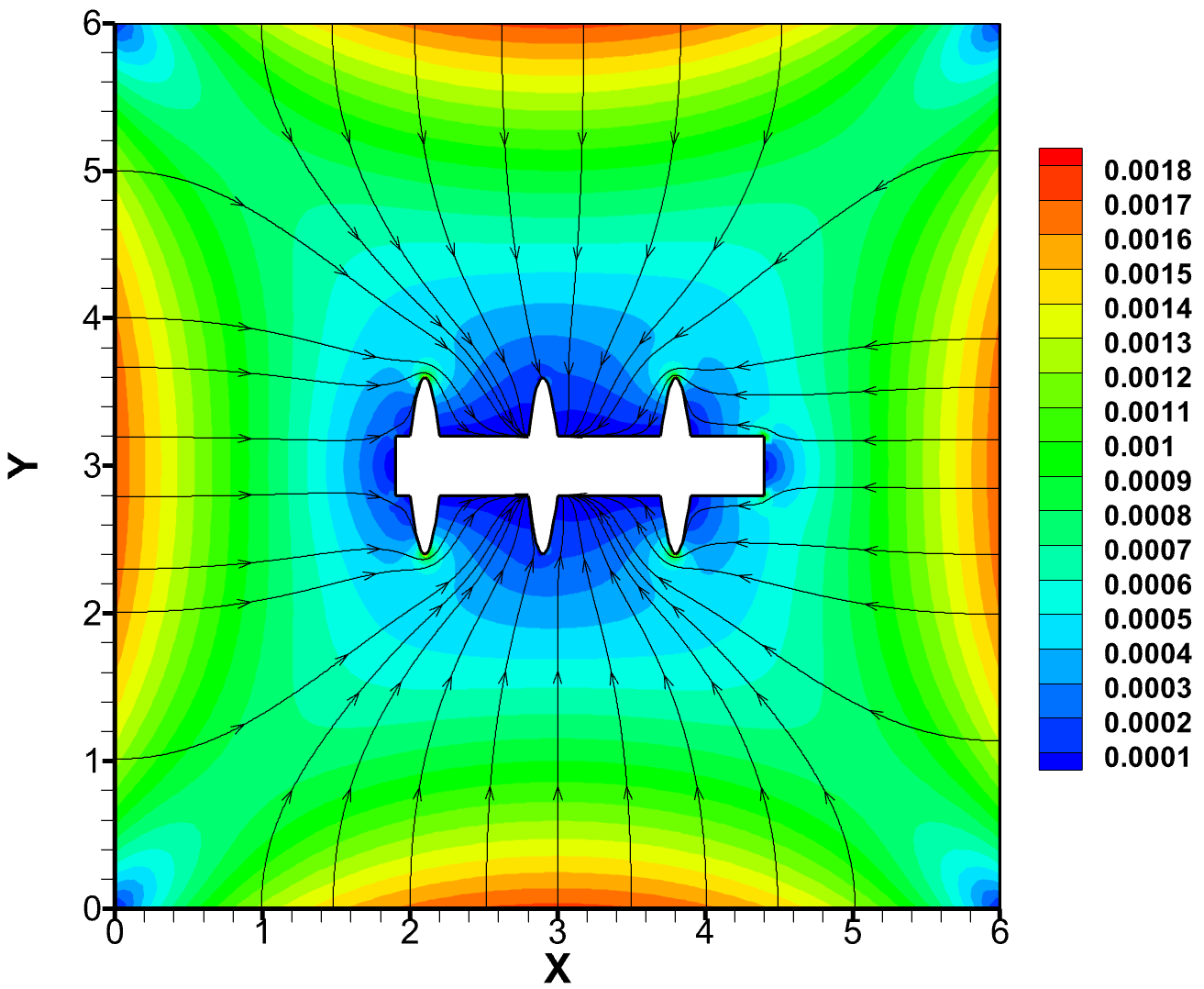}
\end{subfigure}
\end{centering}
\caption{\label{Fig56}\small{The velocity and streamlines around the multistage hydraulically fractured horizontal production wellbore with cased-hole completions in Algorithm \ref{Algorithm-1}.
}}
\end{figure}

\subsubsection{Simulation of the impact of different super-hydrophobic proppants on oil recovery rate}

In hydraulic fracturing engineering applications, different super-hydrophobic proppant materials directly influence the permeability $k_F$.
Based on the parameter settings in Section \ref{Example3-1},
we take different values $k_F=2 \times 10^{-2}, 4 \times 10^{-2}, 6 \times 10^{-2}, 8 \times 10^{-2}, 2 \times 10^{-1}, 4 \times 10^{-1}, 6 \times 10^{-1}, 8 \times 10^{-1}$ to
obtain different velocities using Algorithm \ref{Algorithm-1} and Algorithm \ref{algorithm3}.
Using the horizontal cross-section area $A$, we can get the oil production rate $Q=\vec{u}_c A$, which is shown in Figure \ref{OilRate}.

\begin{figure}[h]
  \centering
  \includegraphics[width=10cm]{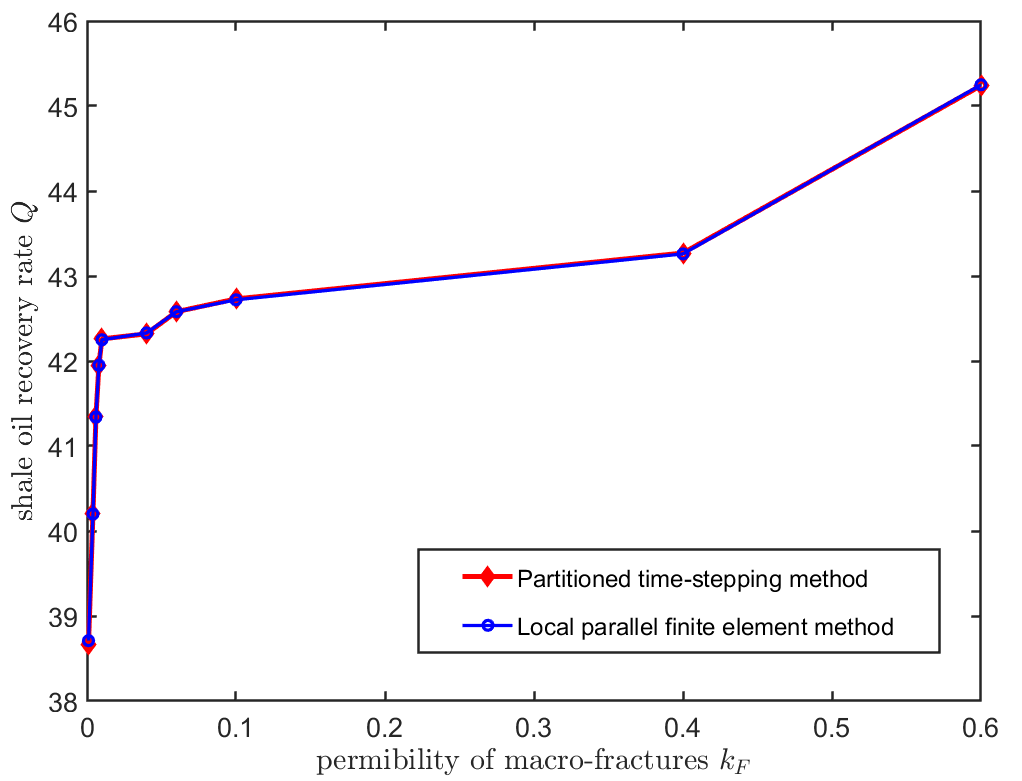}\\
  \caption{\label{OilRate}\small{The relationship between permeability of the macro-fractures and shale oil recovery rate.}}
\end{figure}

The relationship between permeability of the macro-fractures and shale oil recovery rates are shown in Figure \ref{OilRate}.
We can see that the recovery rates calculated by the two algorithms are the same.
In addition, as the proppant's oil permeability and water resistance performance increase, the oil production rate gradually rises.
Therefore, the super-hydrophobic proppant can enhance the oil recovery rate.

\section{Conclusions}
In this paper, we present a local parallel algorithm for super-hydrophobic proppants in a hydraulic fracturing system based on a 2D/3D transient triple-porosity Navier-Stokes Model.
Numerical examples are demonstrated to showcase the effectiveness and efficiency of the algorithm, as well as to illustrate its advantages in practical applications.
In the future, we will pursue additional research focusing on the two aspects.
One is to improve the results of Lemma \ref{prioriLem} to obtain the optimal error estimates for the proposed theorem.
The other one is to establish a more refined model in the porous media region, considering the influence of crack length, proppant volume, and capillary forces on the diversion capacity of cracks in the application of super-hydrophobic proppants.

\section*{Acknowledgements}
This work is supported in part by NSF of China(No. 11771259), Shaanxi Provincial Joint Laboratory of Artificial Intelligence(No.2022JC-SYS-05), National program for the introduction of high-end foreign experts(No.G2023041032L), Innovative team project of Shaanxi Provincial Department of Education(No.21JP013) and Shaanxi Province Natural Science basic research program key project(No.2023-JC-ZD-02).

~



\section*{References}
\bibliographystyle{plain}
\bibliography{bibfile}
\end{document}